\numberwithin{equation}{section}
\theoremstyle{plain}
\newtheorem{theorem}{Theorem}[section]
\newtheorem{lemma}[theorem]{Lemma}
\newtheorem{proposition}[theorem]{Proposition}
\newtheorem{assumption}[theorem]{Assumption}
\newtheorem{remark}[theorem]{Remark}
\theoremstyle{remark}
\newtheorem{definition}[theorem]{Definition}
\def\*#1{\mathbf{#1}}
\newcommand{\diff}{\mathrm{d}} 
\begin{document}

\begin{frontmatter}
\title{Turnpike in optimal control of PDEs, ResNets, and beyond}
\runtitle{Turnpike in optimal control of PDEs, ResNets, and beyond}

\begin{aug}
\author{\fnms{Borjan} \snm{Geshkovski}\ead[label=e1]{borjan@mit.edu}}
\address{Department of Mathematics\\
Massachusetts Institute of Technology\\
Cambridge MA, 02139 USA\\
\printead{e1}}
\author{\fnms{Enrique} \snm{Zuazua}\ead[label=e3]{enrique.zuazua@fau.de}}
\address{Chair in Dynamics, Control, and Numerics \\
Alexander von Humboldt-Professorship\\
Friedrich-Alexander-Universität Erlangen-Nürnberg\\
Cauerstrasse 11, 91052 Erlangen, Germany\\
\&\\
Chair of Computational Mathematics\\
Fundación Deusto\\
Av. de las Universidades 24 \\
48007 Bilbao, Basque Country, Spain\\
\&\\
Departamento de Matemáticas\\
Universidad Autónoma de Madrid\\ 
28049 Madrid, Spain\\
\printead{e3}}

\end{aug}

\begin{abstract}
The \emph{turnpike property} in contemporary macroeconomics asserts that if an economic planner seeks to move an economy from one level of capital to another, then the most efficient path, as long as the planner has enough time, is to rapidly move stock to a level close to the optimal stationary or constant path, then allow for capital to develop along that path until the desired term is nearly reached, at which point the stock ought to be moved to the final target.  
Motivated in part by its nature as a resource allocation strategy, over the past decade, the turnpike property has also been shown to hold for several classes of partial differential equations arising in mechanics. 
When formalized mathematically, the turnpike theory corroborates the insights from economics: for an optimal control problem set in a finite-time horizon, optimal controls and corresponding states, are close (often exponentially), during most of the time, except near the initial and final time, to the optimal control and corresponding state for the associated stationary optimal control problem. In particular, the former are mostly constant over time.
This fact provides a rigorous meaning to the asymptotic simplification that some optimal control problems appear to enjoy over long time intervals, allowing the consideration of the corresponding stationary problem for computing and applications.
We review a slice of the theory developed over the past decade --the controllability of the underlying system is an important ingredient, and can even be used to devise simple turnpike-like strategies which are nearly optimal--, and present several novel applications, including, among many others, the characterization of Hamilton-Jacobi-Bellman asymptotics, and stability estimates in deep learning via residual neural networks.
\end{abstract}

\vspace{0.5cm}
{\small
\emph{Dedicated to the memory of Roland Glowinski.}}

\begin{keyword}[class=MSC]
\kwd[Primary ]{49N10}
\kwd{93B05}
\kwd{65K99}
\kwd{49M05}
\kwd[; secondary ]{93C20}
\kwd{49N80}
\end{keyword}

\begin{keyword}
\kwd{Turnpike property}
\kwd{Partial Differential Equations}
\kwd{Optimal control}
\kwd{Shape optimization}
\kwd{Machine learning}
\end{keyword}

\end{frontmatter}

\tableofcontents

\section{Introduction} \label{sec: 1}

The field of \emph{control} provides the principles and methods used to design inputs which ensure that systems, arising in common physical, biological or social science applications, reach a desired configuration in some time, or maintain a desirable performance over time. The field, in its current form, may trace its origins to Norbert Wiener and his introduction of cybernetics \cite{wiener2019cybernetics}.
In most contemporary applications, control is applied to systems modeled by  ordinary or partial differential equations (ODEs or PDEs). For such systems, one may, in principle, design and use a variety of different controls which allow to steer or manipulate the state trajectory to one's choosing. Accordingly, in practice, for robustness, computing and production reasons, controls are sought to satisfy a certain optimality criterion. 
In other words, controls are found by minimizing some cost or maximizing some reward, which leads one to the field of \emph{optimal control} -- a classic of applied mathematics, treated in-depth and from several different lenses (\cite{lee1967foundations, lions1971optimal, hinze2008optimization, troltzsch2010optimal}).

An exemplifying application of optimal control of partial differential equations is that of \emph{flow control} (\cite{gad2007flow}). 
In aeronautics, fluid-flow is typically modeled by the pillars of fluid mechanics, namely the Navier-Stokes or Euler equations, while aeroelastic structural deformations are modeled by nonlinear elasticity systems or variants of beam equations. And in such contexts, typical control or design problems involve the optimal placement of actuators along a wing as to minimize vibrations, or optimal shape design of a wing so that drag is minimized (\cite{holmes2012turbulence, castro2007systematic, mohammadi2010applied}). 

An interesting and perpetual artifact appears in these practical applications: oftentimes, only the time-independent, stationary partial differential equations are considered by practitioners for control and/or design (\cite{jameson2010optimization, jameson1988aerodynamic}). This is done for obvious computational reasons -- a direct simulation of the time-dependent Navier-Stokes equations would be rather unfeasible and not wise for online design. 
Nonetheless, even if the underlying dynamics (in occurrence, the Navier-Stokes system) without control may stabilize to a steady state in large time, there are no guarantees that the stationary optimal control problem will reflect the features of solutions to the true, time-dependent problem. 
This marks the relevance of the role of the final time horizon in these contexts.

Long time horizons in optimal control of PDEs also occur in other related applications. An example is sonic-boom minimization, where the main interest is the design of aircraft which are sufficiently quiet to fly supersonically over land, which namely produces little to no acoustic disturbances for humans on ground level. In some settings, the mathematical formulation of the sonic-boom minimization problem can be seen as an inverse design or optimal control problem for the inviscid Burgers equation over long time horizons (\cite{alonso2012multidisciplinary, allahverdi2016numerical}). The stability and inversion properties of such problems are sensitive to the time horizon and analytical guarantees are thus required (\cite{liard2021initial, esteve2020inverse}). 
Similar considerations transfer to optimal control problems in climate science (\cite{nordhaus1992optimal, kellett2019feedback}), or data assimilation problems in meteorology and oceanography (\cite{ghil1991data}), in addition to problems transversing fields such as the computation of sensitivities, all of which fit within the framework discussed above.

\begin{figure}[h!]
\centering
\includegraphics[scale=1.1]{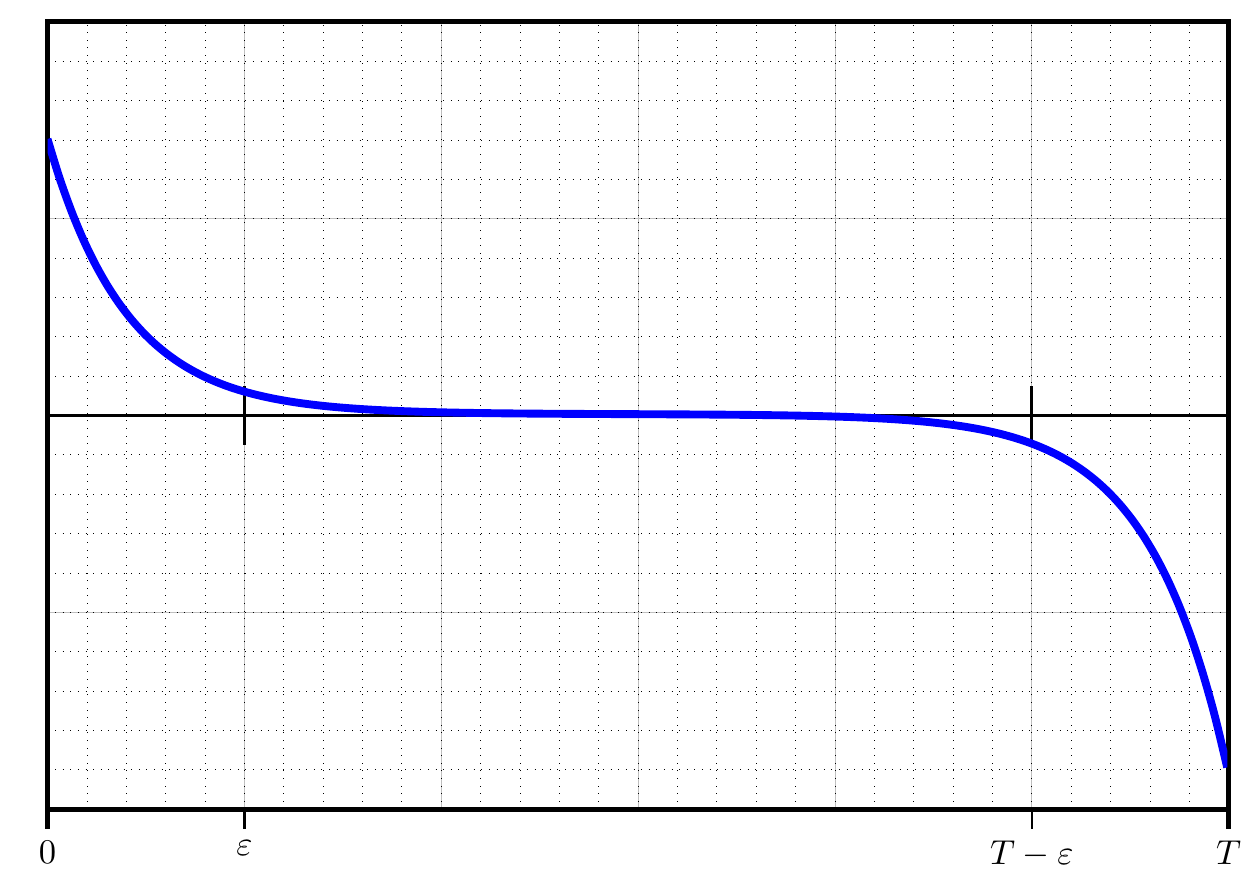}
\caption{{\textbf{The turnpike property} for optimal (with respect to some cost) controls $u(t)$ and corresponding states $y(t)$, solving $\dot{y}=f(y,u)$, over $t\in[0,T]$. The graph shows that $\|u(t)\|$ and $\|y(t)\|$ (blue) are near to $\|\overline{u}\|$ and $\|\overline{y}\|$ (black) respectively, during most of the time horizon, except for two small initial and final subintervals. In particular, the former are mostly constant.
Here $\overline{u}$ is the optimal (with respect to the corresponding time-independent cost) steady control and $\overline{y}$ corresponding optimal steady state satisfying $f(\overline{y},\overline{u})=0$. The latter are referred to as \emph{the turnpike}.}}
\label{fig: first}
\end{figure}

In view of the above discussions, one is expected to understand that the transition from time-dependent optimal control problems to the associated static optimal control problem is an issue which requires careful analysis and discussion.  And it is herein where the concept of \emph{turnpike} comes into play.
The \emph{turnpike property} reflects the fact that, for suitable optimal control problems set in a sufficiently large time horizon, any optimal solution thereof remains, during most of the time, close to the optimal solution of a corresponding stationary optimal control problem. This optimal stationary solution is referred to as \emph{the turnpike} – the name stems from the idea that a turnpike is the fastest route between two points which are far apart, even if it is not the most direct route.
In many of these cases, the turnpike property is quantified by an exponential estimate; typically, the optimal control-state pair $\left(u(t), y(t)\right)$ is $\mathcal{O}\left(e^{-\lambda t} + e^{-\lambda(T-t)}\right)$--close to the optimal stationary control-state pair $(\overline{u}, \overline{y})$, for $t\in[0,T]$ and for some rate $\lambda>0$ independent of $T$, provided $T\gg1$.

The denomination \emph{turnpike} was coined by three pre-eminent economists of the 20th century -- Paul Samuelson, Robert Solow, and Robert Dorfman -- in their seminal text \cite{dorfman1987linear}. We quote, verbatim, \cite[p. 331]{dorfman1987linear} (also found on the Wikipedia): 

\begin{center}
\begin{minipage}{0.925\textwidth}
\emph{
"
Thus in this unexpected way, we have found a real normative significance for steady growth -- not steady growth in general, but maximal von Neumann growth. 
It is, in a sense, the single most effective way for the system to grow, so that if we are planning long-run growth, no matter where we start, and where we desire to end up, it will pay in the intermediate stages to get into a growth phase of this kind. It is exactly like a turnpike paralleled by a network of minor roads. There is a fastest route between any two points; and if the origin and destination are close together and far from the turnpike, the best route may not touch the turnpike. But if the origin and destination are far enough apart, it will always pay to get on to the turnpike and cover distance at the best rate of travel, even if this means adding a little mileage at either end. The best intermediate capital configuration is one which will grow most rapidly, even if it is not the desired one, it is temporarily optimal.
"
}
\end{minipage}
\end{center}

\noindent
This quote and denomination are actually preceded by another work of Samuelson in 1948 (see \cite{samuelson1965catenary}), in which he shows that an efficient expanding economy would spend most of the time in the vicinity of a balanced equilibrium path (also called a \emph{von Neumann path}). Hence, as insinuated, these notions trace their origins back to an older work of John von Neumann \cite{von1937uber} (and even further back to \cite{ramsey1928mathematical}).
The turnpike theory had subsequently seen further development in the field of econometrics throughout the 1960s and 70s (\cite{mckenzie1976turnpike, haurie1976optimal}).
And yet, even-though the turnpike property appears to be a phenomenon which is implicitly used by many practitioners for different optimal control problems at different scales, a rigorous theory regarding its appearance for finite and infinite dimensional systems stemming from mechanics, distinguishing the relevant necessary or sufficient conditions, had been lacking for several decades, until recent developments in the parallel field of \emph{mean field games} (\cite{cardaliaguet2012long, cardaliaguet2013long}, see also Section \ref{sec: 14}).

It is important to ensure that such a theory is firmly established due to similar dissonances which arise between mathematics and applications. 
Recall the classical problem of \emph{control and discretize} versus \emph{discretize and control}, for instance. In the case of the wave equation with boundary control, the control and discretization processes do not commute (\cite{zuazua2005propagation}). Indeed, when one first spatially discretizes the wave equation, one obtains a linear finite dimensional control system, which can be shown to be controllable by a simple algebraic test, the so-called the \emph{Kalman rank condition}. In particular, this condition is entirely devoid of time dependence. However, it is well known and understood that the solutions of the wave equation manifest an oscillatory behavior, and in particular, cannot be controlled in an arbitrarily small time, as waves require a long time\footnote{The minimal time horizon can be characterized explicitly as a function of the support of the control, the velocity of propagation of the waves, and the domain where the waves propagate. See \cite{zuazua2005propagation} for a survey.} to travel across the domain. 
Similar conclusions also hold for shape optimization (\cite{hebrard2005spillover}) and inverse problems (\cite{ammari2011transient}) in wave propagation, or Bayesian inverse problems (\cite{stuart2010inverse}), among many other topics.
This way of thought should be extrapolated to the turnpike phenomenon -- one cannot simply drop time dependence and consider the static problem without a priori guarantees of proximity between both problems. 

Through this work, we aim to provide a concise review of the theory behind this transfer. We shall mostly focus on the heat and wave equations for clarity of the presentation, and provide comments throughout on results coming from finite dimensions, and extensions to PDEs of a different nature.
We shall begin by discussing the genesis and proofs of the turnpike property for optimal control problems subject to linear PDEs. An emphasis is put on some properties that both the cost functional and the underlying dynamics need to satisfy. Namely, we shall require a certain amount of \emph{controllability} or \emph{stabilizability} of the underlying system, since, even-though the turnpike may be clearly defined, one needs some innate mechanism to be able to reach such a stationary state. On another hand, the cost functional also needs to carry sufficient observation of the state over the time horizon -- we shall see that, even for dissipative/stable systems such as the heat equation, the turnpike property may fail if the functional doesn't track the trajectory over time. On another hand, the wave equation, which conserves energy and has oscillatory solutions, will satisfy the turnpike property when the state is tracked over time. This makes clear the need of tracking terms, which are quite natural from a practical point of view, as one implicitly wishes (namely, without necessarily imposing constraints) for the state trajectory to remain within some moderately sized box at all times.

\begin{figure}[h!]
\center
\includegraphics[scale=1.1]{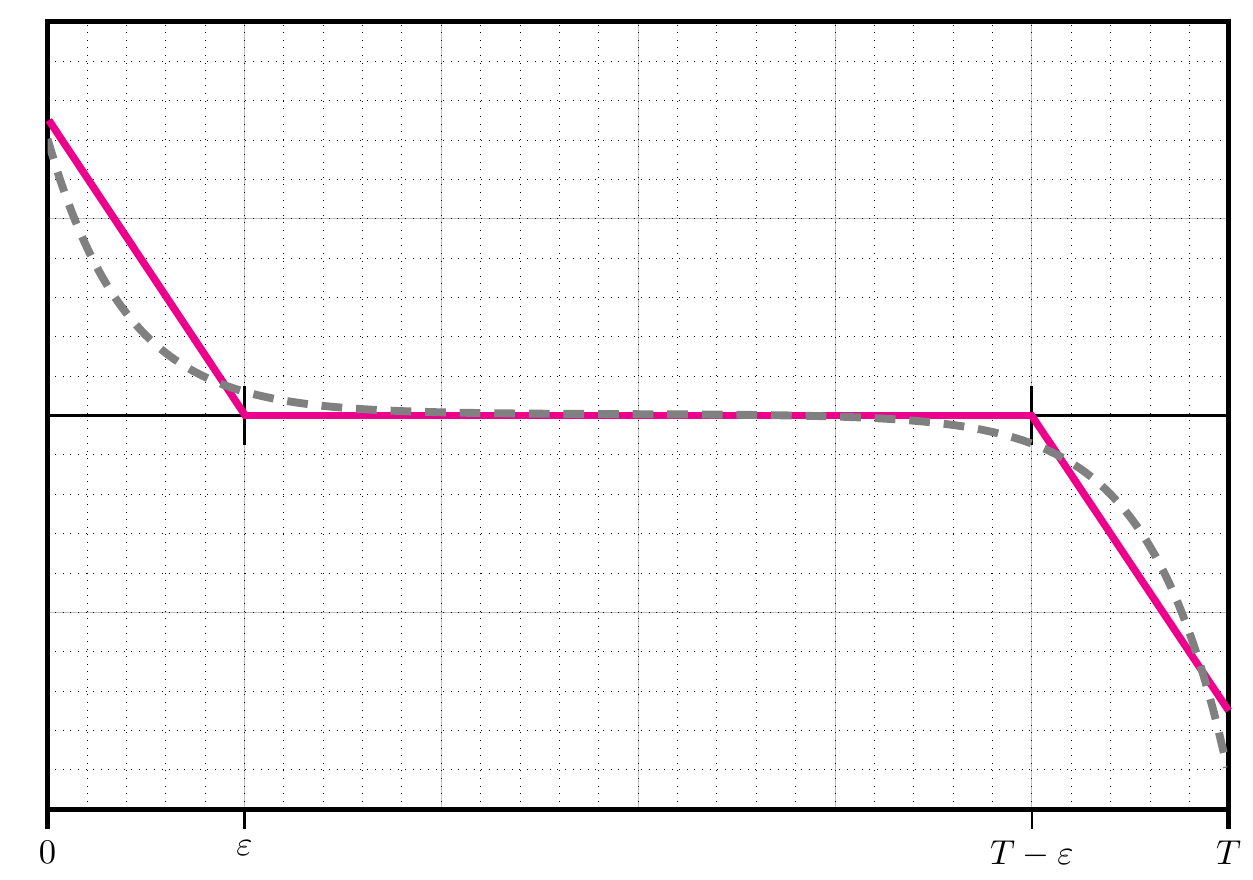}
\caption{The four-step (more precisely, an "initialization" step, plus three visible actions) {\bf quasi-turnpike strategy} (red) compared to the actual turnpike of Figure \ref{fig: first} (dashed gray). The quasi-turnpike strategy is nearly optimal.}
\label{fig: quasi.turnpike}
\end{figure}

\noindent
In problems for which controllability or stabilizability holds, even before proceeding with proofs of the turnpike property, one can devise a simple, yet illustrative, and almost-turnpike strategy in four simple steps:

\begin{enumerate}
\item[\textbf{1).}] Compute the turnpike $\overline{y}$ by solving the associated steady optimal control problem.

\item[\textbf{2).}] Use controllability to steer the trajectory $y(t)$ from $y^0$ to the computed turnpike $\overline{y}$ in time\footnote{In such strategies, the parameter $\varepsilon$ may be tuned/optimized with the goal of best approximating the optimal transient strategy -- one would expect that $\varepsilon$ should be large enough as to keep the controllability cost moderate, but not too large either, so as to  remain at the turnpike for as long as possible.} $t=\varepsilon$.

\item[\textbf{3).}] Keep $y(t)$ at the turnpike $\overline{y}$ for $t\in[\varepsilon,T-\varepsilon]$ by using the steady control.

\item[\textbf{4).}] Use the controllability to exit the turnpike with $y(t)$, starting from time $t=T-\varepsilon$, and reach the final target (if any) in time $t=T$.
\end{enumerate}

\noindent
We call such strategies \emph{quasi-turnpikes} -- they are actually commonly used in practical applications, and we shall often make use of such strategies as "test cases" in many proofs (see Figure \ref{fig: quasi.turnpike}), or as an initialization for iterative proofs (as presented in Section \ref{sec: 10} later on). 
Existing approaches (linearization-based, or tailored to the nonlinearity) for obtaining  results for nonlinear problems are also presented. 
In the nonlinear case, an emphasis is put on the non-uniqueness of global minimizers for the stationary optimal control problem, as counter-examples can be produced. This represents a serious warning for both theory and numerics in the nonlinear case, and raises several questions.
We shall also give several broad applications of the turnpike property, spanning  a priori guarantees for the design of efficient discretization algorithms for optimal control problems, long time asymptotics of Hamilton-Jacobi-Bellman equations, and stability properties in deep learning via residual neural networks, among others. Several open problems are sprinkled throughout the text.

\subsection{Outline} This paper is organized as follows.
\medskip

\noindent
\textbf{Section \ref{sec: 2}} is a brief mathematical introduction to the optimal control problems we shall consider in this work, namely minimizing quadratic functionals subject to linear (or nonlinear) PDEs, as well as a formal discussion regarding what ingredients such optimal control problems need to possess in view of exhibiting turnpike. 
\medskip

\noindent
\textbf{Part I} (sections 3--7) presents the methodology for proving turnpike for optimal control problems consisting of minimizing an appropriate quadratic functional subject to a linear PDE. 
This methodology always makes use of a study of the optimality system provided by the Pontryagin Maximum Principle (or simply, the Euler-Lagrange equations), which is a necessary and sufficient condition for optimality, and borrows tools from Riccati theory in the infinite-time horizon. Such ideas have been introduced in the works \cite{porretta2013long, porretta2016remarks, trelat2015turnpike}. 
Several other strategies are also discussed, e.g., those relying on \emph{dissipativity} in the sense of Willems (for which we follow \cite{trelat2018integral}), which brings the turnpike property closer to a Lyapunov method interpretation, or more direct techniques, as per the works \cite{grune2019sensitivity, grune2020exponential}, among others. 

\medskip
\noindent
\textbf{Part II} (sections 8--10) is an extension of the results presented in Part I to the case where the underlying equations are nonlinear. We present a couple of different strategies of proof, namely ones based on linearization of the optimality system (which require smallness assumptions on the target for the state, and on the initial data, as per \cite{trelat2015turnpike, porretta2016remarks}), as well as a new proof (\cite{esteve2020turnpike}), which avoids the use of the optimality system, combining quasi-turnpike and bootstrap arguments (but requires the targets to be steady-states). 

In the nonlinear case, without making specific assumptions on the target (as those above), no uniqueness of solutions for the optimality system may be guaranteed. Consequently, there may be solutions of the optimality system which are not optimal with respect to the cost to be minimized. 
In fact, we present a recent result (\cite{pighin2020nonuniqueness}) which provides a counter-example yielding the non-uniqueness of minimizers for optimal control problems subject to nonlinear elliptic PDEs. This raises several open problems.

\medskip
\noindent
\textbf{Part III} (sections 11-13) presents several direct applications of the turnpike property in numerical analysis and machine learning. For instance, as first observed by \cite{trelat2015turnpike}, the turnpike property can be used to provide an accurate initial guess for shooting problems in numerical optimization. The a priori knowledge of turnpike is also used for more efficient design of model predictive control (MPC) schemes (\cite{grune2019sensitivity}). In the finite-dimensional, linear quadratic optimal control setting, turnpike also provides a precise asymptotic decomposition for the unique viscosity solution of the associated Hamilton-Jacobi-Bellman equation \cite{esteve2020turnpike}. Finally, turnpike-like properties can also be shown to hold for supervised learning problems for residual neural networks, for which it guarantees exponential decay of the approximation error and stability estimates for the controls when the number of layers is increased (\cite{esteve2020large, faulwasser2021turnpikeML}). These, in turn, imply that the relevant information is concentrated in the beginning, and any layers beyond a certain stopping time/layer may be discarded safely.
Section \ref{sec: 14} indicates several topics related to the turnpike property worthy of interest but not treated in depth in this work.

\medskip
\noindent
\textbf{Part IV} concludes this paper, with a couple of major and intertwined open problems.

\subsection{Notation} 
We henceforth suppose that $\Omega\subset\mathbb{R}^d$ is a bounded and smooth domain.
We make use of standard Sobolev spaces -- we denote by $H^k(\Omega)$ Sobolev spaces of order $k\geqslant0$, namely $L^2(\Omega)$ functions with $k$ weak derivatives in $L^2(\Omega)$. Also, $H^1_0(\Omega)$ denotes the space of $H^1(\Omega)$ functions whose Dirichlet trace on the boundary $\partial\Omega$ vanishes. We recall that by the Poincaré inequality, $H^1_0(\Omega)$ is endowed with the norm $\|f\|_{H^1_0(\Omega)}:=\|\nabla f\|_{L^2(\Omega)}$. 
More details on the above concepts can be found in classic texts such as \cite{evans1998partial}.
Furthermore: $1_A$ denotes the characteristic function of a set $A$; $\text{meas}(A)$ denotes the Lebesgue measure of $A$; $\nabla$ and $\Delta$ denote the canonical spatial gradient and Laplacian on $\mathbb{R}^d$ respectively (we shall also use $\nabla_x$ and $\Delta_x$ to further emphasize the spatial differentiation where this may appear ambiguous).

\section{Genesis of the turnpike property} \label{sec: 2}

\subsection{An apparent lack of turnpike} 

Let us begin by considering a problem which will set the tone in what follows. 
We consider the linear, controlled heat equation 
\begin{equation} \label{eq: heat.equation}
\begin{cases}
\partial_t y - \Delta y = u1_{\omega} &\text{ in }(0,T)\times \Omega,\\
y = 0 &\text{ in } (0,T)\times\partial\Omega,\\
y_{|_{t=0}} = y^0 &\text{ in } \Omega.
\end{cases}
\end{equation}
In the above equation, $\Omega\subset\mathbb{R}^d$ is a bounded and smooth domain, $T>0$ is a given time horizon, $y^0\in L^2(\Omega)$ is an initial datum, $u=u(t,x)$ denotes the control actuating within an open and non-empty measurable subset $\omega\subset\Omega$, and $y=y(t,x)$ is the unknown state. 

It is now well-known that given an arbitrary initial datum $y^0\in L^2(\Omega)$, the heat equation \eqref{eq: heat.equation} is controllable to rest (\emph{null-controllable}) in any time $T>0$, and from any non-empty and open subset $\omega\subset\Omega$ (see \cite{lebeau1995controle, fursikov1996controllability}), in the sense that there exists a control $u\in L^2((0,T)\times\omega)$ such that the corresponding state $y\in C^0([0,T]; L^2(\Omega))\cap L^2(0,T; H^1_0(\Omega))$, designating the unique solution to \eqref{eq: heat.equation}, satisfies 
\begin{equation} \label{eq: null.controllability}
y(T,x) = 0 \hspace{1cm} \text{ for a.e. } x \in \Omega.
\end{equation} 
As a matter of fact, due to linearity and time invariance of the heat equation we consider herein, one can also ensure that the terminal zero state in \eqref{eq: null.controllability} can be replaced by any (controlled) steady state of \eqref{eq: heat.equation}. 
In view of the above fact, one can then be interested in finding controls which ensure the null-controllability of \eqref{eq: heat.equation} and which are of minimal norm, e.g., of minimal $L^2$--norm. 
Namely, one could look to solve the following optimal control problem: 
\begin{equation} \label{eq: minimal.norm.control.heat}
\inf_{\substack{u\in L^2((0,T)\times\omega)\\ y \text{ solves} \eqref{eq: heat.equation} \\ \eqref{eq: null.controllability} \text{ holds} }} \|u\|_{L^2((0,T)\times\omega)}^2.
\end{equation}

\begin{figure}[h!]
\includegraphics[scale=0.425]{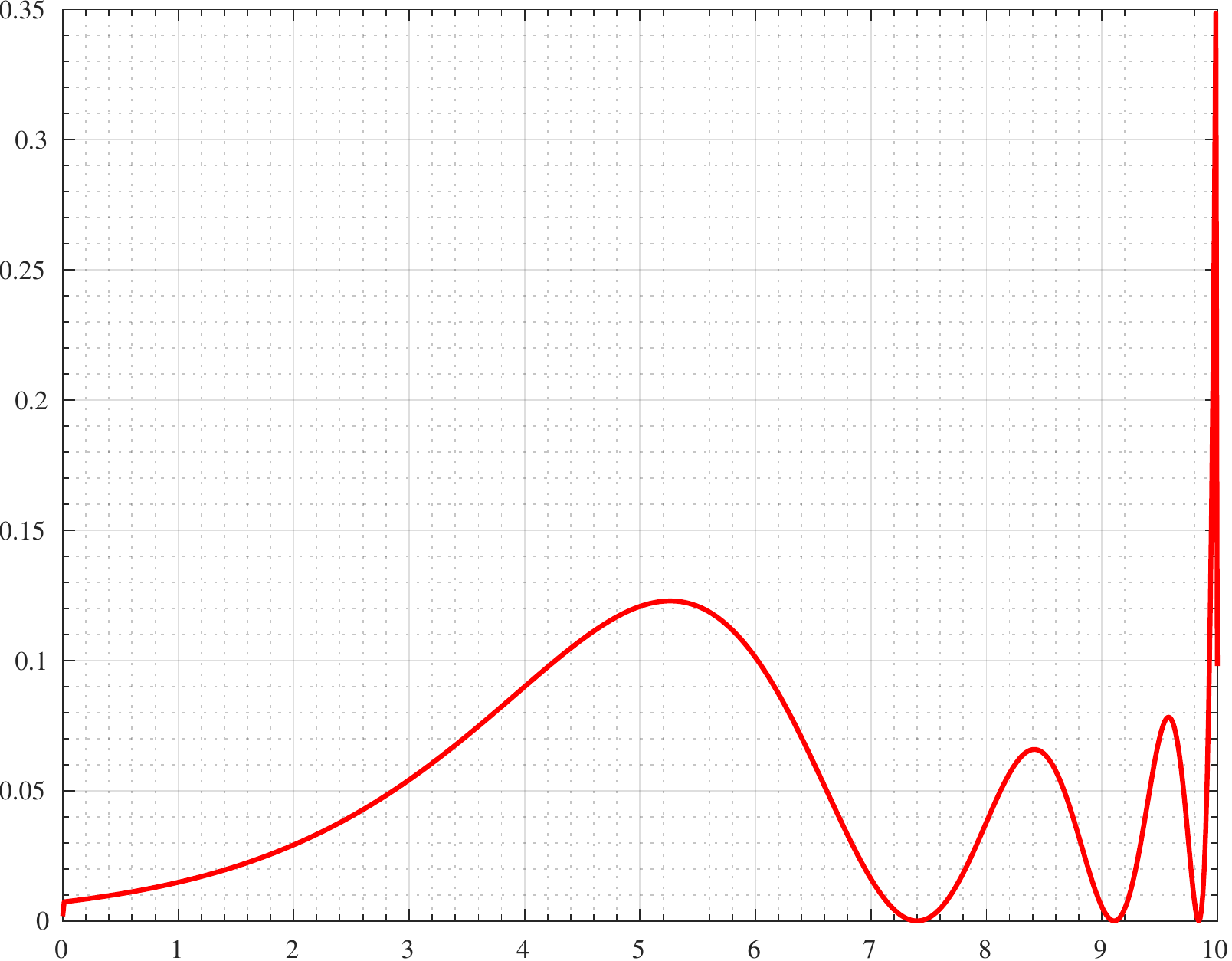}
\hspace{0.1cm}
\includegraphics[scale=0.425]{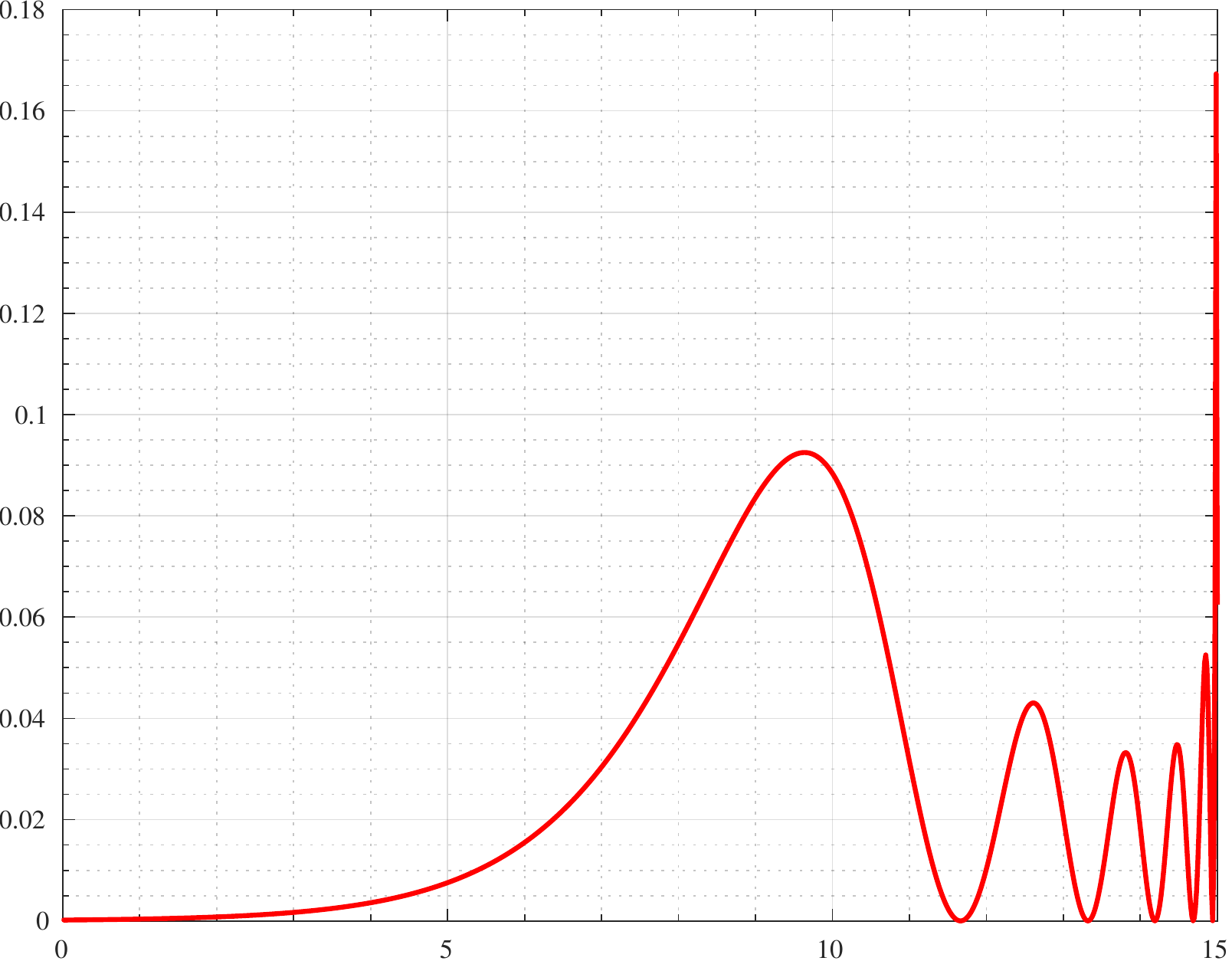}
\caption{The minimal $L^2((0,T)\times\omega)$--norm controls manifest a "lazy" behavior, as they activate and actuate only near $t=T$ in a singular manner. This is in accordance with the fact that they are characterized as the solutions of a specific backward adjoint heat equation. 
We display $t\mapsto \|u(t)\|_{L^2(\omega)}^2$ where $\omega=\left(0.25,0.75\right)^2$ and $\Omega=(0,1)^2$. The initial datum is  $y^0(x_1,x_2):=\sin(\pi x_1)\sin(\pi x_2)$.} 
 \label{fig: hum.heat}
\end{figure}

\noindent
Problem \eqref{eq: minimal.norm.control.heat} admits a unique solution by virtue of the direct method in the calculus of variations -- indeed, the set of admissible controls is a non-empty, closed linear subspace, due to the existence of at least one control ensuring \eqref{eq: null.controllability}, and the functional is coercive, continuous, and convex. 

On another hand, it is also well-known that the free solutions to \eqref{eq: heat.equation} possess a very strong dissipative mechanism, which ensures that they decay exponentially to $0$ as $t\to+\infty$ with rate $\lambda_1(\Omega)>0$
(the first eigenvalue of the Dirichlet Laplacian $-\Delta$). 
In view of this fact, one could be tempted to stipulate that the optimal controls and the controlled optimal solution behave similarly as well. This is however not the case -- see Figure \ref{fig: hum.heat} for an illustrative counterexample.  

To see why the minimal $L^2$--norm control ensuring \eqref{eq: null.controllability} does not satisfy a property of asymptotic simplification, we simply need to see how it is characterized by using the first order optimality conditions. This is the goal of the so-called \emph{Hilbert Uniqueness Method} (HUM, see \cite{lions1988exact, glowinski1995exact}). 
By convex duality, we know that minimal $L^2$--norm exact controls for the heat equation are in fact given by
\begin{equation}
u \equiv p1_\omega \hspace{1cm} \text{ a.e. in } (0,T)\times\omega,
\end{equation}
where $p=p(t,x)$ is the unique solution to the backward (\emph{adjoint}) heat equation
\begin{equation*}
\begin{cases}
\partial_t p + \Delta p = 0 &\text{ in }(0,T)\times\Omega,\\
p = 0 &\text{ in }(0,T)\times\partial\Omega,\\
p_{|_{t=T}} = p^T, &\text{ in }\Omega,
\end{cases}
\end{equation*}
associated to the datum $p^T\in \mathscr{H}$, which is the unique minimizer of the conjugate functional\footnote{Of course, the dual functional $\mathscr{J}^\star$ defined in \eqref{eq: J.star} also admits a unique minimizer by the direct method in the calculus of variations, as the coercivity of $\mathscr{J}^\star$, which is characterized by an \emph{observability inequality} for the adjoint system of the form $$\|p(0)\|_{L^2(\Omega)} \leqslant C(T,\omega)\|p\|_{L^2((0,T)\times\omega)}$$ for some $C(T,\omega)>0$ and for all $p_T\in L^2(\Omega)$, is equivalent to the controllability assumption \eqref{eq: null.controllability} for the forward one (\cite{lions1988exact}).}
\begin{equation} \label{eq: J.star}
\mathscr{J}^\star\left(p^T\right) := \frac12 \int_0^T \int_\omega |p(t,x)|^2 \diff x \diff t + \int_\Omega y^0(x) p(0,x) \diff x
\end{equation}
over the Hilbert space $\mathscr{H}$, which is defined as the completion of $C^\infty_{\mathrm{c}}(\Omega)$ with respect to the norm $\left\|p^T\right\|_{\mathscr{H}}:= \|p\|_{L^2((0,T)\times\omega)}$. 
In fact, this duality is due to the simple observation that 
\begin{equation*}
\int_\Omega y(T, x)p^T(x)\diff x = \int_0^T \int_\omega u(t,x)p(t,x)\diff x \diff t + \int_\Omega y^0(x)p(0,x)\diff x
\end{equation*}
holds for all $p^T \in L^2(\Omega)$.
Summarizing, the singular behavior of the optimal control $u$ near $t=T$ is due to the fact that the space $\mathscr{H}$ is very large -- due to the regularizing effect of the heat equation, any initial (at time $t=T$) datum $p^T$ of the backward heat equation with finite-order singularities away from the control set $\omega$ belongs to $\mathscr{H}$.  (See \cite{munch2010numerical} for a thorough presentation of this ill-posedness.)

The lack of asymptotic simplification is not solely due to the specific setting of the problem \eqref{eq: minimal.norm.control.heat}, and persists for more conventional optimal control problems for the heat equation, such as  
\begin{equation*} \label{eq: heat.control.final.time}
\inf_{\substack{u\in L^2((0,T)\times\omega) \\ y \text{ solves } \eqref{eq: heat.equation}}} \frac{\alpha}{2} \|y(T) - y_d\|_{L^2(\Omega)}^2 + \frac{1}{2} \|u\|_{L^2((0,T)\times\omega)}^2.
\end{equation*}
Here $y_d\in L^2(\Omega)$ denotes a prescribed target design, and $\alpha>0$ is a tunable regularization parameter. The above problem can again be shown to admit a unique minimizer by the direct method in the calculus of variations, this time without requiring any additional coercivity (observability) inequalities.  But then, looking at how the control is characterized, by using the Pontryagin Maximum Principle (or, equivalently, computing the Euler-Lagrange equations), one can see that there exists an adjoint state $p_T \in C^0([0,T]; L^2(\Omega))$ such that the optimal triple\footnote{Here and henceforth, we shall designate, by an underscore $T$, the dependence of a function with respect to $T$.} $(u_T,y_T,p_T)$ is the unique solution to the first-order optimality system 
\begin{equation*}
\begin{cases}
\partial_t y_T - \Delta y_T = u_T1_\omega &\text{ in }(0,T)\times\Omega,\\
\partial_t p_T + \Delta p_T = 0 &\text{ in } (0,T)\times\Omega,\\
y_T = p_T = 0 &\text{ in }(0,T)\times\partial\Omega,\\
{y_T}_{|_{t=0}} = y^0 &\text{ in }\Omega,\\
{p_T}_{|_{t=T}} = \alpha(y_T(T)-y_d) &\text{ in }\Omega,
\end{cases}
\end{equation*}
with 
\begin{equation*}
u_T\equiv p_T1_\omega \hspace{1cm} \text{ a.e. in } (0,T)\times\omega.
\end{equation*} 
Hence, one readily sees that the adjoint state $p_T$ and the state $y_T$ are only \emph{weakly} coupled, through the final condition, and the control is again given by the solution of the adjoint heat equation, hence similar conclusions hold as in the previous case (we provide more detail just below).

These artifacts are not unique to the (somewhat surprising) case of the heat equation; they are also present for analog optimal control problems for the perhaps more intuitive example of the wave equation  
\begin{equation} \label{eq: wave.equation}
\begin{cases}
\partial_t^2 y - \Delta y = u1_\omega &\text{ in }(0,T)\times\Omega,\\
y = 0 &\text{ in }(0,T)\times\partial\Omega,\\
(y, \partial_t y)_{|_{t=0}} = (y^0, y^1) &\text{ in }\Omega.
\end{cases}
\end{equation}
We recall that for any $(y^0, y^1) \in H^1_0(\Omega)\times L^2(\Omega)$, and for any control $u\in L^2((0,T)\times\omega)$, equation \eqref{eq: wave.equation} admits a unique finite-energy solution $y\in C^0([0,T]; H^1_0(\Omega))\cap C^1([0,T]; L^2(\Omega))$. 
Once again, when one considers a problem such as
\begin{equation} \label{eq: final.cost.wave}
\inf_{\substack{u\in L^2((0,T)\times\omega)\\ y \text{solves} \eqref{eq: wave.equation}}} \underbrace{\frac{1}{2}\|y(T)\|_{H^1_0(\Omega)}^2 +\frac{1}{2} \|\partial_t y(T)\|_{L^2(\Omega)}^2}_{:=\phi\big(y(T), \partial_ty(T)\big)} + \frac12 \|u\|^2_{L^2((0,T)\times\omega)},
\end{equation}
(where we took $y_d\equiv0$ for simplicity), the optimality system\footnote{As \eqref{eq: optimality.waves} is not a classical Cauchy problem, the existence of a unique solution to the above system is again due to the fact that the triple $(u_T,y_T,p_T)$ is optimal, hence follows from the Pontryagin Maximum Principle.} takes the form
\begin{equation} \label{eq: optimality.waves}
\begin{cases}
\partial_t^2 y_T - \Delta y_T = p_T1_\omega &\text{ in }(0,T)\times\Omega,\\
\partial_t^2 p_T - \Delta p_T = 0 &\text{ in }(0,T)\times\Omega,\\
y_T=p_T = 0 &\text{ in }(0,T)\times\partial\Omega,\\
(y,\partial_t y)_{|_{t=0}} = (y^0, y^1) &\text{ in }\Omega,\\
(p_T,\partial_t p_T)_{|_{t=T}} = (-\partial_t y_T(T),-\Delta y_T(T)) &\text{ in }\Omega.
\end{cases}
\end{equation}
Moreover, 
\begin{equation*}
u_T\equiv p_T 1_\omega \hspace{1cm} \text{ a.e. in } (0,T)\times\omega.
\end{equation*} 
Again, one sees that the forward state $y_T$ has no effect on the evolution of the adjoint state $p_T$. In other words, since $p_T$ solves a free wave equation, whose solutions conserve energy, $p_T$ will likely oscillate over the entire time interval (or even manifest a periodic pattern, as in the case $d=1$), which would entail the same conclusions for the control $u_T$, and would exclude the validity of the turnpike phenomenon. 

\subsection{The emergence of turnpike}

In view of the preceding discussion, one might ask if the turnpike property appears for optimal control problems for PDEs at all. 
To answer to these doubts, let us focus on the wave equation \eqref{eq: wave.equation}, and consider another staple problem of optimal control, namely the following linear quadratic (LQ) problem
\begin{equation} \label{eq: lq.turnpike.wave.func}
\inf_{\substack{u\in L^2((0,T)\times\omega)\\ y \text{ solves } \eqref{eq: wave.equation}}} \phi\big(y(T), \partial_t y(T)\big)+ \frac12 \int_0^T\|\nabla_x y(t)\|^2_{L^2(\Omega)}\diff t + \frac{1}{2}\int_0^T \|u(t)\|^2_{L^2(\omega)} \diff t.
\end{equation}
Here $\phi$ is defined as in \eqref{eq: final.cost.wave}, and one sees that a tracking term, which tracks the variations of $\nabla_x y(t)$ over all the time interval $(0,T)$, was added. 
The optimality system satisfied by an optimal triple $(u_T, y_T, p_T)$ (this time, for \eqref{eq: lq.turnpike.wave.func}) now reads
\begin{equation} \label{eq: optimality.system.wave}
\begin{cases}
\partial_t^2 y_T - \Delta y_T = p_T1_{\omega} &\text{ in }(0,T)\times\Omega,\\
\partial_t^2 p_T - \Delta p_T = \Delta y_T &\text{ in }(0,T)\times\Omega,\\
y_T = p_T = 0 &\text{ in }(0,T)\times\partial\Omega,\\
(y_T,\partial_t y_T)_{|_{t=0}} = (y^0,y^1) &\text{ in }\Omega,\\
(p_T,\partial_t p_T)_{|_{t=T}} = (-\partial_t y_T(T), -\Delta y_T(T)) &\text{ in }\Omega,
\end{cases}
\end{equation}
and, again, $u_T\equiv p_T1_\omega$.  

Now, both the forward and adjoint state are \emph{strongly} coupled due to the presence of the tracking term of $\nabla_x y_T(t)$ in \eqref{eq: lq.turnpike.wave.func}, which manifests itself as $\Delta y_T$ in \eqref{eq: optimality.system.wave}. 
It is precisely this coupling that will cause the occurrence of the turnpike property. 

Let us give a heuristic argument, following \cite{zuazua2017large}, to reinforce this claim. 
Assume that $\omega=\Omega$ in \eqref{eq: optimality.system.wave}, and let us ignore initial and terminal conditions. 
We write the solution $(y_T,p_T)$ in Fourier series as
\begin{equation*}
\begin{bmatrix}y_T(t,x)\\p_T(t,x)\end{bmatrix} = \sum_{j=1}^\infty \begin{bmatrix}\widehat{y}_{j}\\ \widehat{p}_{j} \end{bmatrix} e^{\mu_j t} \varphi_j(x) \hspace{1cm} \text{ for } (t,x) \in (0,T)\times\Omega,
\end{equation*}
for suitable frequencies $\mu_j$ and scalar coefficients $(\widehat{y}_{j}, \widehat{p}_{j})$; here $\{\varphi_j\}_{j=1}^\infty$ and $\{\lambda_j\}_{j=1}^\infty$ denote the orthonormal basis of eigenfunctions and corresponding eigenvalues of the Dirichlet Laplacian $-\Delta: H^1_0(\Omega)\to H^{-1}(\Omega)$, thus satisfying $-\Delta \varphi_j = \lambda_j \varphi_j$ in $\Omega$.
It is then readily seen that
\begin{equation*}
\begin{cases}
\widehat{y}_j\Big(\mu^2_j + \lambda_j\Big)&=\widehat{p}_j\\
\widehat{p}_j\Big(\mu^2_j + \lambda_j\Big)&=-\lambda_j \widehat{y}_j.
\end{cases}
\end{equation*}
In view of this, we have $\big(\mu^2_j+\lambda_j\big)^2+\lambda_j=0$ for $j\geqslant1$, 
clearly yielding four pairs of complex eigenvalues 
\begin{equation*}
\mu_j=\pm\sqrt{-\lambda_j\pm i\sqrt{\lambda_j}}, 
\end{equation*}
namely two pairs of conjugates -- two with strictly positive real parts and two with strictly negative ones, uniformly away from the imaginary axis $\{\Re(z)=0\}$ as $j\to+\infty$. 
This means that the solutions of the optimality system are constituted by the superposition of two time evolving components of oscillatory nature, one decaying exponentially as $t\to+\infty$ while the other grows exponentially.

\begin{figure}[h!]
\centering
\includegraphics[scale=0.42]{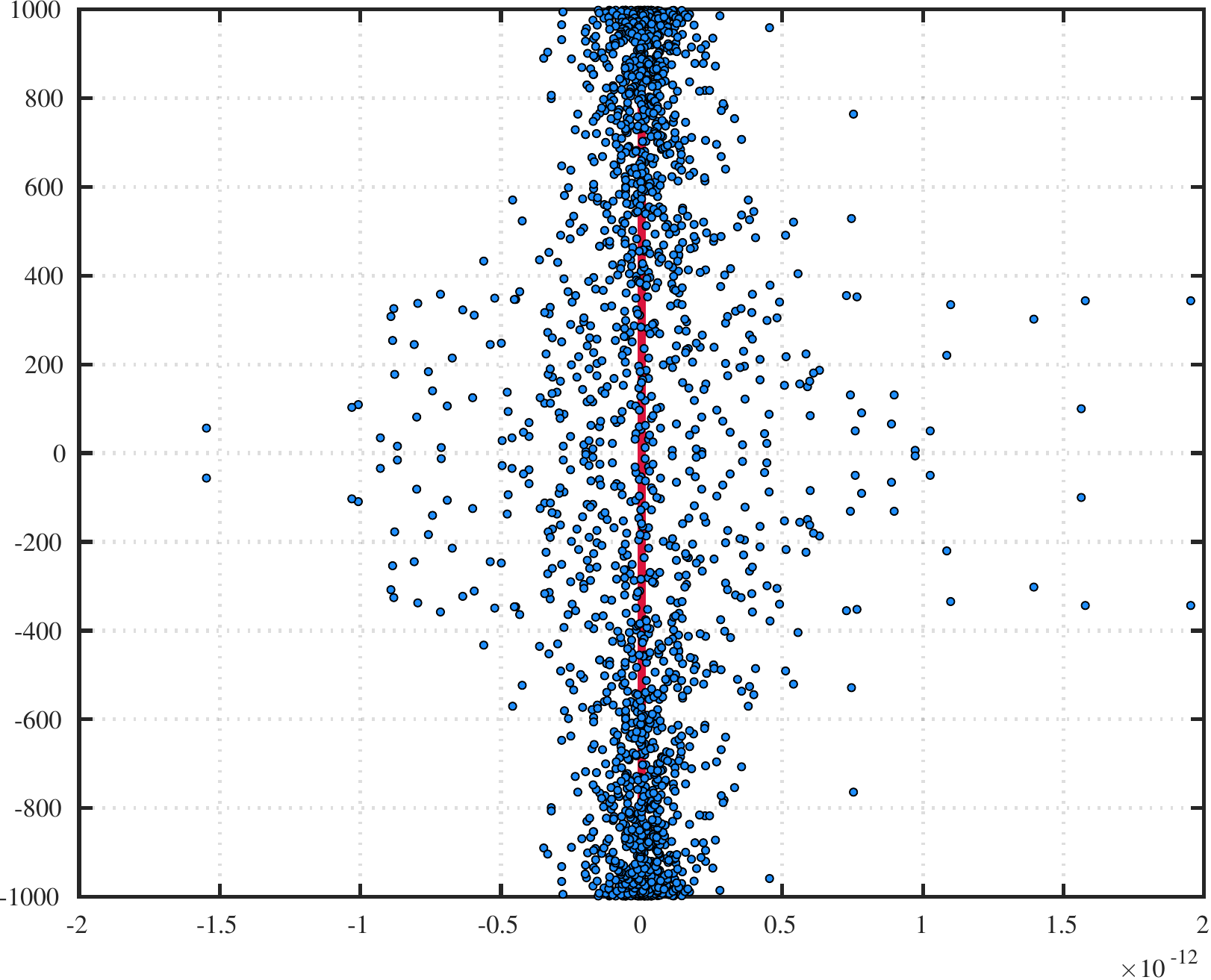}
\hspace{0.25cm}
\raisebox{2.1mm}{\includegraphics[scale=0.42]{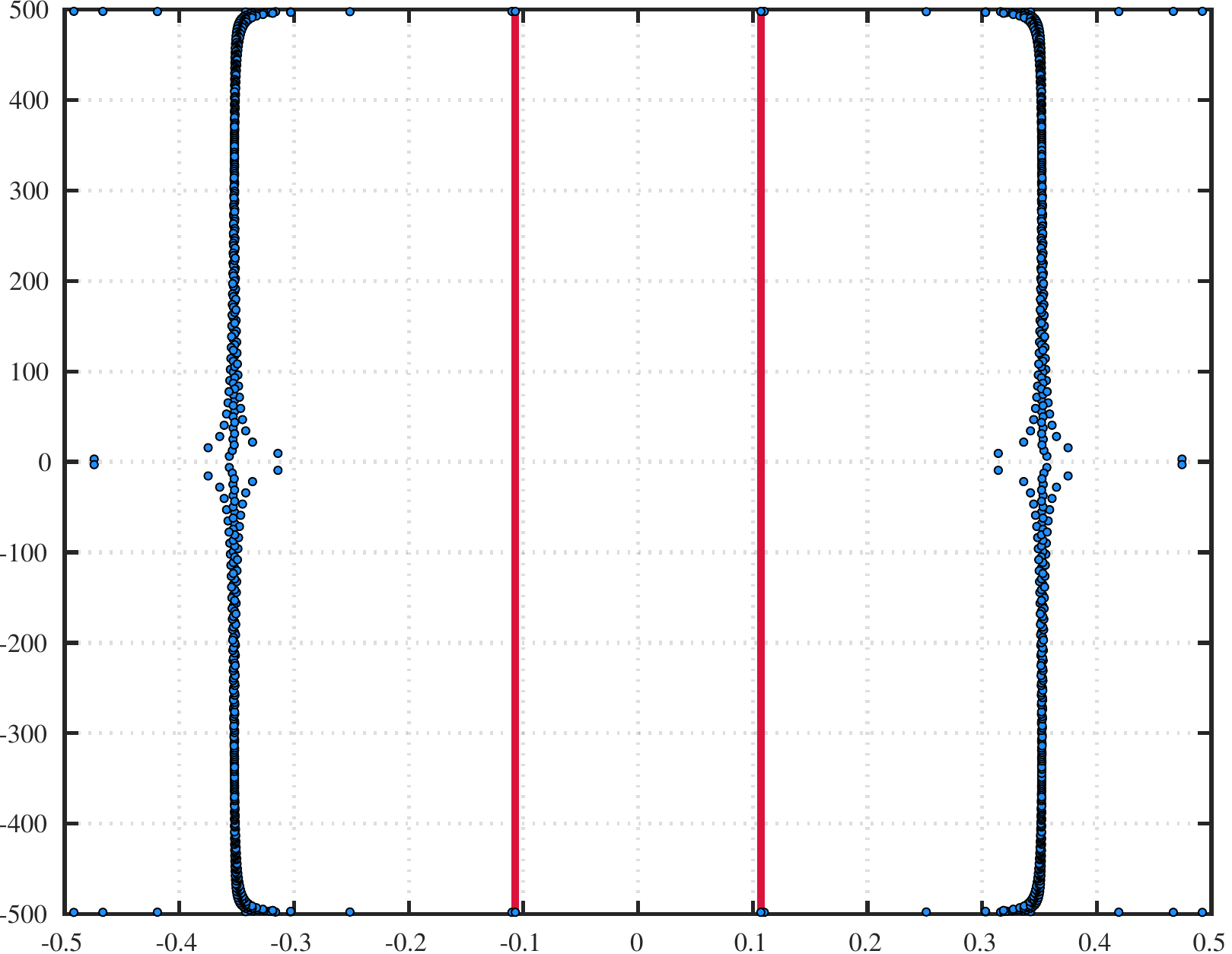}}
\caption{The spectrum (subset of $\mathbb{C}$) of the optimality system for the finite-difference wave equation (with $n=500$ points) with distributed control. The Kalman rank condition is satisfied.
(\emph{Left}) Spectrum of the optimality system without a tracking term. (\emph{Right}) Spectrum of the optimality system with an $L^2$-tracking term. We see a spectral gap for the real part of the eigenvalues in the presence of a tracking term; the symmetry indicates that those with negative real part entail a decay for the forward wave components, and those with a positive real part a decay for the backward ones. In the absence of a tracking term, the gap in the real part of the spectrum collapses to zero, entailing an unstable and unsteady nature of the optimality system. This finite-dimensional illustration is theoretically corroborated in Section \ref{sec: 5}.}
\label{fig: spectral.dic}
\end{figure}

This is contrary to the case without the tracking term for $\nabla_x y(t)$ (i.e., \eqref{eq: final.cost.wave}), where by writing the optimality system \eqref{eq: optimality.waves} in Fourier series as above (again, with $\omega=\Omega$), one sees that  
$\mu^2_j + \lambda_j = 0$ holds for $j\geqslant1$. 
Accordingly, $\mu_j$ would be purely imaginary, thus leading to solutions of purely oscillatory nature, in agreement with previous observations. In this case, in particular, the adjoint state $p_T$ and accordingly, the control $u_T$ will have a purely oscillatory behavior without never stabilizing around the optimal steady adjoint state $\overline{p}$ and control $\overline{u}$. 

While the above argument is solely heuristic, a similar diagonalization strategy for the optimality system may be used for a rigorous proof, as done in \cite{trelat2015turnpike} (see Section \ref{sec: 5} for more details).
We depict a numerical example of this spectral dichotomy for a finite-dimensional example in Figure \ref{fig: spectral.dic}. 

This behavior is compatible with the turnpike property, according to which the optimal solution $(y_T, u_T\equiv p_T1_\omega)$ should be close to the optimal steady state configuration $(\overline{y},\overline{u})$ during most of the time horizon of control $[0,T]$, when $T$ is large.
The optimal steady state configuration $(\overline{y}, \overline{u})$ is that in which solely the time is dropped, namely, $\overline{y}$ solves
\begin{equation} \label{eq: poisson.control}
\begin{cases}
-\Delta y = u1_\omega &\text{ in }\Omega,\\
y = 0 &\text{ on }\partial\Omega,
\end{cases}
\end{equation}
with $\overline{u}$ being the unique minimizer of 
\begin{equation*} 
\inf_{\substack{u \in L^2(\omega)\\ y \text{ solves} \eqref{eq: poisson.control}}}\frac12 \|\nabla y\|^2_{L^2(\Omega)} + \frac12 \|u\|^2_{L^2(\omega)}.
\end{equation*}
The above discussion leads us to conclude that the turnpike property does not hold for an optimal control problem simply because the underlying ordinary or partial differential equation has a dissipative and stabilizing (or controllable) character when time is large. On the contrary, depending on the cost to be minimized, turnpike may also hold for oscillatory systems such as the wave equation. 
The bottom line is that, to ensure turnpike, solely a controllability or stabilizability mechanism is needed for the underlying system and not necessarily a decay of the free dynamics; moreover, one requires sufficient coercivity of the cost functional with respect to the state of that system.

\part{Linear theory}

\section{The heat equation} \label{sec: 3}

We begin by presenting the theory for the linear heat equation with distributed controls. 
As seen in what precedes, even-though the heat equation is a dissipative and controllable system, for the turnpike property to appear, one also needs some coercivity (namely, observability) of the state in the functional to be minimized.
We will focus on a specific linear quadratic (LQ) problem for the heat equation with distributed control (i.e., \eqref{eq: heat.equation}) to avoid introducing too many assumptions and unnecessary technicalities in the proofs -- more general statements are given in subsequent sections. 
This framework will nonetheless contain most of the specific features and can readily be generalized. 

The full structure of the control problem we consider matters, in addition to penalizing the full state. For instance, the fact that the control enters in a distributed way, actuating within an open and non-empty subset $\omega\subset\Omega$, ensures the presence of a controllability mechanism which will promote the appearance of the turnpike property. 
The situation is different in the case where the control actuates at a nodal point of the Laplacian (i.e., one has $u(t)\delta_{x_0}$ instead of $u1_\omega$ in \eqref{eq: heat.equation}, with $x_0$ being a zero of an eigenfunction of the Laplacian), in which case controllability fails to hold.

We shall consider the following linear quadratic (LQ) optimal control problem
\begin{equation} \label{eq: pb.turnpike.heat}
\inf_{\substack{u \in L^2((0,T)\times\omega)\\ y \text{ solves } \eqref{eq: heat.equation}}} \frac12\int_0^T \|y(t)-y_d\|^2_{L^2(\omega_\circ)} \diff t + \frac12\int_0^T \|u(t)\|^2_{L^2(\omega)} \diff t,
\end{equation}
where $\omega_\circ\subset\Omega$ is open and non-empty\footnote{In this example, we are minimizing the discrepancy of the state $y(t)$ to the design target $y_d$ only in, possibly, a small subdomain $\omega_\circ$ of $\Omega$. Some PDEs (e.g. the wave equation) will require further geometric assumptions on $\omega_\circ$ for turnpike to be induced, as sufficient observation of the state inscribed within the functional itself will be needed (in addition to similar geometric assumptions on $\omega$ to ensure controllability).}, and $y_d\in L^2(\omega_\circ)$. 
We shall henceforth focus on running targets $y_d$ which are independent of time. This is rather natural as steady optimal control problems, used in applications, and described in the introduction, typically assume such a setup. 
But, in fact, many results and insights transfer to specific settings of time dependent targets (see Section \ref{sec: periodic.turnpike}).

The corresponding steady optimal control problem then reads
\begin{equation} \label{eq: optimal.steady.heat}
\inf_{\substack{u\in L^2(\omega) \\ y \text{ solves } \eqref{eq: steady.heat}}} \frac12\|y-y_d\|_{L^2(\omega_\circ)}^2 + \frac12\|u\|_{L^2(\omega)}^2,
\end{equation}
where the underlying PDE constraint is given by the linear controlled Poisson equation
\begin{equation} \label{eq: steady.heat}
\begin{cases}
-\Delta y = u1_\omega &\text{ in }\Omega,\\
y=0 &\text{ on }\partial\Omega.
\end{cases}
\end{equation}
By virtue of the direct method in the calculus of variations, one may easily show that problem \eqref{eq: optimal.steady.heat} admits a unique minimizer $\overline{u}\in L^2(\omega)$ and there exists a unique optimal steady state $\overline{y}\in H^2(\Omega)\cap H^1_0(\Omega)$, solution to \eqref{eq: steady.heat} corresponding to $\overline{u}$.
As discussed in preceding paragraphs, the turnpike property for the optimal pair $(u_T, y_T)$ solving \eqref{eq: pb.turnpike.heat} would mean that $(u_T,y_T)$ is "near" $(\overline{u}, \overline{y})$, namely the optimal solution to the steady problem \eqref{eq: optimal.steady.heat} (the \emph{turnpike}) during most of the time horizon $[0,T]$ with an exception of two boundary layers near $t=0$ and $t=T$.  This is illustrated by the following result, due to \cite{porretta2013long}.

\begin{theorem}[\cite{porretta2013long}] \label{thm: porretta.zuazua.1}
Let $y^0 \in L^2(\Omega)$ and $y_d\in L^2(\omega_\circ)$ be fixed. 
There exist a couple of constants $C>0$ and $\lambda>0$, independent of $y^0$ and $y_d$, such that for any large enough $T>0$, the unique solution $(u_T, y_T)$ to \eqref{eq: pb.turnpike.heat} satisfies
\begin{align}\label{eq: porretta.zuazua.estimate}
\|y_T(t) -\overline{y}\|_{L^2(\Omega)} &+ \|u_T(t) - \overline{u}\|_{L^2(\omega)}\nonumber\\
&\leqslant C\left(\left\|y^0-\overline{y}\right\|_{L^2(\Omega)}e^{-\lambda t} + \|\overline{p}\|_{L^2(\Omega)}e^{-\lambda(T-t)}\right),
\end{align}
for a.e. $t\in [0,T]$, where $(\overline{u}, \overline{y})$ denotes the unique solution to \eqref{eq: optimal.steady.heat}, and $\overline{p}\in H^1_0(\Omega)$ is the optimal steady adjoint state in \eqref{eq: steady.heat.optimality}. 
\end{theorem}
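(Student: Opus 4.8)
\emph{Proof proposal.} The plan is to reduce the statement to an exponential decay estimate for the \emph{homogeneous} forward--backward system satisfied by the deviation from the turnpike, and then to extract that decay from infinite-horizon Riccati theory, the serious point being the closure of the resulting two-point boundary value problem.

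First I would write down the two first-order optimality systems. Since \eqref{eq: pb.turnpike.heat} is strictly convex and coercive over an affine admissible set, it has a unique minimizer, characterized (Pontryagin Maximum Principle, equivalently the Euler--Lagrange equations, necessary and sufficient here) by an adjoint state $p_T$ with $u_T \equiv p_T 1_\omega$ and
\begin{equation*}
\begin{cases}
\partial_t y_T - \Delta y_T = p_T 1_\omega &\text{ in }(0,T)\times\Omega,\\
-\partial_t p_T - \Delta p_T = -(y_T - y_d) 1_{\omega_\circ} &\text{ in }(0,T)\times\Omega,\\
y_T = p_T = 0 &\text{ on }(0,T)\times\partial\Omega,\\
{y_T}_{|_{t=0}} = y^0,\quad {p_T}_{|_{t=T}} = 0 &\text{ in }\Omega,
\end{cases}
\end{equation*}
while the steady optimality system \eqref{eq: steady.heat.optimality} reads $-\Delta\overline{y} = \overline{p}1_\omega$, $-\Delta\overline{p} = -(\overline{y}-y_d)1_{\omega_\circ}$ with $\overline{u}\equiv\overline{p}1_\omega$. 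Subtracting, the deviations $z_T := y_T - \overline{y}$ and $q_T := p_T - \overline{p}$ solve the homogeneous system
\begin{equation} \label{eq: diff.heat.plan}
\begin{cases}
\partial_t z_T - \Delta z_T = q_T 1_\omega,\\
-\partial_t q_T - \Delta q_T = -z_T 1_{\omega_\circ},\\
z_T = q_T = 0 \text{ on }\partial\Omega,\\
{z_T}_{|_{t=0}} = y^0 - \overline{y},\quad {q_T}_{|_{t=T}} = -\overline{p}.
\end{cases}
\end{equation}
The key gain is that the target $y_d$ has cancelled: all the data now sit in the two temporal endpoint conditions, which are precisely the two sources of the two exponential terms in \eqref{eq: porretta.zuazua.estimate}. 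Since $u_T - \overline{u} = q_T 1_\omega$, it suffices to prove $\|z_T(t)\|_{L^2(\Omega)} + \|q_T(t)\|_{L^2(\Omega)} \lesssim \|y^0-\overline{y}\|_{L^2(\Omega)}\, e^{-\lambda t} + \|\overline{p}\|_{L^2(\Omega)}\, e^{-\lambda(T-t)}$.

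Next I would invoke the associated infinite-horizon LQ problem. The null-controllability of \eqref{eq: heat.equation} from $\omega$ provides stabilizability, and the heat semigroup being exponentially stable yields detectability; hence there exists a minimal nonnegative self-adjoint solution $E$ of the algebraic Riccati equation $\Delta E + E\Delta - E 1_\omega E + 1_{\omega_\circ} = 0$, and the closed-loop operator $\Delta - 1_\omega E$ generates a $C_0$-semigroup with $\|e^{t(\Delta - 1_\omega E)}\| \leq M e^{-\lambda t}$, the rate $\lambda>0$ depending only on $\Omega,\omega,\omega_\circ$ (in particular, not on $T$). Setting $r_T := q_T + E z_T$ and using the Riccati identity, the $z_T$-equation becomes the forward closed-loop equation $\partial_t z_T = (\Delta - 1_\omega E) z_T + 1_\omega r_T$, while $r_T$ \emph{decouples} and solves the backward equation $-\partial_t r_T = (\Delta - 1_\omega E)^* r_T$. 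Consequently $\|r_T(t)\| \leq M e^{-\lambda(T-t)}\|r_T(T)\|$, and Duhamel's formula together with the convolution of the two exponentials gives $\|z_T(t)\| \leq M e^{-\lambda t}\|z_T(0)\| + C\, e^{-\lambda(T-t)}\|r_T(T)\|$, with a symmetric bound for $q_T$.

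The main obstacle is the closure, namely estimating $\|r_T(T)\|$. As $r_T(T) = q_T(T) + E z_T(T) = -\overline{p} + E z_T(T)$, one must control the \emph{internal} unknown $z_T(T)$ by the data; the crude Duhamel bound at $t=T$ only returns $\|z_T(T)\| \lesssim e^{-\lambda T}\|z_T(0)\| + C\|r_T(T)\|$ with a constant not necessarily below $\|E\|^{-1}$, so the estimate does not close naively. The resolution is to use the full exponential dichotomy of the Hamiltonian system \eqref{eq: diff.heat.plan} (the real spectral gap visible already at the Fourier level, since the coupling forces $\mu_j^2 = \lambda_j^2 + 1$ when $\omega=\omega_\circ=\Omega$): by time-reversal symmetry there is a companion nonnegative Riccati operator governing the unstable manifold, along which the flow is exponentially stable backward. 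Decomposing $(z_T, q_T)$ along the stable graph $q=-Ez$ and this unstable graph shows that the map sending the endpoint data $(z_T(0), q_T(T))$ to the internal traces $(z_T(T), q_T(0))$ has the form $\mathrm{Id} + \mathcal{O}(e^{-\lambda T})$, hence is boundedly invertible for $T$ large, uniformly in $T$. This yields $\|r_T(T)\| \leq C\big(\|\overline{p}\| + e^{-\lambda T}\|y^0-\overline{y}\|\big)$ and, upon substitution, the claimed estimate \eqref{eq: porretta.zuazua.estimate}. An equivalent route first derives the uniform integral bound $\int_0^T\!\big(\|z_T\|_{L^2(\omega_\circ)}^2 + \|q_T\|_{L^2(\omega)}^2\big)\diff t \lesssim \|y^0-\overline{y}\|^2 + \|\overline{p}\|^2$ from the symplectic identity $\tfrac{\diff}{\diff t}\langle z_T, q_T\rangle = \|q_T\|_{L^2(\omega)}^2 + \|z_T\|_{L^2(\omega_\circ)}^2$ combined with the optimality of $(u_T,y_T)$, and then bootstraps it to the pointwise exponential bound via the decoupling. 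In either case $C$ and $\lambda$ are independent of $y^0, y_d, T$, being inherited from $E$ and the closed-loop decay rate, which depend only on $\Omega,\omega,\omega_\circ$.
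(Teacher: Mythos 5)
Your proposal is correct in outline, but it follows a genuinely different route from the paper's proof of Theorem \ref{thm: porretta.zuazua.1} -- in fact it is essentially the second of the two strategies the paper lists right after the statement, namely the diagonalization/shooting approach of \cite{trelat2015turnpike, trelat2018steady} sketched in Section \ref{sec: 5}, rather than the Porretta--Zuazua argument the paper actually carries out. The paper never faces your closure problem: instead of the fixed infinite-horizon operator $E=\mathscr{E}_\infty$, it uses the \emph{finite-horizon} operator $\mathscr{E}(T-t)$ in the affine feedback law \eqref{eq: affine.feedback}, together with a corrector $h_T$ solving a backward equation with ${h_T}_{|_{t=T}}=-\overline{p}$; since $\mathscr{E}(0)=0$, the terminal condition $p_T(T)=0$ is matched \emph{exactly}, and the whole difficulty is displaced onto proving the operator estimate $\|\mathscr{E}(T)-\mathscr{E}_\infty\|_{\mathscr{L}(L^2(\Omega))}\leqslant C_1e^{-\lambda T}$ (via monotonicity, the variational identity \eqref{eq: ET.variational}, and Datko's theorem), after which Duhamel and Gr\"onwall give \eqref{eq: estimate.h} and \eqref{eq: turnpike.zeta}. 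Your route instead keeps $E$ fixed and closes the resulting two-point boundary value problem by the exponential dichotomy of the Hamiltonian system, i.e.\ a second Riccati operator and an $\mathrm{Id}+\mathcal{O}(e^{-\lambda T})$ invertibility argument. What each buys: the corrector approach avoids any discussion of the unstable manifold (convenient for the non-time-reversible heat equation) at the cost of the convergence estimate for $\mathscr{E}(T)$; your approach makes the forward/backward splitting and the symmetry of the two arcs completely transparent, at the cost of constructing the unstable graph. On that last point, two steps you state too quickly deserve care: the heat equation is not time-reversible, so the companion operator "governing the unstable manifold" is not obtained by time reversal and is not naturally a bounded graph $q=\mathscr{E}_+z$ over the state -- it should be parametrized as $z=Fq$ with $F$ the dual (filter-type) Riccati operator, so that $q$ solves a well-posed backward parabolic equation; and the decomposition of an arbitrary phase-space element into stable plus unstable components requires the transversality of the two graphs, i.e.\ the invertibility of $\mathrm{Id}+FE$ (which does hold since $E,F\geqslant0$, but must be said). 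These are precisely the "minor technical changes" the paper attributes to the PDE extension in \cite{trelat2018steady}. Your alternative integral-identity route at the end is sound and mirrors the paper's identity \eqref{eq: ET.variational}.
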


There exist (at least) a couple of ways to prove Theorem \ref{thm: porretta.zuazua.1}. 
Both of them rely on analyzing the decay properties of the corresponding optimality systems, found by computing the Euler-Lagrange equations at the optimal pairs $(u_T,y_T)$ and $(\overline{u},\overline{y})$ respectively. 
In the LQ case we present herein, these systems are necessary and sufficient conditions for optimality. 
As seen, for instance in \eqref{eq: heat.optimality.T}, the optimality system for the evolutionary problem is a coupled system, consisting of a forward heat equation for the state $y_T$, and a backward heat equation for the adjoint state $p_T$. Due to the coupling of states which evolve in different directions in time, it is not straightforward to obtain a full understanding of the decay properties of the system. 
\begin{itemize}
\item
In the original proof of \cite{porretta2013long}, which we present just below, one looks to uncouple the system by making use of some feedback operator --a rather classical procedure, described in \cite{lions1971optimal, lions1988exact} for instance. 
This feedback is constructed by making use of the Riccati operator from the associated infinite-time horizon problem (but without actually solving an infinite-dimensional Riccati equation), which is known to provide a feedback control ensuring exponential stability in infinite time. To take into account the final time horizon $T$, one cuts-off the preceding feedback by means of a corrector term, which will be shown to decay as $\mathcal{O}\left(e^{-(T-t)}\right)$ for $t\in[0,T]$.
\begin{figure}[h!]
\center
\includegraphics[scale=0.9]{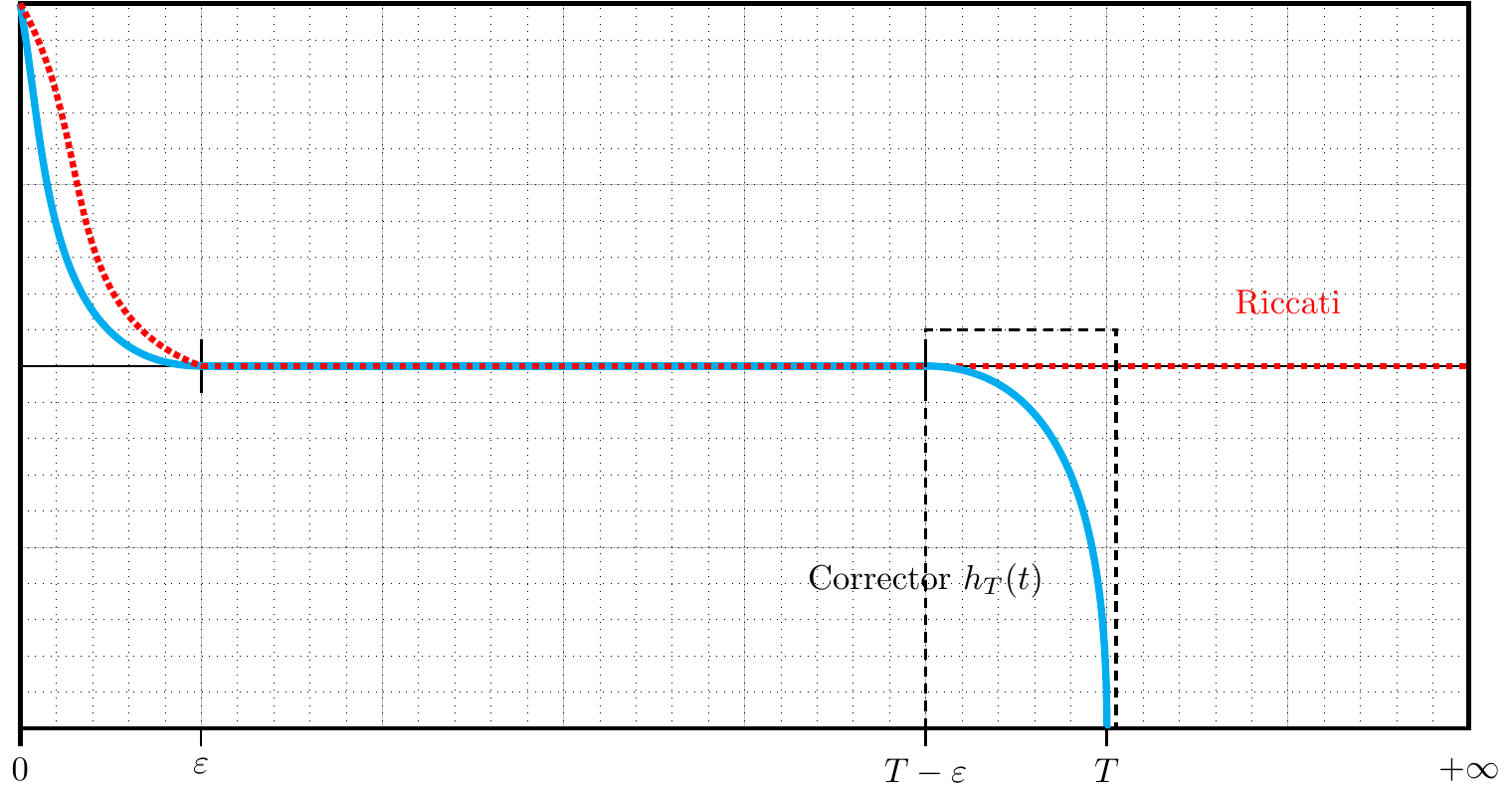}
\caption{The {\bf Riccati-inspired strategy}: we use the feedback given by the infinite-time horizon Riccati operator, and correct it near time $t=T$ by means of the "auxiliary" adjoint state $h_T(t)$.}
\end{figure}
\smallskip

\item An alternative strategy, introduced in \cite{trelat2015turnpike} (see also \cite{trelat2018steady}) which is especially transparent in the context of finite-dimensional linear control systems (discussed in section \ref{sec: 5}), consists in subtracting the evolutionary and stationary optimality system, and looking at the resulting system as a \emph{shooting problem}. The matrix appearing in this shooting problem can be diagonalized, again making use of the infinite-time horizon Riccati operator, resulting in an uncoupled system whose matrix is hyperbolic. Consequently, the first part of the state will decay forward in time, while the other will decay backward in time, yielding the double-arc exponential turnpike estimate.
\end{itemize}

\noindent
Before proceeding with further comments, we shall provide a sketch of the proof of \cite{porretta2013long}, namely following the first strategy, indicating the main steps.

\begin{proof}
Let us begin by writing down the first-order optimality systems for both the evolutionary and steady optimal control problems. They read, respectively, as 
\begin{equation} \label{eq: heat.optimality.T}
\begin{cases}
\partial_t y_T - \Delta y_T = p_T1_\omega &\text{ in }(0,T)\times\Omega,\\
\partial_t p_T + \Delta p_T = (y_T-y_d)1_{\omega_\circ} &\text{ in }(0,T)\times \Omega,\\
y_T=p_T=0 &\text{ in }(0,T)\times\partial\Omega,\\
{y_T}_{|_{t=0}} = y^0 &\text{ in }\Omega,\\
{p_T}_{|_{t=T}} = 0 &\text{ in }\Omega,
\end{cases}
\end{equation}
and
\begin{equation} \label{eq: steady.heat.optimality}
\begin{cases}
-\Delta \overline{y} = \overline{p}1_\omega &\text{ in }\Omega,\\
-\Delta \overline{p} = -(\overline{y}-y_d)1_{\omega_\circ} &\text{ in }\Omega,\\
\overline{y}=\overline{p}=0 &\text{ on }\partial\Omega.
\end{cases}
\end{equation}
Moreover, 
\begin{equation} \label{eq: uequivp}
u_T\equiv p_T1_\omega \hspace{1cm} \text{a.e. in } (0,T)\times\omega
\end{equation} 
and 
\begin{equation*}
\overline{u}\equiv \overline{p}1_\omega \hspace{1cm} \text{ a.e. in } \omega.
\end{equation*} 
Again, these systems can be found by either applying the Pontryagin Maximum Principle, or straightforwardly computing the Euler-Lagrange equations.
We now structure the proof in three steps.
\smallskip

\noindent
\textbf{Step 1. Riccati stability when $y_d\equiv0$.}
We shall begin by firstly considering the reference case in which $y_d\equiv 0$, and, unless otherwise stated, the triple $(u_T,y_T,p_T)$ refers specifically to this case.
We shall also denote by $\mathscr{J}_T^0(\cdot)$ the functional defined in \eqref{eq: pb.turnpike.heat} with $y_d\equiv0$.  

For $T>0$, we define the operator $\mathscr{E}(T): L^2(\Omega)\to L^2(\Omega)$ by
\begin{equation*} 
\mathscr{E}(T)y^0 := -p_T(0),
\end{equation*}
for $y^0 \in L^2(\Omega)$. 
Clearly, $\mathscr{E}(T)$ is linear. Now, by multiplying the first equation in \eqref{eq: heat.optimality.T} by $p_T$ and integrating over $(0,T)\times\Omega$, we derive the variational identities
\begin{align} 
\left\langle \mathscr{E}(T)y^0, y^0\right\rangle_{L^2(\Omega)} &= \int_0^T \|y_T(t)\|^2_{L^2(\omega_\circ)} \diff t + \int_0^T \|p_T(t)\|_{L^2(\omega)}^2\diff t \nonumber\\
&\stackrel{\eqref{eq: uequivp}}{=}\inf_{\substack{u\in L^2((0,T)\times\omega)\\ y\text{ solves } \eqref{eq: heat.equation}}}\mathscr{J}_T^0(u).
\label{eq: ET.variational}
\end{align} 
From \eqref{eq: ET.variational}, we may gather two crucial clues. 
\begin{itemize}
\item 
First of all, we see that $\mathscr{E}(T)$ is non-decreasing with respect to $T>0$. Indeed, for $t_1\leqslant t_2$, let $u_{t_1}$ and $u_{t_2}$ designate the minimizers of $\mathscr{J}_{t_1}^0$ and $\mathscr{J}_{t_2}^0$ respectively. 
From \eqref{eq: ET.variational} we see that
\begin{align*}
\left\langle \mathscr{E}(t_1)y^0, y^0\right\rangle_{L^2(\Omega)} = \mathscr{J}_{t_1}^0(u_{t_1})\leqslant\mathscr{J}_{t_1}^0(u_{t_2})&\leqslant\mathscr{J}_{t_2}^0(u_{t_2})\\
&=\left\langle \mathscr{E}(t_2)y^0, y^0\right\rangle_{L^2(\Omega)},
\end{align*}
as desired.
\smallskip
\item 
On another hand, from \eqref{eq: ET.variational}, we can also ensure that $\mathscr{E}(T)$ is bounded uniformly in $T>0$. 
Indeed, using the exponential stabilizability of the heat semigroup\footnote{Here, as a matter of fact, we use the exponential decay of the semigroup, but for more general settings in which exponential decay does not hold (e.g. some parabolic equations with lower order terms, the wave equation, and so on), exponential stabilizability by means of some feedback operator suffices (which in turn, is implied by controllability). This is addressed in Section \ref{sec: 4}.} in conjunction with Datko's theorem (\cite{datko1972uniform}), we may find that
\begin{align} \label{eq: some.energy}
\Bigg|\Big\langle &\,p_T(0),y^0\Big\rangle_{L^2(\Omega)}\Bigg|\leqslant\left\|p_T(0)\right\|_{L^2(\Omega)}\left\|y^0\right\|_{L^2(\Omega)}\nonumber\\ 
&\leqslant C_0\left(\int_0^T\|y_T(t)\|^2_{L^2(\omega_\circ)}\diff t+\int_0^T \|p_T(t)\|^2_{L^2(\omega)}\diff t\right)^{\sfrac12}\left\|y^0\right\|_{L^2(\Omega)}
\end{align}
holds for some constant $C_0>0$ independent of $T>0$ and $y^0$. Combining \eqref{eq: some.energy} with \eqref{eq: ET.variational} leads us to the desired conclusion.
For completeness, let us briefly sketch the proof of \eqref{eq: some.energy}. Fix an arbitrary $\psi^0\in L^2(\Omega)$ and consider
\begin{equation} \label{eq: heat.psi}
\begin{cases}
\partial_t \psi-\Delta \psi=0&\text{ in }(0,T)\times\Omega,\\
\psi=0&\text{ on }(0,T)\times\partial\Omega,\\
\psi_{|_{t=0}}=\psi^0&\text{ in } \Omega.
\end{cases}
\end{equation}
Multiplying the equation for $p_T$ in \eqref{eq: heat.optimality.T} by $\psi$ and integrating, and then using Cauchy-Schwarz, we find 
\begin{align} \label{eq: before.datko.psi}
\left|\Big\langle p_T(0),\psi^0\Big\rangle_{L^2(\Omega)}\right|&=\left|\int_0^T\int_{\Omega} y_T(t,x)1_{\omega_\circ} \psi(t,x)\diff x\diff t\right|\\
&\leqslant\left(\int_0^T\|y_T(t)\|_{L^2(\omega_\circ)}^2\diff t\right)^{\sfrac12}\left(\int_0^T\|\psi(t)\|_{L^2(\Omega)}^2\diff t\right)^{\sfrac12}\nonumber.
\end{align} 
By virtue of the exponential decay of solutions to \eqref{eq: heat.psi}, and Datko's theorem (\cite{datko1972uniform}), there exists a constant $C_0>0$, independent of $T$ and $y^0$, such that
\begin{equation} \label{eq: datko.psi}
\int_0^T\|\psi(t)\|_{L^2(\Omega)}^2\diff t\leqslant C_0\left\|\psi^0\right\|_{L^2(\Omega)}^2.
\end{equation}
Applying \eqref{eq: datko.psi} to \eqref{eq: before.datko.psi}, and choosing $\psi^0:=p_T(0)$, leads us to \eqref{eq: some.energy}.
\end{itemize}
The limit $\lim_{T\to+\infty}\langle \mathscr{E}(T)y^0,y^0\rangle_{L^2(\Omega)}$ thus exists, and is actually characterized in terms of the infinite-time horizon (the \emph{regulator}) problem, defining a limit operator $\mathscr{E}_\infty$.  Actually,  $\mathscr{E}_\infty: L^2(\Omega)\to L^2(\Omega)$ may be characterized as
\begin{equation*}
\mathscr{E}_\infty y^0 :=-p_\infty(0),
\end{equation*}
for $y^0\in L^2(\Omega)$, where, in this case, the pair $(y_\infty,p_\infty)$ solves the optimality system in an infinite-time horizon:
\begin{equation} \label{eq: infinite.time.horizon.heat}
\begin{cases}
\partial_t y_\infty - \Delta y_\infty = p_\infty1_\omega &\text{ in }(0,+\infty)\times\Omega,\\
\partial_t p_\infty + \Delta p_\infty = y_\infty 1_{\omega_\circ} &\text{ in }(0,+\infty)\times \Omega,\\
y_\infty=p_\infty=0 &\text{ in }(0,+\infty)\times\partial\Omega,\\
{y_\infty}_{|_{t=0}} = y^0 &\text{ in }\Omega,\\
p_\infty(t) \xrightarrow[L^2(\Omega)]{} 0 &\text{ as }t\to+\infty.
\end{cases}
\end{equation}
(We refer to \cite[Lemma 3.9]{porretta2013long} for the complete proof of this fact.)
Observe that by the semigroup property (namely, time invariance), we have
\begin{equation*}
p_\infty(t) =-\mathscr{E}_\infty y_\infty(t),
\end{equation*}
for $t\in(0,+\infty)$. Hence, the first equation in the infinite-time horizon problem \eqref{eq: infinite.time.horizon.heat} rewrites as $\partial_t y_\infty + M y_\infty = 0$ in $(0,+\infty)\times\Omega$, where 
\begin{equation*}
M:= -\Delta + \mathscr{E}_\infty 1_\omega.
\end{equation*}
In other words, the system \eqref{eq: infinite.time.horizon.heat} is now uncoupled.
Furthermore, it can be seen that $$f\mapsto-\langle\mathscr{E}_\infty f,f\rangle_{L^2(\Omega)}$$ 
is a Lyapunov functional for the first equation in \eqref{eq: infinite.time.horizon.heat}; indeed,
\begin{align*}
\frac{\diff}{\diff t}\big\langle-\mathscr{E}_\infty y_\infty(t),y_\infty(t)\big\rangle_{L^2(\Omega)} &= \frac{\diff}{\diff t}\big\langle p_\infty(t),y_\infty(t)\big\rangle_{L^2(\Omega)} \\ 
&= -\Big(\|y_\infty(t)\|_{L^2(\omega_\circ)}^2 + \|p_\infty(t)\|_{L^2(\omega)}^2\Big),
\end{align*}
for all $t\geqslant0$. From this, it can then rigorously be shown (again making use of Datko's theorem) that the operator $M:H^2(\Omega)\cap H^1_0(\Omega)\to L^2(\Omega)$ generates a strongly-continuous and exponentially stable semigroup $\{e^{tM}\}_{t\geqslant0}$ on $L^2(\Omega)$ -- namely, there exists $\lambda>0$ such that 
\begin{equation} \label{eq: exp.M}
\left\|e^{tM} y^0\right\|_{L^2(\Omega)} \leqslant e^{-\lambda t} \left\|y^0\right\|_{L^2(\Omega)},
\end{equation}
holds for all $t\geqslant 0$. Finally, it can furthermore be shown (we omit the proof, which can be found in \cite[Lemma 3.9]{porretta2013long})\footnote{We note that both of these conclusions are actually well-known facts, and in addition to the proof found in \cite[Lemma 3.9]{porretta2013long}, we refer the reader to \cite[Part IV, Chapter 4, Theorem 4.4, p. 241]{zabczyk2020mathematical}, and also to \cite[Sections 8-10]{lions1988exact} and the references therein.}, that there exists a constant $C_1>0$ (independent of $T$) such that
\begin{equation} \label{eq: estimate.operators}
\left\|\mathscr{E}(T)-\mathscr{E}_\infty\right\|_{\mathscr{L}(L^2(\Omega))} \leqslant C_1 e^{-\lambda T},
\end{equation}
holds for all $T\geqslant0$; here, $\lambda>0$ is the same as in \eqref{eq: exp.M}.
\smallskip

\noindent
\textbf{Step 2.  Uncoupling the optimality system with a correction near $t=T$.}
We now come back to the case $y_d\not\equiv0$. Note that when $y_d\equiv0$, and $T=+\infty$, we could readily uncouple the optimality system through the Riccati feedback operator $\mathscr{E}_\infty$. In the case $T<+\infty$, to match the terminal condition for the adjoint at $t=T$, we need to slightly correct this Riccati feedback.
To this end, let us define $h_T\in C^0([0,T]; L^2(\Omega))$ as the unique weak solution to the system
\begin{equation*} \label{eq: corrector.system}
\begin{cases}
-\partial_t h_T + \big(-\Delta + \mathscr{E}(T-t)1_\omega\big)h_T = 0 &\text{ in }(0,T)\times\Omega,\\
h_T = 0 &\text{ in }(0,T)\times\partial\Omega,\\
{h_T}_{|_{t=T}} = -\overline{p} &\text{ in }\Omega. 
\end{cases}
\end{equation*}
Note that, here, $\mathscr{E}(T-t)y^0:=-p_{T-t}(0)$, namely, is defined as in the first step, with $p_{T-t}$ designating the unique solution to the second equation in \eqref{eq: heat.optimality.T} set on $(0,T-t)$, with $y_d\equiv0$.
We introduce $h_T$ precisely in order to uncouple the optimality system: the key observation is that using judiciously the definition of $\mathscr{E}(t)$, one gathers
\begin{equation*}
\int_\Omega \big(p_T(t)-\overline{p}\big)f \diff x = \int_\Omega (y_T(t)-\overline{y})\big(\mathscr{E}(T-t)f\big)\diff x + \int_\Omega h_T(t)f \diff x,
\end{equation*}
for all $f\in L^2(\Omega)$. Whence, we see that the adjoint state $p_T$ can be represented by the affine feedback law
\begin{equation} \label{eq: affine.feedback}
p_T(t)-\overline{p} = \mathscr{E}(T-t)\big(y_T(t)-\overline{y}\big) + h_T(t),
\end{equation}
for $t\in[0,T]$.
One sees that $h_T(t)$ was designed to play the role of a corrector, taking care of the final arc near time $t=T$.
By using the above feedback, the optimality system \eqref{eq: heat.optimality.T} can then be uncoupled by seeing that the optimal trajectory $y_T$ satisfies
\begin{align*}
\partial_t y_T - \Delta y_T &= -\overline{p}1_\omega - \mathscr{E}(T-t)1_\omega\big(y_T(t)-\overline{y}\big)-h_T1_\omega 
\end{align*}
in $(0,T)\times\Omega$, the above identity being interpreted in the weak sense.
\smallskip

\noindent
\textbf{Step 3. Energy estimates for the uncoupled system.}
Let us now set $\zeta(t):=y_T(t)-\overline{y}$; since $\overline{y}$ solves the first equation in \eqref{eq: steady.heat.optimality}, by the Duhamel formula one finds
\begin{equation*}
\zeta(t)=e^{tM}\Big(y^0-\overline{y}\Big)+\int_0^t e^{(t-s)M}\Big(\mathcal{K}(s)\zeta(s)-h_T(s)1_\omega\Big)\diff s,
\end{equation*}
where $\mathcal{K}(s):=\Big(\mathscr{E}_\infty-\mathscr{E}(T-s)\Big)1_\omega$.
By the Duhamel formula once again,
\begin{equation*}
h(t) = -e^{(T-t)M}\overline{p} + \int_{t}^{T} e^{(t-s)M}\mathcal{K}(s)h_T(s)\diff s,
\end{equation*}
where the identity is understood in the $L^2(\Omega)$--sense. 
By using Gr\"onwall's lemma along with \eqref{eq: estimate.operators} and \eqref{eq: exp.M}, one finds
\begin{equation} \label{eq: estimate.h}
\|h_T(t)\|_{L^2(\Omega)} \leqslant C_1 e^{-\lambda(T-t)} \|\overline{p}\|_{L^2(\Omega)},
\end{equation}
for $t\in[0,T]$.  Using Gr\"onwall's lemma once more, along with \eqref{eq: exp.M}, \eqref{eq: estimate.operators}, and \eqref{eq: estimate.h} to $\zeta(t)$ leads us to 
\begin{equation} \label{eq: turnpike.zeta}
\|\zeta(t)\|_{L^2(\Omega)}\leqslant C_2\left(\left\|y^0 - \overline{y}\right\|_{L^2(\Omega)} e^{-\lambda t} + \|\overline{p}\|_{L^2(\Omega)} e^{-\lambda(T-t)}\right)
\end{equation}
for any $t\in[0,T]$. Here the constant $C_2>0$ is clearly independent of $T$, but also independent of the choice of initial data and running target $y_d$. 
This yields the desired turnpike property for $\zeta(t):=y_T(t)-\overline{y}$. Taking advantage of the affine feedback law \eqref{eq: affine.feedback} once again, using \eqref{eq: turnpike.zeta}, the uniform-in-$T$ boundedness of $\mathscr{E}(T)$, as well as \eqref{eq: estimate.h}, we also find 
\begin{equation*}
\|p_T(t)-\overline{p}\|_{L^2(\Omega)}\leqslant C_3\left(\left\|y^0 - \overline{y}\right\|_{L^2(\Omega)} e^{-\lambda t} + \|\overline{p}\|_{L^2(\Omega)} e^{-\lambda(T-t)}\right)
\end{equation*}
for $t\in[0,T]$, and for some possibly larger constant $C_3>0$, independent of $T, y^0$ and $y_d$. As $u_T\equiv p_T1_\omega$ and $\overline{u}\equiv\overline{p}1_\omega$, we may conclude.
\end{proof}

\begin{remark}[The decay rate $\lambda$] Reading the proof, one notes that the decay rate $\lambda>0$ appearing in the turnpike estimate is in fact explicit. It is precisely given as the exponential decay rate for the system
\begin{equation*}
\begin{cases}
\partial_t y + (-\Delta  + \mathscr{E}_\infty1_\omega) y = 0 &\text{ in }(0,+\infty)\times\Omega,\\
y_{|_{t=0}}=y^0 &\text{ in }\Omega.
\end{cases}
\end{equation*}
Namely, $\lambda$ corresponds to the spectral abscissa of the operator $-\Delta+\mathscr{E}_\infty1_\omega$.
This is also seen in the strategy of \cite{trelat2015turnpike} (see Section \ref{sec: 5}).
\end{remark}

\begin{remark}[Feedback law \& turnpike for the adjoint] Once again by reading the proof, one garners further information than what is stated in the theorem. First of all, we note that the optimal control $u_T$ is given by an affine feedback law of the form 
\begin{equation*}
u_T(t) = \Big(\overline{p}+\mathscr{E}(T-t)(y_T(t)-\overline{y})+h_T(t)\Big)1_\omega \hspace{1cm} \text{ for } t\in(0,T).
\end{equation*}
On another hand, the turnpike property also holds for the adjoint state $p_T(t)$ and corresponding stationary adjoint state $\overline{p}$: 
\begin{equation*}
\|p_T(t)-\overline{p}\|_{L^2(\Omega)}\leqslant C\left(\left\|y^0-\overline{y}\right\|_{L^2(\Omega)}e^{-\lambda t}+\|\overline{p}\|_{L^2(\Omega)}e^{-\lambda(T-t)}\right)
\end{equation*}
for $t\in[0,T]$. 
\end{remark}

\begin{remark}[Pay-off at time $T$] \label{remark: pay.off}
Let us stress that the turnpike estimate would take a more "symmetric" form if the adjoint state $p_T$ had a different data prescribed at time $t=T$. 
To achieve such a goal, one could consider a cost functional which contains an additional pay-off at the final time, such as, for instance
\begin{equation*}
\mathscr{J}_T(u) :=  \langle p^T, y(T)\rangle_{L^2(\Omega)} + \frac12 \int_0^T \|y(t)-y_d\|_{L^2(\omega_\circ)}^2 \diff t + \frac12 \int_0^T  \|u(t)\|_{L^2(\omega)}^2 \diff t
\end{equation*}
for some $p^T\in L^2(\Omega)$. 
In this case, the adjoint state, by writing the optimality system, would have to satisfy $p_T(T) = p^T$, and the above proof applies without any change except that now the corrector term $h_T$ will take a different final condition (equal to $p^T-\overline{p}$) and the estimate would become
\begin{align*}
&\left\|y_T(t)-\overline{y}\right\|_{L^2(\Omega)} + \|u_T(t)-\overline{u}\|_{L^2(\Omega)} \nonumber\\
&\quad\leqslant C\left(\left\|y^0 - \overline{y}\right\|_{L^2(\Omega)} e^{-\lambda t} + \left\|p^T-\overline{p}\right\|_{L^2(\Omega)} e^{-\lambda(T-t)}\right)
\end{align*}
for all $t\in[0,T]$.  A more general pay-off $\phi(y(T))$ instead of $\langle p^T, y(T)\rangle_{L^2(\Omega)}$ can also be considered in the definition of $\mathscr{J}_T$ just above (assuming it is, for example, Fréchet differentiable on $L^2(\Omega)$, convex, and bounded from below), and one would then change the terminal condition for the adjoint state: one would have $p_T(T)=\nabla\phi(y_T(T))$, where the gradient is interpreted as the one found by the Fréchet derivative and subsequently the Riesz representation theorem.
Of course, for a more general payoff, the symmetry with respect to the data in the turnpike estimate just above would not be replicated.
\end{remark}

\begin{remark}[Reference for the control] 
The turnpike result remains the same if the control $u(t)$ in the functional defined in \eqref{eq: pb.turnpike.heat} tracks a given reference $u_d\in L^2(\omega)$, namely, if one minimizes
\begin{equation*}
\mathscr{J}_T(u):= \frac12 \int_0^T \|y(t)-y_d\|^2_{L^2(\omega_\circ)}\diff t + \frac12\int_0^T \|u(t)-u_d\|^2_{L^2(\omega)}\diff t,
\end{equation*}
instead of the functional defined in \eqref{eq: pb.turnpike.heat}. The proof remains identical, with the only differences being the definition of the optimal control wherein one also accounts for $u_d$, namely $u_T(t)\equiv u_d+p_T1_\omega$ (and similarly for the steady control $\overline{u}$), and thus also the addition of $u_d$ as a source in the equation for the forward state $y_T$ (and similarly for the steady state $\overline{y}$).
\end{remark}

We delay further comments after generalizing the above result to a wider array of evolution equations. This is done in what follows.

\section{General evolution equations} \label{sec: 4}

The linear heat equation enjoys several properties which play a role in the proof  just above. These namely include the fact that the heat semigroup $\left\{e^{-t\Delta}\right\}_{t\geqslant0}$ is \emph{exponentially stable}, and that the heat equation is \emph{observable} from any open and non-void subset $\omega_\circ\subset\Omega$. 
One may thus be lead to think that turnpike only holds for such dissipative systems. This is not the case -- as we shall see, it will suffice for the system to be solely \emph{stabilizable} by means of some feedback law. And for the latter, controllability suffices. 
This is in agreement with common sense. Indeed, if the system under consideration is stabilizable, the optimal control will actually stabilize the system. The controlled system will therefore behave as an exponentially decaying system. Once the system enters this stable regime, the turnpike property will be manifested.

It is thus worthwhile to see under what conditions the turnpike property holds for  general partial differential equations and cost functionals. We shall see that the same result holds for significantly more general evolution equations -- for instance, hyperbolic equations --, with boundary controls and boundary observations in the tracking terms.

\subsection{The transport equation as a motivating example}

To motivate the appearance of turnpike for hyperbolic equations, let us illustrate the validity of the turnpike property for perhaps the simplest such equation imaginable: the linear transport equation. 

We consider
\begin{equation} \label{eq: transport}
\begin{cases}
\partial_t y + \partial_x y = 0 &\text{ in }(0,T)\times(0,1),\\
y(t,0) = u(t) &\text{ in }(0,T),\\
y(0,x)= y^0(x) &\text{ in }(0,1),
\end{cases}
\end{equation}
and the natural LQ problem
\begin{equation} \label{eq: LQ.transport}
\inf_{\substack{u\in L^2(0,T) \\ y \text{ solves} \eqref{eq: transport}}} \frac12\int_0^T\int_0^1 |y(t,x)-y_d(x)|^2 \diff x \diff t + \frac12\int_0^T|u(t)|^2\diff t.
\end{equation}
Here $y_d\in L^2(0,1)$ is a given running target. Given $y^0\in L^2(0,1)$ and $u\in L^2(0,T)$, \eqref{eq: transport} admits a unique weak solution $y\in C^0([0,T]; L^2(0,1))$ (see \cite[Section 2.1.1]{coron2007control} for the appropriate notion of weak solution). 

One may look to replicate the Riccati-inspired proof presented in the context of the heat equation -- to this end, we can first write the optimality system for an optimal pair $(u_T,y_T)$ for \eqref{eq: LQ.transport} -- \eqref{eq: transport}, which reads
\begin{equation} \label{eq: transport.optimality.system}
\begin{cases}
\partial_t y_T+\partial_x y_T = 0 &\text{ in }(0,T)\times(0,1),\\
\partial_t p_T+\partial_x p_T=y_T-y_d &\text{ in }(0,T)\times(0,1),\\
y_T(t,0)=p_T(t,0) &\text{ in }(0,T),\\
p_T(t,1)=0 &\text{ in }(0,T),\\
y_T(0,x)=y^0(x) &\text{ in }(0,1),\\
p_T(T,x)=0 &\text{ in } (0,1),
\end{cases}
\end{equation}
with 
\begin{equation*}
u_T(t)=p_T(t,0) \hspace{1cm} \text{ for } t\in[0,T].
\end{equation*}
But, for the transport equation \eqref{eq: transport}, the turnpike property can actually be derived by explicit calculations. 

Let us corroborate this claim.
Since solutions to \eqref{eq: transport} are constant along characteristics, one readily sees that $y$ takes the form
\begin{equation} \label{eq: explicit.transport}
y(t,x) = 
\begin{cases}
y^0(x-t) &\text{ for }t\leqslant x,\\
u(t-x) &\text{ for }t\geqslant x.
\end{cases}
\end{equation}
Because of this formula, for a given and fixed datum $y^0\in L^2(0,1)$, we can see that \eqref{eq: LQ.transport} is actually equivalent to the unconstrained quadratic problem 
\begin{equation*}
\inf_{u\in L^2(0,T)} \underbrace{\frac12 \int_0^T \int_0^1 |u(t-x)-y_d(x)|^2 1_{\{x\leqslant t\}} \diff x \diff t + \frac12 \int_0^T |u(t)|^2\diff t}_{:=\mathscr{J}_T(u)}.
\end{equation*}
Since $\mathscr{J}_T$ is strictly convex, continuous, and coercive, it admits a unique minimizer $u_T$, which is also a solution to \eqref{eq: LQ.transport}. 
We compute the G\^ateaux derivative of $\mathscr{J}_T$ at $u_T$ in any direction $v\in L^2(0,T)$ to find that
\begin{equation*}
\int_0^1\int_0^T \Big(u_T(t-x)-y_d(x)\Big)v(t-x)1_{\{t\geqslant x\}}\diff t\diff x +\int_0^T u_T(t)v(t)\diff t = 0.
\end{equation*}
The change of variable $t-x=\tau$ yields
\begin{equation*}
\int_0^T \int_0^1 \Big(u_T(\tau)-y_d(x)\Big) v(\tau)1_{(0,T-x)}(\tau) \diff \tau \diff x +\int_0^T u_T(t) v(t)\diff t =0.
\end{equation*}
Another change of variable in the indicator function above leads us to
\begin{equation*}
2u_T(\tau)-\int_0^1 y_d(x)1_{(0,T-\tau)}(x)\diff x = 0,
\end{equation*}
for a.e. $\tau\in(0,T)$. We then clearly see that
\begin{equation} \label{eq: u.transport}
u_T(t) = \frac12 \int_0^{\min\{1, T-t\}} y_d(x)\diff x
\end{equation}
for a.e. $t\in(0,T)$. And in view of \eqref{eq: explicit.transport}, we also find
\begin{align}
\int_0^1 y_T(t,x)\diff x &= \left(\int_0^t u_T(\tau)\diff \tau + \int_0^{1-t} y^0(\zeta)\diff \zeta\right)1_{\{t\leqslant1\}}\nonumber\\
&\quad +\left(\int_{t-1}^t u_T(\tau)\diff \tau\right) 1_{\{t\geqslant 1\}}\nonumber\\
&=\left(\frac12\int_0^t \int_0^{\min\{1,T-\tau\}} y_d(\zeta)\diff \zeta \diff \tau + \int_0^{1-t} y^0(\zeta)\diff \zeta\right)1_{\{t\leqslant1\}}\nonumber\\
&\quad+\left(\frac12 \int_0^1 y_d(\zeta)\diff \zeta\right) 1_{\{1<t<T-1\}}\nonumber \\
&\quad + \left(\frac12 \int_{t-1}^t \int_0^{T-\tau} y_d(\zeta)\diff\zeta \diff \tau\right) 1_{\{t\geqslant T-1\}}\label{eq: y.transport}
\end{align}
for all $t\in[0,T]$.
The above characterizations clearly indicate an exact turnpike-like pattern, as, for instance, we see that the (mass of the) optimal state $y_T(t)$ is stationary at $\frac12\int_0^1 y_d(x)\diff x$ over the time interval $(1,T-1)$. Furthermore, this pattern actually emerges rather rapidly, namely when $T>2$ only. 
This is also visible in the numerical experiments shown in Figure \ref{fig: transport}. 

To be able to conclude and consider this as a turnpike phenomenon, we need to ensure that the optimal steady control-state pair is precisely given by 
$$(\overline{u},\overline{y})=\left(\frac12 \int_0^1 y_d(x)\diff x, \frac12 \int_0^1 y_d(x)\diff x\right).$$ 
To this end, we consider the steady problem corresponding to \eqref{eq: LQ.transport}, which reads
\begin{equation} \label{eq: LQ.transport.steady}
\inf_{\substack{u\in\mathbb{R}\\ \partial_x y=0 \text{ in } (0,1)\\ y(0)=u}}\frac12\int_0^1 |y(x)-y_d(x)|^2 \diff x + \frac12|u|^2.
\end{equation}
One readily sees that the constraints in \eqref{eq: LQ.transport.steady} yield $y\equiv u$, and so \eqref{eq: LQ.transport.steady} is actually an unconstrained minimization problem on $\mathbb{R}$:
\begin{equation} \label{eq: 4.5}
\inf_{u\in\mathbb{R}}\frac12\int_0^1 |u-y_d(x)|^2\diff x + \frac12|u|^2.
\end{equation}
It is readily seen that the unique solution to \eqref{eq: 4.5} is $\overline{u}\equiv\frac{1}{2}\int_0^1 y_d(x)\diff x$, and as $\overline{u}\equiv\overline{y}$, we deduce a turnpike property for the optimal evolutionary pair $(u_T,y_T)$ to $(\overline{u},\overline{y})$.
 
This simple example indicates that the turnpike property may also appear for hyperbolic equations. We provide more examples and a general setup in Sections 4.2--4.3.

\begin{remark}[Compatible norms] 
It is important to note that the above derivation, and subsequent result, rely on the fact that the state and the control are penalized in \emph{compatible topologies} (here, $L^2(0,T)$ for the boundary control, and consequently, $L^2((0,T)\times(0,1))$ for the state). 
The computations are then explicit due to the choice of these topologies, but, in essence, the result is inherently due to the possibility of exponentially stabilizing the system through a feedback operator defined on the energy space. 
The bottom line is that there should be a compatibility in the topologies being penalized for the control and the state, due to conservation of regularity. 
This is clearly seen in the optimality system \eqref{eq: transport.optimality.system}. Roughly speaking, if solely the $H^{-1}(0,T)$-norm of the boundary control $u(t)$ is penalized, then there would be a mismatch of regularity between the state $y_T$ and the adjoint state $p_T$ through the boundary condition at $x=0$. 
The same artifact appears in the context of the wave equation, and is discussed later on.
\end{remark}

\begin{figure}[h!] 
\center
\includegraphics[scale=0.425]{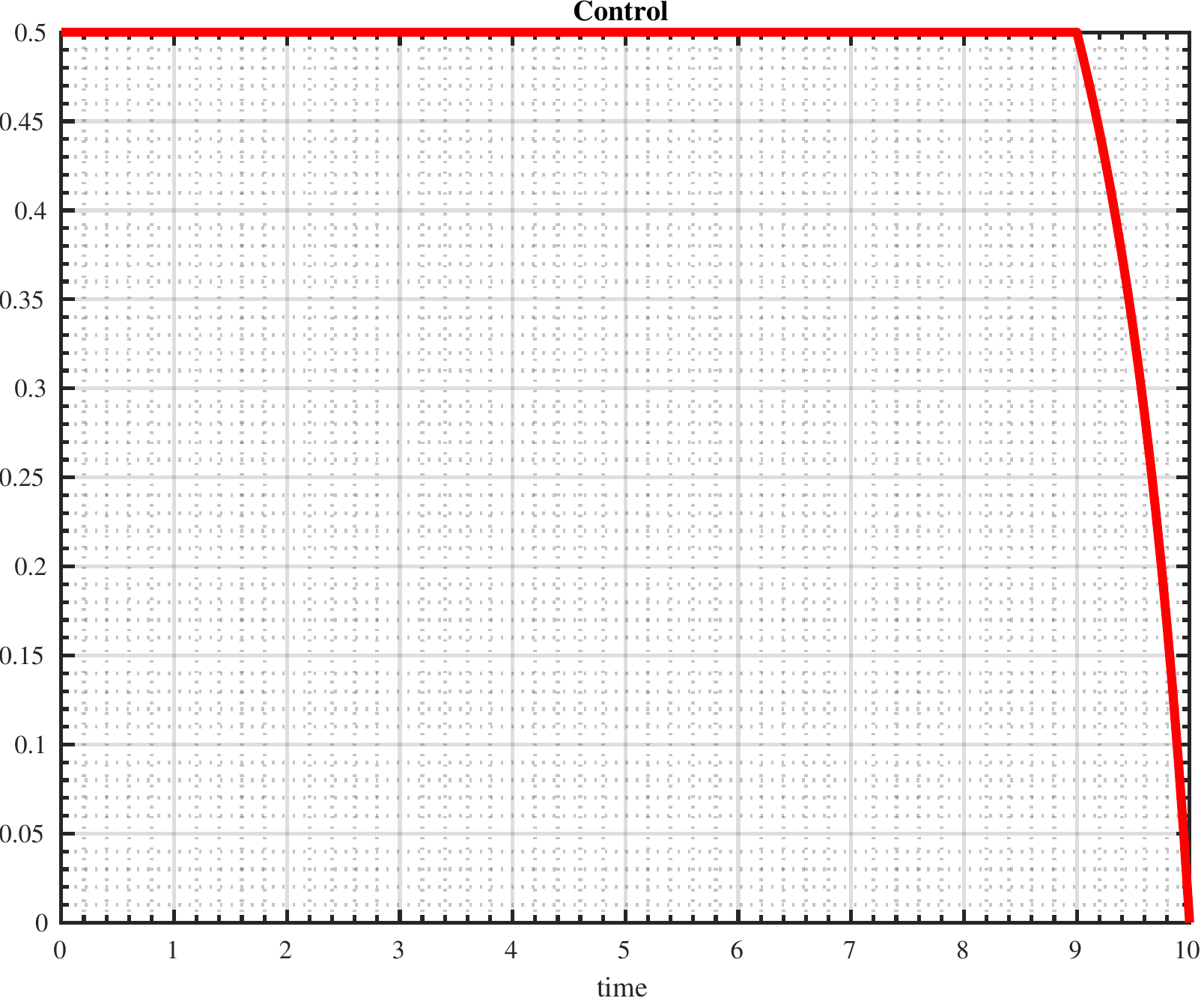}
\hspace{0.2cm}
\includegraphics[scale=0.425]{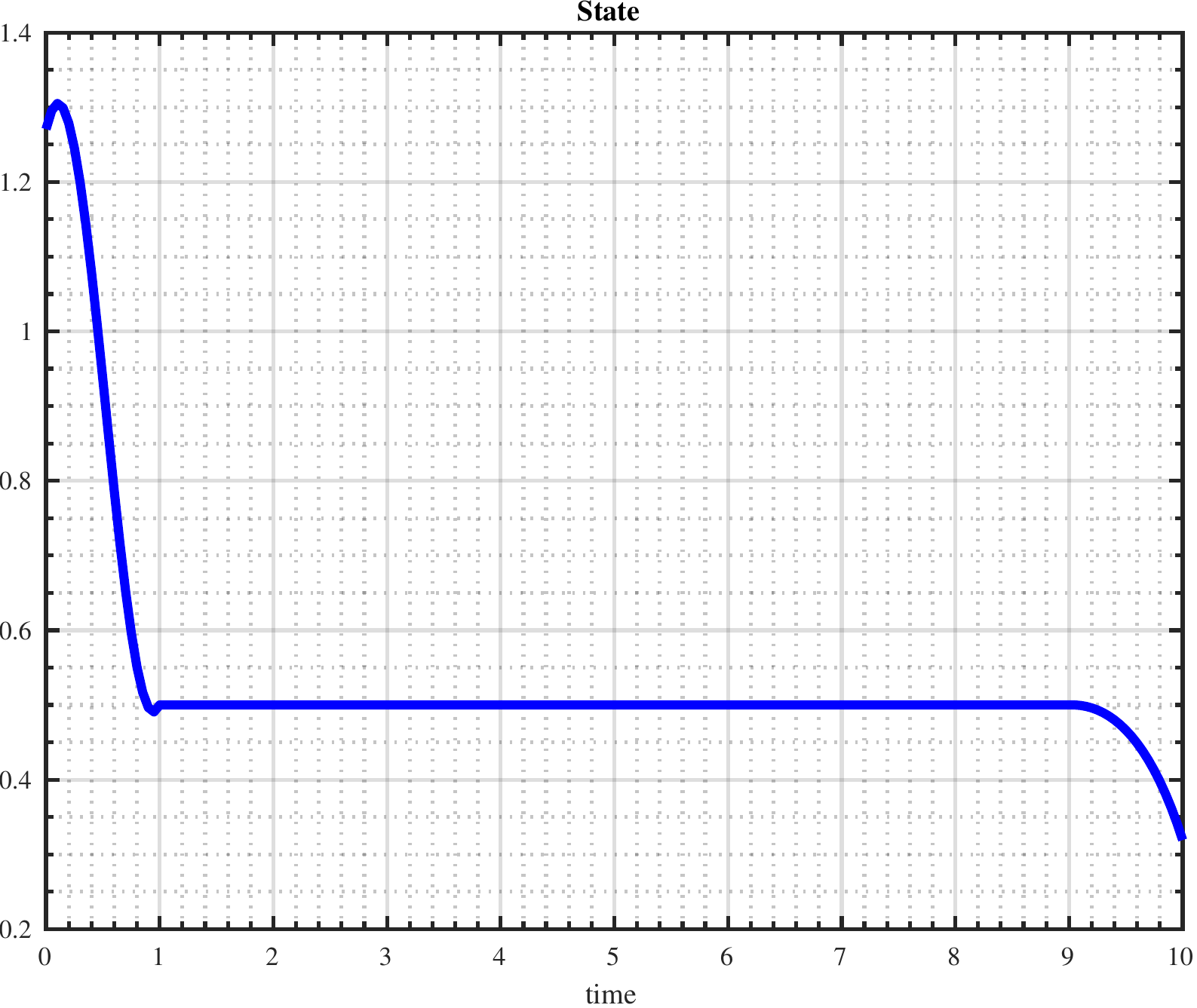}
\caption{A numerical visualization of the optimal control $u_T$ solving \eqref{eq: LQ.transport} (\emph{left}), and mass of the corresponding solution $y_T$ to \eqref{eq: transport} (\emph{right}), for $T=10$, $y^0(x)=\sin(\pi x)$, and $y_d\equiv1$. (\emph{Left}) We see that the optimal control $u_T(t)$ is constant equal to the turnpike $\frac12\int_0^1 y_d=\frac12$ for $t\leqslant T-1$ and reaches $0$ at time $t=T$, as per \eqref{eq: u.transport}. (\emph{Right}) We also see that the mass of the optimal state $y_T(t)$ splits in three stages: it descends to the turnpike $\frac12$ in time $t=1$, stays at the turnpike until time $t=T-1$, and then exits, as per \eqref{eq: y.transport}.}
\label{fig: transport}
\end{figure}

\begin{figure}[h!]
\center
\includegraphics[scale=0.45]{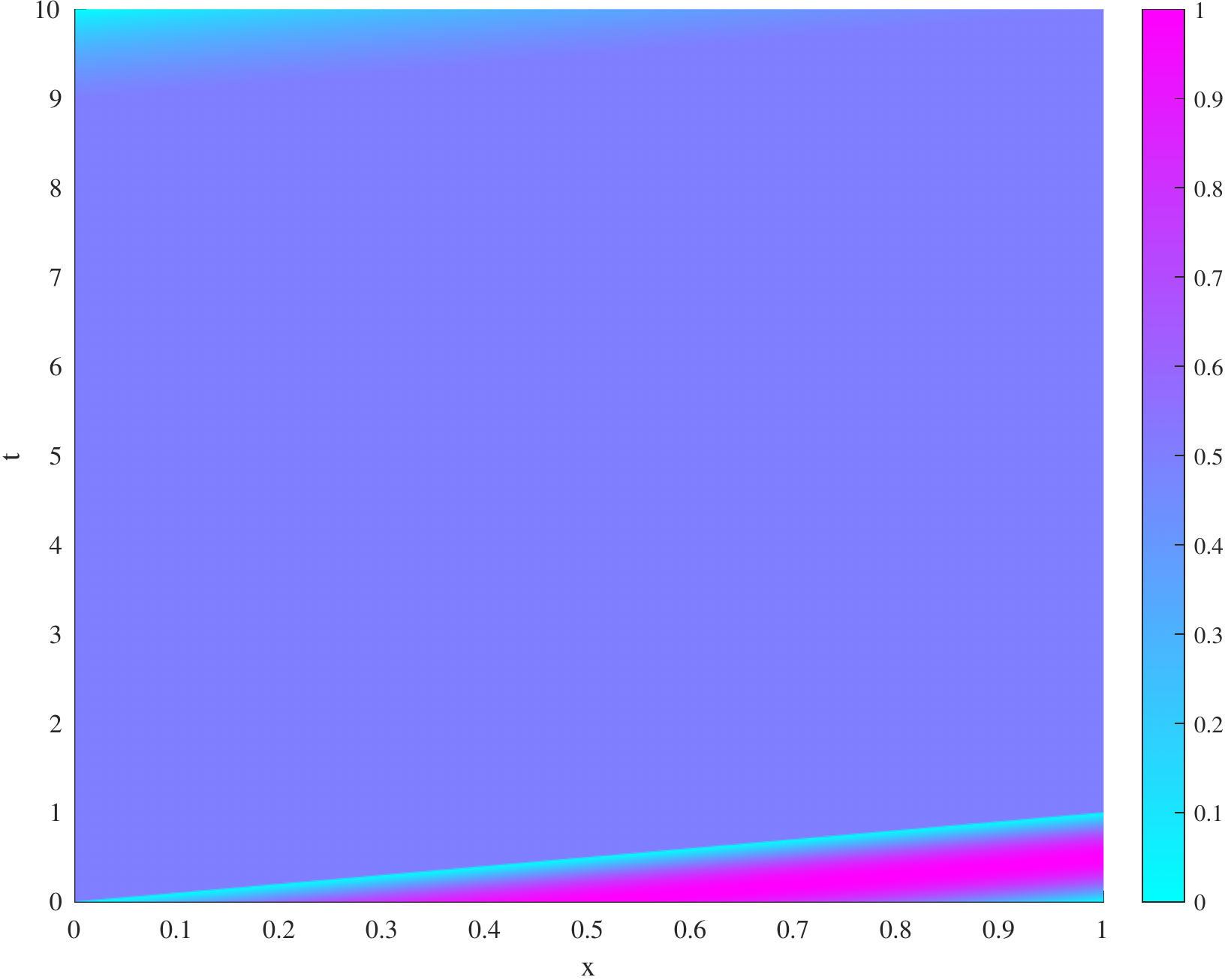}
\caption{A surface plot of the optimal state $y_T(t,x)$ solution to \eqref{eq: transport}, further showing the turnpike phenomenon: the initial datum is transported until time $t=1$, beyond which the state stays at the turnpike equal to $\frac12$ until time $t=T-1$, when it exits per \eqref{eq: y.transport}.}
\end{figure}

\subsection{First-order in time (parabolic) equations}

We shall consider general evolution equations written as abstract first-order systems, with a main focus on \emph{parabolic equations}. 
While the wave equation may also fit in this setting, there is a difficulty in defining a general functional setting for such differing kinds of problems, as the wave equation conserves the regularity of the initial datum, unlike the gain of regularity typically encountered in parabolic equations. The specific proof of turnpike however, and the structural hypotheses on the dynamics, control, and observation operators, are identical in both cases. We thus postpone the specific case of the wave equation (and natural generalizations thereof) to the subsequent section.
The presentation will require elementary knowledge of semigroup theory and functional analysis; we refer the reader to \cite{tucsnak2009observation} for all the needed details. 

Let us henceforth suppose that we are given a couple of Hilbert spaces $X$ and $\mathscr{H}$ such that
\begin{equation*}
X\hookrightarrow\mathscr{H}\hookrightarrow X'
\end{equation*}
 (with dense embeddings), where the pivot space $\mathscr{H}$ is identified with its dual $\mathscr{H}'$.  This is a Gelfand triple, the canonical example thereof of course being $X:=H^1_0(\Omega)$, $\mathscr{H}:=L^2(\Omega)$, with $X' = H^{-1}(\Omega)$. 
 We shall focus on linear, first order control systems, written in a canonical form
\begin{equation} \label{eq: abstract.system}
\begin{cases}
\partial_t y = Ay+Bu &\text{ in }(0,T),\\
y_{|_{t=0}} = y^0.
\end{cases}
\end{equation}
Here, 
\begin{itemize}
\item $A:\mathfrak{D}(A)\to\mathscr{H}$ is closed and densely defined, with $A\in\mathscr{L}(X,X')$; we also suppose that $-A+\alpha\text{Id}$ is coercive for some $\alpha>0$, in the sense that there exist a couple of constants $(\alpha,\beta)\in(0,+\infty)^2$ such that
\begin{equation} \label{eq: A.coercive}
\Big\langle(-A+\alpha\text{Id})f, f\Big\rangle_{\mathscr{H}} \geqslant\beta\|f\|_X^2,
\end{equation}
holds for all $f\in\mathfrak{D}(A)$. 
The above hypothesis entails that $A$ generates a strongly continuous semigroup $\{e^{tA}\}_{t\geqslant0}$ on $\mathscr{H}$ (see \cite[Chapter 3, pp. 100--105]{lions1971optimal}). 
We need not assume that $A$ is symmetric.
This is an inherently "parabolic" hypothesis, as it is mostly valid in cases where the principal part of the operator $-A$ is self-adjoint, and, consequently, the bilinear form inferred from the principal part of $-A$ is equivalent to the norm $X$-norm.
We suppose that $\mathfrak{D}(A)$ is also dense in $X$, so \eqref{eq: A.coercive} also holds for all $f\in X$, modulo replacing the inner product in $\mathscr{H}$ by the duality bracket between $X'$ and $X$.
\smallskip
\item 
Let us also note that, for ensuring the generation of a strongly continuous semigroup, one may also simply assume $A$ being $m$-dissipative in the sense of \cite{cazenave1998introduction}: 
\begin{equation*}
\|f-\lambda Af\|_{\mathscr{H}}\geqslant\|f\|_{\mathscr{H}}
\end{equation*}
holds for all $\lambda>0$ and $f\in\mathfrak{D}(A)$, and, moreover, the equation $(\text{Id}-\lambda A)f=g$ admits a solution $f\in\mathfrak{D}(A)$ for any $g\in \mathscr{H}$. 
We shall actually use \eqref{eq: A.coercive} to also guarantee the existence of solutions to the steady optimal control problem (see Remark \ref{rem: steady.well.posed}).  
\smallskip
\item 
On the other hand, the control operator is $B\in\mathscr{L}(\mathscr{U},\mathscr{H})$, where $\mathscr{U}$ is another Hilbert space.
A feasible scenario is having $\mathscr{H}=L^2(\Omega)$ and $\mathscr{U}=L^2(\omega)$, with $\omega\subset\Omega$ open and non-empty, namely the typical distributed control setting as considered in \eqref{eq: heat.equation}. We comment on systems involving boundary controls in Remark \ref{rem: boundary.control}; the framework and results can be adapted by making use of transposition and duality arguments. These are solely technical considerations, and do not carry significant conceptual differences to the strategy for proving turnpike we have presented in the context of the heat equation with distributed control.  
\end{itemize}

\begin{remark}[Examples of \eqref{eq: A.coercive}]
The coercivity inequality \eqref{eq: A.coercive} is not only satisfied by the Dirichlet Laplacian (with $\alpha=0$ and $\beta=1$, where $X=H^1_0(\Omega)$, $\mathscr{H}=L^2(\Omega)$ and $\mathfrak{D}(A)=H^2(\Omega)\cap H^1_0(\Omega)$), but also by the Neumann Laplacian (with $\alpha=\beta=1$, where $X=H^1(\Omega)$, $\mathscr{H}=L^2(\Omega)$, and $\mathfrak{D}(A)=\{y\in H^2(\Omega)\,\bigm|\,\partial_n y=0 \text{ on } \partial\Omega\}$), and also for more general elliptic operators involving lower order perturbations.
\end{remark}

By virtue of these assumptions on $A$ and $B$, for any $u\in L^2(0,T;\mathscr{U})$ and $y^0\in\mathscr{H}$, the abstract system \eqref{eq: abstract.system} is well posed, in the sense that there exists a unique weak solution\footnote{In fact, one has stronger information in that, moreover, $\partial_t y\in L^2(0,T; X')$.} $y\in C^0([0,T]; \mathscr{H})\cap L^2(0,T; X)$ (see \cite[Chapter 3, pp. 100--105]{lions1971optimal}, and also \cite{tucsnak2009observation} for a primer on semigroup theory in control). 

We shall henceforth consider the following optimal control problem
\begin{equation}  \label{eq: abstract.heat.ocp}
\inf_{\substack{u\in L^2(0,T; \mathscr{U})\\ y \text{ solves } \eqref{eq: abstract.system}}} \frac12 \int_0^T \|Cy(t)-y_d\|^2_{\mathscr{H}} \diff t + \frac12\int_0^T \|u(t)\|_{\mathscr{U}}^2\diff t.
\end{equation}
In the above problem, $C\in\mathscr{L}(\mathscr{H})$ is\footnote{Henceforth, whenever we use $C$ to denote the observation operator, we shall use lowercase letters (e.g. $c$) to denote constants in various estimates.} a given observation operator, whereas $y_d\in\mathscr{H}$. 
In the specific example of \eqref{eq: pb.turnpike.heat} for instance, we had $Cy=y|_{\omega_\circ}$ and $\mathscr{H}=L^2(\Omega)$, with $\omega_\circ\subset\Omega$. But as we shall see in what proceeds, the definition of $C$ can be relaxed to take into account scenarios which are of practical relevance, such as boundary observation via Neumann traces.
Final pay-offs may also be considered, under similarly moderate assumptions (convex, Fréchet differentiable, bounded from below). Again, these are solely technical adaptations, so we omit them to avoid even more cumbersome notation. 

The optimal control problem \eqref{eq: abstract.heat.ocp} again admits a unique solution by the direct method in the calculus of variations. The steady problem corresponding to \eqref{eq: abstract.heat.ocp} reads as
\begin{equation} \label{eq: static.abstract.ocp}
\inf_{\substack{(u, y)\in\mathscr{U}\times X \\ Ay+Bu=0}} \frac12 \|Cy-y_d\|_{\mathscr{H}}^2 + \frac12 \|u\|_{\mathscr{U}}^2;
\end{equation}
\eqref{eq: static.abstract.ocp} also admits a unique optimal solution $(\overline{u}, \overline{y})$, but we postpone the brief argument\footnote{We do note however that it is relevant to optimize over pairs $(u,y)$ over the manifold $\{Ay+Bu=0\}\subset\mathscr{U}\times X$, as opposed to optimizing solely over $u$ with $y$ satisfying the equation $Ay+Bu=0$. Both are equivalent whenever $-A$ is invertible, since in this case, for any given $u$ there exists a unique solution $y$ to $Ay+Bu=0$. Herein we consider a more general scenario, to account for cases such as the Neumann Laplacian.} to Remark \ref{rem: steady.well.posed}.

Since the proof of Theorem \ref{thm: porretta.zuazua.1} consist in studying the decay properties of the optimality system, in this new abstract framework, we will also need to ensure that the forward equation for the state, as well as the backward equation for the adjoint state, possess a stabilization mechanism.
To this end, we will make the following two natural assumptions. 

Beforehand, we recall that an operator semigroup $\{\mathcal{T}(t)\}_{t\geqslant0}$ on a Hilbert space $\mathscr{H}$ is called \emph{exponentially stable} if there exist a couple of constants $c\geqslant1$ and $\lambda>0$ such that 
\begin{equation*}
\|\mathcal{T}(t)\|_{\mathscr{L}(\mathscr{H})} \leqslant c\,e^{-\lambda t}
\end{equation*}
holds for all $t\geqslant 0$. 

\begin{assumption}[Stabilizability] \label{ass: 1.2}
We suppose that there exists a feedback operator $K\in\mathscr{L}(\mathscr{H},\mathscr{U})$ such that the semigroup\footnote{Note that since $BK\in\mathscr{L}(\mathscr{H})$, as a bounded perturbation of $A$, the operator $A+BK$ also generates a strongly continuous semigroup on $\mathscr{H}$ (see \cite[Section 2.11]{tucsnak2009observation}).} $\left\{e^{t(A+BK)}\right\}_{t\geqslant0}$ on $\mathscr{H}$ is exponentially stable. Equivalently,
\begin{equation} \label{eq: stabilizability.ineq}
\sup_{\|y^0\|_{\mathscr{H}}\leqslant1}\int_0^{+\infty}\left\|e^{t(A+BK)}y^0\right\|_{\mathscr{H}}^2<+\infty
\end{equation}
holds.
\end{assumption}

When the above assumption holds true, we say that $(A,B)$ is \emph{exponentially stabilizable}.
The equivalence stated in Assumption \ref{ass: 1.2} is due to \cite{datko1972uniform} (see also \cite[Corollary 6.1.14]{tucsnak2009observation}). By virtue of \eqref{eq: stabilizability.ineq}, one readily sees that there exists a constant $c>0$ such that for all $T>0$ and $y^0\in\mathscr{H}$, the unique solution $y$ to 
\begin{equation*}
\begin{cases}
\partial_t y = (A+BK)y &\text{ in }(0,T),\\
y_{|_{t=0}}=y^0
\end{cases}
\end{equation*}
satisfies
\begin{equation}\label{eq: implied.by.stabilizability}
\int_0^T \|y(t)\|_{\mathscr{H}}^2\diff t \leqslant c\left\|y^0\right\|_{\mathscr{H}}^2.
\end{equation}
Here, it is critical to emphasize that the constant $c>0$ is independent of $T$.

\begin{assumption}[Detectability] \label{ass: 1.1} 
We suppose that there exists a feedback operator $K\in\mathscr{L}(\mathscr{H})$ such that the semigroup $\left\{e^{t(A^*+C^*K)}\right\}_{t\geqslant0}$ on $\mathscr{H}$ is exponentially stable. Equivalently, 
\begin{equation} \label{eq: detectability.ineq}
\sup_{\left\|p^T\right\|_{\mathscr{H}}\leqslant1}\int_0^{+\infty}\left\|e^{t(A^*+C^*K)}p^T\right\|_{\mathscr{H}}^2<+\infty
\end{equation}
holds.
\end{assumption}

In such a case, we say that the pair $(A, C)$ is assumed to be \emph{exponentially detectable}. And similarly as before, \eqref{eq: detectability.ineq} implies that there exists a constant $c>0$ such that for all $T>0$ and $p^T\in\mathscr{H}$, the unique solution $p$ to
 \begin{equation} \label{eq: visavis.adjoint}
 \begin{cases}
 -\partial_t p =(A^*+C^*K)p &\text{ in }(0,T),\\
 p_{|_{t=T}} = p^T
 \end{cases}
 \end{equation}
 satisfies
 \begin{equation} \label{eq: implied.by.detectability}
 \int_0^T\|p(t)\|_{\mathscr{H}}^2 \diff t\leqslant c\left\|p^T\right\|_{\mathscr{H}}^2.
 \end{equation}
Once again, as for \eqref{eq: implied.by.stabilizability}, we emphasize that the constant $c>0$ is independent of $T$. We also note, vis-à-vis \eqref{eq: visavis.adjoint}, that both the forward and the adjoint equation are posed in the same Hilbert space $\mathscr{H}$, which is identified with its dual. This artifact is in line with the assumptions we had made on the structure of the underlying system and the governing operator, and are typical for parabolic equations.   
 
 Inequalities \eqref{eq: implied.by.stabilizability} and \eqref{eq: implied.by.detectability} are then used in proving that $M:=-A+BB^*\mathscr{E}_\infty$ generates an exponentially stable semigroup on $\mathscr{H}$, and that $\mathscr{E}(T)$ converges exponentially to $\mathscr{E}_\infty$ (both defined as in the proof of Theorem \ref{thm: porretta.zuazua.1}); these two properties are cornerstones of the proof. We refer to Remark \ref{rem: assumptions.turnpike} for more details on how these assumptions are used to derive weaker observability inequalities (and consequently, some kind of unique continuation properties) which appear in the proof, as well as how they may be derived from stronger, but more intuitive assumptions such as controllability and observability.
 
  Taking stock of the above conditions, we may state the following generalization of Theorem \ref{thm: porretta.zuazua.1} -- namely, an exponential turnpike property for the solutions to \eqref{eq: abstract.heat.ocp}. 

\begin{theorem}[\cite{porretta2013long}] \label{thm: porretta.zuazua.2}
 Suppose $y^0\in \mathscr{H}$ and $y_d\in\mathscr{H}$ are fixed. Under Assumptions \ref{ass: 1.2} and \ref{ass: 1.1},  there exist a couple of constants $c>0$ and $\lambda>0$, independent of $y^0$ and $y_d$, such that for any large enough $T>0$, the unique solution $(u_T,y_T)$ to \eqref{eq: abstract.heat.ocp} satisfies
\begin{align}
\|y_T(t)-\overline{y}\|_{\mathscr{H}} &+ \|u_T(t)-\overline{u}\|_{\mathscr{U}}\nonumber \\
&\leqslant c\left(\left\|y^0-\overline{y}\right\|_{\mathscr{H}}e^{-\lambda t} + \|\overline{p}\|_{\mathscr{H}}e^{-\lambda(T-t)}\right)
\end{align}
for a.e. $t\in[0,T]$, where $(\overline{u}, \overline{y})$ denotes the unique solution to \eqref{eq: static.abstract.ocp}, and $\overline{p}\in X$ is the optimal steady adjoint state.
\end{theorem}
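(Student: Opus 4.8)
The plan is to replay, almost verbatim, the three-step Riccati-inspired argument used for Theorem~\ref{thm: porretta.zuazua.1}, now reading the concrete operators $-\Delta$, $1_\omega$ and $1_{\omega_\circ}$ as the abstract data $-A$, $BB^*$ and $C^*C$. First I would record the first-order optimality system for \eqref{eq: abstract.heat.ocp}, namely
\begin{equation*}
\begin{cases}
\partial_t y_T = A y_T + BB^* p_T &\text{in }(0,T),\\
-\partial_t p_T = A^* p_T - C^*\big(Cy_T - y_d\big) &\text{in }(0,T),\\
{y_T}_{|_{t=0}} = y^0, \qquad {p_T}_{|_{t=T}} = 0,
\end{cases}
\end{equation*}
supplemented by $u_T = B^* p_T$, together with its stationary counterpart $-A\overline{y}=BB^*\overline{p}$, $A^*\overline{p}=C^*(C\overline{y}-y_d)$, $\overline{u}=B^*\overline{p}$. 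As in the parabolic case these are necessary and sufficient conditions for optimality, and the steady system is uniquely solvable (Remark~\ref{rem: steady.well.posed}).

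Second, for $y_d\equiv0$ I would define the Riccati operator $\mathscr{E}(T)\in\mathscr{L}(\mathscr{H})$ by $\mathscr{E}(T)y^0:=-p_T(0)$ and, pairing the state equation with $p_T$, recover the variational identity
\begin{equation*}
\big\langle \mathscr{E}(T)y^0,y^0\big\rangle_{\mathscr{H}} = \int_0^T\|Cy_T(t)\|_{\mathscr{H}}^2\diff t + \int_0^T\|u_T(t)\|_{\mathscr{U}}^2\diff t = \inf\, \mathscr{J}_T^0,
\end{equation*}
where $\mathscr{J}_T^0$ denotes the functional in \eqref{eq: abstract.heat.ocp} with $y_d\equiv0$. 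Monotonicity of $\mathscr{E}(T)$ in $T$ follows exactly as before. The one point where the argument genuinely departs from the heat case is the uniform-in-$T$ bound: since the free semigroup $\{e^{tA}\}_{t\geqslant0}$ need no longer decay, I would not apply Datko to it, but instead insert the suboptimal stabilizing feedback $u=Ky$ from Assumption~\ref{ass: 1.2} into the cost and invoke \eqref{eq: implied.by.stabilizability} to obtain $\inf\mathscr{J}_T^0\leqslant \tfrac12\big(\|C\|_{\mathscr{L}(\mathscr{H})}^2+\|K\|_{\mathscr{L}(\mathscr{H},\mathscr{U})}^2\big)\,c\,\|y^0\|_{\mathscr{H}}^2$, uniformly in $T$. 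Being monotone and uniformly bounded (and self-adjoint, nonnegative), $\mathscr{E}(T)$ then converges strongly to a limit $\mathscr{E}_\infty\in\mathscr{L}(\mathscr{H})$.

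Third, I would establish the two cornerstones: that $M:=-A+BB^*\mathscr{E}_\infty$ generates an exponentially stable semigroup on $\mathscr{H}$, and that $\|\mathscr{E}(T)-\mathscr{E}_\infty\|_{\mathscr{L}(\mathscr{H})}\leqslant c\,e^{-\lambda T}$. For the former I would use $y\mapsto\langle\mathscr{E}_\infty y,y\rangle_{\mathscr{H}}$ as a Lyapunov functional for $\partial_t y+My=0$; its dissipation rate is $-\big(\|Cy\|_{\mathscr{H}}^2+\|B^*\mathscr{E}_\infty y\|_{\mathscr{U}}^2\big)$, which sees the state only through the observation and the control, so to upgrade it to genuine exponential decay I would combine it with the detectability inequality \eqref{eq: implied.by.detectability} of Assumption~\ref{ass: 1.1} and Datko's theorem. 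With these in hand, Steps 2 and 3 transcribe directly: introduce the corrector $h_T$, now solving $-\partial_t h_T+(-A^*+\mathscr{E}(T-t)BB^*)h_T=0$ with ${h_T}_{|_{t=T}}=-\overline{p}$, derive the affine feedback $p_T(t)-\overline{p}=\mathscr{E}(T-t)(y_T(t)-\overline{y})+h_T(t)$ (the analog of \eqref{eq: affine.feedback}), and close via Duhamel's formula and Grönwall's lemma applied to $\zeta:=y_T-\overline{y}$, using the exponential stability of $e^{tM}$, the exponential convergence of $\mathscr{E}(T)$, and the bound $\|h_T(t)\|_{\mathscr{H}}\leqslant c\,e^{-\lambda(T-t)}\|\overline{p}\|_{\mathscr{H}}$. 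This gives the double-arc estimate for $\zeta$, then for $p_T-\overline{p}$ through the feedback law, and finally for $u_T-\overline{u}=B^*(p_T-\overline{p})$ by boundedness of $B^*$.

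The main obstacle is precisely the exponential stability of $M$ in this abstract setting. In the heat case the free decay of $\{e^{tA}\}$ did much of the work; here the Lyapunov dissipation controls the state only through $C$ and through $B^*\mathscr{E}_\infty$, so one must quantitatively propagate decay of the observation into decay of the full state. This is exactly the role of detectability (an abstract observability/unique-continuation-type hypothesis), and it is also what underlies the coercivity of $\mathscr{E}_\infty$ from below that the Lyapunov argument tacitly requires (cf. Remark~\ref{rem: assumptions.turnpike}). Ensuring that every constant $c$ produced along the way—in the uniform bound on $\mathscr{E}(T)$, in the rate of $M$, and in the convergence $\mathscr{E}(T)\to\mathscr{E}_\infty$—is genuinely independent of $T$ is the recurring technical thread that must be watched throughout.
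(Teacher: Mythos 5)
Your proposal follows essentially the same route as the paper's (sketched) proof: write the abstract optimality systems, define $\mathscr{E}(T)$ via the adjoint at $t=0$, use the stabilizing feedback of Assumption \ref{ass: 1.2} as a suboptimal test control to get the uniform-in-$T$ bound, use detectability to obtain exponential stability of $M=-A+BB^*\mathscr{E}_\infty$ and the exponential convergence $\mathscr{E}(T)\to\mathscr{E}_\infty$, and then close with the corrector $h_T$, the affine feedback law, Duhamel and Grönwall. This matches the argument the paper outlines and defers to \cite{porretta2013long}, including the correct allocation of the stabilizability and detectability hypotheses to the two cornerstones.
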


\begin{proof} 
The proof follows the same lines as that for the heat equation (Theorem \ref{thm: porretta.zuazua.1}), and may be found in \cite{porretta2013long}. First, one may readily write the optimality systems for the time-dependent and steady optimal control problems. They read, respectively, as
\begin{equation}
\begin{cases}
\partial_t y_T=Ay_T+B\iota_u B^*p_T &\text{ in } (0,T),\\
-\partial_t p_T=A^*p_T-C^*(Cy_T-y_d) &\text{ in } (0,T),\\
{y_T}_{|_{t=0}}=y^0,\\
{p_T}_{|_{t=T}}=0 
\end{cases}
\end{equation}
and
\begin{equation}
\begin{cases}
-A\overline{y}=B\iota_u B^*\overline{p},\\
A^*\overline{p}=C^*(C\overline{y}-y_d),
\end{cases}
\end{equation}
with $u_T\equiv \iota_u B^*p_T$ and $\overline{u}\equiv\iota_u B^*\overline{p}$.
Here $\iota_u:\mathscr{U}'\to\mathscr{U}$ is the natural injection of the dual $\mathscr{U}'$ in the Hilbert space $\mathscr{U}$. 
Assumptions \ref{ass: 1.1} and \ref{ass: 1.2} are then used (see also Remark \ref{rem: assumptions.turnpike}) to ensure the convergence of $\mathscr{E}(T)$ to $\mathscr{E}$ in the reference case $y_d\equiv0$ (defined as in the proof of Theorem \ref{thm: porretta.zuazua.1}), exponentially, with rate $\lambda>0$; this is shown precisely in \cite[Section 3.2]{porretta2013long}.
 \end{proof}
 
There are several examples to which one can apply the above theorem. Let us name a few to illustrate the wide spectrum of applications they encompass.
\begin{enumerate}
\item[1.] \emph{Advection-diffusion equations.} We may consider a more general setting to the linear heat equation with constant coefficients we presented in what precedes, namely an advection-diffusion equation with distributed control and observation, where
\begin{equation*}
Ay:= -\nabla \cdot (a(x)\nabla y) + c(x)y + b(x)\cdot \nabla y.
\end{equation*}
The coefficients are assumed as follows: $a\in L^\infty(\Omega;\mathbb{R}^{d\times d})$ is such that 
$$\alpha_1 \text{Id}\leqslant a(x)\leqslant \alpha_2 \text{Id}$$ 
for some $\alpha_1,\alpha_2>0$ and for a.e. $x\in\Omega$, while $c\in L^\infty(\Omega)$ and $b\in L^\infty(\Omega)^d$. Accordingly,  $A$ is an elliptic operator. 
Let $Bu=u1_{\omega}$ and $Cy=y|_{\omega_\circ}$, with $\mathscr{U}=L^2(\omega)$; both $\omega,\omega_\circ\subset\Omega$ are open and non-empty.
Setting $X=H^1_0(\Omega)$, $\mathscr{H}=L^2(\Omega)$, we see that $A$ satisfies the coercivity requirements stated in what precedes. 
Should $\|b\|_{L^\infty}$ and/or $\|c\|_{L^\infty}$ be large, then $A$ might not generate an exponentially stable semigroup. 
Yet $(A,B)$ and $(A^*,C^*)$ are stabilizable due to the presence of some control (through $B$ and $C^*$), and the turnpike property then holds. This is another example of a system which may be unstable in the absence of control, but can then be stabilized through the action of a control. In occurrence, this is also sufficient for  the turnpike property to be manifested.
\smallskip
\item[2.] \emph{Stokes equations.} Similarly, the result applies for \emph{systems} of equations, such as the linear Stokes equations with Dirichlet boundary conditions on a bounded and smooth domain $\Omega\subset\mathbb{R}^2$:
\begin{equation*}
\begin{cases}
\partial_t \*y - \Delta \*y = -\nabla p + \*u1_\omega &\text{ in }(0,T)\times\Omega,\\
\nabla \cdot \*y = 0 &\text{ in }(0,T)\times\Omega,\\
\*y = 0 &\text{ in }(0,T)\times\partial\Omega,\\
\*y_{|_{t=0}} = \*y^0 &\text{ in }\Omega.
\end{cases}
\end{equation*}
Here $\*y=(y_1,y_2)$ and $\*u=(u_1,u_2)$. The functional setting is only slightly more delicate in this case. The underlying Hilbert state space $\mathscr{H}$ is defined as 
\begin{equation*}
\mathscr{H}:=\left\{\*y\in L^2(\Omega; \mathbb{R}^2)\, \Bigm|\, \nabla\cdot\*y=0,\, \*y|_{\partial\Omega} \cdot \nu =0\right\}
\end{equation*}
In the definition of $\mathscr{H}$, $\nu\in\mathbb{R}^2$ denotes the outward unit normal to $\partial\Omega$. We then set $X=H^1_{\text{div}}(\Omega)$, where 
$$H^1_{\text{div}}(\Omega):=\left\{\*y\in H^1_0(\Omega; \mathbb{R}^d)\, \Bigm|\, \nabla\cdot\*y=0\right\}.$$
We can define the Stokes operator $A:\mathfrak{D}(A)\to\mathscr{H}$ as $A=-\mathcal{P}\Delta$, with domain $$\mathfrak{D}(A)=\left\{\*y\in H^1_{\text{div}}(\Omega)\, \bigm|\, A\*y\in\mathscr{H}\right\}.$$ 
Here $-\Delta$ denotes the Dirichlet Laplacian on $\Omega$, while $$\mathcal{P}: L^2(\Omega; \mathbb{R}^2)=\mathscr{H}\oplus\mathscr{H}^\perp\to\mathscr{H}$$ 
is the Leray projector.
The operator $A$ is self-adjoint, and exponentially stabilizable (\cite{fernandez2004local}), hence previous considerations apply.
Linear convective potentials (as in the first item) may also be added; this allows one to see the framework as linearized Navier-Stokes.
\smallskip
\item[3.] Many further examples can be fit in this framework, including several classes of degenerate linear parabolic equations (\cite{cannarsa2013null, gueye2014exact, geshkovski2020null}), evolution equations for the fractional Laplacian with Dirichlet boundary conditions (\cite{warma2020exponential, macia2021observability}), and so on.
\end{enumerate}

\subsection{The wave equation}

The linear wave equation
\begin{equation} \label{eq: wave.eq.turnpike}
\begin{cases}
\partial_t^2 y - \Delta y = u1_\omega &\text{ in }(0,T)\times\Omega,\\
y=0 &\text{ in }(0,T)\times\partial\Omega,\\
(y,\partial_t y)_{|_{t=0}} = (y^0, y^1) &\text{ in }\Omega,
\end{cases}
\end{equation}
may also fit in the setting of the result presented above. This is done in greater depth in \cite{zuazua2017large}.
As \eqref{eq: wave.eq.turnpike} is a second-order system, the state is $(y,\partial_t y)$. Therefore, some adaptations are needed in terms of the functional setting, but the proof of turnpike follows precisely the same arguments. We shall avoid abstractions in this part, and state the result specific to \eqref{eq: wave.eq.turnpike}. 
In other words, the turnpike property does also hold for appropriate optimal control problems for the wave equation \eqref{eq: wave.eq.turnpike}, and under appropriate assumptions on the control domain $\omega$.

We may consider
\begin{equation} \label{eq: ocp.wave.eq}
\inf_{\substack{u\in L^2((0,T)\times\omega)\\ y\text{ solves } \eqref{eq: wave.eq.turnpike}}} \frac12\int_0^T\left\|y(t)-y_d\right\|^2_{H^1_0(\Omega)} \diff t + \frac12 \int_0^T \|u(t)\|_{L^2(\omega)}^2 \diff t.
\end{equation}
Note that we are not only penalizing $\nabla_x y(t,x)$ in \eqref{eq: ocp.wave.eq}, but we do so over the entire domain $\Omega$ (instead of an open and non-empty subdomain $\omega_\circ$). We discuss both of these considerations in Remark \ref{rem: considerations} -- the latter one is actually not necessary, but renders the presentation simpler.

The corresponding steady system is the same as the one for the heat equation, namely \eqref{eq: steady.heat}. The steady optimal control problem then reads
\begin{equation} \label{eq: steady.ocp.wave.eq}
\inf_{\substack{u\in L^2(\omega)\\ y\text{ solves } \eqref{eq: steady.heat}}} \frac12\left\|y-y_d\right\|^2_{H^1_0(\Omega)}+\frac12 \|u\|_{L^2(\omega)}^2.
\end{equation}
Theorem \ref{thm: porretta.zuazua.2} applies to \eqref{eq: ocp.wave.eq} under the assumption that $\omega\subset\Omega$ satisfies the \emph{Geometric Control Condition} (GCC). This condition roughly asserts that all the rays of geometric optics in $\Omega$, reflected according to the Descartes-Snell law on the boundary, enter the domain $\omega$ in some finite, uniform time (see the seminal work \cite{bardos1992sharp}).

The following result then holds.

\begin{theorem}[\cite{zuazua2017large}]
Suppose that $\omega\subset\Omega$ is open, non-empty, and satisfies GCC. Let $(y^0,y^1)\in H^1_0(\Omega)\times L^2(\Omega)$ and $y_d\in H^1_0(\Omega)$ be fixed. 
There exist a couple of constants $C>0$ and $\lambda>0$, independent of $y^0$ and $y_d$, such that for any $T>0$ large enough, the unique solution $(u_T, y_T)$ to \eqref{eq: ocp.wave.eq} satisfies
\begin{align}\label{eq: porretta.zuazua.estimate}
\|y_T(t)&-\overline{y}\|_{H^1_0(\Omega)}+\|\partial_t y_T(t)\|_{L^2(\Omega)}+\|u_T(t) - \overline{u}\|_{L^2(\omega)}\nonumber\\
&\leqslant C\left(\left\|\Big(y^0-\overline{y}, y^1\Big)\right\|_{H^1_0(\Omega)\times L^2(\Omega)}e^{-\lambda t} + \|\overline{p}\|_{L^2(\Omega)}e^{-\lambda(T-t)}\right),
\end{align}
for a.e. $t\in [0,T]$, where $(\overline{u}, \overline{y})$ denotes the unique solution to \eqref{eq: steady.ocp.wave.eq}, and $\overline{p}\in L^2(\Omega)$ is the optimal steady adjoint state. 
\end{theorem}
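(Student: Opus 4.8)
The plan is to recast the second–order problem \eqref{eq: wave.eq.turnpike}--\eqref{eq: ocp.wave.eq} as a first–order abstract system on the energy space, and then run the Riccati–corrector argument behind Theorem \ref{thm: porretta.zuazua.2}. Concretely, I would take $\mathscr{H}:=H^1_0(\Omega)\times L^2(\Omega)$ with the energy inner product $\langle(y_1,v_1),(y_2,v_2)\rangle_{\mathscr{H}}=\int_\Omega \nabla y_1\cdot\nabla y_2\,\diff x+\int_\Omega v_1 v_2\,\diff x$, write the state as $Y=(y,\partial_t y)$, and introduce the generator
\begin{equation*}
\mathcal{A}=\begin{bmatrix} 0 & \mathrm{Id}\\ \Delta & 0\end{bmatrix},\qquad \mathfrak{D}(\mathcal{A})=\big(H^2(\Omega)\cap H^1_0(\Omega)\big)\times H^1_0(\Omega),
\end{equation*}
together with $\mathcal{B}u=(0,u1_\omega)$ and the observation $\mathcal{C}Y=(y,0)$, so that $\|\mathcal{C}Y\|_{\mathscr{H}}=\|y\|_{H^1_0(\Omega)}$ reproduces the tracking term in \eqref{eq: ocp.wave.eq}. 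Here $\mathcal{A}$ is \emph{skew–adjoint} ($\mathcal{A}^*=-\mathcal{A}$) and generates a unitary $C^0$–group. The steady constraint $\mathcal{A}Y+\mathcal{B}u=0$ forces $\partial_t y\equiv 0$ and $-\Delta y=u1_\omega$, i.e. exactly the Poisson problem \eqref{eq: steady.heat}; hence the steady problem \eqref{eq: steady.ocp.wave.eq} is the elliptic one, and existence and uniqueness of the turnpike triple $(\overline{u},\overline{y},\overline{p})$ follow from the direct method together with the coercivity of $-\Delta$, exactly as in the parabolic case.

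Next I would verify the two structural hypotheses. For stabilizability (Assumption \ref{ass: 1.2}), GCC yields exact controllability of \eqref{eq: wave.eq.turnpike} in a finite uniform time \cite{bardos1992sharp}, which in turn gives exponential stabilizability of $(\mathcal{A},\mathcal{B})$; an explicit admissible feedback is the linear damping $\mathcal{B}K(y,v)=(0,-a(x)v)$ with $a\geqslant 0$ supported in $\omega$, for which GCC is precisely the condition ensuring uniform exponential decay of the energy, so that \eqref{eq: implied.by.stabilizability} holds with a constant independent of $T$. For detectability (Assumption \ref{ass: 1.1}), the running cost observes the \emph{full} $H^1_0(\Omega)$–norm over all of $\Omega$; this is exact observability of the wave equation (the case $\omega_\circ=\Omega$ of GCC, which holds trivially), and dualizing the controllable–implies–stabilizable fact yields exponential detectability of $(\mathcal{A},\mathcal{C})$, i.e. \eqref{eq: implied.by.detectability}. (Were one to observe only on a subdomain $\omega_\circ$, detectability would instead require a GCC assumption on $\omega_\circ$, as flagged in the statement.)

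With Assumptions \ref{ass: 1.2}--\ref{ass: 1.1} in hand, the conclusion follows by replaying the three steps of the proof of Theorem \ref{thm: porretta.zuazua.1}: monotonicity and $T$–uniform boundedness of the Riccati operators $\mathscr{E}(T)$, their exponential convergence \eqref{eq: estimate.operators} to $\mathscr{E}_\infty$, exponential stability \eqref{eq: exp.M} of $M=-\mathcal{A}+\mathcal{B}\mathcal{B}^*\mathscr{E}_\infty$, the affine feedback representation of the adjoint with the corrector estimate \eqref{eq: estimate.h}, and finally the Gr\"onwall energy estimate for $\zeta=Y_T-\overline{Y}$ where $Y_T=(y_T,\partial_t y_T)$; the control estimate then comes from $u_T\equiv \mathcal{B}^*P_T$ and $\overline{u}\equiv \mathcal{B}^*\overline{p}$ for the time–dependent and steady adjoints. \emph{The main obstacle}, and the only genuinely new point over the parabolic case, is that the wave operator does \emph{not} satisfy the parabolic coercivity \eqref{eq: A.coercive}: $\mathcal{A}$ generates an energy–conserving group rather than a smoothing semigroup, and there is no Gelfand–triple gain of regularity to lean on. One must therefore check that every estimate above is driven solely by the stabilizability/detectability inequalities \eqref{eq: implied.by.stabilizability}--\eqref{eq: implied.by.detectability} and by energy identities in $\mathscr{H}$, rather than by any analyticity or smoothing of the flow — in particular that the forward and adjoint equations are consistently posed in the single self–dual space $\mathscr{H}$ (the adjoint lives in $\mathscr{H}$, not in a weaker space), and that Datko's theorem \cite{datko1972uniform} is applied to the damped group. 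This is carried out in \cite{zuazua2017large}.
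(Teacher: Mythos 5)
Your proposal is correct and follows essentially the same route as the paper: GCC gives observability of the adjoint wave equation, hence exponential stabilizability of the damped wave group, and Datko's theorem then yields the $T$-uniform boundedness and exponential convergence of the Riccati operators $\mathscr{E}(T)\to\mathscr{E}_\infty$ needed to uncouple the optimality system exactly as in the heat-equation proof. The only cosmetic difference is that the paper keeps the second-order formulation and lets $\mathscr{E}(T)$ take values in the dual space $H^{-1}(\Omega)\times L^2(\Omega)$, so the adjoint lives in $L^2(\Omega)\times H^{-1}(\Omega)$ rather than in $\mathscr{H}$ itself --- precisely the ``well-balanced norms'' bookkeeping you flag as the main point to check.
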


\begin{proof}
The proof follows precisely the same lines as that for the heat equation, and we only provide a sketch thereof. Let us focus on Step 1 per the proof of Theorem \ref{thm: porretta.zuazua.1}, in which $y_d\equiv0$. 
We consider the transient optimality system
\begin{equation} \label{eq: optimality.wave.eq}
\begin{cases}
\partial_t^2 y_T-\Delta y_T = p_T1_\omega &\text{ in }(0,T)\times\Omega,\\
\partial_t^2 p_T-\Delta p_T = \Delta y_T &\text{ in }(0,T)\times\Omega,\\
y_T=p_T=0 &\text{ on }(0,T)\times\partial\Omega,\\
(y_T,\partial_t y_T)_{|_{t=0}}=(y^0,y^1) &\text{ in }\Omega,\\
(p_T,\partial_t p_T)_{|_{t=T}}=(0,0) &\text{ in }\Omega.
\end{cases}
\end{equation} 
Of course, once again, $u_T\equiv p_T1_\omega$.
For $T>0$, we can define the operator $$\mathscr{E}(T): H^1_0(\Omega)\times L^2(\Omega)\to H^{-1}(\Omega)\times L^2(\Omega)$$ as
\begin{equation*}
\mathscr{E}(T)\left(y^0,y^1\right):=\left(-\partial_t p_T(0), p_T(0)\right),
\end{equation*}
and see that 
\begin{align}
\Big\langle\mathscr{E}(T)\left(y^0,y^1\right), \left(y^0,y^1\right)\Big\rangle 
&=\int_0^T \|y(t)\|_{H^1_0(\Omega)}^2\diff t + \int_0^T \|p(t)\|^2_{L^2(\omega)}\diff t\nonumber\\
&=\inf_{\substack{u\in L^2((0,T)\times\omega)\\ y \text{ solves} \eqref{eq: wave.eq.turnpike}}}\mathscr{J}_T^0(u). \label{eq: wave.riccati}
\end{align}
Here, $\mathscr{J}_T^0$ denotes the functional defined in \eqref{eq: ocp.wave.eq} with $y_d\equiv0$, and $\langle\cdot,\cdot\rangle$ denotes the duality bracket between $H^{-1}(\Omega)\times L^2(\Omega)$ and $H^1_0(\Omega)\times L^2(\Omega)$. This characterization then implies that $\mathscr{E}$ is monotonically increasing with $T>0$. 

To derive similar conclusions as for the heat equation, we seek to use the stabilizability assumptions in \eqref{eq: wave.riccati} to show that $\mathscr{E}(T)$ is bounded uniformly with respect to $T>0$, from which point on, an exponential convergence to the regulator operator $\mathscr{E}_\infty$ can be derived.

It is well-known (\cite{bardos1992sharp, burq1997condition}) that GCC for $\omega$ is a sharp sufficient (and almost necessary) condition for the observability  of the adjoint wave equation. Namely, for any $T>T_{\min}(\omega,\Omega)>0$ (excluding the trivial case $\omega=\Omega$, in which $T_{\min}=0$), there exists a constant $C>0$, depending on $\omega,\Omega$ and $T$, such that for any pair of initial data $(p_0, p_1)\in L^2(\Omega)\times H^{-1}(\Omega)$, the corresponding solution $p$ to the adjoint wave equation
\begin{equation*}
\begin{cases}
\partial_t^2 p-\Delta p=0 &\text{ in }(0,T)\times\Omega,\\
p=0 &\text{ on }(0,T)\times\partial\Omega,\\
(p,\partial_t p)_{|_{t=T}}=(p_0, p_1) &\text{ in }\Omega,
\end{cases}
\end{equation*}
satisfies\footnote{At this point, we furthermore see that the stabilizability of the state equation, and the detectability of the adjoint one, must take place in the appropriate dual space. And this is linked precisely to the notion of having well-balanced norms penalized in the cost functional.}
\begin{equation}\label{eq: obs.ervedoza}
\int_\Omega |p(0,x)|^2\diff x + \|\partial_t p(0,\cdot)\|_{H^{-1}(\Omega)}^2 \leqslant C\int_0^T \int_{\omega} |p(t,x)|^2\diff t\diff x.
\end{equation}
The observability inequality \eqref{eq: obs.ervedoza} then yields\footnote{The observability inequality can actually be used to build more general feedback operators which ensure the exponential decay of the energy for the associated closed-loop wave system at any rate $\mu>0$ (\cite{komornik1997rapid}).} the stabilizability of the forward wave equation, in the sense that there exist $c\geqslant1$ and $\mu>0$, independent of the solution $y$ to the damped wave equation
\begin{equation*}
\begin{cases}
\partial_t^2 y-\Delta y+1_{\omega}\partial_t y=0 &\text{ in }(0,+\infty)\times\Omega,\\
y=0 &\text{ in }(0,+\infty)\times\partial\Omega,
\end{cases}
\end{equation*}
such that 
\begin{equation} \label{eq: energy.def}
E(y(t)):=\int_\Omega |\partial_t y(t,x)|^2\diff x + \int_\Omega |\nabla y(t,x)|^2 \diff x\leqslant c e^{-\mu t} E(y(0))
\end{equation}
holds for all $t\geqslant0$. The stabilizability of the underlying dynamics (combined with the equivalent characterization through Datko's theorem \cite{datko1972uniform}) is precisely the ingredient used in ensuring the uniform boundedness of $\mathscr{E}(T)$ with respect to $T$ (see \cite{porretta2013long}), and ultimately the exponential convergence of $\mathscr{E}(T)$ to $\mathscr{E}_\infty$, which allows to uncouple the optimality system, just as done for the heat equation in the previous section.
\end{proof}

\begin{remark}[Observation in the functional]\label{rem: considerations}
To prove turnpike, we need to ensure that the cost functional allows to recover
enough information on the state $(y_T,\partial_t y_T)$. This is in agreement with our discussions in preceding sections, in which we indicated the relevance of controllability/stabilizability for the turnpike phenomena to emerge.
\begin{enumerate}
\item[1.]
It is for this reason that we penalize $\|\nabla_x y(t)\|^2_{L^2(\Omega)}$, instead of solely $\|y(t)\|_{L^2(\Omega)}$ over $(0,T)$. (Actually, $\|\partial_t y(t)\|_{L^2(\Omega)}$ would also suffice, due to the equipartition of energy for the wave equation, according to which, modulo a compact remainder, the time-averages of $\|\partial_t y(t)\|_{L^2(\Omega)}$ and $\|\nabla_x y(t)\|_{L^2(\Omega)}$ are equivalent.) 
Indeed, if we were to solely penalize $\|y(t)\|_{L^2(\Omega)}$, there would already an apparent mismatch in the optimality system, which in such a case, would read as
\begin{equation}
\begin{cases}
\partial_t^2 y_T-\Delta y_T = p_T1_\omega &\text{ in }(0,T)\times\Omega,\\
\partial_t^2 p_T-\Delta p_T = y_T &\text{ in }(0,T)\times\Omega.
\end{cases}
\end{equation} 
We see that here the right hand side term of the adjoint equation is $y_T(t)\in H^1_0(\Omega)$, unlike in \eqref{eq: optimality.wave.eq}, where $\Delta y_T(t)\in H^{-1}(\Omega)$, which is the correct regularity for the source term to ensure that $p_T(t)\in L^2(\Omega)$. 
This mismatch results in the fact that the Riccati feedback operator $\mathscr{E}(T)$ cannot be ensured to converge exponentially to the regulator $\mathscr{E}_\infty$. 
\smallskip

\item[2.] In the cost functional, we had penalized $\nabla_x y(t, x)$ everywhere in $\Omega$, instead of solely within an open and non-empty subdomain $\omega_\circ\subset\Omega$. This was done solely to simplify the presentation, as one would need to localize the observation within $\omega_\circ$ through a cut-off function, whenever $\partial\omega_\circ\not\subset\partial\Omega$.
One could consider, for instance, a cut-off $\chi\in C^\infty_c(\mathbb{R}^d)$, with $\chi\equiv1$ in an appropriate compact subset $K\Subset\omega_\circ$, and $\chi\equiv0$ in a neighborhood of $\partial\omega_\circ$, as well as in $\Omega\setminus\omega_\circ$, and rather, minimize the functional
\begin{equation*}
\mathscr{J}_T(u):=\frac12\int_0^T\int_{\omega_\circ}\left|\chi\nabla\big(y(t,x)-y_d(x)\big)\right|^2 \diff x \diff t+\frac12\int_0^T\int_\omega |u(t,x)|^2\diff x\diff t.
\end{equation*} 
Just as assumed for $\omega$, for turnpike to hold, one needs to suppose that $\text{supp}(\chi)\subset\omega_\circ$ satisfies GCC, as to ensure the presence of an exponentially stabilizing mechanism with a damping localized through the cut-off $\chi$.
\end{enumerate}
\end{remark}

\begin{remark}[Further second-order examples] The above turnpike result may also be applied to other second-order systems.
\begin{enumerate}
\item[1.] More general settings for wave equations can also be considered (as done in \cite{zuazua2017large, trelat2018steady, grune2020exponential}); for instance, Neumann boundary conditions, or taking into account the medium heterogeneities through variable coefficients
\begin{equation*}
\rho(x)\partial_t^2 y - \nabla \cdot \Big(\sigma(x)\nabla y\Big) = u1_\omega \hspace{1cm} \text{ in } (0,T)\times\Omega,
\end{equation*}
where $\sigma,\rho$ are smooth up to the boundary, with $\sigma(x)>0$ and $\rho(x)>0$ for $x\in\overline{\Omega}$. 
\smallskip
\item[2.] One may also replace the Dirichlet Laplacian $-\Delta$ by the biharmonic operator $\Delta^2$ (and adapt the boundary conditions appropriately) -- this gives rise to the Euler-Bernouilli beam equation. Since the latter is controllable and observable (in \emph{any} positive time $T$ -- see \cite[Appendix 1]{lions1988controlabilite}, \cite[Proposition 7.5.7]{tucsnak2009observation}), the turnpike property also holds in this case.  
\end{enumerate}
\end{remark}

\begin{remark}[Lack of GCC] \label{rem: gcc}
\begin{enumerate}
\item[1.] 
If $\omega$ does not satisfy GCC, then one can ensure at least logarithmic decay for the smooth solution to the damped wave equation, namely logarithmic stabilizability for the wave equation, in the sense that the smooth solution $y$ to 
\begin{equation*}
\begin{cases}
\partial_t^2 y - \Delta y + 1_\omega \partial_t y = 0 &\text{ in }\mathbb{R}\times\Omega,\\
y=0 &\text{ in }\mathbb{R}\times\partial\Omega,\\
(y,\partial_t y)_{|_{t=0}} = (y^0, y^1) &\text{ in }\Omega,
\end{cases}
\end{equation*}
satisfy (recall the definition of the energy $E$ in \eqref{eq: energy.def})
\begin{align*}
E(y(t))\leqslant \frac{C_0}{(\log(2+t))^2} \left(\left\|y^0\right\|_{H^2(\Omega)}^2 + \left\|y^1\right\|_{H^1(\Omega)}^2\right).
\end{align*}
(See \cite{lebeau1996equation}.)
Proceeding by duality as done in \cite[Lemma 4.5]{porretta2013long}, one can then only ensure an estimate of the form
\begin{align} \label{eq: weaker.est}
&\|p(0)\|_{H^{-1}(\Omega)}^2 + \|\partial_t p(0)\|_{(H^1_0\cap H^2(\Omega))'}^2 \\
&\,\leqslant \frac{C\,T}{(\log(T+2))^2} \left(\left\|p^T\right\|_{L^2(\Omega)}^2 +\|p\|_{L^2((0,T)\times\omega)}^2+\|f\|_{L^2(0,T;H^{-1}(\Omega))}^2\right)\nonumber
\end{align}
for some $C>0$ independent of $T>0$, and for any $p_T\in L^2(\Omega)$, $f\in L^2(0,T; H^{-1}(\Omega))$, and the corresponding solution $p$ to
\begin{equation}
\begin{cases}
\partial_t^2 p-\Delta p = f &\text{ in }(0,T)\times\Omega,\\
p=0 &\text{ on }(0,T)\times\partial\Omega,\\
(p,\partial_t p)_{|_{t=T}}=(p^T,0)&\text{ in }\Omega.
\end{cases}
\end{equation}
This is a significantly weaker estimate than
\begin{align} \label{eq: gcc.1}
&\|p(0)\|^2_{L^2(\Omega)} + \|\partial_t p(0)\|^2_{H^{-1}(\Omega)} \\
&\,\leqslant C\left(\left\|p^T\right\|_{L^2(\Omega)}^2 +\|p\|_{L^2((0,T)\times\omega)}^2+\|f\|_{L^2(0,T;H^{-1}(\Omega))}^2\right), \nonumber
\end{align}
which holds for some $C>0$ independent of $T$ when $\omega$ satisfies GCC, both in terms of the topology\footnote{This topology is in fact very weak, as $(H^1_0(\Omega)\cap H^2(\Omega))'$ is not even a space of distributions, since $C^\infty_c(\Omega)$ is not dense in $H^1_0(\Omega)\cap H^2(\Omega)$. This space is nonetheless well suited to the study of evolution equations governed by the Laplacian, as it's simply the dual space of its domain, which can be characterized by Fourier expansion in the orthobasis of eigenfunctions.} for which it holds, and the fact that upper bound in \eqref{eq: weaker.est} will grow with $T$.
Inequality \eqref{eq: gcc.1} is implied by \eqref{eq: obs.ervedoza}, and is specifically used to prove the exponential decay of the Riccati feedback operator $\mathscr{E}(T)$ to $\mathscr{E}_\infty$. 
\medskip

\item[2.]
When GCC doesn't hold, one can still obtain an inkling of a turnpike property (albeit not an exponential turnpike property). More specifically, in \cite{porretta2013long}, for data $y^0\in H^2(\Omega)\cap H^1_0(\Omega)$ and $y^1\in H^1(\Omega)$, the authors show that 
\begin{align*}
&\frac{1}{T}\left(\int_0^T \|y_T(t)-\overline{y}\|_{L^2(\omega_\circ)}^2\diff t + \int_0^T \|u(t)-\overline{u}\|_{L^2(\omega)}^2 \diff t\right)\\
&\quad\leqslant \frac{C}{(\log(2+T))^2}\left(\left\|y^0\right\|_{H^2(\Omega)}^2 +\left\|y^1\right\|_{H^1(\Omega)}^2+\|\overline{p}\|_{H^2(\Omega)}^2\right).
\end{align*}
This is an \emph{integral turnpike} property, indicating the convergence, when $T\to+\infty$, of time averages of optimal evolutionary pairs to the corresponding optimal steady pair.
We refer to \cite{han2021slow} for recent results in the context of wave equations on planar graphs. The lack of exponential stabilizability is also typical in this context (\cite{dager2006wave, valein2009stabilization}). We also refer to \cite{gugat2019turnpike} for a direct strategy for proving integral turnpike properties tailored to first-order, linear hyperbolic systems.
\end{enumerate}
\end{remark} 

\subsection{Discussion}
\begin{remark}[On Assumptions \ref{ass: 1.2} and \ref{ass: 1.1}] \label{rem: assumptions.turnpike}
Let us make some observations regarding the assumptions, in particular, relating them with more familiar and easy-to-check controllability and observability properties, following \cite{porretta2013long}.
\begin{itemize}
\item We begin by noting that detectability for $(A,C)$ implies the existence of a constant $c>0$ such that for every $y\in C^0([0,T];\mathscr{H})$, $f\in L^2(0,T;X')$ and $y^0\in\mathscr{H}$ such that
\begin{equation*}
\begin{cases}
\partial_t y =Ay+ f &\text{ in }(0,T),\\
y_{|_{t=0}}=y^0,
\end{cases}
\end{equation*}
the inequality 
\begin{equation} \label{eq: weak.observability}
\|y(T)\|_{\mathscr{H}}^2 \leqslant c\left(\left\|y^0\right\|_{\mathscr{H}}^2+\int_0^T \|f(t)\|_{X'}^2 \diff t + \int_0^T\|Cy(t)\|_{\mathscr{H}}^2\diff t \right)
\end{equation}
holds for all $T>0$. 
This inequality is clearly satisfied (even with $C\equiv0$) whenever $-A$ is coercive on $X$, namely, $\langle -Af, f\rangle_{\mathscr{H}}\geqslant \beta\|f\|_X^2$ for some $\beta>0$ and all $f\in \mathfrak{D}(A)$, by straightforward energy estimates. Otherwise, the contribution of $Cy(t)$ is non-negligable, and the fulfillment of \eqref{eq: weak.observability} requires an effective interaction of the operator $C$ and the dynamics generated by $A$.
An analog result can be obtained for the adjoint system by making use of the stabilizability assumption (see \cite[Hypothesis 3.3]{porretta2013long}). We refer to \cite[Lemma 3.5]{porretta2013long} for a proof.
\smallskip 

\item In fact, in \cite{porretta2013long}, only \eqref{eq: weak.observability} is assumed, contrary to assuming the exponential detectability hypothesis. Analogously, a similar hypothesis is assumed for the adjoint system, which is then implied by the exponential stabilizability assumption we make here. This is done for simplicity of the presentation.
\smallskip

\item We also note that \eqref{eq: weak.observability} holds whenever a stronger estimate of the form
\begin{equation} \label{eq: whatever}
\|y(\tau)\|_{\mathscr{H}}^2 \leqslant c_\tau \left(\int_0^{\tau} \|f(t)\|_{\mathscr{H}}^2 \diff t + \int_0^{\tau} \|Cy(t)\|_{\mathscr{H}}^2\diff t\right)
\end{equation}
holds for some $c_\tau>0$ and for all $y$ such that $\partial_t y =Ay+f$ in $(0,\tau)$. 
To see this, one invokes \eqref{eq: whatever} over $(T-\tau,T)$ to obtain 
\begin{equation*}
\|y(T)\|_{\mathscr{H}}^2 \leqslant c_\tau \left(\int_{T-\tau}^{T} \|f(t)\|_{\mathscr{H}}^2 \diff t + \int_{T-\tau}^{T} \|Cy(t)\|_{\mathscr{H}}^2\diff t\right),
\end{equation*}
and so \eqref{eq: weak.observability} holds for $T\geqslant\tau$. The local well-posedness of the equation implies \eqref{eq: weak.observability} for $T\leqslant\tau$.
On another hand, by superposition, estimate \eqref{eq: whatever} holds if and only if the observability inequality 
\begin{equation} \label{eq: obs.ineq.str}
\left\|e^{\tau A}y^0\right\|_{\mathscr{H}}^2 \leqslant c_{\tau}\int_0^{\tau} \left\|Ce^{tA}y^0\right\|_{\mathscr{H}}^2 \diff t
\end{equation}
holds for all $\tau>0$, $y^0\in\mathscr{H}$, and for some $c_\tau>0$ depending only on $\tau, A$ and $C$. 
In other words, observability in the sense of \eqref{eq: obs.ineq.str} suffices for ensuring estimate \eqref{eq: weak.observability}. 
Note that an observability inequality such as \eqref{eq: obs.ineq.str} for $(A,C)$, which is actually equivalent to the null controllability of $(A^*,C^*)$, also implies the exponential detectability for $(A,C)$ (which, we recall, means that $(A^*,C^*)$ is exponentially stabilizable). 
Analogous conclusions hold for the stabilizability for $(A,B)$.
Both of these implications are part of the same, namely, the well-known fact that null-controllability implies exponential stabilizability (see \cite{haraux1989remarque} in the context of the wave equation, and \cite[Theorem 3.3, pp. 227]{tucsnak2009observation} for the general setting). See \cite{trelat2019characterization} for further details regarding these characterizations.
\smallskip

\item In the finite-dimensional case (in which $A\in\mathbb{R}^{d\times d}$, $B\in\mathbb{R}^{d\times m}$ and $C\in\mathbb{R}^{s\times d}$), stabilizability and detectability are not only sufficient, but also necessary for having exponential turnpike (see \cite[Theorem A.3]{esteve2020turnpikea}). The necessity of these assumptions in the PDE context is also likely, but has not been demonstrated in full generality to our knowledge.
\end{itemize}
\end{remark}

\begin{remark}[Existence of steady minimizers] \label{rem: steady.well.posed}
To ensure the existence and uniqueness of minimizers to $\mathscr{J}_s$ defined in \eqref{eq: static.abstract.ocp}, namely solutions to the latter, one would again look to apply the direct method in the calculus of variations. However, due to the fact that we are now optimizing over pairs $(u,y)$, coercivity of $\mathscr{J}_s$ with respect to $y$ in the norm of $X$ is also needed. And said coercivity follows from \eqref{eq: weak.observability}. To see as to why this is the case, we note that \eqref{eq: weak.observability} implies that there exists a constant $c_1>0$ such that
\begin{equation} \label{eq: stationary.coercivity}
\|y\|_X^2 \leqslant c_1\Big(\|Ay\|_{X'}^2 + \|Cy\|_{\mathscr{H}}^2\Big)
\end{equation}
holds for all $y\in X$. 
Indeed, applying \eqref{eq: weak.observability} (which is implied by Assumption \ref{ass: 1.1}, per the previous remark) to $\zeta(t):=ty$ for an arbitrary $y\in X$, we get
\begin{equation*}
T^2\|y\|_{\mathscr{H}}^2 \leqslant 2c \frac{T^3}{3} \Big(\|Ay\|_{X'}^2+\|Cy\|_{\mathscr{H}}^2\Big) + 2c T\|y\|_{X'}^2.
\end{equation*}
By virtue of $\mathscr{H}\hookrightarrow X'$, and choosing $T\gg c$, we find 
\begin{equation*}
\|y\|_{\mathscr{H}}^2 \leqslant c_0\Big(\|Ay\|_{X'}^2 + \|Cy\|_{\mathscr{H}}^2\Big).
\end{equation*}
The conclusion then follows by adding $\langle -Ay, y\rangle_{X',X}$ on both sides of the estimate, and using the coercivity assumption on $-A$ and $A\in\mathscr{L}(X,X')$. Note that from \eqref{eq: stationary.coercivity}, one readily sees that the functional $\mathscr{J}_s(u,y)$ defined in \eqref{eq: static.abstract.ocp} is coercive with respect to $(u,v)$ in the $\mathscr{U}\times X$--norm. This, combined with the strict convexity of the problem allows to apply the direct method and derive existence and uniqueness of solutions to \eqref{eq: static.abstract.ocp}.
\end{remark}

\begin{remark}[Boundary control] \label{rem: boundary.control}
In the context of boundary control, for instance, when $y(t,x)=u(t,x)1_{\Gamma}$ on $(0,T)\times\partial\Omega$ where $\Gamma\subset\partial\Omega$ is open and non-empty, instead of having distributed controls of the form $u1_{\omega_\circ}$ as in \eqref{eq: heat.equation}, the turnpike property as stated above still holds. It is however not a direct consequence of Theorem \ref{thm: porretta.zuazua.2}, which assumed that $B\in\mathscr{L}(\mathscr{U},X')$, a hypothesis which is not satisfied by trace operators. 
In the context of the simple heat equation \eqref{eq: heat.equation}, the proof can be adapted by making use of a prudent lifting of the trace, albeit at the cost of additional technicalities. 
In the abstract setting of Theorem \ref{thm: porretta.zuazua.2}, the proof requires introducing the concept of \emph{admissible} control operators $B$ (see \cite{tucsnak2009observation}). We merely stated the result in the context of bounded control operators to avoid many unnecessary technical details.
The proof of the turnpike property for such control operators may be found in \cite{trelat2018steady} and in \cite{grune2020exponential}.
\end{remark}

\begin{remark}[Tracking boundary observations] 
In many of the examples we mentioned, the observation operator $C$ is a bounded linear operator on $\mathscr{H}$. For example, this is usually the case when we can observe the state $y$ within an arbitrarily small, open subset $\omega_\circ\subset\Omega$, in which case, $Cy=y|_{\omega_\circ}$ and $\mathscr{H}=L^2(\Omega)$.
However, in applications stemming from geophysics and tomography, among many others, it is natural to think of a regression problem in which only boundary measurements of the state are tracked. Namely, one could imagine having an observation operator given by the Neumann trace, say, on the entire boundary $\partial\Omega$:
\begin{equation*}
Cy=\partial_\nu y, \hspace{1cm} \text{ for } y\in X.
\end{equation*}
In this case, the operator $C$ is not bounded on $\mathscr{H}$, or even from $X$ to $\mathscr{H}$; rather, it is defined on a domain $\mathfrak{D}(C)$ which is dense in $X$, and its range is typically a subset of some other Hilbert space $\mathscr{V}$. But this does not a priori allow to consider the adjoint $C^*$ as an operator $C^*\in\mathscr{L}(\mathscr{V}', X')$, which would allow us, given the optimal steady state $\overline{y}\in X$, to define the steady adjoint state $\overline{p}\in X$ as satisfying
\begin{equation} \label{eq: AstarCstar}
A^*\overline{p} = C^*\iota(C\overline{y}-y_d),
\end{equation}
where $\iota: \mathscr{V}\to\mathscr{V}'$ is the natural injection of $\mathscr{V}$ in its dual $\mathscr{V}'$.

A remedy for this issue is to define the adjoint state $\overline{p}$ through a transposition argument. We focus on the stationary adjoint state -- the evolution problem follows a similar argument. 
Let us assume that there exists some functional space $\mathscr{W}\subset X$ such that $C\in\mathscr{L}(\mathscr{W},\mathscr{V})$ and, simultaneously, such that $A\in\mathscr{L}(\mathscr{W},\mathscr{H})$ is invertible.  
In this case, the adjoint state $\overline{p}\in\mathscr{H}$ can be defined, instead of \eqref{eq: AstarCstar}, by solving the equation
\begin{equation*}
\langle \overline{p}, A\varphi\rangle_{\mathscr{H}} = \langle C\overline{y}-y_d, C\varphi\rangle_{\mathscr{V}}, \hspace{1cm} \text{ for all } \varphi\in \mathscr{W}.
\end{equation*}
For example, in the case of Neumann trace observation, and working with the Dirichlet Laplacian and distributed controls, one would have $X=H^1_0(\Omega)$ and $\mathscr{W}=H^2(\Omega)\cap H^1_0(\Omega)$. This transposition argument only slightly changes the proof of turnpike, namely the definition of the optimality system (see \cite{porretta2013long}).
\end{remark}

\section{A diagonalization strategy} \label{sec: 5}

The proof of turnpike presented in what precedes can be slightly tweaked to obtain a version which may be seen as even more illustrative. 
In the finite dimensional case, this variation relies on essentially diagonalizing the optimality system, leading, as before, to an uncoupled system for which the asymptotics are transparent. This is done by noting that the optimality system can be written as a shooting problem governed by a matrix which, under the Kalman rank condition, is hyperbolic, namely has eigenvalues with non-zero real part. Presented in \cite{trelat2015turnpike} for the finite-dimensional LQ case (and actually for nonlinear problems by linearization and smallness, as discussed in Part 2), the strategy has also been extended in \cite{trelat2018steady} to the PDE setting. 

For the sake of clarity, let us sketch the idea of this strategy in the finite dimensional case. The PDE setting can be dealt with in a similar way, albeit with some minor technical changes, as done so in \cite{trelat2018steady}. (Furthermore, the controllability assumption entailed by the Kalman rank condition can be relaxed to a stabilizability assumption, as seen in the latter paper.) 
We consider systems of the form
\begin{equation} \label{eq: finite.dim.sys}
\begin{cases}
\dot{y} = Ay + Bu &\text{ in }(0,T),\\
y(0) = y^0,
\end{cases}
\end{equation}
where now $A\in\mathbb{R}^{d\times d}(\mathbb{R})$ and $B\in\mathbb{R}^{d\times m}(\mathbb{R})$, with $d,m\geqslant 1$ (and, typically, $d>m$). 
We now consider the following optimal control problem (which, can be made slightly more general, but we avoid doing so, for simplicity): 
\begin{equation} \label{eq: finite.dim.ocp}
\inf_{\substack{u\in L^2(0,T;\mathbb{R}^m)\\ y \text{solves} \eqref{eq: finite.dim.sys}}} \frac12\int_0^T \|y(t)-y_d\|^2 \diff t + \frac12 \int_0^T \|u(t)\|^2\diff t.
\end{equation}
Here, $y_d\in\mathbb{R}^d$ is given. The existence and uniqueness of a solution to \eqref{eq: finite.dim.ocp} requires no specific assumptions on $A$ or $B$, unlike for  turnpike, as seen just below.

The turnpike property then naturally also holds for the unique optimal pair $(u_T,y_T)$ solving \eqref{eq: finite.dim.ocp}, under similar stabilizability and detectability assumptions. 
We shall assume a stronger property on the dynamics. Namely, we suppose that the Kalman rank condition
\begin{equation*}
\text{rank}\left(\left[B\, AB\, \ldots\, A^{d-1}B\right]\right) = d
\end{equation*}
holds. The corresponding steady optimal control problem reads as 
\begin{equation*} \label{eq: steady.ocp.fin.dim}
\inf_{\substack{(u,y)\in\mathbb{R}^m\times \mathbb{R}^d\\ Ay+Bu=0}} \frac12 \|y-y_d\|^2 + \frac12 \|u\|^2.
\end{equation*}
Problem \eqref{eq: steady.ocp.fin.dim} admits a unique solution, since $\text{ker}(A^*)\cap\text{ker}(B^*)=\{0\}$ by virtue of the Kalman rank condition. 
Then, writing the optimality systems for both the optimal time-dependent triple $(u_T, y_T, p_T)$ and the steady triple $(\overline{u},\overline{y},\overline{p})$, 
where $u_T\equiv B^*p_T$ and $\overline{u}\equiv B^*\overline{p}$, and setting 
\begin{equation*}
\delta y(t):=y_T(t) - \overline{y}, \hspace{1cm} \delta p(t):=p_T(t)-\overline{p},
\end{equation*}
we see that $\delta y(t)$ and $\delta p(t)$ satisfy
\begin{equation*}
\begin{cases}
\delta\dot{y}(t) = A\delta y(t) + BB^*\delta p(t) &\text{ in }(0,T),\\
\delta \dot{p}(t) = \delta y(t) - A^*\delta p(t) &\text{ in }(0,T),\\
\delta y(0) = y^0 - \overline{y},\\
\delta p(T) = -\overline{p}.
\end{cases}
\end{equation*}
But, by setting $\*z:=\left[\delta y^\top, \delta p^\top\right]^\top$, this system can then be seen as a shooting problem for the linear differential system
\begin{equation*}
\dot{\*z}(t) = \mathfrak{H} \*z(t), \quad \text{ in } (0,T),
\end{equation*}
where the matrix $\mathfrak{H}\in\mathbb{R}^{2d\times 2d}(\mathbb{R})$ (designating a \emph{Hamiltonian} matrix) is given by
\begin{equation} \label{eq: def.ham}
\mathfrak{H}:=
\begin{bmatrix}
A & BB^*\\
\text{Id} & -A^*
\end{bmatrix},
\end{equation}
and for which a part of the initial and final data are imposed. The shooting problem consists in determining the initial condition $\delta p(0)$ for which $\*z(t)$, starting at $\*z(0)=\left[(y^0-\overline{y})^\top,\delta p(0)^\top\right]$, satisfies $\delta p(T)=-\overline{p}$. 
The critical observation is that, under the Kalman rank condition, the matrix $\mathfrak{H}$ is \emph{hyperbolic}, namely 

\begin{lemma} The matrix $\mathfrak{H}$ in \eqref{eq: def.ham} is hyperbolic, in the sense that if $\lambda\in\mathbb{C}$ is an eigenvalue of $\mathfrak{H}$, then $\mathrm{Re}(\mathfrak{H})\neq0$. Moreover, if $\lambda$ is an eigenvalue of $\mathfrak{H}$, then so is $-\lambda$.
\end{lemma}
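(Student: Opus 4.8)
The plan is to prove the two assertions separately, handling the spectral symmetry $\lambda\mapsto-\lambda$ first and the hyperbolicity second, with the Kalman rank condition entering only in the latter.

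For the symmetry, I would exploit that $\mathfrak{H}$ is a \emph{Hamiltonian} matrix in the precise algebraic sense. Introducing the skew-symmetric matrix $J:=\left[\begin{smallmatrix} 0 & \mathrm{Id}\\ -\mathrm{Id} & 0\end{smallmatrix}\right]\in\mathbb{R}^{2d\times 2d}$, a direct block computation gives $J\mathfrak{H}=\left[\begin{smallmatrix} \mathrm{Id} & -A^*\\ -A & -BB^*\end{smallmatrix}\right]$, which is symmetric (using $A^*=A^\top$ and the symmetry of $BB^*$). Equivalently $\mathfrak{H}^\top=-J\mathfrak{H}J^{-1}$, so $\mathfrak{H}^\top$ is similar to $-\mathfrak{H}$. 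Since $\mathfrak{H}$ and $\mathfrak{H}^\top$ share the same characteristic polynomial, the spectra of $\mathfrak{H}$ and $-\mathfrak{H}$ coincide, whence $\lambda\in\mathrm{spec}(\mathfrak{H})$ if and only if $-\lambda\in\mathrm{spec}(\mathfrak{H})$, as claimed.

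For hyperbolicity I would argue by contradiction, supposing $i\omega$ with $\omega\in\mathbb{R}$ is an eigenvalue with eigenvector $(v,w)\neq 0$ in $\mathbb{C}^d\times\mathbb{C}^d$. The eigenvalue relation splits as $Av+BB^*w=i\omega v$ and $v-A^*w=i\omega w$. Pairing the first relation in the Hermitian inner product with $w$ yields $\langle w,Av\rangle+\|B^*w\|^2=i\omega\langle w,v\rangle$, and pairing the second with $v$ yields $\|v\|^2-\langle Av,w\rangle=i\omega\langle v,w\rangle$. Conjugating the first identity turns its cross term into $\langle Av,w\rangle$ and its right-hand side into $-i\omega\langle v,w\rangle$; adding the result to the second identity cancels both the $\langle Av,w\rangle$ terms and the two opposite $i\omega\langle v,w\rangle$ terms, leaving the clean identity $\|v\|^2+\|B^*w\|^2=0$. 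This forces $v=0$ and $B^*w=0$, so the second relation degenerates to $A^*w=-i\omega w$ with $B^*w=0$.

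At this point the Kalman rank condition enters, and this is the step I expect to be the crux. I would invoke the Popov--Belevitch--Hautus test: $(A,B)$ is controllable --- equivalently, satisfies the Kalman rank condition --- if and only if there is no nonzero $w$ and no $\mu\in\mathbb{C}$ with $A^*w=\mu w$ and $B^*w=0$ simultaneously. Applying this with $\mu=-i\omega$ forces $w=0$, hence $(v,w)=0$, contradicting that $(v,w)$ is an eigenvector. Therefore no purely imaginary eigenvalue can exist and every eigenvalue satisfies $\mathrm{Re}(\lambda)\neq 0$. The only genuinely substantive ingredient is the equivalence between the Kalman rank condition and the Hautus eigenvector criterion, which is standard; everything else reduces to elementary block algebra and the inner-product cancellation above.
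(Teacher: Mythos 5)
Your proof is correct, but it takes a genuinely different route from the paper's. The paper's argument is built on Riccati theory: it takes the negative definite and positive definite solutions $\mathscr{E}_\pm$ of the algebraic Riccati equation $XA+A^*X+XBB^*X-\mathrm{Id}=0$, uses them to conjugate $\mathfrak{H}$ into the block-diagonal form $\mathrm{diag}\left(A+BB^*\mathscr{E}_-,\ A+BB^*\mathscr{E}_+\right)$, invokes the classical fact that $A+BB^*\mathscr{E}_-$ is Hurwitz under the Kalman rank condition, and then subtracts the two Riccati equations to see that the spectrum of $A+BB^*\mathscr{E}_+$ is the negative of that of $A+BB^*\mathscr{E}_-$. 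Your argument instead splits the two claims: the spectral symmetry $\lambda\mapsto-\lambda$ follows purely from the symplectic identity $\mathfrak{H}^\top=-J\mathfrak{H}J^{-1}$ (and so needs no controllability at all, a point the paper's proof obscures), while the absence of imaginary eigenvalues follows from your inner-product cancellation $\|v\|^2+\|B^*w\|^2=0$ combined with the Hautus eigenvector test. Both the cancellation and the reduction to $A^*w=-i\omega w$, $B^*w=0$ check out. What your route buys is economy and transparency: it avoids assuming existence and definiteness of the stabilizing and anti-stabilizing Riccati solutions, and it isolates exactly where controllability is used. What it loses is the explicit diagonalizing change of variables $P=\left[\begin{smallmatrix}\mathrm{Id}&\mathrm{Id}\\ \mathscr{E}_-&\mathscr{E}_+\end{smallmatrix}\right]$, which is not incidental here: the paper's subsequent turnpike argument runs precisely through that conjugation to split the shooting problem into a forward-contracting and a backward-contracting part, so the Riccati-based proof of the lemma is doing double duty. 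As a proof of the lemma as stated, yours is complete and arguably cleaner; as a step in the paper's overall strategy, the Riccati construction is still needed downstream.
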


This is precisely what we have seen in Figure \ref{fig: spectral.dic}: the coupling in the optimality system, stemming from the tracking term, instills a stabilizing and symmetric structure.
The proof of this lemma is in fact quite important in the general strategy, so we sketch it.

\begin{proof} Let $\mathscr{E}_-$ (resp. $\mathscr{E}_+$) be the symmetric negative definite matrix (resp., the symmetric positive definite matrix) solution of the algebraic Riccati equation (see \cite{vinter2010optimal}):
\begin{equation*}
XA + A^*X + XBB^*X - \text{Id} = 0.
\end{equation*}
Note that uniqueness of solutions follows from the controllability assumption. 
Setting 
\begin{equation*}
P=\begin{bmatrix}
\text{Id} & \text{Id}\\
\mathscr{E}_- & \mathscr{E}_+
\end{bmatrix}
\end{equation*}
we see that the matrix $P$ is invertible, and in fact
\begin{equation*}
P^{-1} \mathfrak{H} P = 
\begin{bmatrix}
A+BB^*\mathscr{E}_- & 0 \\ 
0 & A+ BB^*\mathscr{E}_+
\end{bmatrix}.
\end{equation*}
Now the fact that the matrix $A+BB^*\mathscr{E}_-$ has (complex) eigenvalues with negative real parts is a known property of algebraic Riccati theory, due to the fact that $(A,B)$ satisfies the Kalman rank condition (\cite{vinter2010optimal}). On another hand, subtracting the Riccati equations satisfied by $\mathscr{E}_+$ and $\mathscr{E}_-$, we find
\begin{equation*}
(\mathscr{E}_+-\mathscr{E}_-)(A+BB^*\mathscr{E}_+) + (A+BB^*\mathscr{E}_-)^*(\mathscr{E}_+-\mathscr{E}_-) = 0.
\end{equation*}
Since $\mathscr{E}_+-\mathscr{E}_-$ is invertible, it follows that the eigenvalues of $A+BB^*\mathscr{E}_+$ are the negative of those of $A+BB^*\mathscr{E}_-$. This concludes the proof.
\end{proof}

The above proof motivates working in a different coordinate system in view of understanding the turnpike asymptotics. In fact, the proof allows to diagonalize $M$ in a rather appropriate way.
We consider the change of variable
\begin{equation*}
\*z(t) = 
\begin{bmatrix}
\text{Id} & \text{Id}\\
\mathscr{E}_- & \mathscr{E}_+
\end{bmatrix}\*x(t),
\end{equation*}
to then find that $\*x(t)$ satisfies
\begin{equation*}
\dot{\*x}(t) = \begin{bmatrix}
A+BB^*\mathscr{E}_- & 0 \\ 
0 & A+ BB^*\mathscr{E}_+
\end{bmatrix}\*x(t).
\end{equation*}
But now the above system, consisting of $2d$ equations, is purely hyperbolic, namely it is governed by a matrix with eigenvalues with non-zero real part, and is also symmetric. Thus, the first $d$ equations represent a contracting system forward in time, and the last $d$ ones represent a contracting system backward in time. 
To be more precise, setting $\*x(t) = [\zeta(t), \eta(t)]$, we find that
\begin{equation*}
\begin{cases}
\dot{\zeta}(t) = (A+BB^*\mathscr{E}_-)\zeta(t) &\text{ in }(0,T),\\
\dot{\eta}(t) = (A+BB^*\mathscr{E}_+)\eta(t) &\text{ in }(0,T).
\end{cases}
\end{equation*}
And since all the eigenvalues of $A+BB^*\mathscr{E}_-$ have negative real parts, while the ones of $A+BB^*\mathscr{E}_+$ are the negative of those of $A+BB^*\mathscr{E}_-$, it follows that 
\begin{equation*}
\|\zeta(t)\| \leqslant c\|\zeta(0)\| e^{-\lambda t}, \qquad \|\eta(t)\|\leqslant c\|\eta(T)\| e^{-\lambda(T-t)} 
\end{equation*}
for some $c>0$ independent of $(\zeta,\eta)$ and $T$, and for every $t\in[0,T]$, where $\lambda$ is the spectral abscissa of the matrix $A+BB^*\mathscr{E}_-$, namely
\begin{equation*}
\lambda = -\max\left\{\Re(\mu)\,\Bigm|\, \mu \in \text{spec}(A+BB^*\mathscr{E}_-)\right\}>0.
\end{equation*}
We thus recover the same decay rate as the one obtained via the strategy presented in what precedes. We refer to \cite{trelat2015turnpike, trelat2018steady} for technical details.

\section{Dissipativity and measure turnpike} \label{sec: 6} Up to now, we only focused on a characterization of the turnpike property by means of a double-arc exponential decay estimate: when $T\gg1$, point-wise, the optimal triple is $\mathcal{O}\left(e^{-t} + e^{-(T-t)}\right)$ for all $t\in[0,T]$. 
And we refer to such an estimate as the \emph{exponential turnpike property}.
There exist, however, weaker notions and characterizations, which warrant some attention, in particular due to a breadth of existing techniques, and the possibility of including state and control constraints. One of them is the so called \emph{measure turnpike property}, which states that for all $\varepsilon>0$, the measure of the set of times $t\in[0,T]$ for which $\|y_T(t)-\overline{y}\|_{\mathscr{H}} + \|u_T(t)-\overline{u}\|_{\mathscr{U}}$ is larger than $\varepsilon$, is not "too big". It is noteworthy that in some settings, a sufficient condition for this property to hold can be seen as an extension of Lyapunov's second method. This is the so called \emph{dissipativity of systems}, in the sense of Willems \cite{willems1972dissipative} (see \cite{faulwasser2017turnpike} for a contemporary treatment). 
In other words, the study of dissipativity can be seen, to a certain regard, as a Lyapunov-akin strategy (namely, an extension of Lyapunov to an open-loop setting) to proving the turnpike property.

Let us provide some more details to this discussion in the context of PDEs, for which we follow \cite{trelat2018integral}. (In fact, in \cite{trelat2018integral}, the results are stated and proven for more general nonlinear systems, but the theory being local around a steady pair, we focus on the linear case here.) 
We borrow the notations from previous sections, and consider 
\begin{equation} \label{eq: trelat.ocp..}
\inf_{\substack{u \in L^2(0,T; \mathscr{U})\\ y \text{ solves} \eqref{eq: trelat.eq..}}} \underbrace{\int_0^T f^0(y(t), u(t)) \diff t}_{:=\mathscr{J}_T(u)},
\end{equation}
where
\begin{equation} \label{eq: trelat.eq..}
\begin{cases}
\partial_t y =Ay+ Bu &\text{ in }(0,T),\\
y_{|_{t=0}} = y^0.
\end{cases}
\end{equation}
Here, we assume that $f^0\in C^0(X\times\mathscr{U};\mathbb{R})$ is bounded from below, convex, and coercive with respect to the $X\times\mathscr{U}$--norm; once again, $A$ is supposed to generate a continuous semigroup on $\mathscr{H}$, and $B\in\mathscr{L}(\mathscr{U},\mathscr{H})$. Accordingly, as before, for $T>0$, \eqref{eq: trelat.eq..} admits a unique (mild) solution $y\in C^0([0,T]; \mathscr{H})$ for data $y^0\in\mathscr{H}$ and $u\in L^2(0,T; \mathscr{U})$, whereas, due to continuity, convexity, and coercivity, \eqref{eq: trelat.ocp..} can be shown to admit a minimizer by the direct method in the calculus of variations. 
The corresponding steady optimal control problem then reads
\begin{equation} \label{eq: static.ocp.trelat..}
\inf_{\substack{(u,y)\in\mathscr{U}\times X\\ Ay+Bu=0}} f^0(y,u).
\end{equation}
We denote $\mathscr{J}_s(u):=f^0(y,u)$. We shall assume that \eqref{eq: static.ocp.trelat..} admits a solution (see \cite{trelat2018integral} for more details).

We shall distinguish pairs $(u,y)$ which are optimal and admissible for \eqref{eq: trelat.ocp..}. Namely, we say that the pair $(u,y)\in L^2(0,T; \mathscr{U})\times C^0([0,T];\mathscr{H})$ is \emph{admissible} for \eqref{eq: trelat.ocp..} if $\partial_t y =Ay+ Bu$ for $t\in(0,T)$. 
We say that the pair $(u,y)$ is \emph{optimal} if, in addition to being admissible, $\mathscr{J}_T(u)\leqslant\mathscr{J}_T(v)$ for all functions $v\in L^2(0,T; \mathscr{U})$, and $y(0)=y^0$ hold. 
In particular, any optimal steady pair $(\overline{y}, \overline{u})$ for \eqref{eq: static.ocp.trelat..} is also admissible for \eqref{eq: trelat.ocp..}.

We may begin by defining the relevant notions of dissipativity.

\begin{definition}[Dissipativity] 
Let $T>0$. 
We say that \eqref{eq: trelat.ocp..} is \emph{dissipative} at an optimal steady pair $(\overline{u}, \overline{y})$ solving \eqref{eq: static.ocp.trelat..}, if there exists a \emph{storage function} $\*S: \mathscr{H}\to\mathbb{R}$, locally bounded and bounded from below, such that for any $T>0$, the inequality
\begin{equation*}
\*S(y(\tau)) - \*S(y(0))\leqslant\int_0^\tau \Big(f^0(y(t),u(t)) - f^0(\overline{y},\overline{u})\Big)\diff t
\end{equation*}
holds for any $\tau\in[0,T]$ and for any optimal pair $(u, y)$ solution to \eqref{eq: trelat.ocp..}. 

We say that \eqref{eq: trelat.ocp..} is \emph{strictly dissipative} at an optimal steady pair $(\overline{u}, \overline{y})$ solving \eqref{eq: static.ocp.trelat..}, if there exists a nonnegative function $\alpha\in C^0([0,+\infty))$, with $\alpha$ strictly increasing and\footnote{Such functions $\alpha$ are said to be of \emph{class} $\mathcal{K}$.} $\alpha(0)=0$, and a storage function $\*S: \mathscr{H}\to\mathbb{R}$, locally bounded and bounded from below, such that for any $T>0$, the inequality
\begin{align*}
\*S(y(\tau))-\*S(y(0)) &\leqslant \int_0^\tau \Big(f^0(y(t),u(t)) - f^0(\overline{y},\overline{u})\Big)\diff t \\
&\quad- \int_0^\tau \alpha\left(\Big\|\big(y(t)-\overline{y}, u(t)-\overline{u}\big)\Big\|_{\mathscr{H}\times\mathscr{U}}\right) \diff t
\end{align*}
holds for any $\tau\in[0,T]$  and for any optimal pair $(u, y)$ solution to \eqref{eq: trelat.ocp..}. 
\end{definition}

Let us provide some comments regarding the above definitions. 
The function $$\omega(y, u):= f^0(y,u) - f^0(\overline{y},\overline{u}),$$ with respect to which dissipativity is defined, is usually referred to as the \emph{supply rate function}.
We then note that for dissipativity to hold, it suffices to find a $C^1$, non-negative function $\*S$ satisfying 
\begin{equation*}
\frac{\diff}{\diff t} \*S(y(t)) \leqslant \omega(y(t),u(t))
\end{equation*}
for all $t\in[0,T]$ along optimal pairs $(y,u)$. This makes the storage function $\*S$  akin to a Lyapunov functional, the difference being the presence of the supply rate $\omega$, which accounts for the energy input in the system due to the presence of an open-loop control $u(t)$. (Recall that the Lyapunov stability method applies to systems without inputs: $\dot{y}(t)=f(y(t))$.) The supply rate indicates, in some sense, the total external energy added to the system at time $t$. And so, there can be no internal "creation of energy", rather, only internal dissipation of energy.  
Strict dissipativity entails a stronger differential inequality, of the form
\begin{equation*}
\frac{\diff}{\diff t} \*S(y(t)) \leqslant \omega(y(t),u(t))-\alpha\left(\Big\|\big(y(t)-\overline{y}, u(t)-\overline{u}\big)\Big\|_{\mathscr{H}\times\mathscr{U}}\right)
\end{equation*}
for all $t\in[0,T]$. While sufficient and illustrative, this is not a necessary assumption as looking for a differentiable storage function is rather restrictive. In fact, as discussed in \cite{trelat2018integral}, the value function for \eqref{eq: trelat.ocp..} is always a storage function, but is not differentiable for many optimal control problems.
The following theorem holds.

\begin{theorem}[\cite{trelat2018integral}]  Suppose that there exists some constant $M>0$ such that for any $T>0$, any optimal pair $(u_T, y_T)$ for \eqref{eq: trelat.ocp..} is such that 
\begin{equation*}
\|y_T(t)\|_{\mathscr{H}}+\|u_T(t)\|_{\mathscr{U}}\leqslant M
\end{equation*}
for a.e. $t\in[0,T]$. Let $(\overline{u}, \overline{y})$ be \emph{some} solution to \eqref{eq: static.ocp.trelat..}. 
\begin{enumerate}
\item[1.] Suppose furthermore that \eqref{eq: trelat.ocp..} is dissipative at $(\overline{u}, \overline{y})$.
Then 
\begin{equation*}
\frac{\mathscr{J}_T}{T} = \mathscr{J}_s + \mathcal{O}\left(\frac{1}{T}\right) \hspace{1cm} \text{ as } T\to+\infty.
\end{equation*}
\item[2.] Suppose furthermore that \eqref{eq: trelat.ocp..} is strictly dissipative at $(\overline{u}, \overline{y})$. 
Then for every $\varepsilon>0$, there exists $\kappa(\varepsilon)>0$ such that 
\begin{equation*}
\text{meas}\left(\left\{t\in[0,T]\, \Biggm|\, \Big\|\big(y_T(t)-\overline{y}, u_T(t)-\overline{u}\big)\Big\|_{\mathscr{H}\times\mathscr{U}}>\varepsilon\right\}\right)\leqslant \kappa(\varepsilon)
\end{equation*}
holds for all $T>0$, where $(u_T,y_T)$ is an optimal pair for \eqref{eq: trelat.ocp..}.
\end{enumerate}
\end{theorem}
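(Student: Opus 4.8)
The plan is to pair the dissipation inequality, which yields a lower bound on the transient cost, with a quasi-turnpike competitor, which yields a matching upper bound; the measure estimate then falls out of the \emph{strict} dissipation inequality by a Markov-type argument. First I would record the lower bound. Since dissipativity holds along the optimal pair $(u_T, y_T)$, evaluating the dissipation inequality at $\tau = T$ and using $y_T(0) = y^0$ together with $\mathscr{J}_s = f^0(\overline{y}, \overline{u})$ gives
\begin{equation*}
\*S(y_T(T)) - \*S(y^0) \leqslant \mathscr{J}_T(u_T) - T\,\mathscr{J}_s .
\end{equation*}
As $\*S$ is bounded from below and $\*S(y^0)$ is a fixed constant, this rearranges to $\mathscr{J}_T(u_T) \geqslant T\,\mathscr{J}_s - C_1$ with $C_1$ independent of $T$.

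Next I would establish the reverse inequality $\mathscr{J}_T(u_T) \leqslant T\,\mathscr{J}_s + C_2$ by testing the functional against a quasi-turnpike trajectory. Using a stabilizability/controllability mechanism for $(A,B)$, one steers from $y^0$ to $\overline{y}$ in a fixed time $\tau_0$ (independent of $T$) at bounded cost, and then applies the steady control $\overline{u}$ on $[\tau_0, T]$, so that the state sits at $\overline{y}$ there. The cost of this competitor is a fixed boundary-layer contribution plus $(T-\tau_0)\,\mathscr{J}_s$, so optimality forces $\mathscr{J}_T(u_T) \leqslant T\,\mathscr{J}_s + C_2$. Combined with the lower bound this gives $\mathscr{J}_T / T = \mathscr{J}_s + \mathcal{O}(1/T)$, which is part 1.

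For part 2 I would replay the computation with the \emph{strict} dissipation inequality at $\tau = T$, which after rearranging reads
\begin{equation*}
\int_0^T \alpha\!\left(\Big\|\big(y_T(t)-\overline{y},\, u_T(t)-\overline{u}\big)\Big\|_{\mathscr{H}\times\mathscr{U}}\right)\diff t \leqslant \mathscr{J}_T(u_T) - T\,\mathscr{J}_s + \*S(y^0) - \*S(y_T(T)) .
\end{equation*}
Inserting the upper bound $\mathscr{J}_T(u_T) \leqslant T\,\mathscr{J}_s + C_2$ and the lower bound on $\*S$ shows the left-hand integral is bounded by a constant $C_3$ \emph{independent of $T$}. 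Writing $Q_\varepsilon := \{t \in [0,T] : \|(y_T(t)-\overline{y}, u_T(t)-\overline{u})\|_{\mathscr{H}\times\mathscr{U}} > \varepsilon\}$ and using that $\alpha$ is of class $\mathcal{K}$ (hence increasing, with $\alpha(\varepsilon) > 0$), a Markov inequality yields
\begin{equation*}
\alpha(\varepsilon)\,\text{meas}(Q_\varepsilon) \leqslant \int_{Q_\varepsilon} \alpha\!\left(\Big\|\big(y_T(t)-\overline{y},\, u_T(t)-\overline{u}\big)\Big\|_{\mathscr{H}\times\mathscr{U}}\right)\diff t \leqslant C_3 ,
\end{equation*}
so that $\text{meas}(Q_\varepsilon) \leqslant C_3 / \alpha(\varepsilon) =: \kappa(\varepsilon)$, uniformly in $T$ (the regime $T\leqslant\tau_0$ being trivial, as then $\text{meas}(Q_\varepsilon)\leqslant\tau_0$).

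The main obstacle is the uniform-in-$T$ upper bound $\mathscr{J}_T(u_T) \leqslant T\,\mathscr{J}_s + C_2$: everything else is a rearrangement of the (strict) dissipation inequality together with the lower boundedness of $\*S$. This upper bound is precisely where a reachability/stabilizability mechanism is indispensable, since it is what permits a competitor to reach and remain at the turnpike for all but a bounded window of time; it is also where the controllability philosophy underlying the whole paper re-enters. A secondary technical point is ensuring the boundary-layer cost of reaching $\overline{y}$ is finite and $T$-independent, which needs $f^0$ to be controlled along the steering trajectory (e.g.\ via its continuity together with the exponential approach to $\overline{y}$).
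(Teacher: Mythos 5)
Your architecture — a lower bound on the transient cost from the dissipation inequality, a matching upper bound from a competitor, and a Markov/Chebyshev argument on $\alpha$ for the measure estimate — coincides with the paper's, and your part 2 computation (bounding $\int_0^T\alpha(\cdot)\,\diff t$ by a $T$-independent constant via strict dissipativity, the lower boundedness and local boundedness of $\*S$, and dividing by $\alpha(\varepsilon)$) is exactly the paper's proof of the measure turnpike property. The one genuine divergence is the source of the upper bound $\mathscr{J}_T(u_T)\leqslant T\mathscr{J}_s+C_2$. You construct a quasi-turnpike competitor emanating from $y^0$, reaching $\overline{y}$ in a fixed time $\tau_0$ via a stabilizability/controllability mechanism and then sitting at the turnpike; as you yourself flag, this requires a reachability hypothesis that is \emph{not} among the assumptions of the theorem (only the uniform bound $M$ on optimal pairs and (strict) dissipativity are assumed). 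The paper instead exploits its convention on admissibility: an admissible pair is only required to solve the state equation, not to match the initial condition, so the steady pair $(\overline{u},\overline{y})$ is itself an admissible competitor and optimality yields directly $\mathscr{J}_T(u_T)\leqslant\mathscr{J}_T(\overline{u})=T\mathscr{J}_s$, with no boundary layer, no constant $C_2$, and no controllability whatsoever. Strictly as a proof of the theorem as stated, your route therefore carries an extra hypothesis the statement does not make; on the other hand, it is the more scrupulous route if one insists that every competitor satisfy $y(0)=y^0$, in which case some reachability mechanism is genuinely unavoidable and the paper's one-line comparison would need your repair. The remaining details (the part 1 lower bound from plain dissipativity, the harmless additive constant $C_2$ in the Markov step, the trivial regime of small $T$) are all consistent with the intended argument.
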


The first result in the above theorem is usually referred to as the \emph{integral turnpike property} -- time averages of the functional converge to the stationary functional as $T\to+\infty$. This ergodic-like pattern is a relatively weak property and can be proven by means of a variety of techniques (mainly energy estimates; see however \cite{mazari2020quantitative} for a proof by means of so-called quantitative inequalities in the presence of state constraints).
On the other hand, the second property is referred to as the \emph{measure turnpike property}, and states that the measure of the set of times where an optimal control and state pair are away from some optimal steady control and state pair is not "too big". 

This being said, the constant $\kappa(\varepsilon)$ is of the form $\sfrac{C}{\alpha(\varepsilon)}$ for some constant $C$ independent of $\varepsilon$ and $T$. Since $\alpha$ is increasing and $\alpha(0)=0$, we see that as $\varepsilon$ goes to $0$, the upper bound for the measure of the set grows. Furthermore, it is not apparent specifically where the time instances at which the discrepancies of the time-depending pairs to the steady pairs are small, are located. In comparison, the exponential turnpike property provides the exact distribution of these time instances.
Note that the exponential turnpike property implies both of the above statements.
 And while it is not always clear how to find a storage function $\*S$ for PDEs beyond  LQ problems, wherein sufficient conditions are known for the exponential turnpike property to hold, we do refer the reader to \cite[Section 4]{trelat2018integral}, where the authors devise a clever duality method for finding a storage function. We refer to \cite{brogliato2007dissipative} for further insights regarding sufficient conditions for storage functions -- we emphasize that this is a delicate question in general.
 
 \begin{remark}[Constraints]
Note that in \cite{trelat2018integral}, the authors impose constraints on the admissible pairs $(u, y)$ within the optimal control problem, namely, that $(u(t),y(t))$ lie in a compact subset of $\mathscr{U}\times\mathscr{H}$ for a.e. $t\in[0,T]$. In particular, this would mean that the assumption in the statement is satisfied. 
\end{remark}
 
\begin{remark}[Enhancing measure turnpike]
Strict dissipativity is a rather strong assumption for ensuring the measure turnpike property, which, as said above, is rather weak when compared to the exponential turnpike property. But actually, under the assumption of strict dissipativity, in \cite{trelat2020linear} it is shown that for almost every $s\in(0,1)$, $y_T(sT)\to\overline{y} $ and $u_T(sT)\to\overline{u}$ as $T\to+\infty$, which is a significantly stronger result. 
In fact, it can be said that, in some sense, the turnpike property is engraved within the notion of strict dissipativity.
 \end{remark}

\begin{proof} 
We focus on proving the measure turnpike property only. 
Let $T>0$ and let $(u_T, y_T)$ be any optimal pair for \eqref{eq: trelat.ocp..}. 
For $\varepsilon>0$, let us denote
\begin{equation*}
\mathscr{Q}_{\varepsilon,T} := \left\{t\in[0,T]\,\Biggm|\, \Big\|\big(y_T(t)-\overline{y}, u_T(t)-\overline{u}\big)\Big\|_{\mathscr{H}\times\mathscr{U}}>\varepsilon\right\}.
\end{equation*}
We readily see that
\begin{equation} \label{eq: meas.Qeps}
\text{meas}(\mathscr{Q}_{\varepsilon,T}) = \int_0^T 1_{\mathscr{Q}_{\varepsilon,T}} \diff t = \frac{1}{\alpha(\varepsilon)} \int_0^T \alpha(\varepsilon)1_{\mathscr{Q}_{\varepsilon,T}} \diff t.
\end{equation}
Since $\alpha$ is a non-decreasing function, we find that
\begin{equation} \label{eq: est.meas.1}
\frac{1}{\alpha(\varepsilon)} \int_0^T \alpha(\varepsilon)1_{\mathscr{Q}_{\varepsilon,T}} \diff t \leqslant \frac{1}{\alpha(\varepsilon)}\int_0^T \alpha\left(\Big\|\big(y(t)-\overline{y}, u(t)-\overline{u}\big)\Big\|_{\mathscr{H}\times\mathscr{U}}\right) \diff t.
\end{equation}
On another hand, by strict dissipativity, we have
\begin{align}
\frac{1}{\alpha(\varepsilon)}\int_0^T \alpha\left(\Big\|\big(y(t)-\overline{y}, u(t)-\overline{u}\big)\Big\|_{\mathscr{H}\times\mathscr{U}}\right) \diff t &\leqslant \mathscr{J}_T(u_T)-T\mathscr{J}_s(\overline{u})\nonumber \\
&\, + \*S(y_T(0)) - \*S(y_T(T)) \label{eq: before.last}.
\end{align}
And then, using the fact that an optimal steady pair $(\overline{u}, \overline{y})$ is admissible for \eqref{eq: trelat.ocp..}, we also find
\begin{equation} \label{eq: after.before.last}
\mathscr{J}_T(u_T)\leqslant \mathscr{J}_T(\overline{u})=\int_0^T f^0(\overline{u},\overline{y})\diff t = T\mathscr{J}_s(\overline{u}).
\end{equation}
Plugging \eqref{eq: after.before.last} in \eqref{eq: before.last}, we find 
\begin{equation*}
\frac{1}{\alpha(\varepsilon)}\int_0^T \alpha\left(\Big\|\big(y(t)-\overline{y}, u(t)-\overline{u}\big)\Big\|_{\mathscr{H}\times\mathscr{U}}\right) \diff t\leqslant \*S(y_T(0)) - \*S(y_T(T)).
\end{equation*}
Now since $y_T(t)$ is bounded by assumption, and $\*S$ is locally bounded, 
there exists a constant $C=C(M)>0$ (depending only on $M$, and independent of $T$ and $\varepsilon$) such that $|\*S(y)|\leqslant C$ for all $y\in\mathscr{H}$. And so, we find that
\begin{equation} \label{eq: est.meas.2}
\frac{1}{\alpha(\varepsilon)}\int_0^T \alpha\left(\Big\|\big(y(t)-\overline{y}, u(t)-\overline{u}\big)\Big\|_{\mathscr{H}\times\mathscr{U}}\right) \diff t\leqslant 2C.
\end{equation}
Whence, combining \eqref{eq: meas.Qeps}, \eqref{eq: est.meas.1}, and \eqref{eq: est.meas.2}, we deduce that
\begin{equation*}
\text{meas}(\mathscr{Q}_{\varepsilon,T}) \leqslant \frac{2C}{\alpha(\varepsilon)},
\end{equation*}
as desired.
\end{proof}

The theory of dissipativity has been applied for obtaining turnpike results for discrete-time, finite-dimensional systems, as well as LQ problems in finite dimensions (\cite{damm2014exponential, grune2016relation, grune2018strict, grune2018turnpike, grune2017relation, grune2021relation, faulwasser2019towards, faulwasser2021dissipativity, gugat2021turnpike}). 
The results in these works are mostly \emph{measure} turnpike properties (or \emph{cardinal} turnpike, in the discrete-time setting), for continuous-time and discrete-time respectively, and can be enhanced to exponential turnpike under stabilizability and detectability assumptions.
We refer also the reader to the survey \cite{faulwasser2020turnpike} for an in-depth overlook and bibliography of this theory, which we only touched upon. 

\section{Beyond} \label{sec: 7}

We tried to provide an all-encompassing review of existing results regarding turnpike for LQ problems for partial differential equations. There are several topics that we did not present in great depth, and related open problems. 

\subsection{Third proof of exponential turnpike}
The strategies we presented in what precedes are, of course, not definitive in the linear turnpike theory. In particular, in \cite{grune2020exponential, grune2019sensitivity},  the authors derive the exponential turnpike property for LQ problems for abstract linear PDEs (see \cite{grune2021abstract} for an extension to semilinear parabolic PDEs by linearization and smallness, a strategy presented in the subsequent section), written in the canonical form $\dot{y}=Ay+Bu$, under the same stabilizability and detectability assumptions for $(A,B,C)$ we made above. 
Note that the framework of these papers accounts for possibly unbounded control operators $B$; these need only be assumed admissible, thus covering boundary control systems (see \cite{tucsnak2009observation} for more detail on these notions).
In these works, the authors write the entire optimality systems as a linear operator equation in Bochner spaces; for the time-dependent optimal triple $(u_T,y_T,p_T)$ for instance, one has 
\begin{equation*}
\underbrace{
\begin{bmatrix}
C^*C &-\partial_t - A^*\\
0 & \Psi_T\\
\partial_t - A & -BB^*\\
\Psi_0 & 0
\end{bmatrix}
}_{:=M_T}
\begin{bmatrix}
y_T\\
p_T
\end{bmatrix}
=
\begin{bmatrix}
C^*Cy_d\\
0\\
0\\
y^0
\end{bmatrix}.
\end{equation*}
Here, $C^0([0,T];\mathscr{H})\ni\Psi_t y:=y(t)\in \mathscr{H}$ for $t\in[0,T]$. 
Defining the perturbation variables $\delta y(t):=y_T(t)-\overline{y}$ and $\delta p(t)=p_T(t)-\overline{p}$, one then finds
\begin{equation*}
M_T\begin{bmatrix}\delta y\\ \delta p\end{bmatrix} = \begin{bmatrix}0\\-\overline{p}\\0\\y^0-\overline{y}\end{bmatrix}.
\end{equation*} 
The authors can then, using mostly energy estimates, first prove an estimate of the form
\begin{align*}
&\|\delta y(t)\|_{\mathscr{H}} + \|\delta p(t)\|_{\mathscr{H}}\\ 
&\leqslant c\left\|M_T^{-1}\right\|_{\mathscr{L}(L^2(0,T;\mathscr{H}), C^0([0,T];\mathscr{H}))}\left(e^{-\lambda t} + e^{-\lambda(T-t)}\right)\Big(\|\overline{p}\|_{\mathscr{H}} + \left\|y^0-\overline{y}\right\|_{\mathscr{H}}\Big).
\end{align*}
The stabilizability and detectability assumptions are then used to prove that $M_T^{-1}$ is uniformly bounded with respect to $T$, from which the exponential turnpike property follows. Besides the upper bound, the decay rate $\lambda>0$ also depends on $M_T^{-1}$. Hence the uniform bound on this inverse is needed for a uniform upper bound and a uniform decay rate.

\smallskip

\subsection{Turnpike in optimal shape design}
In \cite{lance2020shape} (see also \cite{trelat2018optimal} for related results), measure turnpike has been shown also for shape optimization problems of the form 
\begin{equation*}
\inf_{\substack{\omega(\cdot)\in\mathscr{U}_\gamma\\ y\text{ solves} \eqref{eq: shape.design.pb}}} \frac{1}{T}\int_0^T \|y(t)-y_d\|^2_{L^2(\Omega)}\diff t,
\end{equation*}
where
\begin{equation} \label{eq: shape.design.pb}
\begin{cases}
\partial_t y - \Delta y = 1_{\omega(t)} &\text{ in }(0,T)\times\Omega,\\
y=0 &\text{ in }(0,T)\times\partial\Omega,\\
y_{|_{t=0}} = y^0 &\text{ in }\Omega.
\end{cases}
\end{equation}
Here $\mathscr{U}_\gamma:=\{\omega\subset\Omega\,\bigm|\,\text{meas}(\omega)\leqslant\gamma\text{ meas}(\Omega)\}$ denotes the set of admissible shapes, for a given $\gamma\in(0,1)$. The setup of this problem is quite in the spirit of the original problem regarding Navier-Stokes shape design discussed in the introduction. 

In \cite{lance2020shape}, the authors convexify the problem by relaxation (namely, by considering the convex closure of $\mathscr{U}_\gamma$ in the $L^\infty$ weak-* topology, which roughly translates to replacing $1_{\omega(t)}$ by a bounded potential $a(t)\in[0,1]$ with mass $\leqslant\gamma\text{ meas}(\Omega)$), and make use of techniques inspired by the calculus of variations to prove measure and integral turnpike properties for the optimal shapes. 
A proof of the exponential turnpike property remains an open problem. All in all, a complete theory of turnpike for shape optimization problems has not been established as of yet.

\subsection{Unsteady turnpike} \label{sec: periodic.turnpike}

Finally, let us comment on the fundamental notion of turnpike we deal with in this work. The definition of the turnpike property we had considered entails a proximity of  time-dependent optimal strategies to \emph{the} associated steady ones. But this definition does not paint the whole picture. First of all, it could happen that the turnpike is \emph{not} unique (as it is the case in some nonlinear problems, as seen in the subsequent section). 
Sometimes, the turnpike may not even be of a steady nature. 

The latter can even occur in linear problems. Let us corroborate with some more details. An example is the artifact which appears whenever one considers \emph{time-dependent, periodic} running targets $y_d(t)$ in the LQ problem, as noted in \cite{samuelson1976periodic, rapaport2004turnpike, zanon2016periodic} for finite-dimensional systems. Suppose for instance that $y_d\in C^0([0,+\infty);\mathscr{H})$ is periodic of period $\pi_\bullet>0$, namely, 
\begin{equation*}
y_d(t+\pi_\bullet) = y_d(t), \hspace{1cm} \text{ for } t>0.
\end{equation*}
One can then consider the standard LQ problem for one's favorite linear PDE, written in the canonical form $\partial_t y=Ay+Bu$. And under similar stabilizability and detectability assumptions for the underlying PDE dynamics $(A,B)$ and observation operator $C$, the authors in \cite{trelat2018steady}. show that the exponential turnpike property holds, where now the turnpike is given by the unique triple $(u_\pi, y_\pi, p_\pi)$ solving 
\begin{equation*}
\begin{cases}
\partial_t y_\pi = Ay_\pi + BB^*p_\pi &\text{ in }(0,\pi_\bullet),\\
-\partial_t p_\pi =A^*p_\pi -C^*C(y_\pi - y_d) &\text{ in }(0,\pi_\bullet),\\
{y_\pi}_{|_{t=0}} = {y_\pi}_{|t=\pi_\bullet},\\
{p_\pi}_{|_{t=0}} = {p_\pi}_{|t=\pi_\bullet},
\end{cases}
\end{equation*}
with 
\begin{equation*}
u_\pi(t)\equiv B^*p_\pi(t)  \hspace{1cm} \text{ for a.e. } t\in[0,\pi_\bullet].
\end{equation*} 
To our knowledge, the taxonomy of different turnpikes which could occur depending on the choice of functional and underlying dynamics has not yet been proposed or established.
As a general principle, the turnpike can be any trajectory or any invariant set of the system. For instance, in \cite{trelat2020linear}, the turnpike is a monotonically increasing trajectory, exemplified in practical applications by the motion of a medium (400m) distance runner (\cite{aftalion2021pace}). In such applications, the velocity of the runner is essentially constant from beginning to end, but the position of the runner evolves in a monotonic fashion. A more complete picture on the structure and reasons behind this artifact may be found in \cite{pighin2020theturnpike}, relying on the Kalman decomposition, which ensures that the exponential turnpike property is inherent to the observable components of the underlying system.
For infinite-dimensional systems, these issues have not been thoroughly explored, and merit further attention. Monotonic trajectories of this kind are a hallmark for systems arising in fluid mechanics -- they can be essentially laminar. We are not aware if such turnpike questions have been studied in a controlled scenario.

\begin{figure}[h!]
\centering
\includegraphics[scale=1.2]{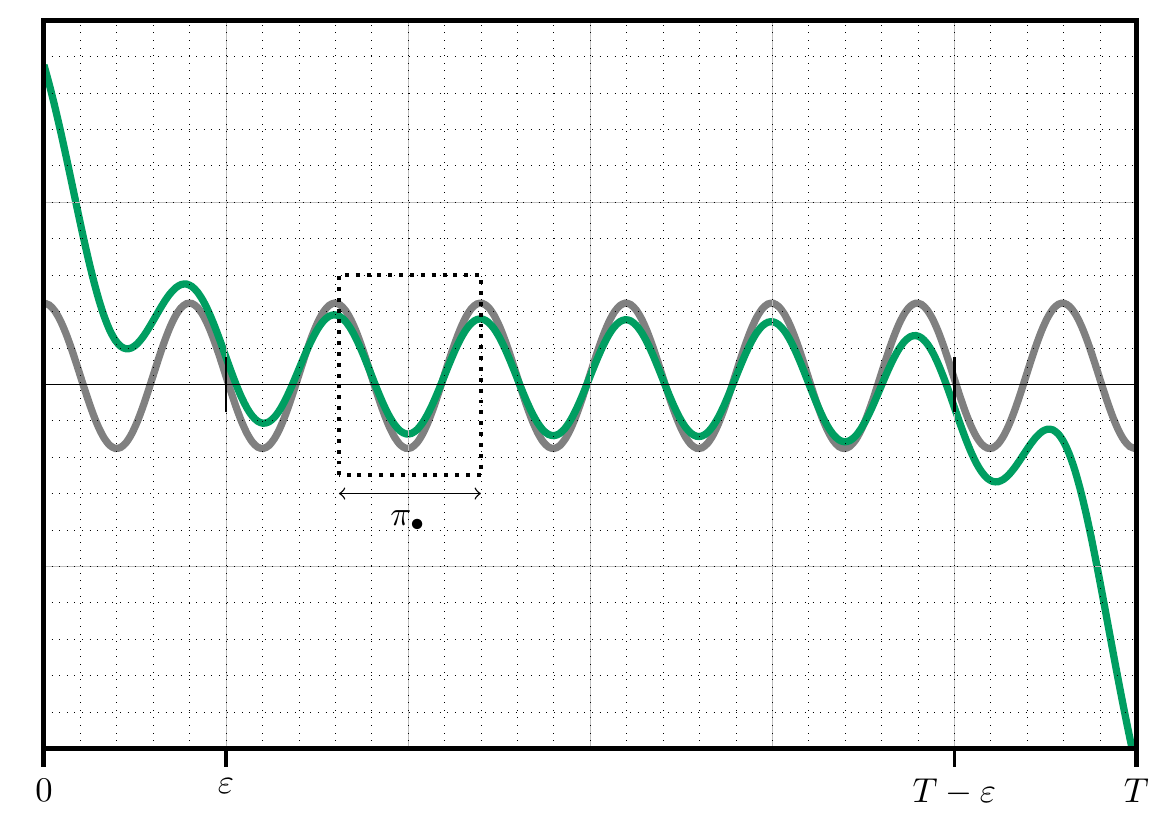}
\caption{The {\bf periodic turnpike property}: the (norm of the) optimal trajectory $y_T(t)$ (green) stays exponentially close to the periodic turnpike trajectory $y_\pi(t)$ of period $\pi_\bullet$ (gray).}
\end{figure}
\subsection{Even further in the finite-dimensional case}

An extensive theory regarding cardinal and measure-like turnpike properties for finite-dimensional discrete and continuous time systems has been developed independently by Zaslavski in a series of works (see \cite{zaslavski2005turnpike, zaslavski2007turnpike, zaslavski2015turnpike} and the references therein).
In the finite dimensional case, dynamical systems techniques based on stable manifold theory have also been used and developed for proving the exponential turnpike property (\cite{sakamoto2019turnpike}). Further links with systems theory are established in \cite{heiland2020classical}, and additional direct strategies for proving exponential turnpike properties for finite dimensional systems may be found in \cite{lou2019turnpike}.

\part{Nonlinear theory}

The case in which the underlying constraint in the optimal control problem is a nonlinear PDE is rather different. 
It requires a case-by-case study, and one cannot expect to provide a turnpike theory without any smallness assumptions encompassing all PDE systems. 
This is, of course, a problem which transcends many other fields and topics, not just optimal control of PDEs.
We emphasize that this theory is far from mature, and many open problems persist, even in some relatively simple cases.

\section{Linearization and smallness} \label{sec: 8}

One can expect, as done for a variety of different control concepts, to transfer the linear results to a nonlinear setting by means of linearization and fixed point arguments, provided some smallness assumptions on the data. 
In fact, we can first prove a turnpike property for the optimality system, under the condition that the initial and final states, for the forward and adjoint state respectively, are close enough to the stationary forward and dual state, respectively. 
We will also see that this result applies to global minimizers of the cost functional, at least in the case that the running target $y_d$ is small enough. 
In such a case, one can also ensure that the turnpike is unique. 

We thus query the validity of the turnpike property for the semilinear heat equation
\begin{equation} \label{eq: semilinear.heat}
\begin{cases}
\partial_t y - \Delta y + f(y) = u1_{\omega} &\text{ in }(0,T)\times\Omega,\\
y= 0 &\text{ in }(0,T)\times\partial\Omega,\\
y_{|_{t=0}} = y^0 &\text{ in }\Omega,
\end{cases}
\end{equation}
where $y^0\in L^2(\Omega)$. 
Here and in what follows, we assume that 
\begin{equation*}
f\in C^2(\mathbb{R}), \hspace{0.25cm} \text{ with } \hspace{0.25cm} f'\geqslant 0 \hspace{0.15cm} \text{ and } \hspace{0.15cm} f(0)=0.
\end{equation*}
A canonical example is the cubic nonlinearity $f(y)=y^3$ in dimensions $d\leqslant3$. 
Under these assumptions, system \eqref{eq: semilinear.heat} is well-posed, in the sense that given any $y^0\in L^2(\Omega)$ and $u\in L^2((0,T)\times\omega)$, there exists a unique  solution $y\in C^0([0,T]; L^2(\Omega))\cap L^2(0,T; H^1_0(\Omega))$. Such a result can be shown by employing a fixed point argument, making use of the dissipative nature of the nonlinearity to obtain global results (see \cite[Appendix B]{pighin2020turnpike} and the references therein). 

We may thus consider the following optimal control problem
\begin{equation} \label{eq: ocp.heat.semilinear}
\inf_{\substack{u\in L^2((0,T)\times\omega) \\ y \text{ solves } \eqref{eq: semilinear.heat}}} \underbrace{\phi(y(T)) + \frac12 \int_0^T \|y(t)-y_d\|_{L^2(\omega_\circ)}^2 \diff t + \frac12 \int_0^T \|u(t)\|_{L^2(\omega)}^2 \diff t}_{:=\mathscr{J}_T(u)},
\end{equation}
where $y_d\in L^2(\omega_\circ)$, with 
$$\phi(y(T)):=\left\langle p^T, y(T)\right\rangle_{L^2(\Omega)}$$ 
for a given and fixed $p^T\in L^2(\Omega)$.
The corresponding steady optimal control problem consists in solving 
\begin{equation} \label{eq: semilinear.steady.problem}
\inf_{\substack{u \in L^2(\omega)\\ y \text{ solves} \eqref{eq: semilinear.poisson}}} \|y-y_d\|_{L^2(\omega_\circ)}^2 + \|u\|_{L^2(\omega)}^2,
\end{equation}
where the underlying PDE constraint is given by the semilinear controlled Poisson equation
\begin{equation} \label{eq: semilinear.poisson}
\begin{cases}
-\Delta y + f(y) = u1_\omega &\text{ in }\Omega,\\
y = 0 &\text{ on }\partial\Omega.
\end{cases}
\end{equation}
In both cases, one can also ensure the existence of solutions (minimizers) by the direct method in the calculus of variations. Uniqueness can only be guaranteed under smallness assumptions on the target $y_d$; this will be a major plotline in what follows.
We can also readily write the corresponding optimality systems for the evolutionary triple $(u_T, y_T, p_T)$ and the steady one $(\overline{u}, \overline{y}, \overline{p})$.
As per \cite[Chapter 1]{ito2008lagrange}, the optimality systems read as
\begin{equation} \label{eq: semilinear.heat.optimality}
\begin{cases}
\partial_t y_T - \Delta y_T + f(y_T) = p_T1_\omega &\text{ in }(0,T)\times\Omega,\\
\partial_t p_T +\Delta p_T - f'(y_T)p_T = (y_T-y_d)1_{\omega_\circ} &\text{ in }(0,T)\times\Omega,\\
y_T=p_T = 0 &\text{ in }(0,T)\times\partial\Omega,\\
{y_T}_{|_{t=0}}= y^0 &\text{ in }\Omega,\\
{p_T}_{|_{t=T}} = p^T &\text{ in }\Omega,
\end{cases}
\end{equation}
as well as
\begin{equation} \label{eq: semilinear.poisson.optimality}
\begin{cases}
-\Delta \overline{y} + f(\overline{y}) = \overline{p}1_\omega &\text{ in }\Omega,\\
-\Delta \overline{p} + f'(\overline{y})\overline{p} = -(\overline{y}-y_d)1_{\omega_\circ} &\text{ in }\Omega,\\
\overline{y} = \overline{p} = 0 &\text{ on }\partial\Omega.
\end{cases}
\end{equation}
Of course, once again, $u_T\equiv p_T1_\omega$ and $\overline{u}\equiv \overline{p}1_\omega$. 
But, due to the nonlinearity of the problems under consideration, the methods presented for the linear theory cannot be applied directly. 
Hence, we can seek to test a local theory around a given steady state optimal control-state pair. 
To this end, we define the perturbation variables
\begin{equation*}
\delta y := y_T - \overline{y}, \hspace{1cm} \delta p:= p_T - \overline{p}.
\end{equation*}
Then, $(\delta y, \delta p)$ would satisfy
\begin{equation} \label{eq: 20}
\begin{cases}
\partial_t \delta y - \Delta \delta y + \mathfrak{f}(\delta y) = \delta p 1_\omega &\text{ in }(0,T)\times\Omega,\\
\partial_t \delta p + \Delta \delta p - \mathfrak{g}(\delta y, \delta p) = \delta y1_{\omega_\circ} &\text{ in }(0,T)\times\Omega,\\
\delta y = \delta p = 0 &\text{ in }(0,T)\times\partial\Omega,\\
\delta y_{|_{t=0}} = \delta y^0 &\text{ in }\Omega,\\
\delta p_{|_{t=T}} = \delta p^T &\text{ in }\Omega,
\end{cases}
\end{equation}
where $\delta y^0 := y^0-\overline{y}$ and $\delta p^T:= p^T-\overline{p}$, and moreover, the nonlinearities are
\begin{align*}
\mathfrak{f}(\delta y) &:= f(\overline{y}+\delta y) - f(\overline{y}), \\ 
\mathfrak{g}(\delta y, \delta p) &:= f'(\overline{y}+\delta y)(\overline{p}+\delta p) - f'(\overline{y})\overline{p}.
\end{align*}
Since our aim is to build a pair $(u_T,y_T)$ fulfilling the turnpike property, namely such that $(u_T,y_T) \sim (\overline{u}, \overline{y})$ in the sense of the previous section, in the $(\delta y, \delta p)$ coordinates, this is equivalent to ensuring $(\delta y, \delta p) \sim (0,0)$. 
Furthermore, since the latter are perturbation variables, it is natural to look at the linearized version of the nonlinear system \eqref{eq: 20}. Namely, we would look at the first order Taylor expansion of $\mathfrak{f}$ and $\mathfrak{g}$ near $(\delta y, \delta p) = (0, 0)$, which, due to the form of these nonlinearities, corresponds to 
\begin{equation} \label{eq: linearized.optimality.system}
\begin{cases}
\partial_t \delta y - \Delta \delta y + f'(\overline{y})\delta y = \delta p 1_\omega &\text{ in }(0,T)\times\Omega,\\
\partial_t \delta p + \Delta \delta p - f'(\overline{y})\delta p = \delta y1_{\omega_\circ} + f''(\overline{y}) \overline{p} \delta y &\text{ in }(0,T)\times\Omega,\\
\delta y = \delta p = 0 &\text{ in }(0,T)\times\Omega,\\
\delta y_{|_{t=0}} = \delta y^0 &\text{ in }\Omega,\\
\delta p_{|_{t=T}} = \delta p^T &\text{ in }\Omega.
\end{cases}
\end{equation}
The following result can then be shown to hold. 

\begin{theorem}[\cite{porretta2016remarks}] \label{thm: porretta.zuazua.3}
Suppose that $f\in C^2(\mathbb{R})$, $f'\geqslant0$ and $d\leqslant3$. Let $(\overline{y},\overline{p})\in (H^1_0(\Omega)\cap L^\infty(\Omega))^2$ be some solution\footnote{Again, such a solution exists due to the fact that \eqref{eq: semilinear.poisson.optimality} is the Euler-Lagrange equation for a minimization problem.} to the optimality system \eqref{eq: semilinear.poisson.optimality}.
Suppose that there exist $C>0$ and $\lambda>0$ such that for any $T>0$ and $(\delta y^0, \delta p^T)\in L^\infty(\Omega)\times L^\infty(\Omega)$, the unique solution $(\delta y, \delta p)$ to \eqref{eq: linearized.optimality.system} satisfies the turnpike property
\begin{equation*}
\|\delta y(t)\|_{L^\infty(\Omega)} + \|\delta p(t)\|_{L^\infty(\Omega)} \leqslant C\left(e^{-\lambda t} + e^{-\lambda(T-t)}\right)
\end{equation*}
for all $t\in [0,T]$. Then, there exists some $\varepsilon>0$ (independent of $T$) such that for all data $(y^0, p^T) \in L^\infty(\Omega)\times L^\infty(\Omega)$ satisfying 
\begin{equation*}
\left\|y^0-\overline{y}\right\|_{L^\infty(\Omega)} + \left\|p^T-\overline{p}\right\|_{L^\infty(\Omega)} \leqslant \varepsilon,
\end{equation*}
there exists a solution $(y_T,p_T)$ to the optimality system \eqref{eq: semilinear.heat.optimality} which satisfies
\begin{equation*}
\left\|y_T(t)-\overline{y}\right\|_{L^\infty(\Omega)} + \left\|p_T(t)-\overline{p}\right\|_{L^\infty(\Omega)} \leqslant C\left(e^{-\lambda t} + e^{-\lambda(T-t)}\right)
\end{equation*}
for all $t\in [0,T]$. 
\end{theorem}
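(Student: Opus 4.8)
The plan is to realize the nonlinear optimality system \eqref{eq: semilinear.heat.optimality} as a small perturbation of its linearization \eqref{eq: linearized.optimality.system}, and to construct the desired turnpike solution by a contraction argument in a weighted space tailored to the double-arc profile. Writing the perturbations $(\delta y,\delta p)=(y_T-\overline y,p_T-\overline p)$ as in \eqref{eq: 20}, I first split the nonlinearities into their linear parts plus genuinely higher-order remainders,
\begin{align*}
N_f(\delta y)&:=\mathfrak{f}(\delta y)-f'(\overline y)\,\delta y,\\
N_g(\delta y,\delta p)&:=\mathfrak{g}(\delta y,\delta p)-f'(\overline y)\,\delta p-f''(\overline y)\,\overline p\,\delta y,
\end{align*}
so that \eqref{eq: 20} becomes the linearized system \eqref{eq: linearized.optimality.system} with the extra volume sources $-N_f(\delta y)$ and $N_g(\delta y,\delta p)$, together with the prescribed endpoint data $\delta y^0=y^0-\overline y$ and $\delta p^T=p^T-\overline p$. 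Since $f\in C^2$ and $\overline y,\overline p\in L^\infty(\Omega)$, Taylor's formula yields, on the ball $\{\|\delta y\|_{L^\infty}+\|\delta p\|_{L^\infty}\leqslant r\}$, the superlinear bounds $\|N_f(\delta y)\|_{L^\infty}\leqslant\rho(r)\,\|\delta y\|_{L^\infty}$ and $\|N_g(\delta y,\delta p)\|_{L^\infty}\leqslant\rho(r)\,(\|\delta y\|_{L^\infty}+\|\delta p\|_{L^\infty})$, together with matching Lipschitz estimates, where $\rho(r)\to0$ as $r\to0$ (with $\rho(r)\sim r$ if $f\in C^{2,1}$). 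These are the two structural properties of the nonlinearity I will exploit.

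I would work in the Banach space equipped with the weighted norm
\begin{equation*}
\|(\delta y,\delta p)\|_\star:=\sup_{t\in[0,T]}\frac{\|\delta y(t)\|_{L^\infty(\Omega)}+\|\delta p(t)\|_{L^\infty(\Omega)}}{e^{-\lambda t}+e^{-\lambda(T-t)}},
\end{equation*}
and measure sources against the weight $w(t):=e^{-\lambda t}+e^{-\lambda(T-t)}$. The decisive ingredient is a bound, uniform in $T$, for the solution operator of the \emph{inhomogeneous} linearized system: given sources $(F,G)$ and endpoint data $(\delta y^0,\delta p^T)\in L^\infty\times L^\infty$, the unique solution $(\delta y,\delta p)$ of \eqref{eq: linearized.optimality.system} with these sources obeys
\begin{equation*}
\|(\delta y,\delta p)\|_\star\leqslant c\Big(\|\delta y^0\|_{L^\infty}+\|\delta p^T\|_{L^\infty}+\sup_{t}w(t)^{-1}\big(\|F(t)\|_{L^\infty}+\|G(t)\|_{L^\infty}\big)\Big),
\end{equation*}
with $c$ independent of $T$. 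The assumed homogeneous turnpike estimate covers the data contribution; to incorporate the sources I would decouple the forward--backward linear system by the infinite-horizon Riccati operator $\mathscr{E}_\infty$ of the linearized problem, exactly as in the diagonalization of Section \ref{sec: 5} and in Step 2 of the proof of Theorem \ref{thm: porretta.zuazua.1}. This reduces the system to one component contracting forward in time and one contracting backward, each driven by the transformed sources. Duhamel's formula then reduces matters to the convolution estimates $\int_0^t e^{-\lambda(t-s)}w(s)\,\diff s\lesssim w(t)$ and its backward analogue, which hold with a harmless reduction of the rate (absorbed by the spectral gap of the dichotomy) and close the weighted bound uniformly in $T$.

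Granting this linear estimate, I would conclude by Banach's fixed point theorem. Define $\Phi(\delta y,\delta p)$ to be the solution of the inhomogeneous linearized system with sources $\big(-N_f(\delta y),\,N_g(\delta y,\delta p)\big)$ and the fixed data $(\delta y^0,\delta p^T)$. By the linear estimate and the superlinear source bounds, $\|\Phi(\delta y,\delta p)\|_\star\leqslant c\,\varepsilon+c\,\rho(r)\,\|(\delta y,\delta p)\|_\star$, so choosing $r$ small enough that $c\,\rho(r)\leqslant\tfrac12$ and then $\varepsilon$ small enough that $c\,\varepsilon\leqslant r/2$, the map $\Phi$ sends the $\star$-ball of radius $r$ into itself; the Lipschitz estimates for $N_f,N_g$ make it a contraction there, with all constants independent of $T$. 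Its unique fixed point $(\delta y,\delta p)$ solves \eqref{eq: 20}, hence $(y_T,p_T)=(\overline y+\delta y,\overline p+\delta p)$ solves \eqref{eq: semilinear.heat.optimality}, and membership in the $\star$-ball is precisely the claimed estimate. The main obstacle is the uniform-in-$T$ inhomogeneous linear bound: the bare homogeneous turnpike hypothesis must be upgraded to a bounded solution operator on the doubly-weighted space, and it is here that the stable/unstable (exponential dichotomy) structure underlying the linearized optimality system---rather than mere decay of its homogeneous solutions---is genuinely needed. Everything else (the Taylor remainders, the convolution bounds, the contraction) is routine once this estimate is in hand.
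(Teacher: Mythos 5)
The paper itself does not reproduce a proof of this theorem---it defers entirely to \cite{porretta2016remarks}---and your proposal is a faithful reconstruction of the argument used there: a Banach fixed point in the doubly-weighted norm $\|\cdot\|_\star$, with the nonlinear optimality system viewed as the linearized system \eqref{eq: linearized.optimality.system} driven by the superlinear Taylor remainders $N_f,N_g$, and the uniform-in-$T$ inhomogeneous linear bound obtained from the Riccati/dichotomy decoupling of Section \ref{sec: 5}. The only caveat, which you already flag, is that the convolution $\int_0^t e^{-\lambda(t-s)}w(s)\,\diff s$ produces a term $t\,e^{-\lambda t}$ and hence forces a strictly smaller rate $\lambda'<\lambda$ (and a larger constant) in the conclusion; the statement recycles the letters $C,\lambda$, but they must be understood as possibly degraded, exactly as in the cited reference.
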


\begin{remark}
The assumption $d\leqslant3$ is not essential -- should we be working with power-type nonlinearities of the form $f(s)=|s|^{p-1}s$, in which case the appropriate functional space would be $H^1_0(\Omega)\cap L^{p+1}(\Omega)$. 
\end{remark}

\begin{proof} The proof may be found in \cite{porretta2016remarks}.
\end{proof}

The above theorem states that there exists a solution to the nonlinear optimality system \eqref{eq: semilinear.heat.optimality} for which the turnpike property holds. 
Thus, the result does not have the nature we expect -- in other words, it does not  apply to the minimizers of the functional $\mathscr{J}_T$ under consideration in \eqref{eq: ocp.heat.semilinear}. 
Moreover, the statement \emph{assumes} that the turnpike property is satisfied by the linearized optimality system. 
As we shall see just below, this theorem applies at least when the target $y_d$ is small enough, in the sense that, first of all, the steady optimal control problem has a unique minimizer, which is also small, and, second of all, \eqref{eq: linearized.optimality.system} satisfies the exponential turnpike property.
In this special case, the minimizer of the parabolic optimal control problem also turns out to be unique, and thus coincides with the solution of the optimality system.

\subsection{Small targets}
Let us now consider the particular case where both the target $y_d$ and the initial datum $y^0$ are small in $L^2$, and where $\omega_\circ=\Omega$ (we comment on this assumption in Remark \ref{rem: carleman.domain}). 
In this case, one can actually show that 1). the optimal pair for the steady-state problem is unique, and 2). the linearized optimality system satisfies the turnpike property. 
These conditions would thus ensure that the turnpike property is satisfied by the linearized optimality system \eqref{eq: linearized.optimality.system}, thus ensuring the validity of the hypothesis in Theorem \ref{thm: porretta.zuazua.3}. 

To see why item 2). in the above discussion would hold, denoting 
\begin{equation*}
\varrho(x) := 1-f''(\overline{y}(x))\overline{p}(x) \hspace{1cm} \text{ for } x\in\Omega,
\end{equation*}
a clever observation is that \eqref{eq: linearized.optimality.system} is an optimality system for the LQ problem (thus, a necessary and sufficient condition)
\begin{equation} \label{eq: func.rho}
\inf_{\substack{v\in L^2((0,T)\times\omega)\\ \zeta \text{ solves} \eqref{eq: linearized.semilinear.heat}}} \frac12 \int_0^T \int_\Omega \varrho(x) \zeta(t,x)^2 \diff x \diff t + \frac12 \int_0^T \int_\omega v(t,x)^2 \diff x \diff t,
\end{equation}
where the underlying PDE is 
\begin{equation} \label{eq: linearized.semilinear.heat}
\begin{cases}
\partial_t \zeta - \Delta \zeta + f'(\overline{y})\zeta = v1_\omega &\text{ in }(0,T)\times\Omega,\\
\zeta=0 &\text{ in }(0,T)\times\partial\Omega,\\
\zeta_{|_{t=0}} = \delta y^0 &\text{ in }\Omega.
\end{cases}
\end{equation}
And turnpike holds for the above LQ problem whenever there is some $\delta>0$ such that 
\begin{equation*}
\varrho(x)\geqslant\delta>0 \hspace{1cm} \text{ for } x\in\Omega.
\end{equation*}
We shall see just below that this can be ensured precisely if $\|y_d\|_{L^2(\Omega)}$ is small enough, as this would entail that both $\overline{y}
$ and $\overline{p}$ are small in $L^\infty(\Omega)$.
Let us briefly sketch as to why such smallness assumptions would yield the uniqueness of steady minimizers per 1), and, all the while, $\varrho(x)>0$, as desired.

\begin{itemize}
\item \emph{Steady functional is strictly convex for small controls.}
We claim that the functional $\mathscr{J}_s$, defined in the steady optimal control problem \eqref{eq: semilinear.steady.problem}, is strictly convex whenever the control input $\overline{u}\in L^2(\omega)$ is small enough in $L^p(\Omega)$, for some $p>\sfrac{d}{2}$.
Let us support this claim by showing that the Hessian is positive definite for such controls. 
Following \cite[Proposition 2.3]{casas2002second, casas2002seconda}, we find
\begin{equation} \label{eq: casas.identity}
\mathscr{J}''_s(\overline{u})v_1 v_2 = \int_{\Omega} \eta_{v_1} \eta_{v_2}\diff x + \int_\omega v_1 v_2\diff x - \int_\Omega f''(\overline{y}) \overline{p}\, \eta_{v_1} \eta_{v_2}\diff x,
\end{equation}
for any $\overline{u}\in L^2(\omega)$, where $\overline{y}\in H^1_0(\Omega)$ denotes the corresponding solution to \eqref{eq: semilinear.poisson}, $\overline{p}\in H^1_0(\Omega)$ is the adjoint steady state, solution to
\begin{equation*}
\begin{cases}
-\Delta \overline{p} + f'(\overline{y}) \overline{p} = \overline{y}-y_d &\text{ in }\Omega,\\
\overline{p}=0 &\text{ on }\partial\Omega,
\end{cases}
\end{equation*}
while $\eta_{v_j}\in H^1_0(\Omega)$ are the solutions of the linearized steady equation in the directions $v_j\in L^2(\omega)$, namely
\begin{equation*}
\begin{cases}
-\Delta \eta_{v_j} + f'(\overline{y}) \eta_{v_j} = v_j 1_\omega &\text{ in }\Omega,\\
\eta_{v_j} = 0 &\text{ on }\partial\Omega.
\end{cases}
\end{equation*} 
Whenever $\overline{u}$ is small enough in $L^p(\Omega)$ for some $p>\sfrac{d}{2}$, 
in conjunction with the monotonicity of $f$, puts us in the framework of classic elliptic regularity which ensures that $\overline{y}$ is small in $H^1_0(\Omega)\cap L^\infty(\Omega)$.
In turn, the same can be said of $\overline{p}$, namely $\overline{p}$  is small in $H^1_0(\Omega)\cap L^\infty(\Omega)$, due to the fact that $\overline{y} \in L^\infty(\Omega)$ and $f'\geqslant0$. Therefore, since
\begin{equation} \label{eq: elliptic.eta.v}
\|\eta_v\|_{H^1_0(\Omega)}\leqslant C_1 \|v\|_{L^2(\omega)},
\end{equation}
for some $C_1=C_1(\overline{y}, f)>0$ by Lax-Milgram, when $v_1=v_2=v$, the term
\begin{equation*}
-\int_\Omega f''(\overline{y})\overline{p}\,\eta_{v}^2\diff x
\end{equation*}
in \eqref{eq: casas.identity} can be absorbed by 
\begin{equation*}
\int_\omega v^2\diff x
\end{equation*}
thanks to the fact that $\overline{y}\in L^\infty(\Omega)$ and the smallness of $\overline{p}$ in $L^\infty(\Omega)$. 
Indeed, by using the Poincaré inequality in \eqref{eq: elliptic.eta.v}, and since $f\in C^2(\mathbb{R})$ and $\overline{y}\in L^\infty(\Omega)$, we find
\begin{align} \label{eq: coercivity.Js}
\mathscr{J}''_s(\overline{u})vv &\geqslant \int_{\Omega} \eta_v^2 \diff x + \int_\omega v^2\diff x - C\Big(f, \|\overline{y}\|_{L^\infty(\Omega)}\Big) \|\overline{p}\|_{L^\infty(\Omega)} \int_\Omega \eta_v^2\diff x  \nonumber\\
&\geqslant \int_{\Omega} \eta_v^2 \diff x + \Big(1-C_2\|\overline{p}\|_{L^\infty(\Omega)}\Big) \int_\omega v^2 \diff x,
\end{align}
for some $C_2=C_2(f, \overline{y}, \Omega)>0$. Taking $\|\overline{p}\|_{L^\infty(\Omega)}$ small enough renders the lower bound in \eqref{eq: coercivity.Js} strictly positive. Hence, $\mathscr{J}_s(\overline{u})$ is strictly convex whenever $\overline{u}$ is taken small enough in $L^p(\Omega)$ for some $p>\sfrac{d}{2}$. 
\smallskip

\item \emph{Steady optima $(\overline{u}, \overline{y})$ live in a ball of radius $\|y_d\|_{L^2(\Omega)}$.}
We observe, by comparing the steady functional evaluated at the minimizer with that at $0$, that any steady minimizer $(\overline{u}, \overline{y})$ satisfies
\begin{equation} \label{eq: steady.inequality.elementary}
\|\overline{y}-y_d\|_{L^2(\Omega)}^2 + \|\overline{u}\|_{L^2(\omega)}^2 \leqslant \|y_d\|_{L^2(\Omega)}^2.
\end{equation}
So assuming that the target $y_d$ is small enough in $L^2(\Omega)$ would ensure the smallness of the optimal control $\overline{u}$ in $L^2(\omega)$. And since $d\leqslant3$, we have that $\overline{u}\in L^p(\omega)$ for some $p>\sfrac{d}{2}$. Therefore, $\overline{u}$ is small in $L^p(\omega)$ whenever $y_d$ is small in $L^2(\Omega)$.
\smallskip

\item\emph{Uniqueness of minimizers for small targets.} From the previous 2 items, we gather that 1). whenever $y_d$ is small enough in $L^2(\Omega)$, any minimizer $\overline{u}$ of $\mathscr{J}_s$ is small enough in $L^p(\omega)$ for some $p>\sfrac{d}{2}$, and 2). the functional $\mathscr{J}_s$ is strictly convex over the set of controls which are small enough in $L^p(\omega)$ for some $p>\sfrac{d}{2}$.
This thus ensures the uniqueness of minimizers of $\mathscr{J}_s$ whenever $\|y_d\|_{L^2(\Omega)}$ is small enough. 
\end{itemize}

\noindent
Hence, in this case, the theorem stated before can be enhanced to read as follows\footnote{The smallness condition on the target may manifest in slightly different ways depending on the nature of the nonlinearity. For instance, in the context of the Navier-Stokes system (quadratic nonlinearity), the smallness condition involves the discrepancy between the target $y_d$ and the turnpike $\overline{y}$ \cite{zamorano2018turnpike}.}. 

\begin{theorem}[\cite{porretta2016remarks}] \label{thm: porretta.zuazua.nonlinear.2} 
Suppose that $f\in C^2(\mathbb{R})$ with $f'\geqslant0$ and $d\leqslant3$. Suppose that $\omega_\circ=\Omega$. Then, there exists some $\varepsilon>0$ and $\lambda>0$ such that for all $T>0$, for all $y_d\in L^2(\Omega)$, and for all data $(y^0, p^T) \in L^\infty(\Omega)\times L^\infty(\Omega)$ satisfying 
\begin{equation*}
\|y_d\|_{L^2(\Omega)}+ \left\|y^0-\overline{y}\right\|_{L^\infty(\Omega)} + \left\|p^T-\overline{p}\right\|_{L^\infty(\Omega)} \leqslant \varepsilon,
\end{equation*}
there exists a solution to the optimality system \eqref{eq: semilinear.heat.optimality} which satisfies
\begin{equation*}
\left\|y_T(t)-\overline{y}\right\|_{L^\infty(\Omega)} + \left\|p_T(t)-\overline{p}\right\|_{L^\infty(\Omega)} \leqslant C\left(e^{-\lambda t} + e^{-\lambda(T-t)}\right)
\end{equation*}
for all $t\in [0,T]$, where $(\overline{y}, \overline{p})$ are the unique solutions to \eqref{eq: semilinear.poisson.optimality}. 
\end{theorem}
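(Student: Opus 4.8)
The plan is to reduce the statement to the abstract linearization result Theorem~\ref{thm: porretta.zuazua.3} by verifying its standing hypothesis — namely that the linearized optimality system \eqref{eq: linearized.optimality.system} enjoys the exponential turnpike property — precisely in the regime where $\|y_d\|_{L^2(\Omega)}$ is small. The two self-contained smallness arguments sketched before the statement (uniqueness and smallness of the steady optimum, and strict positivity of $\varrho$) carry the bulk of the work, and the remaining task is to assemble them and to confirm that no constant degrades as $T\to+\infty$.

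First I would pin down the steady problem. Comparing $\mathscr{J}_s$ at a minimizer with its value at $u\equiv0$ yields the a priori bound \eqref{eq: steady.inequality.elementary}, so $\|\overline{u}\|_{L^2(\omega)}\leqslant\|y_d\|_{L^2(\Omega)}$; since $d\leqslant3$ one has $\overline{u}\in L^p(\omega)$ with $p>\sfrac{d}{2}$, and elliptic regularity together with $f'\geqslant0$, $f(0)=0$ then forces $\overline{y}$, and subsequently $\overline{p}$, to be small in $H^1_0(\Omega)\cap L^\infty(\Omega)$ as soon as $\|y_d\|_{L^2(\Omega)}\leqslant\varepsilon$. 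The Hessian identity \eqref{eq: casas.identity} combined with the Lax–Milgram bound \eqref{eq: elliptic.eta.v} then produces the coercivity \eqref{eq: coercivity.Js}, whence $\mathscr{J}_s$ is strictly convex on the ball of small controls and the steady minimizer $(\overline{u},\overline{y},\overline{p})$ is unique.

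Second, with $\overline{p}$ small in $L^\infty(\Omega)$ I would shrink $\varepsilon$ so that $\varrho(x)=1-f''(\overline{y}(x))\overline{p}(x)\geqslant\delta>0$ on $\Omega$. The crucial point is that \eqref{eq: linearized.optimality.system} is exactly the first-order optimality system of the linear quadratic problem \eqref{eq: func.rho}--\eqref{eq: linearized.semilinear.heat}, whose state operator is $A=\Delta-f'(\overline{y})$, coercive by virtue of $f'\geqslant0$, and whose observation is multiplication by $\sqrt{\varrho}$. Because $\omega$ controls the heat dynamics and the bound $\varrho\geqslant\delta$ forces detectability, Assumptions~\ref{ass: 1.2} and~\ref{ass: 1.1} hold, and Theorem~\ref{thm: porretta.zuazua.2} delivers exponential turnpike for the linearized system with constants $C,\lambda$ depending only on $\delta$, $\omega$ and $\|\overline{y}\|_{L^\infty(\Omega)}$, hence uniform in $T$. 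The hypothesis of Theorem~\ref{thm: porretta.zuazua.3} is thereby met, and that theorem produces the desired solution $(y_T,p_T)$ of the nonlinear optimality system \eqref{eq: semilinear.heat.optimality} with the stated estimate; uniqueness of the steady optimum (and the same convexity argument applied to the parabolic functional) guarantees that this solution coincides with the genuine minimizer.

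The main obstacle will be matching the topologies. Theorem~\ref{thm: porretta.zuazua.2} is naturally stated in the pivot space $\mathscr{H}=L^2(\Omega)$, whereas the fixed-point mechanism behind Theorem~\ref{thm: porretta.zuazua.3} — and the very statement to be proved — lives in $L^\infty(\Omega)$, which is what makes the smallness parameter $\varepsilon$ controllable through $\|y_d\|_{L^2(\Omega)}$. Upgrading the $L^2$ turnpike estimate for \eqref{eq: linearized.optimality.system} to an $L^\infty$ estimate with a $T$-independent rate requires parabolic smoothing and a bootstrap on the two boundary layers near $t=0$ and $t=T$, together with the verification that the contraction radius needed to close the nonlinear fixed point does not shrink as $T\to+\infty$. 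Once this uniform $L^\infty$ turnpike for the linearization is secured, the remaining assembly is routine.
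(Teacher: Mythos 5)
Your proposal follows essentially the same route as the paper: establish uniqueness and smallness of the steady optimum via the Hessian/convexity argument and the elementary comparison \eqref{eq: steady.inequality.elementary}, deduce $\varrho\geqslant\delta>0$ so that the linearized optimality system \eqref{eq: linearized.optimality.system} is the optimality system of the well-posed LQ problem \eqref{eq: func.rho}, and thereby verify the hypothesis of Theorem \ref{thm: porretta.zuazua.3}, which yields the conclusion. Your closing remark on upgrading the $L^2$ estimate to $L^\infty$ correctly identifies the main technical point that the paper defers to \cite{porretta2016remarks}, so the proposal is sound and matches the paper's argument.
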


\begin{remark}[Localized observations] 
\label{rem: carleman.domain}
Note that in the above derivation, we had assumed $\omega_\circ=\Omega$. If $\omega_\circ\subsetneq\Omega$, then the weight $\varrho$ appearing in the optimality system, and hence in \eqref{eq: func.rho}, will rather read as
\begin{equation*}
\varrho(x):=1_{\omega_\circ}(x)-f''(\overline{y}(x))\overline{p}(x) \hspace{1cm} \text{ for } x\in\Omega.
\end{equation*}
In particular, the smallness of the optimal steady state and adjoint state are not sufficient to prescribe the sign of $\varrho$, since there is no reason to say, a priori, that these states will be supported within $\omega_\circ$. The question when $\omega_\circ\subsetneq\Omega$ then boils down to ensuring that the functional in \eqref{eq: func.rho} admits a minimizer, even if the weight $\varrho$ might change sign. We believe that, due to the smallness of $\overline{y}$ and $\overline{p}$ in $L^\infty(\Omega)$, the set where $\varrho$ may be negative can, in some sense, be "absorbed" and rendered negligible. But this point requires further rigorous analysis, which could call for the use of Carleman inequalities.
\end{remark}

In both of the aforementioned results, the turnpike property is satisfied by one solution of the optimality system \eqref{eq: semilinear.heat.optimality}. Since the functional $\mathscr{J}_T$ in the optimal control problem \eqref{eq: ocp.heat.semilinear} may be not convex, we cannot directly assert that such a solution of the optimality system is the unique minimizer (optimal control) for \eqref{eq: ocp.heat.semilinear}.
Whether the turnpike property actually holds for the optima under smallness conditions on the initial datum is not indicated in the above statements.
An answer to this question is provided in the recent work \cite{pighin2020turnpike}, in which it is shown that this is indeed the case for the optimal control-state pair $(u_T, y_T)$ for \eqref{eq: ocp.heat.semilinear}. 

Let us henceforth suppose that $p^T\equiv0$ in \eqref{eq: ocp.heat.semilinear}, namely we work with $\phi\equiv0$.
We begin by stating and proving the following result, which ensures that under appropriate smallness assumptions on $y^0$ and $y_d$, the functional $\mathscr{J}_T$ defined in the optimal control problem \eqref{eq: ocp.heat.semilinear} admits a unique minimizer $u_T$. This would then imply the turnpike property for the solution to \eqref{eq: ocp.heat.semilinear}, namely the unique minimizer to $\mathscr{J}_T$.  

\begin{proposition}[\cite{pighin2020turnpike}] \label{lem: uniqueness.smallness}
There exists a $\delta>0$ such that for any $T>0$, and for any $y^0\in L^\infty(\Omega)$ and $y_d\in L^\infty(\Omega)$ satisfying 
\begin{equation*}
\left\|y^0\right\|_{L^\infty(\Omega)}+\|y_d\|_{L^\infty(\Omega)} \leqslant \delta,
\end{equation*}
the problem \eqref{eq: ocp.heat.semilinear} admits a unique solution $u_T\in L^2((0,T)\times\omega)$. Moreover, $$u_T \in L^\infty((0,T)\times\omega).$$ 
\end{proposition}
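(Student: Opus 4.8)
The plan is to first secure existence by the direct method and then to extract uniqueness, together with the $L^\infty$ bound, from a strict convexity argument that the smallness of the data makes available. Existence is routine: the admissible set is all of $L^2((0,T)\times\omega)$, the control-to-state map is well defined and sends weakly convergent controls to (subsequentially) strongly convergent states thanks to $f'\geq0$ and $f(0)=0$, and $\mathscr{J}_T$ is continuous, coercive (through the control penalization) and weakly lower semicontinuous. A minimizer $u_T$ therefore exists and necessarily solves the optimality system \eqref{eq: semilinear.heat.optimality} with terminal condition $p_T(T)=0$. The entire difficulty is to rule out a second minimizer.

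For uniqueness I would compute the second variation of $\mathscr{J}_T$, which, exactly as in the steady identity \eqref{eq: casas.identity} but integrated in time, reads
\[
\mathscr{J}_T''(u)(v,v)=\int_0^T\!\!\int_{\omega_\circ}\eta_v^2\,\diff x\,\diff t+\int_0^T\!\!\int_{\omega}v^2\,\diff x\,\diff t-\int_0^T\!\!\int_{\Omega}f''(y)\,p\,\eta_v^2\,\diff x\,\diff t,
\]
where $y$ and $p$ are the state and adjoint at the control $u$, and $\eta_v$ solves the linearized state equation $\partial_t\eta_v-\Delta\eta_v+f'(y)\eta_v=v1_\omega$ with $\eta_v(0)=0$. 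The structural gain over a generic parabolic functional is that, since $f'\geq0$, the operator $-\Delta+f'(y)$ generates an exponentially stable semigroup with a decay rate independent of $T$; Duhamel's formula combined with Young's convolution inequality then yields $\int_0^T\|\eta_v(t)\|_{L^2(\Omega)}^2\,\diff t\leq C_\star\int_0^T\|v(t)\|_{L^2(\omega)}^2\,\diff t$ with $C_\star$ independent of $T$. Feeding this into the identity and using that $y$ is bounded gives
\[
\mathscr{J}_T''(u)(v,v)\geq\Big(1-C_\star\,\|f''\|_{L^\infty}\,\|p\|_{L^\infty((0,T)\times\Omega)}\Big)\int_0^T\!\!\int_\omega v^2\,\diff x\,\diff t,
\]
so that $\mathscr{J}_T$ is strictly convex along the relevant configurations as soon as the adjoint $p$ is small in $L^\infty((0,T)\times\Omega)$, uniformly in $T$.

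Everything thus reduces to an a priori bound: any minimizer must produce a state and adjoint that are small in $L^\infty$, uniformly in $T$. I would obtain this in two stages. First, comparing $\mathscr{J}_T(u_T)$ with the cost of the quasi-turnpike test control — steer $y^0$ to the small steady optimum $\overline{y}$ in a fixed time $\varepsilon$ via local controllability to steady states (valid under smallness), then hold it with $\overline{u}$ — bounds the time-averaged quantities $\frac1T\int_0^T\big(\|u_T(t)\|_{L^2(\omega)}^2+\|y_T(t)-y_d\|_{L^2(\omega_\circ)}^2\big)\,\diff t$ by $C\big(\|y^0\|_{L^\infty(\Omega)}+\|y_d\|_{L^\infty(\Omega)}\big)$, the steady cost being controlled by \eqref{eq: steady.inequality.elementary} and elliptic regularity. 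Second, I would upgrade this averaged smallness to a uniform pointwise bound by reading the optimality system as a forward--backward fixed point in a space with $T$-independent norm (such as $C([0,T];L^\infty(\Omega))$), where the dissipativity of both the forward and backward equations suppresses growth, and bootstrapping with parabolic $L^p$--$L^\infty$ regularity; for small data this map is a contraction, which simultaneously pins down a unique small solution of \eqref{eq: semilinear.heat.optimality}. Once $\|p_T\|_{L^\infty((0,T)\times\Omega)}$ is small, strict convexity forces the minimizer to coincide with that unique small solution, yielding uniqueness, while the regularity $u_T=p_T1_\omega\in L^\infty((0,T)\times\omega)$ is then immediate.

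The main obstacle is precisely this second stage. The averaged estimate is effortless, but promoting it to a uniform-in-$T$ $L^\infty$ bound is delicate, because the $L^2((0,T)\times\omega)$ norm of $u_T$ itself grows linearly in $T$, so no naive energy argument closes. The forward--backward coupling of the optimality system on an interval that is unbounded as $T\to+\infty$ is where the dissipative sign condition $f'\geq0$ and a careful contraction in a $T$-uniform norm must be played against each other; this is the delicate point upon which the whole proposition rests.
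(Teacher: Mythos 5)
Your overall architecture -- existence by the direct method, strict convexity of $\mathscr{J}_T$ extracted from the second variation with a $T$-independent energy estimate for $\eta_v$ (valid because $f'\geqslant 0$), and an a priori smallness bound to activate that convexity -- is exactly the architecture of the paper's proof. The second-variation identity you write and the absorption of the trilinear term are the same computation as in the paper. Where you diverge, and where the gap sits, is in how the a priori $L^\infty$ bound is obtained and in what precisely must be small.

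The paper does not run a forward--backward contraction on the optimality system \eqref{eq: semilinear.heat.optimality}. Instead it invokes Lemma \ref{lem: strange.lemma.pig}: for \emph{any} global minimizer $u_T$, one has $\|u_T\|_{L^\infty((0,T)\times\omega)}+\|y_T\|_{L^\infty((0,T)\times\Omega)}\leqslant\mathfrak{C}(\|y^0\|_{L^\infty}+\|y_d\|_{L^\infty})$ uniformly in $T$, proved (in the cited reference) by comparing against suboptimal quasi-turnpike controls together with parabolic regularity -- not by a fixed point on the coupled system. With that lemma in hand, one defines the ball $\mathfrak{B}$ of that radius in $L^\infty((0,T)\times\omega)$ and proves strict convexity of $\mathscr{J}_T$ on all of $\mathfrak{B}$. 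The point you miss is that the adjoint entering the second variation at a generic $u\in\mathfrak{B}$ is the solution $\varphi$ of the \emph{decoupled} linear backward problem \eqref{eq: linearized.adjoint.heat.pig}, with source $y-y_d$ and zero terminal data, where $y$ is already determined by $u$; its $L^\infty$ smallness follows from a comparison argument once $y$ is small, with no forward--backward coupling to resolve. You instead try to make the adjoint $p_T$ of the optimality system at the minimizer small via a contraction on \eqref{eq: semilinear.heat.optimality}, which is both harder than necessary and, as you yourself concede, not actually carried out: your "second stage" is asserted, not proved, and it is precisely the load-bearing step. Even if the contraction were closed, you would obtain uniqueness of small solutions of the optimality system, and you would still need the uniform $L^\infty$ bound on arbitrary minimizers (your stage one only gives time-averaged control) to conclude that every minimizer is such a small solution. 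So the proposal correctly identifies where the difficulty lies but does not close it; the paper closes it with the quasi-turnpike $L^\infty$ lemma and a comparison-principle bound on the decoupled linearized adjoint, which avoids the coupled system entirely.
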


The proof of the above proposition in turn is based on the following lemma, also found in \cite{pighin2020turnpike}, which ensures that for bounded initial data $y^0$ and target $y_d$, any optimal control $u_T$ and corresponding state trajectory $y_T$ are uniformly bounded with respect to $T$. Namely, 

\begin{lemma}[\cite{pighin2020turnpike}] \label{lem: strange.lemma.pig}
Let $y^0\in L^\infty(\Omega)$ and $y_d\in L^\infty(\Omega)$ be fixed. Then, there exists a constant 
\begin{equation*}
\mathfrak{C}=\mathfrak{C}\left(\left\|y^0\right\|_{L^\infty(\Omega)} + \|y_d\|_{L^\infty(\omega_\circ)}\right)>0
\end{equation*}
such that for any $T>0$, any optimal solution $u_T\in L^2((0,T)\times\omega)$ to \eqref{eq: ocp.heat.semilinear} satisfies 
$$u_T\in L^\infty((0,T)\times\omega)$$ as well as
\begin{align} \label{eq: misterious.estimate.pig}
\|u_T\|_{L^\infty((0,T)\times\omega)} &+ \|y_T\|_{L^\infty((0,T)\times\Omega)}\nonumber \\
&\leqslant \mathfrak{C} \left(\left\|y^0\right\|_{L^\infty(\Omega)} + \|y_d\|_{L^\infty(\omega_\circ)}\right), 
\end{align}
where $y_T$ denotes the optimal state trajectory, unique solution to \eqref{eq: semilinear.heat}. Moreover, $\zeta\mapsto\mathfrak{C}(\zeta)$ is non-decreasing as a function from $[0,+\infty)$ to $(0,+\infty)$, with $\mathfrak{C}(0)=0$.
\end{lemma}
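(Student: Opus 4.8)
The plan is to reduce the whole statement to a $T$-uniform $L^\infty$ bound on the adjoint state $p_T$, since the optimal control is $u_T \equiv p_T 1_\omega$ and a bound on $y_T$ will then follow from the state equation. Indeed, once $\|u_T\|_{L^\infty((0,T)\times\omega)}$ is under control, I would bound $y_T$ using the maximum principle for \eqref{eq: semilinear.heat}: because $f'\geq 0$ and $f(0)=0$, the monotone nonlinearity only helps, and comparison with the Dirichlet heat equation fed by the bounded source $u_T 1_\omega$ gives $\|y_T\|_{L^\infty((0,T)\times\Omega)} \lesssim \|y^0\|_{L^\infty(\Omega)} + \|u_T\|_{L^\infty((0,T)\times\omega)}$, the long-time accumulation of the source being tamed by the exponential decay (rate $\lambda_1(\Omega)$) of the Dirichlet semigroup.

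The baseline estimate comes from testing optimality against the trivial control. Comparing $\mathscr{J}_T(u_T)\leq \mathscr{J}_T(0)$ and using that the uncontrolled solution of \eqref{eq: semilinear.heat} decays exponentially (dissipativity of $f$ together with the Dirichlet Laplacian, via $\tfrac12\tfrac{\diff}{\diff t}\|y\|_{L^2}^2 + \lambda_1(\Omega)\|y\|_{L^2}^2 \leq 0$), one gets a space--time energy bound $\int_0^T \big(\|y_T(t)-y_d\|_{L^2(\omega_\circ)}^2 + \|u_T(t)\|_{L^2(\omega)}^2\big)\,\diff t \leq C\big(\|y^0\|_{L^2(\Omega)}^2 + T\|y_d\|_{L^2(\omega_\circ)}^2\big)$. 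The linear growth in $T$ is unavoidable, but it controls space--time averages uniformly in $T$, which is the quantity I would feed into the next step. I would then upgrade these averages to pointwise-in-time estimates: representing $y_T$ (forward) and $p_T$ (backward, with $p_T(T)=0$ since $\phi\equiv 0$) through Duhamel's formula against the exponentially decaying Dirichlet semigroup --- absorbing the nonnegative potentials coming from $f(y_T)$ and $f'(y_T)$, which only improve the decay --- produces a coupled pair of integral inequalities for $Y(t):=\|y_T(t)\|_{L^2(\Omega)}$ and $P(t):=\|p_T(t)\|_{L^2(\Omega)}$ with forward and backward kernels of total mass $\lambda_1(\Omega)^{-1}$. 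Combining these with the energy bound (by Cauchy--Schwarz against the exponential kernels) should yield a uniform-in-$T$ pointwise $L^2(\Omega)$ bound on $Y$ and $P$, after which a final parabolic smoothing step over unit time lags converts it into the desired $L^\infty$ bound on $p_T\equiv u_T$ and on $y_T$.

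The main obstacle is precisely closing these forward/backward estimates uniformly in $T$ \emph{without} any smallness hypothesis. A naive fixed point fails: the composed loop gain of the two decaying kernels is of order $\lambda_1(\Omega)^{-2}$, which is not a contraction on large domains, so one cannot simply absorb. What saves the argument is that the two equations must not be treated as independent Gronwall inequalities, but rather through the stable/unstable (hyperbolic) structure of the optimality system \eqref{eq: semilinear.heat.optimality} --- exactly the spectral dichotomy underlying Figure \ref{fig: spectral.dic} and the diagonalization of Section \ref{sec: 5} --- together with the global energy bound, so that the genuinely problematic long-time interaction only occurs on the range where decay dominates and the accumulated partial observation is controlled by the averaged estimate.

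Finally, the structural properties of $\mathfrak{C}$ follow by setting $\mathfrak{C}(\zeta):=\sup\big\{\|u_T\|_{L^\infty((0,T)\times\omega)}+\|y_T\|_{L^\infty((0,T)\times\Omega)}\big\}$, the supremum taken over all $T>0$ and all data with $\|y^0\|_{L^\infty(\Omega)}+\|y_d\|_{L^\infty(\omega_\circ)}\leq\zeta$. This quantity is non-decreasing in $\zeta$ by construction, finite by the uniform bound just obtained, satisfies $\mathfrak{C}(0)=0$ since zero data forces $u_T\equiv 0$ and $y_T\equiv 0$, and tends to $0$ as $\zeta\to 0^+$ because the estimates are asymptotically linear in the data near the origin.
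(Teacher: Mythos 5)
Your reduction of the lemma to a $T$-uniform $L^\infty$ bound on the adjoint state, and the final extraction of $\mathfrak{C}$ as a supremum over data in a ball, are fine; the core of the argument, however, does not close, for two concrete reasons. First, the comparison $\mathscr{J}_T(u_T)\leqslant\mathscr{J}_T(0)$ only yields $\int_0^T\big(\|y_T-y_d\|^2_{L^2(\omega_\circ)}+\|u_T\|^2_{L^2(\omega)}\big)\,\diff t\leqslant C(1+T)$, and when you feed a bound that grows linearly in $T$ into the Duhamel representation of $p_T$ via Cauchy--Schwarz against the kernel $e^{-\lambda_1(s-t)}$, you obtain $\|p_T(t)\|_{L^2(\Omega)}\lesssim(2\lambda_1)^{-1/2}\big(\int_0^T\|y_T-y_d\|^2\,\diff s\big)^{1/2}\lesssim\sqrt{1+T}$, which is not uniform in $T$. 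A global space--time bound of order $T$ does not prevent the running cost from concentrating on a single unit interval; what one actually needs is $\sup_{t}\int_t^{t+1}\big(\|y_T-y_d\|^2+\|u_T\|^2\big)\,\diff s\leqslant C$ uniformly in $t$ and $T$, after which summing the exponential kernel over unit blocks does give a $T$-uniform pointwise bound. Second, your proposed mechanism for closing the forward/backward loop --- the hyperbolic dichotomy of the optimality system \eqref{eq: semilinear.heat.optimality}, as in Section \ref{sec: 5} and Figure \ref{fig: spectral.dic} --- is a property of the \emph{linearized} system and is precisely what is unavailable here without smallness assumptions on $y_d$ and $y^0$; invoking it begs the question that Sections \ref{sec: 8}--\ref{sec: 10} are designed to circumvent.

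The proof the paper points to (in \cite{pighin2020turnpike}; the survey deliberately omits it) supplies the missing local bound by a different device: suboptimal \emph{quasi-turnpike} controls. Using the controllability of \eqref{eq: semilinear.heat} to the optimal steady state and to trajectories in unit time, with a control whose norm is controlled by the data, one compares the restriction of $u_T$ to a subinterval --- which, by dynamic programming, is optimal for the localized problem with matching endpoint data --- with the control that steers into $\overline{y}$, sits at $\overline{y}$ with $\overline{u}$, and exits. This yields local-in-time $L^2$ bounds that are uniform in $T$ and in the location of the interval, and parabolic $L^2\to L^\infty$ smoothing for the adjoint equation (whose potential $f'(y_T)\geqslant0$ only helps) then upgrades them to \eqref{eq: misterious.estimate.pig}; the comparison-principle bound on $y_T$ that you describe is then applied exactly as you propose. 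Note that your argument never uses controllability of the state equation at all, which is a reliable sign that an essential ingredient is missing.
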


We omit the proof, which is rather technical and relies on the construction of suboptimal, quasi-turnpike controls -- further details may be found in the cited paper. We rather focus on proving Proposition \ref{lem: uniqueness.smallness}.

\begin{proof}  First of all, an optimal control $u_T$ is bounded in space-time by \cite[Lemma 2.1]{pighin2020turnpike} (see also the lemma just above). Let us thus focus on proving the uniqueness of minimizers to $\mathscr{J}_T$. The proof is roughly an adaptation of the discussion preceding Theorem \ref{thm: porretta.zuazua.nonlinear.2} to the evolutionary setting. 

Consider $y^0\in L^\infty(\Omega)$ and $y_d\in L^\infty(\Omega)$ such that 
\begin{equation} \label{eq: less.than.1}
\|y^0\|_{L^\infty(\Omega)}+\|y_d\|_{L^\infty(\Omega)}\leqslant 1.
\end{equation}
 We introduce a critical ball in $L^\infty((0,T)\times\omega)$ by:
\begin{equation*}
\mathfrak{B}:= \left\{u\,\Biggm|\, \|u\|_{L^\infty((0,T)\times\omega)} \leqslant \mathfrak{C} \left(\left\|y^0\right\|_{L^\infty(\Omega)}+\|y_d\|_{L^\infty(\omega_\circ)}\right)\right\}
\end{equation*}
where $\mathfrak{C}(\cdot)>0$ appears in \eqref{eq: misterious.estimate.pig}\footnote{The constant $\mathfrak{C}$ will be independent of any smallness constant $1\gg\delta>0$ chosen in what follows, due to the fact that it is non-decreasing with respect to $\|y^0\|_{L^\infty(\Omega)}+\|y_d\|_{L^\infty(\omega_\circ)}$ (as per Lemma \ref{lem: strange.lemma.pig}), and $\|y^0\|_{L^\infty(\Omega)}+\|y_d\|_{L^\infty(\omega_\circ)}\leqslant1$.}.
We look to prove the strict convexity of the functional $\mathscr{J}_T$ appearing in \eqref{eq: ocp.heat.semilinear} in $\mathfrak{B}$. 
To this end, we prove that its Hessian is positive definite. We proceed in doing so by noting that, as in the elliptic case, the second order G\^ateaux derivative of $\mathscr{J}_T$ at some $u\in L^\infty((0,T)\times\omega)$ in a direction $v\in L^\infty((0,T)\times\omega)$, reads as
\begin{equation*}
\mathscr{J}''_T(u) v v = \int_0^T \int_{\Omega} \eta_v^2 \diff x \diff t + \int_0^T \int_\omega v^2 \diff x \diff t  - \int_0^T \int_\Omega f''(y)\varphi\, \eta_v^2 \diff x \diff t,
\end{equation*}
where $y$ solves \eqref{eq: semilinear.heat} with control $u$, $\eta_v$ solves the linearized forward system
\begin{equation*}
\begin{cases}
\partial_t \eta - \Delta \eta + f'(y) \eta = v1_\omega &\text{ in }(0,T)\times\Omega,\\
\eta=0 &\text{ on }(0,T)\times\partial\Omega,\\
\eta_{|_{t=0}} = 0 &\text{ in }\Omega,
\end{cases}
\end{equation*}
whereas $\varphi$ solves the linearized adjoint system
\begin{equation} \label{eq: linearized.adjoint.heat.pig}
\begin{cases}
-\partial_t \varphi - \Delta \varphi + f'(y) \varphi = y-y_d &\text{ in }(0,T)\times\Omega,\\
\varphi=0 &\text{ in }(0,T)\times\partial\Omega,\\
\varphi_{|_{t=T}} = 0 &\text{ in }\Omega.
\end{cases}
\end{equation}
Since $f'\geqslant0$, by standard energy estimates for the linear heat equation it follows that
\begin{equation} \label{eq: pighin.1}
\|\eta_v\|_{L^2((0,T)\times\Omega)} \leqslant C_0(\Omega, f) \|v\|_{L^2((0,T)\times\omega)}
\end{equation}
for some constant $C_0(\Omega, f)>0$ independent of $T$. 
Now let $u\in \mathfrak{B}$. 
{\color{black}By a crafted comparison argument (see \cite{pighin2020turnpike}) applied to \eqref{eq: semilinear.heat} and \eqref{eq: linearized.adjoint.heat.pig}, we may find
\begin{equation} \label{eq: pighin.2}
\|y\|_{L^\infty((0,T)\times\Omega)} + \|\varphi\|_{L^\infty((0,T)\times\Omega)} \leqslant C_1 \left(\left\|y^0\right\|_{L^\infty(\Omega)} + \|y_d\|_{L^\infty(\Omega)}\right)
\end{equation}
for some $C_1(\Omega)>0$ independent of $T$.}
Hence, using estimates \eqref{eq: pighin.1} and \eqref{eq: pighin.2}, it follows that
\begin{equation*}
\int_0^T \int_\Omega \left|f''(y)\varphi\right| \eta^2_v \diff x \diff t \leqslant C_2 \left(\left\|y^0\right\|_{L^\infty(\Omega)}+\|y_d\|_{L^\infty(\Omega)}\right) \int_0^T \int_\omega v^2 \diff x \diff t,
\end{equation*}
where $C_2 = C_2(\Omega, f, f'')>0$ is independent of $T$, and we have used \eqref{eq: less.than.1}. Therefore,
\begin{align*}
\mathscr{J}_T''(u)vv &\geqslant \int_0^T \int_{\omega_\circ} \eta_v^2 \diff x \diff t \\
&\quad+ \left(1-C_2\left(\left\|y^0\right\|_{L^\infty(\Omega)}+\|y_d\|_{L^\infty(\Omega)}\right)\right)\int_0^T \int_\omega v^2 \diff x \diff t. 
\end{align*}
Hence, if $\|y^0\|_{L^\infty(\Omega)}+\|y_d\|_{L^\infty(\Omega)}\leqslant\delta$ for some $0<\delta\ll1$ small enough, we can ensure that
\begin{equation*}
\mathscr{J}''_T(u)vv \geqslant \frac12 \int_0^T \int_\omega v^2 \diff x \diff t
\end{equation*}
holds for any $v\in L^\infty((0,T)\times\omega)$. Consequently, $\mathscr{J}_T$ is strictly convex in the ball $\mathfrak{B}$. 
In other words, $\mathscr{J}_T$ has a unique minimizer in the ball $\mathfrak{B}$. 
Now by Lemma \ref{lem: strange.lemma.pig}, whenever 
$$\|y^0\|_{L^\infty(\Omega)}+\|y_d\|_{L^\infty(\Omega)}\leqslant\delta\ll1,$$ 
then clearly any given optimal control $u_T$, namely any minimizer of $\mathscr{J}_T$, is an element of $\mathfrak{B}$. Thus $u_T$ is a global minimizer to $\mathscr{J}_T$.
\end{proof}

Both of the above results then lead to the following turnpike result for the semilinear heat equation, without any smallness assumptions on the initial data.

\begin{theorem}[\cite{pighin2020turnpike}] \label{thm: pighin.whatever.it.is}  Let $\varepsilon>0$ be fixed. Then, there exists $r_\varepsilon>0$ such that for every $T>0$, $y_0\in L^\infty(\Omega)$, and for every $y_d\in L^\infty(\Omega)$ satisfying 
\begin{equation*}
\|y_d\|_{L^\infty(\Omega)} \leqslant r_{\varepsilon},
\end{equation*}
any $u_T$ solution (global minimizer) to \eqref{eq:  ocp.heat.semilinear} and corresponding solution $y_T$ to \eqref{eq: semilinear.heat} satisfy
\begin{equation} \label{eq: assymetric.turnpike}
 \|y_T(t)-\overline{y}\|_{L^\infty(\Omega)} +\|u_T(t)-\overline{u}\|_{L^\infty(\omega)} \leqslant C_{\varepsilon} e^{-\lambda t} + \varepsilon e^{-\lambda(T-t)}
\end{equation}
for all $t\in[0,T]$, for some constants $C_{\varepsilon}=C\left(\varepsilon,\Omega, \omega,y^0\right)>0$ and $\lambda>0$ independent of $T$. 
\end{theorem}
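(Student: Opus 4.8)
The plan is to combine the uniform $L^\infty$ bounds of Lemma \ref{lem: strange.lemma.pig} with a \emph{restart} argument that feeds a bounded-time section of the optimal trajectory into the local theory of Theorem \ref{thm: porretta.zuazua.nonlinear.2}. The asymmetry of \eqref{eq: assymetric.turnpike} is the tell-tale sign of this strategy: the possibly large constant $C_\varepsilon$ absorbs the initial transient governed by $y^0$, whereas the small prefactor $\varepsilon$ on the final arc is essentially $\|\overline{p}\|_{L^\infty(\Omega)}$, which we force to be small by shrinking $r_\varepsilon$. First I would fix the turnpike scale: comparing the steady functional at its minimizer with its value at $0$ as in \eqref{eq: steady.inequality.elementary}, together with elliptic regularity, shows $\|\overline{y}\|_{L^\infty(\Omega)}+\|\overline{p}\|_{L^\infty(\Omega)}\to0$ as $\|y_d\|_{L^\infty(\Omega)}\to0$. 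I then choose $r_\varepsilon$ so small that $\|y_d\|_{L^\infty(\Omega)}\leqslant r_\varepsilon$ forces uniqueness of the steady minimizer (via the strict-convexity computation preceding Theorem \ref{thm: porretta.zuazua.nonlinear.2}), $\|\overline{p}\|_{L^\infty(\Omega)}\leqslant\varepsilon/C$ for the constant $C$ appearing below, and $\|\overline{y}\|_{L^\infty(\Omega)}$ below half the local-theory smallness threshold $\varepsilon_0$. By Lemma \ref{lem: strange.lemma.pig}, the optimal pair then obeys $\|u_T\|_{L^\infty}+\|y_T\|_{L^\infty}\leqslant R:=\mathfrak{C}\left(\|y^0\|_{L^\infty(\Omega)}+r_\varepsilon\right)$, a bound independent of $T$.

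The core step is to exhibit a time $\tau=\tau(\varepsilon,\|y^0\|_{L^\infty(\Omega)})$, \emph{independent of $T$}, at which $\|y_T(\tau)-\overline{y}\|_{L^\infty(\Omega)}\leqslant\varepsilon_0$. Here I would first bound the excess cost $\mathscr{J}_T(u_T)-T\mathscr{J}_s(\overline{u})$ uniformly in $T$ by testing $\mathscr{J}_T$ against a quasi-turnpike competitor: steer $y^0$ to $\overline{y}$ in unit time using the null-controllability of \eqref{eq: semilinear.heat} (available under the present monotonicity assumptions), then apply the steady control $\overline{u}$, so that the extra cost over riding the turnpike is $O\!\left(\mathrm{stuff}(\|y^0\|_{L^\infty})\right)$ independent of $T$. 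A coercivity/dissipativity estimate valid near the turnpike — available precisely because $\varrho=1-f''(\overline{y})\overline{p}\geqslant\delta>0$ once $\overline{p}$ is small — converts this into an integral turnpike bound $\int_0^T\|y_T(t)-\overline{y}\|_{L^2(\Omega)}^2\,\diff t\leqslant C(\|y^0\|_{L^\infty})$, again uniform in $T$. A pigeonhole argument yields $t_\star\leqslant M(\varepsilon,\|y^0\|_{L^\infty})$ with $\|y_T(t_\star)-\overline{y}\|_{L^2(\Omega)}$ as small as desired, and parabolic smoothing (using the uniform bound $R$ on $y_T$ and $u_T$) upgrades this to $\|y_T(\tau)-\overline{y}\|_{L^\infty(\Omega)}\leqslant\varepsilon_0$ at $\tau:=t_\star+1$.

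With $\tau$ in hand I would restart. Since $\phi\equiv0$, the dynamic programming principle makes $u_T|_{[\tau,T]}$ the optimal control on $[\tau,T]$ for initial datum $y_T(\tau)$ and vanishing terminal payoff. Both data are now small, namely $\|y_T(\tau)-\overline{y}\|_{L^\infty(\Omega)}\leqslant\varepsilon_0$ and $\|0-\overline{p}\|_{L^\infty(\Omega)}=\|\overline{p}\|_{L^\infty(\Omega)}$, so Proposition \ref{lem: uniqueness.smallness} gives uniqueness of this sub-minimizer and the data-tracking form of Theorem \ref{thm: porretta.zuazua.nonlinear.2} (whose linearized estimate carries the prefactors $\|y_T(\tau)-\overline{y}\|_{L^\infty}$ and $\|\overline{p}\|_{L^\infty}$, exactly as in the linear Theorem \ref{thm: porretta.zuazua.1}) yields, for $t\in[\tau,T]$,
\[
\|y_T(t)-\overline{y}\|_{L^\infty(\Omega)}+\|u_T(t)-\overline{u}\|_{L^\infty(\omega)}\leqslant C\varepsilon_0\,e^{-\lambda(t-\tau)}+C\|\overline{p}\|_{L^\infty(\Omega)}\,e^{-\lambda(T-t)}.
\]
As $\tau$ is bounded, $e^{-\lambda(t-\tau)}\leqslant e^{\lambda\tau}e^{-\lambda t}$, so the first term is absorbed into $C_\varepsilon e^{-\lambda t}$, while $C\|\overline{p}\|_{L^\infty(\Omega)}\leqslant\varepsilon$ by the choice of $r_\varepsilon$; this is the final arc. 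On $[0,\tau]$ all quantities are bounded by $2R$ and $\tau$ is fixed, so $e^{-\lambda t}\geqslant e^{-\lambda\tau}$ lets me absorb everything into $C_\varepsilon e^{-\lambda t}$ after enlarging $C_\varepsilon$ by $e^{\lambda\tau}$. Gluing the two intervals produces \eqref{eq: assymetric.turnpike}, with $C_\varepsilon$ depending on $\varepsilon,\Omega,\omega$ and $y^0$ (through $R$ and $\tau$) and $\lambda$ independent of $T$.

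The main obstacle is the core step: producing $\tau$ independent of $T$. The delicate points are that the coercivity needed to pass from the excess-cost bound to the integral bound on $\|y_T-\overline{y}\|$ holds only in a neighborhood of the turnpike — this is where the smallness of $\overline{p}$, hence of $y_d$, is genuinely used — and that the $L^2$-to-$L^\infty$ upgrade must keep every constant independent of the horizon $T$, which is exactly the role of the uniform bound $R$ from Lemma \ref{lem: strange.lemma.pig}. Everything downstream is then a routine bookkeeping of constants across the two time intervals.
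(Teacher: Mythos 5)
Your proposal assembles exactly the ingredients the paper lines up for this result — the uniform $L^\infty$ bound of Lemma \ref{lem: strange.lemma.pig}, the smallness-of-$y_d$ discussion that makes $(\overline{y},\overline{p})$ small and the steady minimizer unique, and the local theory of Proposition \ref{lem: uniqueness.smallness} and Theorem \ref{thm: porretta.zuazua.nonlinear.2} applied after a restart at a $T$-independent time — and this is also how the cited reference proceeds, so the route is the right one and the asymmetry of \eqref{eq: assymetric.turnpike} comes out exactly as you describe.

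The one step you leave soft is the passage from the bounded excess cost to $\int_0^T\|y_T(t)-\overline{y}\|_{L^2(\Omega)}^2\,\diff t\leqslant C$: as you note yourself, the local coercivity $\varrho\geqslant\delta$ does not apply along a trajectory not yet known to be near the turnpike. This closes with the affine storage function $\*S(y):=\langle\overline{p},y\rangle_{L^2(\Omega)}$. Using the steady optimality system \eqref{eq: semilinear.poisson.optimality} (with $\omega_\circ=\Omega$), the cross terms cancel exactly and one finds, along the optimal trajectory,
\begin{equation*}
\Big(f^0(y_T,u_T)-f^0(\overline{y},\overline{u})\Big)-\frac{\diff}{\diff t}\*S(y_T)=\frac12\|y_T-\overline{y}\|_{L^2(\Omega)}^2+\frac12\|u_T-\overline{u}\|_{L^2(\omega)}^2+\big\langle\overline{p},\,f(y_T)-f(\overline{y})-f'(\overline{y})(y_T-\overline{y})\big\rangle_{L^2(\Omega)},
\end{equation*}
and the second-order Taylor remainder is bounded by $\frac12\|\overline{p}\|_{L^\infty(\Omega)}\sup_{|\xi|\leqslant R}|f''(\xi)|\,\|y_T-\overline{y}\|^2_{L^2(\Omega)}$, hence absorbed by the first term once $r_\varepsilon$ is small enough relative to the uniform bound $R$ of Lemma \ref{lem: strange.lemma.pig}. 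Integrating then gives the strict-dissipativity (integral turnpike) bound on both $y_T-\overline{y}$ and $u_T-\overline{u}$, which is precisely what your pigeonhole step needs — note you want the control increment small on the selected unit interval as well, since otherwise the forcing in the equation for $y_T-\overline{y}$ would ruin the $L^2\to L^\infty$ upgrade; with it, interior parabolic regularity plus Gagliardo--Nirenberg interpolation against the uniform $W^{1,\infty}$ bound delivers $\|y_T(\tau)-\overline{y}\|_{L^\infty(\Omega)}\leqslant\varepsilon_0$. The dynamic-programming restart (valid since $\phi\equiv0$) and the final bookkeeping are then as you write them.
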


\begin{remark} Let us make a couple of comments regarding Theorem \ref{thm: pighin.whatever.it.is}.
 
\begin{itemize}
\item Here, once again, $(\overline{u}, \overline{y})$ denotes the unique solution to \eqref{eq: semilinear.steady.problem} -- uniqueness follows precisely from the smallness of the target $y_d$, as discussed in what precedes.
We also note that the decay rate $\lambda>0$ is the same as in the previous statements.
\smallskip 

\item Let us also comment briefly on the asymmetric nature of estimate \eqref{eq: assymetric.turnpike}. Note that the statement of the above theorem does not require any smallness assumptions on the initial datum $y^0$.
Moreover, the constant multiplying the final arc $e^{-\lambda(T-t)}$ may be made arbitrarily small, whereas the constant multiplying the initial arc $e^{-\lambda t}$ will be large whenever $\varepsilon\ll1$. This is in part due to \eqref{eq: steady.inequality.elementary} and the smallness of $\|y_d\|_{L^\infty(\Omega)}$, as namely $\overline{y}\sim y_d \sim 0$ which yields the smallness of the final arc, when the state $y_T(t)$ leaves the turnpike $\overline{y}$ to match the final condition for the adjoint state. On the other hand, the initial condition $y^0$ can be arbitrarily large, which is reflected by the constant $C_\varepsilon$. 
\end{itemize}
\end{remark}

\subsection{Large targets, weaker turnpike}

When the target $y_d$ is taken to be arbitrarily large, one can still obtain asymptotic simplification (turnpike) results for the semilinear heat equation, albeit with a significantly weaker rate of convergence when $T\to+\infty$.

\begin{theorem}[\cite{pighin2020turnpike}] Let $y^0\in L^\infty(\Omega)$ and $y_d\in L^\infty(\omega_\circ)$ be fixed. 
Then 
\begin{equation*}
\frac{1}{T}\inf_{\substack{u_T\in L^2((0,T)\times\Omega)\\ y_T\text{ solves} \eqref{eq: semilinear.heat}}}\mathscr{J}_T(u_T)\xrightarrow{T\to+\infty} \inf_{\substack{\overline{u}\in L^2(\Omega)\\\overline{y}\text{ solves} \eqref{eq: semilinear.poisson}}}\mathscr{J}_s(\overline{u}).
\end{equation*}
Suppose in addition that $y^0\in L^\infty(\Omega)\cap H^1_0(\Omega)$. Then, moreover,
\begin{equation*}
\|\partial_t y_T\|_{L^2((0,T)\times\Omega)} \leqslant C
\end{equation*}
holds for some constant $C>0$ independent of $T$, where $y_T$ denotes the unique solution to \eqref{eq: semilinear.heat} corresponding to any control $u_T$ optimal for $\mathscr{J}_T$.
\end{theorem}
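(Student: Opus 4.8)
The plan is to prove the two assertions separately, the common enabling ingredient being the uniform-in-$T$ bound $\|u_T\|_{L^\infty((0,T)\times\omega)}+\|y_T\|_{L^\infty((0,T)\times\Omega)}\le\mathfrak{C}$ supplied by Lemma \ref{lem: strange.lemma.pig} and \eqref{eq: misterious.estimate.pig}, which confines the entire optimal trajectory to a fixed bounded set on which $f$ and $f'$ are Lipschitz. Writing $f^0(y,u):=\tfrac12\|y-y_d\|_{L^2(\omega_\circ)}^2+\tfrac12\|u\|_{L^2(\omega)}^2$, so that (with $\phi\equiv0$ as above) $\mathscr{J}_T(u)=\int_0^T f^0(y(t),u(t))\diff t$ and $\mathscr{J}_s(\overline u)=f^0(\overline y,\overline u)$, I note that $f^0$ is convex in $(y,u)$. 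The value convergence will follow from a matching pair of inequalities for $\tfrac1T\inf\mathscr{J}_T$.

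For the upper bound I would use a quasi-turnpike competitor, following the construction of \cite{pighin2020turnpike}. Since $f$ is monotone with $f(0)=0$, the semilinear equation \eqref{eq: semilinear.heat} is controllable to the steady trajectory $\overline y$, so one can steer $y^0$ to $\overline y$ on $[0,1]$ with a control whose norm depends only on $\|y^0-\overline y\|$, hence is independent of $T$; on $[1,T]$ one freezes the state at $\overline y$ using the steady control $\overline u$ solving \eqref{eq: semilinear.poisson}. The cost of this competitor is at most $C_0+(T-1)\,\mathscr{J}_s(\overline u)$ with $C_0$ independent of $T$, and dividing by $T$ yields $\limsup_T\tfrac1T\inf\mathscr{J}_T\le\mathscr{J}_s(\overline u)=\inf\mathscr{J}_s$.

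The lower bound is the delicate direction, and it is here that the second assertion enters. The idea is to apply Jensen's inequality to time-averages of the optimal pair, but \emph{localized} on windows $I_k=[kL,(k+1)L]$ of an intermediate length $1\ll L\ll\sqrt T$. Denoting by $(Y_k,U_k)$ the time-averages of $(y_T,u_T)$ over $I_k$, convexity of $f^0$ gives $f^0(Y_k,U_k)\le\tfrac1L\int_{I_k}f^0(y_T,u_T)$. Averaging the state equation over $I_k$ shows that $(Y_k,U_k)$ satisfies the steady equation up to two defects: a boundary term of size $O(1/L)$, and a \emph{nonlinear averaging defect} $\tfrac1L\int_{I_k}\bigl(f(y_T)-f(Y_k)\bigr)$, bounded, via the Lipschitz bound for $f$ on the confining set, by the local oscillation $\bigl(\tfrac1L\int_{I_k}\|y_T-Y_k\|^2\bigr)^{1/2}$. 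By the Poincaré--Wirtinger inequality in time one has $\int_0^T\|y_T-Y_{k(t)}\|^2\le\tfrac{L^2}{\pi^2}\int_0^T\|\partial_t y_T\|^2$, so that, invoking the bound $\int_0^T\|\partial_t y_T\|^2\le C$, the time-averaged oscillation is $O(L^2/T)$ and vanishes as $T\to+\infty$. Correcting each $(Y_k,U_k)$ to a genuinely admissible steady pair by absorbing the (on average, small) defects through the solvability/controllability of the steady elliptic problem with control in $\omega$, and using $f^0\ge\inf\mathscr{J}_s$ at each corrected pair, gives $\inf\mathscr{J}_s\le\tfrac1T\int_0^T f^0(y_T,u_T)+o(1)$, i.e. $\liminf_T\tfrac1T\inf\mathscr{J}_T\ge\inf\mathscr{J}_s$. (For data merely in $L^\infty$ one runs this on $[\varepsilon,T]$, parabolic smoothing supplying the requisite control of $\partial_t y_T$ away from $t=0$, while the layer $[0,\varepsilon]$ contributes $O(1)$.)

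It remains to prove $\int_0^T\|\partial_t y_T\|^2\le C$ under $y^0\in H^1_0(\Omega)\cap L^\infty(\Omega)$, and this is the main obstacle. The subtlety is that neither parabolic maximal regularity nor the crude energy identity (multiplying the equation by $\partial_t y_T$) suffices, since both leave a control contribution $\sim\|u_T\|_{L^2((0,T)\times\omega)}^2=O(T)$; the desired $O(1)$ bound must exploit optimality, namely that the optimal control is slowly varying in the bulk. I would split $[0,T]=[0,1]\cup[1,T]$: on $[0,1]$, local maximal regularity together with the uniform $L^\infty$ control bound gives an $O(1)$ contribution, while on $[1,T]$ I would run an incremental-quotient argument. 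Setting $z_h:=y_T(\cdot+h)-y_T(\cdot)$ and $w_h:=u_T(\cdot+h)-u_T(\cdot)$, the pair solves a linear equation with nonnegative bounded potential $a_h=\int_0^1 f'(y_T+\theta z_h)\,\diff\theta$, and a dissipative energy estimate gives $\int\|z_h\|^2\lesssim\|z_h(1)\|^2+\int\|w_h\|^2$. The crux is to establish, from the time-invariance of the dynamics and cost and the optimality of $u_T$, that $\int_1^{T-h}\|w_h\|_{L^2(\omega)}^2\le C h^2$ uniformly in $T$; granting this, dividing by $h^2$ and letting $h\to0$ yields the claim. Proving this quadratic control on the control increments — equivalently, a uniform-in-$T$ Lipschitz-in-time estimate for $u_T$ obtained by testing optimality against the time-shift of $u_T$ as a near-optimal competitor — is precisely the technical heart that is omitted in the cited reference, and is where I expect the genuine difficulty to lie.
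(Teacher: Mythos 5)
There is a genuine gap, and you have correctly located it yourself: your entire architecture rests on the uniform-in-$T$ bound $\int_0^T\|\partial_t y_T\|^2\leqslant C$, which you reduce to the unproven claim $\int_1^{T-h}\|u_T(\cdot+h)-u_T\|_{L^2(\omega)}^2\leqslant Ch^2$ and then declare to be "the technical heart." Since your lower bound $\liminf_T\frac1T\inf\mathscr{J}_T\geqslant\inf\mathscr{J}_s$ is built on Poincar\'e--Wirtinger in time and hence on that very bound, neither assertion is actually established; worse, the first assertion is stated for $y^0\in L^\infty(\Omega)$ only, while the $\partial_t y_T$ estimate requires $y^0\in H^1_0(\Omega)$ in addition, and your "parabolic smoothing on $[\varepsilon,T]$" patch does not resolve this, because the missing ingredient is not regularity near $t=0$ but the uniformity in $T$ of the bulk estimate. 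The window-averaging/Jensen machinery, the correction of the averaged pairs to admissible steady pairs, and the incremental-quotient argument are all scaffolding around a hole.

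The paper's proof shows that no such machinery is needed: the key observation is the algebraic identity \eqref{eq: pighin.incomprehensible}. Writing $u_T=\partial_t y_T+\bigl(-\Delta y_T+f(y_T)\bigr)$, expanding $\frac12\|u_T\|^2_{L^2(\Omega)}$ accordingly, and integrating the cross term by parts in time (which produces the exact derivative of $\frac12\|\nabla y_T\|^2+\int_\Omega F(y_T)$ with $F$ the antiderivative of $f$) yields
\begin{equation*}
\mathscr{J}_T(u_T)=\int_0^T\mathscr{J}_s\bigl(-\Delta y_T(t)+f(y_T(t))\bigr)\diff t+\frac12\int_0^T\!\!\int_\Omega|\partial_t y_T|^2\diff x\diff t+\Bigl[\tfrac12\|\nabla y_T\|^2_{L^2}+\textstyle\int_\Omega F(y_T)\Bigr]_0^T.
\end{equation*}
Since $\omega=\Omega$ here, for each fixed $t$ the pair $\bigl(-\Delta y_T(t)+f(y_T(t)),\,y_T(t)\bigr)$ is an exactly admissible steady control--state pair, so the first integrand is pointwise $\geqslant\inf\mathscr{J}_s$, and the endpoint terms at $t=T$ are nonnegative because $F\geqslant0$ (by $f'\geqslant0$, $f(0)=0$). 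Combined with your (correct) quasi-turnpike upper bound $\inf\mathscr{J}_T\leqslant T\inf\mathscr{J}_s+C_0$, this single identity delivers \emph{both} $|\inf\mathscr{J}_T-T\inf\mathscr{J}_s|\leqslant C$ and $\frac12\int_0^T\|\partial_t y_T\|_{L^2(\Omega)}^2\diff t\leqslant C$ in one stroke, with the constant depending only on $\|\nabla y^0\|_{L^2}$ and $\int_\Omega F(y^0)$ --- which is precisely why the hypothesis $y^0\in H^1_0(\Omega)$ enters for the second assertion. I recommend you replace your lower-bound and incremental-quotient arguments by this identity; your upper-bound competitor can be kept as is.
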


\begin{proof}
We shall only provide a sketch of the main ideas. The details may be found in \cite{pighin2020turnpike}.
The proof relies on the following elements.
\begin{enumerate}
\item[1.] First of all, solely using the equation satisfied by $y_T$ and integration by parts, one can find that
\begin{align} \label{eq: pighin.incomprehensible}
\mathscr{J}_T(u_T)&=\int_0^T\mathscr{J}_s\Big(-\Delta_x y_T(t,\cdot)+f(y_T(t,\cdot))\Big)\diff t\nonumber \\
&+ \frac12 \int_0^T\int_\Omega |\partial_t y(t,x)|^2\diff t\diff x \\
&+ \frac12\int_\Omega\Big\{\big|\nabla y(T,x)\big|^2-\big|\nabla y^0(x)\big|^2\nonumber\\
&\hspace{1.5cm} + 2F\big(y_T(T,x)\big) - 2F\big(y^0(x)\big)\Big\}\diff x\nonumber
\end{align}
holds, where 
\begin{equation*}
F(z):=\int_0^z f(x)\diff x
\end{equation*}
designates the anti-derivative of $f$. (We note that in \cite{pighin2020turnpike}, the author also assumes that $\omega=\Omega$ precisely in this step, but this is not needed.)
\smallskip

\item[2.]
Using \eqref{eq: pighin.incomprehensible} and the nondecreasing character of $f$, one may then find
\begin{equation} \label{eq: 8.21}
\left|\inf_{\substack{u_T\in L^2((0,T)\times\Omega)\\ y_T\text{ solves} \eqref{eq: semilinear.heat}}}\mathscr{J}_T(u_T)-T\inf_{\substack{\overline{u}\in L^2(\Omega)\\\overline{y}\text{ solves} \eqref{eq: semilinear.poisson}}}\mathscr{J}_s(\overline{u})\right|\leqslant C
\end{equation}
for some $C>0$ independent of $T$. Indeed, \eqref{eq: pighin.incomprehensible} would yield 
\begin{align*}
\inf_{\substack{u_T\in L^2((0,T)\times\Omega)\\ y_T\text{ solves} \eqref{eq: semilinear.heat}}}\mathscr{J}_T(u_T)&\geqslant  T\inf_{\substack{\overline{u}\in L^2(\Omega)\\\overline{y}\text{ solves} \eqref{eq: semilinear.poisson}}}\mathscr{J}_s(\overline{u})\\
&\quad-\left(\frac12\int_\Omega \big|\nabla_x y^0(x)\big|^2 \diff x+\int_\Omega 2F\big(y^0(x)\big)\diff x\right),
\end{align*}
whereas obtaining an appropriate reversed estimate to derive \eqref{eq: 8.21} is in principle significantly simpler (see \cite[Appendix D]{pighin2020turnpike}).  
\end{enumerate}
\end{proof}

\noindent
To conclude this section, we also comment on what happens when the controls are assumed to be \emph{time-independent}. Following \cite{porretta2016remarks}, we consider
\begin{equation} \label{eq: p.semilinear.heat}
\begin{cases}
\partial_t y - \Delta y + |y|^{p-1}y = u(x)1_\omega &\text{ in }(0,T)\times\Omega,\\
y=0 &\text{ in }(0,T)\times\partial\Omega,\\
y_{|_{t=0}} = y^0 &\text{ in }\Omega,
\end{cases}
\end{equation}
with $p>1$, and we consider 
\begin{equation} \label{eq: p.ocp}
\inf_{\substack{u\in L^2(\omega)\\ y \text{ solves} \eqref{eq: p.semilinear.heat}}}  \frac12 \int_0^T \|y(t)-y_d\|^2_{L^2(\omega_{\circ})}\diff t + \frac{T}{2}\|u\|_{L^2(\omega)}^2.
\end{equation}
The corresponding steady analog reads as
\begin{equation}  \label{eq: p.ocp.steady}
\inf_{\substack{u\in L^2(\omega)\\ y \text{ solves} \eqref{eq: p.poisson}}} \frac12\|y-y_d\|_{L^2(\omega_\circ)}^2 + \frac12\|u\|_{L^2(\omega_\circ)}^2,
\end{equation}
where the underlying PDE constraint is the semilinear Poisson equation
\begin{equation} \label{eq: p.poisson}
\begin{cases}
-\Delta y + |y|^{p-1}y = u1_\omega &\text{ in }\Omega,\\
y= 0 &\text{ on }\partial\Omega.
\end{cases}
\end{equation}
The following result can be shown by making use of $\Gamma$-convergence arguments, the dissipativity of the semilinear heat equation, and taking advantage of the fact that the controls under consideration are independent of $t$.

\begin{theorem}[\cite{porretta2016remarks}]
Let $\{u_T\}_{T>0} \subset L^2(\omega)$ be a family of optimal controls for \eqref{eq: p.ocp}. Then, this family is relatively compact in $L^2(\omega)$ and any accumulation point $\overline{u}$ as $T\to+\infty$ is a solution of the steady problem \eqref{eq: p.ocp.steady}.
\end{theorem}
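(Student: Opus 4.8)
The plan is to exploit the time-independence of the controls by rescaling the functional by $1/T$ and showing that $\tfrac1T\mathscr{J}_T$ converges, uniformly on bounded sets of controls, to the steady functional $\mathscr{J}_s$ from \eqref{eq: p.ocp.steady}. Write $S(u)\in H^1_0(\Omega)$ for the unique solution of the steady equation \eqref{eq: p.poisson} (with monotone nonlinearity $f(s):=|s|^{p-1}s$) associated with a control $u\in L^2(\omega)$; uniqueness holds because $f$ is monotone, so $\mathscr{J}_s(u)=\tfrac12\|S(u)-y_d\|_{L^2(\omega_\circ)}^2+\tfrac12\|u\|_{L^2(\omega)}^2$. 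Let $y_u(t)$ denote the solution of the evolution problem \eqref{eq: p.semilinear.heat} with this same (time-independent) control and initial datum $y^0$. The heart of the argument is the dissipative relaxation estimate
\begin{equation*}
\|y_u(t)-S(u)\|_{L^2(\Omega)}\leqslant e^{-\lambda_1 t}\,\|y^0-S(u)\|_{L^2(\Omega)},\qquad t\geqslant0,
\end{equation*}
with $\lambda_1=\lambda_1(\Omega)>0$. Indeed, $z:=y_u-S(u)$ solves $\partial_t z-\Delta z+\bigl(f(y_u)-f(S(u))\bigr)=0$ with $z=0$ on $\partial\Omega$; testing with $z$, discarding the nonnegative monotone term $\int_\Omega\bigl(f(y_u)-f(S(u))\bigr)z\,\diff x\geqslant0$, and invoking Poincaré yields $\tfrac12\tfrac{\diff}{\diff t}\|z\|^2+\lambda_1\|z\|^2\leqslant0$, whence Gr\"onwall. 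Combined with the a priori bound $\|S(u)\|_{H^1_0(\Omega)}\leqslant C\|u\|_{L^2(\omega)}$ (test the steady equation with $S(u)$), this gives a rate and prefactor that are uniform over any bounded family of controls.

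Using this, I would integrate in time and compare the averaged tracking cost $\tfrac{1}{2T}\int_0^T\|y_u(t)-y_d\|_{L^2(\omega_\circ)}^2\,\diff t$ with $\tfrac12\|S(u)-y_d\|_{L^2(\omega_\circ)}^2$ via the elementary bound $\bigl|\|a\|^2-\|b\|^2\bigr|\leqslant\|a-b\|\bigl(\|a\|+\|b\|\bigr)$; the exponential decay renders the discrepancy $O(1/T)$, so that
\begin{equation*}
\frac1T\mathscr{J}_T(u)=\mathscr{J}_s(u)+O(1/T),
\end{equation*}
with the error uniform on $\{\|u\|_{L^2(\omega)}\leqslant R\}$. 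Boundedness (equicoercivity) then follows immediately: since $S(0)=0$, testing optimality against $u\equiv0$ gives $\tfrac12\|u_T\|^2\leqslant\tfrac1T\mathscr{J}_T(u_T)\leqslant\tfrac1T\mathscr{J}_T(0)=\tfrac12\|y_d\|_{L^2(\omega_\circ)}^2+O(1/T)$, so $\{u_T\}$ is bounded in $L^2(\omega)$, hence weakly relatively compact. Let $u_{T_k}\rightharpoonup\overline u$ be any weak cluster point.

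To identify $\overline u$ as a steady minimizer I would run the fundamental theorem of $\Gamma$-convergence by hand. For the upper bound, optimality of $u_{T_k}$ against an arbitrary competitor $v$ together with the uniform expansion gives $\mathscr{J}_s(u_{T_k})\leqslant\mathscr{J}_s(v)+o(1)$, so $\mathscr{J}_s(u_{T_k})\to\inf\mathscr{J}_s$. For the lower bound I would use weak lower semicontinuity of $\mathscr{J}_s$: the norm term is weakly lsc, while the map $u\mapsto S(u)$ is continuous from $L^2(\omega)$-weak to $L^2(\Omega)$-strong — this follows from the $H^1_0$-bound, Rellich compactness, and a Minty/monotonicity passage to the limit identifying any subsequential limit of $S(u_{T_k})$ with $S(\overline u)$ (uniqueness again). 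Hence $\mathscr{J}_s(\overline u)\leqslant\liminf_k\mathscr{J}_s(u_{T_k})=\inf\mathscr{J}_s$, so $\overline u$ solves \eqref{eq: p.ocp.steady}.

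Finally, to upgrade weak to the asserted relative compactness in $L^2(\omega)$, note that the energies converge, $\mathscr{J}_s(u_{T_k})\to\mathscr{J}_s(\overline u)$, while the tracking term is weakly continuous by the step above; therefore the control term must converge as well, $\|u_{T_k}\|_{L^2(\omega)}\to\|\overline u\|_{L^2(\omega)}$. Weak convergence together with convergence of norms in the Hilbert space $L^2(\omega)$ forces $u_{T_k}\to\overline u$ strongly; since every weak cluster point is attained strongly, $\{u_T\}$ is relatively compact in $L^2(\omega)$ and every accumulation point minimizes $\mathscr{J}_s$. I expect the main obstacle to be the \emph{uniformity} of the relaxation estimate — guaranteeing that both the rate $\lambda_1$ and the prefactor are independent of $T$ and of the control along the whole bounded family, which is precisely where the monotonicity/dissipativity of $f$ is indispensable; the weak-to-strong continuity of $S$ for large, possibly supercritical $p$ is the secondary delicate point, handled by the Minty argument rather than by naive Nemytskii continuity.
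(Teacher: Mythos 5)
Your proposal is correct and follows exactly the route the paper indicates for this result: a $\Gamma$-convergence argument (equicoercivity, liminf inequality via weak-to-strong continuity of the control-to-steady-state map, and recovery of the infimum from optimality), powered by the dissipativity of the monotone semilinear heat equation and the time-independence of the controls, which together yield the exponential relaxation of $y_u(t)$ to $S(u)$ and hence the uniform expansion $\tfrac1T\mathscr{J}_T(u)=\mathscr{J}_s(u)+O(1/T)$ on bounded sets. The final upgrade from weak to strong $L^2(\omega)$ convergence via convergence of norms is also the standard conclusion of this scheme, so no gaps to report.
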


Analogous results have been shown for the Navier-Stokes equations set in $\Omega\subset\mathbb{R}^2$ in \cite{zamorano2018turnpike}, and also for shape optimization problems for the linear heat equation in \cite{allaire2010long}, with both works again exploiting the same $\Gamma$-convergence arguments, for time-independent controls. The proof in this setting is significantly simpler, and vis-à-vis \cite{allaire2010long} in particular, as mentioned at different points in the text, a (quantitative) extension of this result to the setting of time-dependent controls remain open.

We note that the uniqueness of the optimal control is not guaranteed neither for the time-dependent problem, nor for the steady one. This is due to the lack of convexity of the functionals under minimization, which stems from the nonlinear character of the state equations. Thus the statement above refers necessarily to the accumulation points of the family $\{u_T\}_{T>0}$ and its inclusion within the set of steady state controls.

\section{A warning regarding non-uniqueness} \label{sec: 9}

As discussed in what precedes, whenever the running target $y_d$ is taken arbitrarily large in the context of semilinear problems, uniqueness of minimizers to the corresponding steady optimal control problem cannot be guaranteed.
Such a lack of uniqueness would namely mean that the turnpike is not clearly or uniquely defined. 
As for most nonlinear optimization problems, non-uniqueness of minimizers may be stipulated due to possible lack of convexity. 
The latter could stem from the nonlinear nature of the control to state map. 
But this is, of course, not a sufficient argument to ensure such a fact.
Nevertheless, the recent work \cite{pighin2020nonuniqueness} shows that non-uniqueness of minimizers may indeed occur for the steady optimal control problem. This is demonstrated by designing a specific running target, which is large, of course.
We present these results and insights in what follows.

To stay close to the original material, let us consider the boundary control problem. 
The techniques and results apply for both boundary control and distributed control systems. 
We consider
\begin{equation} \label{eq: ocp.nonuniqueness}
\inf_{\substack{u\in L^\infty(\mathbb{S}^{d-1})\\ y \text{ solves} \eqref{eq: bc}}} \underbrace{\frac12\int_{B_1} |y-y_d|^2 \diff x + \frac12\int_{\mathbb{S}^{d-1}} u^2 \diff \sigma(x)}_{:=\mathscr{J}(u)},
\end{equation}
where the underlying constraint is the following Poisson equation with boundary control
\begin{equation} \label{eq: bc}
\begin{cases}
-\Delta y + f(y) = 0 &\text{ in }B_1\\
y=u &\text{ on }\mathbb{S}^{d-1}.
\end{cases}
\end{equation}
Here $B_1:=\{|x|\leqslant1\}$ denotes the unit ball in $\mathbb{R}^d$, $\mathbb{S}^{d-1}:=\{|x|=1\}$ denotes the unit sphere (representing the domain's boundary), $\diff \sigma$ represents the Lebesgue surface measure, and we take $d\leqslant 3$. Balls with arbitrary, positive radii, may also be considered.  

The following non-uniqueness theorem can be shown to hold.

\begin{theorem}[\cite{pighin2020nonuniqueness}] Suppose $f\in C^1(\mathbb{R})\cap C^2(\mathbb{R}\setminus\{0\})$, with $f'\geqslant0$, $f(0)=0$ and 
\begin{equation*}
f''(y) \neq 0 \hspace{1cm} \text{ for all } y\neq 0.
\end{equation*}
Then there exists a target $y_d\in L^\infty(B_1)$ such that the functional $\mathscr{J}$ defined in \eqref{eq: ocp.nonuniqueness} admits \emph{at least two} global minimizers.
\end{theorem}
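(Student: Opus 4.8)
The plan is to exploit the symmetry of the problem together with a careful choice of target $y_d$ that breaks the strict convexity of $\mathscr{J}$ precisely because of the sign of $f''$. First I would restrict attention to \emph{radial} configurations. Since the domain $B_1$ is a ball and the nonlinearity $f$ depends only on the value of $y$, if we take the boundary control $u$ to be a constant $c\in\mathbb{R}$ (identified with a constant function on $\mathbb{S}^{d-1}$) and $y_d$ radial, then the state $y$ solving \eqref{eq: bc} is radial, and the control-to-state map $c\mapsto y$ becomes a one-dimensional object. The strategy is to build $y_d$ so that the reduced functional $c\mapsto\mathscr{J}(c)$, now a function of a single real variable, is a genuine double-well, attaining its global minimum at two distinct values $c_-<c_+$. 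One would then need to rule out that some genuinely non-radial control does strictly better; this is where the variational structure and the uniqueness of the radial state come into play.

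The key steps, in order, are as follows. First I would parametrize the radial branch: for each constant boundary value $c$, let $Y_c$ denote the radial solution of $-\Delta y+f(y)=0$ in $B_1$ with $y=c$ on $\mathbb{S}^{d-1}$, and show that $c\mapsto Y_c$ is a smooth, strictly monotone curve in $L^\infty(B_1)$ (monotonicity follows from the maximum principle and $f'\geqslant0$). Second, I would compute the reduced cost $g(c):=\tfrac12\int_{B_1}|Y_c-y_d|^2\,\diff x+\tfrac12\,\mathrm{meas}(\mathbb{S}^{d-1})\,c^2$ and analyze $g''(c)$. The second derivative inherits a term proportional to $\int_{B_1}f''(Y_c)\,(\partial_c Y_c)^2\,(Y_c-y_d)\,\diff x$, exactly analogous to the indefinite Hessian term $-\int_\Omega f''(\overline{y})\overline{p}\,\eta_v^2$ appearing in \eqref{eq: casas.identity}; the hypothesis $f''\neq0$ away from $0$ is what allows this term to be made large and of a fixed sign. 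Third, I would \emph{design} $y_d$ so that $g$ has two equal global minima: one concrete route is to pick two target values, prescribe $y_d$ on an annular region so as to penalize intermediate values of $c$ while rewarding the two extremal ones, and then invoke continuity to tune the free parameters until $g(c_-)=g(c_+)$ is the strict global minimum. Finally, I would verify that these two radial minimizers are in fact global minimizers of the full (non-radial) problem \eqref{eq: ocp.nonuniqueness}, using a symmetrization or a direct argument that any minimizer can be taken radial when both the domain and $y_d$ are radially symmetric.

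\textbf{Main obstacle.} The hardest part will be the last step: ensuring that the two radial candidates are global minimizers over the full admissible class $u\in L^\infty(\mathbb{S}^{d-1})$, not merely critical points within the radial subclass. Convexity fails globally, so one cannot simply appeal to a convex reduction; instead one must argue, e.g. by a rearrangement or symmetrization inequality adapted to the quadratic-plus-nonlinear structure, that no non-symmetric control strictly lowers the cost, or alternatively control the non-radial directions of the second variation. A secondary delicate point is the explicit construction of $y_d$ realizing the \emph{exact} equality $g(c_-)=g(c_+)$ of two global minima rather than a strict inequality; this requires an intermediate-value/continuity argument in the parameters defining $y_d$, leveraging the fact that shifting $y_d$ continuously interpolates between the regime where $c_-$ wins and the regime where $c_+$ wins. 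The precise quantitative estimates needed to localize the two wells, and to guarantee they are the \emph{global} optima, are what make the argument in \cite{pighin2020nonuniqueness} nontrivial.
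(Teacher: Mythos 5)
Your reduction to constant controls and the intermediate-value/bisection tuning of $y_d$ to force two equal wells are genuinely in the spirit of the actual proof (the paper's Lemma 3.3 of \cite{pighin2020nonuniqueness} is exactly such a bisection argument, and the target is built as $\varsigma_1 1_{\omega_1}+\varsigma_2 1_{\omega_2}$ with coefficients tuned so that $\mathscr{I}(u_-,y_d)<0$ and $\mathscr{I}(u_+,y_d)<0$ for some constants $u_-<0<u_+$). But there is a genuine gap at the step you yourself flag as the main obstacle, and the way you propose to close it would not work.

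You propose to show that the two radial (constant-control) candidates are global minimizers of the full non-radial problem ``using a symmetrization or rearrangement inequality, or by controlling the non-radial directions of the second variation.'' No such rearrangement inequality is available for this functional, and more importantly the claim itself is false in general: for non-convex problems on symmetric domains, symmetry breaking of global minimizers is a standard phenomenon, so one cannot expect to prove that every minimizer is radial. The paper avoids this entirely by a dichotomy that turns the potential failure of symmetry into a \emph{proof} of non-uniqueness: if for some radial target an optimal control $u$ is \emph{not} constant, then there is an orthogonal matrix $\mathbf{M}$ with $u\circ\mathbf{M}\neq u$, and the rotational invariance $\mathscr{I}(u\circ\mathbf{M},y_d)=\mathscr{I}(u,y_d)$ immediately produces a second distinct global minimizer — done. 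Only in the complementary case, where every optimal control for every radial target is constant, does one run the one-dimensional double-well argument, and in that case it is automatically exhaustive because the global minimizer is already known to lie in the constant class. Your outline needs this dichotomy; without it the last step cannot be completed.

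A secondary point: you invoke $f''\neq 0$ only vaguely, through the indefinite Hessian term of the reduced cost $g$. In the actual argument the hypothesis enters concretely in the invertibility of the $2\times 2$ matrix
\begin{equation*}
\mathfrak{A}=\begin{bmatrix}\displaystyle\int_{\omega_1}\Phi(u_-)\,\diff x & \displaystyle\int_{\omega_2}\Phi(u_-)\,\diff x\\[2mm] \displaystyle\int_{\omega_1}\Phi(u_+)\,\diff x & \displaystyle\int_{\omega_2}\Phi(u_+)\,\diff x\end{bmatrix},
\end{equation*}
which is what makes the linear system for $(\varsigma_1,\varsigma_2)$ solvable; if $f$ were affine the states $\Phi(u_\pm)$ would be proportional and $\mathfrak{A}$ singular. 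Your plan should identify where the strict nonlinearity is quantitatively used, since otherwise the construction of the two wells cannot be carried out.
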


The proof relies on a rather clever idea, in which the author distinguishes the cases of non-constant and constant controls. If the optimal control is not constant, then by choosing a radial target $y_d$ and using the radial symmetry of the domain, the minimizer can be rotated by an orthogonal matrix to obtain a second, different minimizer. 
If the optimal control is a constant, then one can select a special target, which is the sum of two characteristic functions, of carefully constructed domains. For that target, the functional will be shown to admit at least two local minimizers: one in $(-\infty,0]$ and another in $[0,+\infty)$. A careful bisection argument then yields a couple of distinguished global minimizers in these sets.  

\begin{proof} We shall sketch the steps of the proof. Before proceeding, we define 
the control-to-state map $\Phi(u)=y$, where $y$ solves \eqref{eq: bc} with control $u$. We then define
\begin{equation*}
\mathscr{I}(u,y_d) :=  \frac12 \int_{B_1} |\Phi(u)|^2 \diff x + \frac12 \int_{\mathbb{S}^{d-1}} u^2 \diff \sigma(x) -\int_{B_1} \Phi(u)y_d \diff x.
\end{equation*}
It is readily seen that for any $y_d\in L^\infty(B_1)$, 
\begin{equation*}
\mathscr{J}(\cdot) = \mathscr{I}(\cdot, y_d)+\frac12\|y_d\|_{L^2(B_1)}^2.
\end{equation*} 
Hence, for a fixed target $y_d$, minimizing $\mathscr{I}(\cdot, y_d)$ is equivalent to minimizing $\mathscr{J}$.
Such a change of coordinates is rather convenient because $\mathscr{I}(0,y_d)=0$ for any target $y_d$. 

\smallskip
\noindent
\textit{1).} \textbf{Non-constant controls.} 
First suppose that for some radial target $y_d(x)=\mathfrak{g}(\|x\|)$, the optimal control $u$ is not constant. Then, it can be shown (see \cite[Lemma A.5]{pighin2020nonuniqueness}) that there exists an orthogonal matrix $\mathbf{M}\in \mathbb{R}^{d\times d}(\mathbb{R})$ such that $u\circ \mathbf{M}\neq u$. Now, one can show that
\begin{align*}
&\mathscr{I}\left(u\circ \mathbf{M}, y_d\right) \\ 
&\quad= \frac12 \int_{B_1} |\Phi(u\circ \mathbf{M})|^2 \diff x + \frac12 \int_{\mathbb{S}^{d-1}} |u\circ \mathbf{M}|^2 \diff \sigma(x) -\int_{B_1} \Phi(u\circ \mathbf{M}) y_d \diff x \\
&\quad= \frac12 \int_{B_1} |\Phi(u)|^2 \diff z + \frac12 \int_{\mathbb{S}^{d-1}} u^2 \diff \sigma(z) -\int_{B_1} \Phi(u)y_d \diff z \\
&= \mathscr{I}(u,y_d),
\end{align*}
by making use of the change of variable $z=\mathbf{M}x$. (This is nothing else but exploiting an invariance with respect to rotations.)
Then, $u$ and $u\circ \mathbf{M}$ are two different global minimizers for $\mathscr{I}(\cdot, y_d)$. 
This concludes the proof whenever $u$ is not a constant.

\smallskip
\noindent
\textit{2).} \textbf{Constant controls.} 
In view of the previous step, we may henceforth focus on constant controls.
\begin{enumerate}
\item[1.] We first look to construct a special target $y_d\in L^\infty(B_1)$ such that there exist a couple of constant controls $u_-<0<u_+$ for which 
\begin{equation} \label{eq: I<0}
\mathscr{I}(u_\pm, y_d)<0.
\end{equation}
To this end, let us first fix an arbitrary $y_d$, and let $u_-<0<u_+$ be given. We try to characterize $u_\pm$ by means of a simpler sufficient condition. By definition, we first observe that \eqref{eq: I<0} holds if and only if
\begin{align} \label{eq: inequalities.pighin.1}
\int_{B_1} \Phi(u_-)y_d\diff x &> \frac{d\,\text{meas}(B_1)}{2}|u_-|^2 + \frac12 \int_{B_1} |\Phi(u_-)|^2 \diff x,\\
\int_{B_1} \Phi(u_+)y_d\diff x &> \frac{d\,\text{meas}(B_1)}{2}|u_+|^2 + \frac12 \int_{B_1} |\Phi(u_+)|^2 \diff x,\label{eq: inequalities.pighin.2}
\end{align}
both hold; here $\text{meas}(B_1)$ designates the volume of the unit ball $B_1$.
Now let us suppose that the target $y_d$ takes the specific form 
\begin{equation*}
y_d := \varsigma_1 1_{\omega_1}+ \varsigma_2 1_{\omega_2},
\end{equation*}
where $(\varsigma_1,\varsigma_2)\in\mathbb{R}^2$ are scalars to be found, and $\omega_1, \omega_2$ are two non-empty subsets of $B_1$ such that
\begin{equation} \label{eq: omega1omega2}
\omega_1 \cup \omega_2 \subseteq B_1, \hspace{0.35cm} \omega_1 \cap\omega_2 = \varnothing, \hspace{0.35cm} \text{meas}(\omega_1 \cup\omega_2)=\text{meas}(B_1).
\end{equation}
Now we see that a simple sufficient condition to ensure that the inequalities \eqref{eq: inequalities.pighin.1}--\eqref{eq: inequalities.pighin.2} are satisfied is 
to ensure the following linear system of algebraic equations is solvable for $(\varsigma_1,\varsigma_2)$:
\begin{equation} \label{eq: linear.system.pighin}
\begin{cases}
\varsigma_1 \int_{\omega_1}\Phi(u_-)\diff x + \varsigma_2 \int_{\omega_2}\Phi(u_-)\diff x &= c_+,\\
\varsigma_1 \int_{\omega_1}\Phi(u_+)\diff x + \varsigma_2 \int_{\omega_2}\Phi(u_+)\diff x &= c_-,
\end{cases}
\end{equation}
where 
\begin{equation*}
c_\pm := \frac{d\,\text{meas}(B_1)}{2} |u_\pm|^2 + \frac12 \int_{B_1} |\Phi(u_\pm)|^2\diff x +1.
\end{equation*}
A cornerstone of the proof is precisely the invertibility of the matrix 
\begin{equation*}
\mathfrak{A}:=\begin{bmatrix}
\displaystyle \int_{\omega_1} \Phi(u_-)\diff x & \displaystyle\int_{\omega_2} \Phi(u_-)\diff x\\
\displaystyle\int_{\omega_1} \Phi(u_+)\diff x & \displaystyle\int_{\omega_2} \Phi(u_+)\diff x
\end{bmatrix}
\end{equation*}
appearing in \eqref{eq: linear.system.pighin}. 
\medskip

\item[2.] This is ensured by \cite[Lemma 3.4]{pighin2020nonuniqueness}, which states precisely what we assumed in the previous item; namely, that there exist a couple of constant controls $u_-<0<u_+$ and a couple of subsets $\omega_1,\omega_2$ of $B_1$ satisfying \eqref{eq: omega1omega2} such that the matrix $\mathfrak{A}$ is invertible.
This result is a cornerstone of the proof, and we refer to the original reference for the technical proof.

Since the matrix is invertible, $c_\pm$ are then clearly defined, and we can solve the linear system \eqref{eq: linear.system.pighin} to find $(\varsigma_1,\varsigma_2)$, for which the corresponding target $y_d$ is such that \eqref{eq: I<0} holds.
\medskip

\item[3.] By \cite[Lemma 3.2]{pighin2020nonuniqueness}, since $y_d$ is now fixed, there exist $u_1\leqslant0$ and $u_2\geqslant0$, with $u_1\neq u_2$, such that
\begin{equation} \label{eq: constraints.pighinpig}
\mathscr{I}(u_1,y_d) = \inf_{v\in(-\infty,0]} \mathscr{I}(v,y_d), \hspace{1cm} \mathscr{I}(u_2,y_d)=\inf_{v\in[0,+\infty)} \mathscr{I}(v,y_d).
\end{equation}
Then, by \eqref{eq: I<0}, we also have that
\begin{equation*}
\mathscr{I}(u_1,y_d)\leqslant\mathscr{I}(u_-,y_d)<0=\mathscr{I}(0,y_d),
\end{equation*}
as well as
\begin{equation*}
\mathscr{I}(u_2,y_d)\leqslant\mathscr{I}(u_+,y_d)<0=\mathscr{I}(0,y_d).
\end{equation*}
Therefore, necessarily, $u_1<0$ and $u_2>0$, whence the constraints in \eqref{eq: constraints.pighinpig} are not saturated. In other words, and we see that $\mathscr{I}(\cdot,y_d)$ has at least two local minimizers on $\mathbb{R}$. 
Furthermore, when evaluated at these local minimizers which have different signs, $\mathscr{I}(\cdot,y_d)$ is strictly negative. We may conclude by means of a bisection argument (\cite[Lemma 3.3]{pighin2020nonuniqueness}), which states that there exists a target $y_d^\star\in L^\infty(B_1)$ such that
\begin{equation*}
\inf_{v\in(-\infty,0)}\mathscr{I}(v,y_d^\star)=\inf_{v\in(0,+\infty)}\mathscr{I}(v,y_d^\star).
\end{equation*}
Since, as seen above, $\mathscr{I}(\cdot,y_d^\star)$ has a global minimizer in both $(-\infty,0)$ and $(0,+\infty)$ by \cite[Lemma 3.2]{pighin2020nonuniqueness}, we conclude that each of these minimizers, which are distinguished and have different signs, are global on $\mathbb{R}$. This concludes the proof. 
\end{enumerate}
\end{proof}

\begin{figure}[h!]
\center
\includegraphics[scale=0.8]{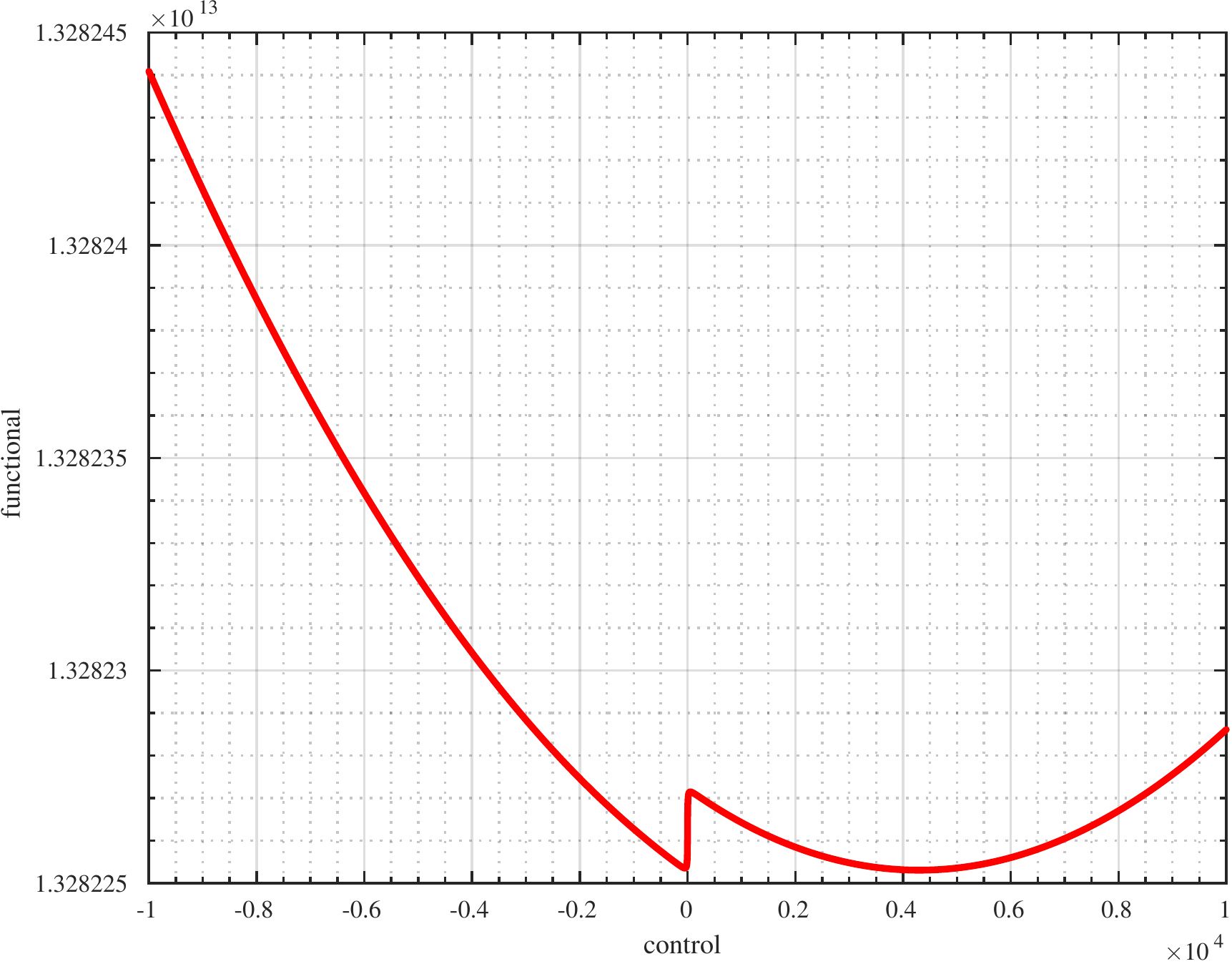}
\caption{The proof above indicates that one need only look the functional $\mathscr{J}$ restricted to constant controls on $\mathbb{R}$. We display the functional $\mathscr{J}$ restricted on $\mathbb{R}$, for the cubic Poisson equation set on $(0,1)$, and a target $y_d:=410000\cdot 1_{\left(0,\frac14\right)\cup\left(\frac34,1\right)}-10300000\cdot1_{\left(\frac14, \frac34\right)}$. 
The numerics consist in a finite-difference discretization of the Laplacian, and a fixed-point type algorithm with relaxation for the cubic nonlinearity.
We clearly see the appearance of two global minimizers, which are $u_1\approx-50$ and $u_2\approx 4298$; each defines a well designating a basin of attraction. Determining which one is the turnpike is an open problem -- a possibility is the well at which the linearized optimality system's Hamiltonian matrix has the largest spectral abscissa. See Section \ref{sec: open.problems} for a discussion.
}
\label{fig: dario}
\end{figure}

The above non-uniqueness result has since been extended to more abstract settings by means of techniques using convexity properties of \emph{Chebychev sets} (\cite{christof2020nonuniqueness}), and has also been explored for finite-dimensional control systems in \cite{trelat2020linear}. We further discuss the latter in Section \ref{sec: open.problems}.

\section{Large, well-adapted targets} \label{sec: 10}

In a couple of recent works (\cite{esteve2020turnpike, esteve2020large}) motivated by applications in machine learning, a new tailored, nonlinear strategy has been conceived for dealing with large targets which are steady states of the underlying equation. This strategy differs significantly from those presented before, as it bypasses analyzing the linearized optimality system, and thus allows for systems with (globally Lipschitz) nonlinearities which are non-smooth (e.g., the ReLU, $x\mapsto\max\{x,0\}$, typically encountered in machine learning applications -- see Section \ref{sec: 13}).  

We shall assume the setting of the semilinear wave equation, but the strategy can readily be adapted to more general, semilinear, exactly controllable systems (e.g.  ODEs, or conservative systems). 
Actually, time-irreversible equations (which are typically only controllable to steady states and trajectories), such as the semilinear heat equation, may also be considered under certain assumptions on the functional to be minimized -- we postpone these cases for a later discussion (see Section \ref{sec: discussion.nonlinearity}).

\subsection{Setup}

Let us consider 
\begin{equation} \label{eq: borjan.ocp}
\inf_{\substack{u\in L^2((0,T)\times\omega)\\ y:=(\zeta,\partial_t \zeta)\\ \zeta \text{ solves } \eqref{eq: abstract.sys.large.1}}}\underbrace{\phi(y(T))+\int_0^T\|y(t)-\overline{y}\|^2_{H^1_0(\Omega)\times L^2(\Omega)}\diff t +\int_0^T\|u(t)\|_{L^2(\omega)}^2\diff t}_{\mathscr{J}_T(u)},
\end{equation}
where 
\begin{equation}\label{eq: abstract.sys.large.1}
\begin{cases}
\partial_t^2 \zeta -\Delta\zeta + f(\zeta) = u1_\omega &\text{ in }(0,T)\times\Omega,\\
\zeta = 0 &\text{ on }(0,T)\times\partial\Omega,\\
(\zeta,\partial_t \zeta)_{|_{t=0}} = (\zeta^0, \zeta^1) &\text{ in }\Omega.
\end{cases}
\end{equation}
Note that in \eqref{eq: borjan.ocp}, $y:=(\zeta,\partial_t\zeta)$ -- we are penalizing the energy norm of the full state $(\zeta(t),\partial_t\zeta(t))$ of the wave system, unlike what we had done in the linear setting (the $L^2$ norm of either $\nabla_x\zeta(t)$ or $\partial_t\zeta(t)$ was sufficient). This particular consideration is an artifact of the strategy of proof -- more details may be found in Remark \ref{rem: full.state}.
Just as before, $\omega\subset\Omega$ is open and non-empty, satisfying additional geometrical assumptions specified later on.
Furthermore, we assume that
\begin{equation} \label{eq: f.lip}
f\in\text{Lip}(\mathbb{R}).
\end{equation} 
In particular, no smoothness assumptions are made on $f$, in which case the methodology based on linearizing the optimality system, presented in preceding sections, is not applicable (one needs $f\in C^2(\mathbb{R})$). 

It will be rather more convenient to work with the wave equation as a first-order system; 
we may of course rewrite \eqref{eq: abstract.sys.large} solely in terms of $y:=(\zeta,\partial_t\zeta)$ as 
\begin{equation} \label{eq: abstract.sys.large}
\begin{cases}
\partial_t y = Ay + \mathfrak{f}(y) + Bu &\text{ in }(0,T),\\
y_{|_{t=0}} =y^0,
\end{cases}
\end{equation}
where the operators $A$ and $B$, and nonlinearity $\mathfrak{f}$, are defined as
\begin{equation} \label{eq: def.A.wave}
A:=\begin{bmatrix}0&\text{Id}\\\Delta&0\end{bmatrix}, \hspace{1cm} \mathfrak{D}(A):= \big(H^2(\Omega)\cap H^1_0(\Omega)\big)\times H^1_0(\Omega),
\end{equation}
as well as
\begin{equation} \label{eq: def.f.wave}
Bu:=\begin{bmatrix}0\\u1_\omega\end{bmatrix}, \hspace{1cm} \mathfrak{f}(y):=\begin{bmatrix}0\\-f(\zeta)\end{bmatrix}.
\end{equation}
Clearly, the PDE constraint in \eqref{eq: borjan.ocp} may also equivalently be changed to \eqref{eq: abstract.sys.large}. The first-order formulation \eqref{eq: abstract.sys.large} can then be used to generalize the presentation to a broader class of systems by adequately adapting the functional framework, as long as the core assumptions, presented in what will follow, are maintained.

Let us henceforth denote 
\begin{equation*}
\mathscr{H}:=H^1_0(\Omega)\times L^2(\Omega).
\end{equation*}
We may recall that $A$ generates a strongly continuous semigroup $\left\{e^{tA}\right\}_{t\geqslant0}$ on $\mathscr{H}$, which is also conservative, in the sense that
\begin{equation} \label{eq: conservation}
\left\|e^{tA}\right\|_{\mathscr{L}(\mathscr{H})}=1
\end{equation}
for all $t$. Thus, by virtue of a Banach fixed point argument, given any $u\in L^2((0,T)\times\omega)$ and $y^0\in\mathscr{H}$, \eqref{eq: abstract.sys.large} admits\footnote{This well-posedness result applies to more general nonlinearities $f$ and does not need assuming \eqref{eq: f.lip} (see \cite[Section 12]{evans1998partial}). Rather, assuming \eqref{eq: f.lip} suffices for bounding $\sup_{t\in[0,T]}\|y(t)-\overline{y}\|_{\mathscr{H}}$ by means of the $L^2(0,T;\mathscr{H})$ norms of $y-\overline{y}$ and $u$, plus the norm of $y^0-\overline{y}$, which we use repeatedly in the proof. Assumption \eqref{eq: f.lip} might not be necessary and is of a technical nature -- see Remark \ref{rem: f.lip} for further details.} a unique finite-energy solution $y\in C^0([0,T];\mathscr{H})$. 

We shall make a specific assumption on the target $\overline{y}\in\mathscr{H}$: we  suppose that it is an uncontrolled steady-state of \eqref{eq: abstract.sys.large}, namely 
\begin{equation} \label{eq: kernel.eq}
A\overline{y} + \mathfrak{f}(\overline{y}) = 0.
\end{equation}
We will, however, not make any smallness assumptions on $\overline{y}$.
Due to the form of $A$ and $\mathfrak{f}$ in \eqref{eq: def.A.wave} and \eqref{eq: def.f.wave} respectively, selecting $\overline{y}$ as such amounts to saying that $\overline{y}=(\overline{\zeta}, 0)$, with 
\begin{equation*}
\begin{cases}
-\Delta\overline{\zeta}+f(\overline{\zeta}) = 0 &\text{ in }\Omega,\\
\overline{\zeta} = 0 &\text{ on }\partial\Omega.
\end{cases}
\end{equation*}
Finally, the final cost $\phi:\mathscr{H}\to\mathbb{R}$ is solely assumed continuous, convex, and bounded from below (say by $0$ for simplicity).

Clearly, as before, \eqref{eq: borjan.ocp} admits a (not necessarily unique) solution by the direct method in the calculus of variations.
On another hand, due to the specific choice of $\overline{y}$ in \eqref{eq: kernel.eq}, we see that the optimal steady solution, namely the unique pair $(u_s,y_s)$ solving
\begin{equation*}
\inf_{\substack{(u,y)\in L^2(\omega)\times\mathscr{H}\\ Ay+\mathfrak{f}(y)+Bu=0}}\underbrace{\|y-\overline{y}\|_{\mathscr{H}}^2 + \|u\|_{L^2(\omega)}^2}_{:=\mathscr{J}_s(u,y)},
\end{equation*}
is precisely $(u_s,y_s)\equiv(0, \overline{y})$; indeed, 
\begin{equation*}
\mathscr{J}(0,\overline{y})=0=\inf_{\substack{(u,y)\in L^2(\omega)\times\mathscr{H}\\ Ay+\mathfrak{f}(y)+Bu=0}}\mathscr{J}_s(u,y) = \min_{\substack{(u,y)\in L^2(\omega)\times\mathscr{H}\\ Ay+\mathfrak{f}(y)+Bu=0}}\mathscr{J}_s(u,y).
\end{equation*} 
In other words, the steady optimal control problem admits a unique solution, given by $(0, \overline{y})$. 

Finally, we shall assume that \eqref{eq: abstract.sys.large} is exactly controllable in some time $T_0>0$. 
More precisely, we make the following hypothesis.

\begin{assumption}[Linear control cost] \label{ass: linear.cost} We suppose that

\begin{enumerate}
\item[1).]
There exists a time $T_0>0$ such that \eqref{eq: abstract.sys.large} is exactly-controllable in time $T_0$. Namely, for any data $(y^0, y^1)\in\mathscr{H}\times\mathscr{H}$, there exists a control $u\in L^2((0,T_0)\times\omega)$ such that the unique solution $y$ to \eqref{eq: abstract.sys.large} set on $(0,T_0)$ satisfies $y(0)=y^0$ and $y(T_0)=y^1$.  
\smallskip

\item[2).] There exists some $r>0$ and some constant $C(T_0)>0$ such that 
\begin{equation} \label{eq: 10.3}
\inf_{\substack{u\in L^2((0,T_0)\times\omega)\\y(0)=y^0\\y(T_0)=\overline{y}}}\|u\|^2_{L^2((0,T_0)\times\omega)} \leqslant C(T_0)\left\|y^0-\overline{y}\right\|_{\mathscr{H}}^2,
\end{equation}
and
\begin{equation} \label{eq: 10.4}
\inf_{\substack{u\in L^2((0,T_0)\times\omega)\\y(0)=\overline{y}\\y(T_0)=y^1}}\|u\|^2_{L^2((0,T_0)\times\omega)} \leqslant C(T_0)\left\|y^1-\overline{y}\right\|_{\mathscr{H}}^2,
\end{equation}
for every $y^0, y^1\in\mathfrak{B}_r(\overline{y})$, where
\begin{equation*}
\mathfrak{B}_r(\overline{y}):=\left\{z\in\mathscr{H}\,\Bigm|\,\|z-\overline{y}\|_{\mathscr{H}}\leqslant r\right\}.
\end{equation*}
\end{enumerate}
\end{assumption}
 
In the particular case of the semilinear wave equation, the above assumption is satisfied\footnote{A subtile point regarding possible extensions to non-globally Lipschitz nonlinearities is that the controllability time $T_0$ may depend on the initial datum $y^0$ (see \cite{joly2014note}). But in the big picture of turnpike this is not necessarily an issue, since we are looking at $T\gg1$.} when, in addition to \eqref{eq: f.lip}, $f(0)=0$ (this is generally needed for ensuring \eqref{eq: 10.3} -- \eqref{eq: 10.4}), and under specific geometric assumptions on $\omega\subset\Omega$, which needs to satisfy a slightly stronger condition than just GCC, namely the so-called multiplier condition (see \cite{zuazua1993exact, zhang2004exact, joly2014note}, and \cite[Remark 10]{esteve2020turnpike}, as well as the setting presented in the latter paper).
We state this as an explicit hypothesis in order to render transparent the needed elements for generalizing the strategy to other systems\footnote{In the finite-dimensional setting, due to the time-reversible nature, this assumption is not restrictive and holds when 1). the associated linear system satisfies the Kalman rank condition, and 2). a fixed-point argument can be performed to transfer the results from the linear system to the nonlinear one. The latter typically requires that the Lipschitz constant of $\mathfrak{f}$ is small enough, so to transfer the controllability of the linear system to the nonlinear one by a small perturbation argument through linearization or a fixed point argument. In the infinite-dimensional setting, the assumption holds more generally when we are working with skew-adjoint operators ($A^*=-A$), generating strongly-continuous and conservative groups of operators, for distributed control problems with appropriate geometric conditions on the control domain $\omega$. The canonical example satisfying this property is of course the wave equation, but there is also the Euler-Bernoulli beam equation, among others. In this setting, globally Lipschitz nonlinear perturbations can be included without further smallness conditions, since they constitute lower-order perturbations of the PDE. The multitude of examples is also one of the reasons why we write the wave equation as a first-order system.}. 

\begin{remark}[Global exact controllability]
We emphasize that, in Assumption \ref{ass: linear.cost}, we assume that exact controllability holds \emph{globally}, namely, without any smallness assumptions on the target $\overline{y}$ or the initial datum $y^0$. On the other hand, we suppose that the cost of controllability is bounded as in \eqref{eq: 10.3} -- \eqref{eq: 10.4} only for initial data $y^0$ in a possibly small ball $\mathfrak{B}_r(\overline{y})$ centered at $\overline{y}$ -- while it might appear structurally restrictive, this is a local assumption. As we shall see later on, the latter is not an impediment to having a turnpike result which holds without any smallness assumptions whatsoever\footnote{Albeit if exact controllability holds only for small initial data, then the turnpike result will also inherit these restrictions and hold locally. This is the case for instance for blowing-up wave equations of the form $\partial_t^2\zeta-\partial_x^2\zeta+\zeta^3=u1_\omega$. Although small amplitude initial data can be controlled to zero by a perturbation argument, the finite velocity of propagation yields that large solutions might blow-up in the finite-time, and this regardless of what the control $u$ is, thus making the controllability of large initial data impossible.}.
\end{remark}

 The following theorem then holds.
 
 \begin{theorem}[\cite{esteve2020turnpike}] \label{thm: borjan.thm}
Let $y^0\in\mathscr{H}$, and let $\overline{y}$ be as in \eqref{eq: kernel.eq}. 
There exists $T^*>0$, and constants $C>0$ and $\lambda>0$, such that for all $T\geqslant T^*$, any solution $u_T$ to \eqref{eq: borjan.ocp}, and the associated unique solution $y_T$ to \eqref{eq: abstract.sys.large}, are such that 
\begin{equation*}
\|u_T\|_{L^2((0,T)\times\omega)}\leqslant C
\end{equation*}
and 
\begin{equation*}
\|y_T(t)-\overline{y}\|_{\mathscr{H}}\leqslant C\Big(e^{-\lambda t} + e^{-\lambda(T-t)}\Big)
\end{equation*}
holds for all $t\in[0,T]$. 
\end{theorem}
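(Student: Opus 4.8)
The plan is to bypass the optimality system entirely---as one must, since $f$ is merely Lipschitz and possibly non-smooth---and instead extract the estimate from three ingredients: the \emph{quasi-turnpike} construction, the dynamic programming (sub-optimality) principle, and energy/well-posedness bounds for \eqref{eq: abstract.sys.large}. \textbf{Step 1 (a priori bound).} First I would produce a $T$-independent bound on the optimal cost by exhibiting an explicit competitor. Using the global exact controllability in Assumption \ref{ass: linear.cost}, steer $y^0$ to $\overline{y}$ on $[0,T_0]$ with some control $u_\flat$ of finite, $T$-independent norm, then apply $u\equiv 0$ on $[T_0,T]$; since $\overline{y}$ is an uncontrolled steady state \eqref{eq: kernel.eq}, the trajectory rests at $\overline{y}$, so $y(T)=\overline{y}$ and the running cost vanishes on $[T_0,T]$. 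This yields $\mathscr{J}_T(u_T)\leqslant\mathscr{J}_T(u_\flat)=:C_0$ with $C_0$ independent of $T$. As every term of $\mathscr{J}_T$ is nonnegative, this already gives the first conclusion $\|u_T\|_{L^2((0,T)\times\omega)}^2\leqslant C_0$ together with the integral bound $\int_0^T\|y_T(t)-\overline{y}\|_{\mathscr{H}}^2\,\diff t\leqslant C_0$.

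\textbf{Step 2 (reaching the ball).} From the integral bound and an averaging argument, on any window of fixed length $\ell$ there is a time at which $\|y_T-\overline{y}\|_{\mathscr{H}}^2\leqslant C_0/\ell$; choosing $\ell$ large (independent of $T$) guarantees times $t_\flat=O(1)$ and $t_\sharp$ with $T-t_\sharp=O(1)$ at which $y_T\in\mathfrak{B}_r(\overline{y})$, so that the \emph{local} cost estimates \eqref{eq: 10.3}--\eqref{eq: 10.4} become available there. \textbf{Step 3 (window comparison).} The heart of the proof is a localized quasi-turnpike comparison. Fix a mesh of times $t_k$ with $t_{k+1}-t_k=\ell>2T_0$, chosen inside consecutive blocks as near-minimizers of $\|y_T(\cdot)-\overline{y}\|_{\mathscr{H}}^2$, and assume inductively $y_T(t_k)\in\mathfrak{B}_r(\overline{y})$. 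On $[t_{k-1},t_{k+1}]$ the restriction of $u_T$ is optimal (dynamic programming); comparing it with the competitor that drives $y_T(t_{k-1})\to\overline{y}$ in time $T_0$, rests at $\overline{y}$, then drives $\overline{y}\to y_T(t_{k+1})$ in time $T_0$, and invoking \eqref{eq: 10.3}--\eqref{eq: 10.4} together with the Lipschitz well-posedness bound for the transfer arcs, gives
\begin{equation*}
\int_{t_{k-1}}^{t_{k+1}}\|y_T(t)-\overline{y}\|_{\mathscr{H}}^2\,\diff t\leqslant C_1\Big(\|y_T(t_{k-1})-\overline{y}\|_{\mathscr{H}}^2+\|y_T(t_{k+1})-\overline{y}\|_{\mathscr{H}}^2\Big),
\end{equation*}
with $C_1=C_1(T_0)$ independent of $\ell$ and $T$.

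\textbf{Step 4 (discrete subharmonicity $\Rightarrow$ double arc).} Writing $a_k:=\|y_T(t_k)-\overline{y}\|_{\mathscr{H}}^2$ and bounding the left-hand side below by $\ell\,a_k$ (recall $t_k$ minimizes over its block), the previous inequality becomes the discrete convexity estimate $a_k\leqslant\frac{C_1}{\ell}(a_{k-1}+a_{k+1})$. Taking $\ell$ large enough that $C_1/\ell\leqslant\frac14$ yields $a_{k+1}\geqslant 4a_k-a_{k-1}$; since $\{a_k\}$ is bounded (by $C_0$ via Step 1), a standard analysis of the recurrence $x^2-4x+1=0$ forces $a_k\lesssim\theta^{k}+\theta^{K-k}$ with $\theta=2-\sqrt{3}\in(0,1)$, which is precisely the discrete double-arc decay and in particular keeps every $a_k\leqslant r^2$, closing the induction. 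Finally, the window energy estimate upgraded through the forward and backward Lipschitz well-posedness bounds transfers the decay of $a_k$ to all of $[0,T]$, producing $\|y_T(t)-\overline{y}\|_{\mathscr{H}}^2\lesssim e^{-\lambda t}+e^{-\lambda(T-t)}$ with $\lambda=\tfrac{1}{\ell}\log\tfrac{1}{\theta}$, as claimed.

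The main obstacle is Steps 3--4: in the absence of an optimality system one cannot diagonalize or appeal to Riccati/spectral structure, so the genuine \emph{exponential} contraction (as opposed to mere boundedness or measure turnpike) must be wrung out of the interplay between the fixed controllability cost $C_1(T_0)$ and the tunable window length $\ell$. Making the comparison rigorous requires that both window endpoints lie in $\mathfrak{B}_r(\overline{y})$ throughout the induction---which is only justified a posteriori by the decay it produces---and the Lipschitz hypothesis \eqref{eq: f.lip} is exactly what underwrites the sup-in-time control of $\|y_T-\overline{y}\|_{\mathscr{H}}$ by the $L^2$ quantities on each transfer arc, as flagged in the well-posedness footnote. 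Care is also needed at the two boundary layers, where only one endpoint of the window may be controllable to or from $\overline{y}$, and in handling the terminal payoff $\phi$, which contributes only the $T$-independent baseline $\phi(\overline{y})$ and therefore does not disturb the decay.
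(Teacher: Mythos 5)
Your proposal is correct and rests on the same three pillars as the paper's proof of Theorem \ref{thm: borjan.thm}: a $T$-independent cost bound via a quasi-turnpike competitor (your Step 1 is exactly Lemma \ref{lem: uniform.bounds}), an averaging argument to locate times where $y_T$ enters $\mathfrak{B}_r(\overline{y})$, and a localized comparison on windows whose restriction of $u_T$ is optimal by dynamic programming, with the transfer arcs costed through \eqref{eq: 10.3}--\eqref{eq: 10.4} and the Lipschitz well-posedness bound of Lemma \ref{lem: poincare.sobolev}. Where you genuinely diverge is in the final combinatorial step that converts the window comparison into exponential decay. The paper runs a nested bootstrap: it re-selects a pair of good times inside $[n\tau, T-n\tau]$ at each stage and iterates a single multiplicative contraction $\gamma=4C_\bullet^2/\sqrt{\tau}<1$, obtaining $\sup_{[n\tau,T-n\tau]}\|y_T-\overline{y}\|_{\mathscr{H}}\leqslant\frac12\gamma^n$. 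You instead fix one mesh $\{t_k\}$ once and for all and derive the three-term inequality $a_k\leqslant\frac{C_1}{\ell}(a_{k-1}+a_{k+1})$, which, by a discrete maximum principle (compare $a_k$ with the exact solution $M(\mu^k+\mu^{K-k})$ of the equality recurrence and note that an interior positive maximum of the difference contradicts $2\epsilon<1$), forces $a_k\lesssim\theta^k+\theta^{K-k}$ with $\theta=2-\sqrt{3}$. Both routes are valid; yours buys a cleaner treatment of the ball-membership issue than you give yourself credit for --- since each $t_k$ is a block minimizer, the global integral bound gives $a_k\leqslant C_0/\ell\leqslant r^2$ for \emph{all} $k$ a priori once $\ell\geqslant C_0/r^2$, so no induction is needed to keep the endpoints in $\mathfrak{B}_r(\overline{y})$ and the circularity you worry about in your closing paragraph does not actually arise. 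It also yields a fixed per-step rate $\theta=2-\sqrt{3}$ once $C_1/\ell\leqslant\frac14$, whereas the paper's rate $\lambda=\log(\sqrt{\tau}/4C_\bullet^2)/(\tau+T_0)$ is tied to the (large) bootstrap parameter $\tau$. The paper's nested scheme, on the other hand, adapts naturally to the one-sided variant (iterating only forward in time when $\phi(\overline{y})=0$, cf.\ Remark \ref{rem: semi.heat}) and makes the dependence of the constants on $r$, $T_0$ and $\left\|y^0-\overline{y}\right\|_{\mathscr{H}}$ explicit as in Remark \ref{rem: constants.nonlinearity}.
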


We note that, among other things, the result does not contain an exponential turnpike estimate for $u_T(t)$, or precise characterizations of the constants $C$ and $\lambda$ -- both are hallmarks of the linear theory, and we comment on this in Remark \ref{rem: turnpike.u} and Remark \ref{rem: constants.nonlinearity} respectively. 
All in all, a complete discussion regarding the assumptions and possible extensions of Theorem \ref{thm: borjan.thm} may also be found in Section \ref{sec: discussion.nonlinearity}.

\subsection{Sketch of the proof of Theorem \ref{thm: borjan.thm}}  \label{sec: sketch.proof} Solely for simplicity of the subsequent sketch, let us suppose that estimates \eqref{eq: 10.3} -- \eqref{eq: 10.4} hold \emph{globally}, namely that $\mathfrak{B}_r(\overline{y})=\mathscr{H}$. 
The entire strategy can roughly be summarized as in Figure \ref{fig: bootstrap}. 
Through a repetitive use of the quasi-turnpike principle, and an interpolation inequality tied to the Lipschitz character of the underlying system, we may inductively decrease the radius of the tubular neighborhood where $y_T(t)$ is localized by looking over shrinking time intervals. We corroborate with more detail.

\begin{figure}[h!]
\center
\includegraphics[scale=0.8]{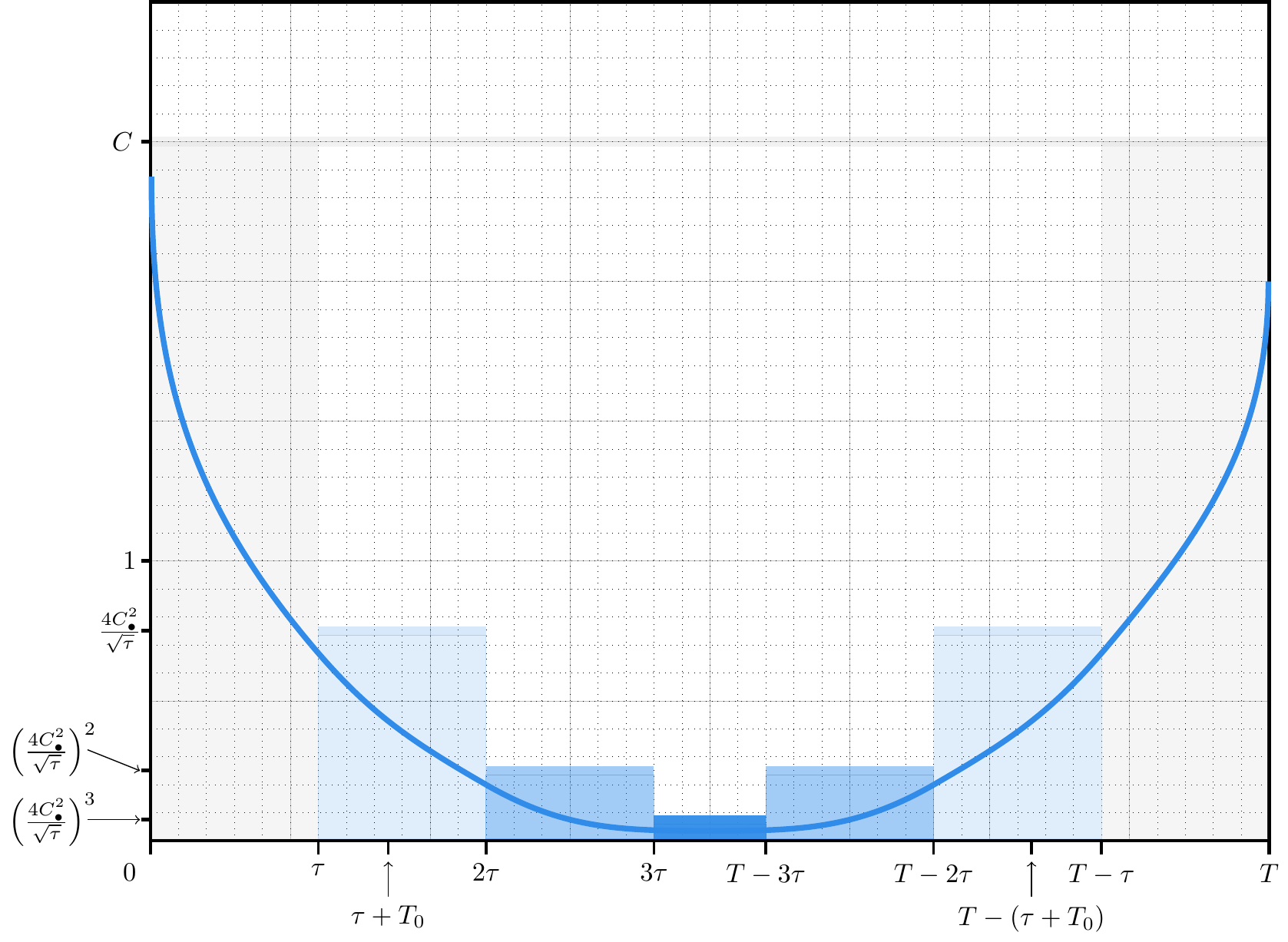}
\caption{Our strategy for showing turnpike for $\|y_T(t)-\overline{y}\|_{\mathscr{H}}$ (blue curve) is as follows. We first show that $\|y_T(t)-\overline{y}\|_{\mathscr{H}}$ is bounded by some possibly large constant $C>0$ independent of $T>0$, over the entire interval $[0,T]$, for $T>0$ large enough. Then, for some sufficiently large $\tau>0$ independent of $T$, in a "symmetrical staircase" fashion, we zoom in over successively smaller subintervals $[n\tau,T-n\tau]$ by induction over all $n\geqslant1$ such that $T-2n\tau\geqslant2T_0$ (an upper bound on $n$, which guarantees that controllability may be used in two disjoint subintervals of $[n\tau,T-n\tau]$ to construct a quasi-turnpike). 
And in each such subinterval, we exponentially decrease an upper bound of the form $\gamma:=\sfrac{4C_\bullet^2}{\sqrt{\tau}}<1$, independent of $T$. Here $C_\bullet>0$ is some constant slightly larger than $C$. In other words, we inductively decrease the radius of the tubular neighborhood where $y_T(t)$ is localized, by shrinking the time intervals where $t$ lies.}
\label{fig: bootstrap}
\end{figure}

\begin{itemize}
\item 
The first tool in our arsenal will be the following inequality for solutions to \eqref{eq: abstract.sys.large}: there exists a constant $C_1>0$, which is (crucially) independent of $T$, such that
\begin{equation} \label{eq: estimate.no.number}
\sup_{t\in[0,T]}\|y(t)-\overline{y}\|_{\mathscr{H}}\leqslant C_1\Big(\|y(0)-\overline{y}\|_{\mathscr{H}}+\|y-\overline{y}\|_{L^2(0,T;\mathscr{H})}+\|u\|_{L^2((0,T)\times\omega)}\Big)
\end{equation}
holds for any, not necessarily optimal $u$, and corresponding solution $y(t)$ to \eqref{eq: abstract.sys.large}. 
The assumption $f\in\text{Lip}(\mathbb{R})$ set in \eqref{eq: f.lip} is used precisely here, as it suffices for proving \eqref{eq: estimate.no.number}.
We refer the reader to Lemma \ref{lem: poincare.sobolev}.
\medskip 

\item Regarding problem \eqref{eq: borjan.ocp}: we first show (Lemma \ref{lem: uniform.bounds}) that there exists $C_2>0$, also independent of $T$, such that
\begin{equation} \label{eq: JT.bounded}
\mathscr{J}_T(u_T)\leqslant C_2
\end{equation}
holds for all $T\geqslant T_0$, where $T_0>0$ is the controllability time.
As the target $\overline{y}$ is a steady state as in \eqref{eq: kernel.eq}, estimate \eqref{eq: JT.bounded} can be shown easily, and done by using the quasi-turnpike principle presented in the introduction.
When used in conjunction with \eqref{eq: estimate.no.number}, estimate \eqref{eq: JT.bounded} yields
\begin{equation} \label{eq: unif.bound.explain}
\sup_{t\in[0,T]}\|y(t)-\overline{y}\|_{\mathscr{H}}^2+\mathscr{J}_T(u_T)\leqslant C_3^2
\end{equation}
for some constant $C_3>0$, depending on $C_1,C_2$ (precisely the constants from \eqref{eq: estimate.no.number} and \eqref{eq: JT.bounded} respectively) and $\left\|y^0-\overline{y}\right\|_{\mathscr{H}}$, but independent of $T\geqslant T_0$.
\medskip

\item Estimate \eqref{eq: unif.bound.explain} is a cornerstone of the subsequent arguments, containing a couple of crucial clues. 
First among these two clues is that the exponential turnpike can immediately be derived on intervals whose length is independent of $T$. Indeed, for $t\in[0,\tau+T_0]$ for instance, from \eqref{eq: unif.bound.explain} one gathers that
\begin{equation} \label{eq: equationstar}
\|y_T(t)-\overline{y}\|_{\mathscr{H}}\leqslant C_3e^{\lambda t} e^{-\lambda t}\leqslant C_3e^{\lambda(\tau+T_0)}\Big(e^{-\lambda t} + e^{-\lambda(T-t)}\Big)
\end{equation}
holds for any $\lambda>0$ (the specific $\lambda$ appearing in Theorem \ref{thm: borjan.thm} will then be fully determined at the end of the proof). A similar computation can then be repeated for $t\in[T-(\tau+T_0),T]$. Herein, one already notes that $T$ needs to be chosen sufficiently large, namely, 
\begin{equation}\label{eq: T>}
T>2(\tau+T_0),
\end{equation}
where $\tau>0$ is a free parameter, chosen large enough later on (with the slight caveat of increasing the constant $C_3 e^{\lambda(\tau+T_0)}$ in \eqref{eq: equationstar}).
\medskip

\item And so, turnpike only needs to be shown for $t\in[\tau+T_0, T-(\tau+T_0)]$. 
To this end, we invoke the second clue that \eqref{eq: unif.bound.explain} provides: 
there must exist $\tau_1\in[0,\tau)$ and $\tau_2\in (T-\tau,T]$ such that
\begin{equation} \label{eq: 10.10}
\|y_T(\tau_j)-\overline{y}\|_{\mathscr{H}}\leqslant\frac{\|y_T-\overline{y}\|_{L^2(0,T; \mathscr{H})}}{\sqrt{\tau}}\stackrel{\eqref{eq: unif.bound.explain}}{\leqslant} \frac{C_3}{\sqrt{\tau}}.
\end{equation}
(If not, one readily derives a contradiction.) Here, $C_3>0$ is the constant appearing in \eqref{eq: estimate.no.number}.
As $\tau$ will be chosen at least larger than $C_3^2$ just below, this estimate motivates localizing the entire problem in $[\tau_1,\tau_2]$ in view of sharpening the pointwise estimate of \eqref{eq: unif.bound.explain}.
And so, restricting $u_T$ to the subinterval $[\tau_1,\tau_2]$, one sees that it is a solution to
\begin{equation} \label{eq: aux.ocp}
\inf_{\substack{u\in L^2((\tau_1,\tau_2)\times\omega)\\ \partial_t y=Ay+\mathfrak{f}(y)+Bu \text{ in } (\tau_1,\tau_2)\\y(\tau_1)=y_T(\tau_1)\\y(\tau_2)=y_T(\tau_2)} } \int_{\tau_1}^{\tau_2}\|y(t)-\overline{y}\|^2_{\mathscr{H}}\diff t + \int_{\tau_1}^{\tau_2} \|u(t)\|^2_{L^2(\omega)}\diff t.
\end{equation}
(This can be seen as some kind of dynamic programming principle, and is readily shown by arguing by contradiction.)
We then show that there exists some constant\footnote{Estimate \eqref{eq: 10.12} actually holds with some constant $C_4>0$ independent of $T,\tau_1,\tau_2$ and $\tau$ (the proof follows the lines of that of \eqref{eq: unif.bound.explain}, employing the quasi-turnpike principle), and, in principle, there is no reason to guarantee that $C_4\geqslant C_3$ initially. But we may simply take $C_\bullet:=\max\{C_3,C_4\}$ so that $C_\bullet\geqslant C_3$, and we do so, so that subsequent bounds are simpler to write.} $C_\bullet\geqslant C_3$, independent of $T,\tau_1,\tau_2$ and $\tau$, such that
\begin{equation} \label{eq: 10.12}
\|y_T(t)-\overline{y}\|_{\mathscr{H}}\leqslant C_\bullet\Big(\|y_T(\tau_1)-\overline{y}\|_{\mathscr{H}} + \|y_T(\tau_2)-\overline{y}\|_{\mathscr{H}}\Big)
\end{equation}
holds for all $t\in[\tau_1,\tau_2]$. In view of \eqref{eq: estimate.no.number}, such an estimate would follow should we bound the functional minimized in \eqref{eq: aux.ocp} by means of the right-hand-side in \eqref{eq: 10.12}. The latter can indeed be shown by arguing through the quasi-turnpike principle (see Figure \ref{fig: quasi.2}).
Estimate \eqref{eq: 10.12} combined with \eqref{eq: 10.10} yields
\begin{equation} \label{eq: 934}
\|y_T(t)-\overline{y}\|_{\mathscr{H}}\leqslant\frac{2C_3\cdot C_\bullet}{\sqrt{\tau}}\leqslant\frac12\cdot\frac{4C_\bullet^2}{\sqrt{\tau}}
\end{equation}
for all $t\in[\tau_1,\tau_2]$, and thus also for all $t\in[\tau,T-\tau]$. We henceforth fix 
\begin{equation*}
\tau>16C_\bullet^4; 
\end{equation*}
estimate \eqref{eq: 934} thus yields a contraction. 
The entire argument which precedes can then be repeated by induction on even smaller sub-intervals $[n\tau,T-n\tau]$ for all integers $n\geqslant1$ which satisfy $T-2n\tau\geqslant2T_0$ (this is an upper bound on $n$, in order to be able to repeat the quasi-turnpike argument of Figure \ref{fig: quasi.2}) to obtain 
\begin{equation} \label{eq: bootstrap.ineq}
\sup_{t\in[n\tau,T-n\tau]}\|y_T(t)-\overline{y}\|_{\mathscr{H}}\leqslant\frac12\left(\frac{4C_\bullet^2}{\sqrt{\tau}}\right)^n.
\end{equation}
We may rewrite \eqref{eq: bootstrap.ineq} as 
\begin{align} \label{eq: please.leave.me.alone}
\sup_{t\in[n\tau,T-n\tau]}\|y_T(t)-\overline{y}\|_{\mathscr{H}}&\leqslant\frac12\left(\frac{4C_\bullet^2}{\sqrt{\tau}}\right)^n \nonumber\\ 
&= \frac12\exp\left(-n\log\left(\frac{\sqrt{\tau}}{4C_\bullet^2}\right)\right),
\end{align}
and since $\tau>16C_\bullet^4$, the double-arc exponential estimate will readily follow by a judicious choice of $n$, with $n\geqslant1$ and $T-2n\tau\geqslant2T_0$. (See \eqref{eq: def.n} for the exact choice of $n$, as well as Remark \ref{rem: constants.nonlinearity} for the form of the constants $C$ and $\lambda$, which arise directly from \eqref{eq: def.n} applied to \eqref{eq: please.leave.me.alone}.) 
\end{itemize}

\begin{figure}[h!]
\center
\includegraphics[scale=0.575]{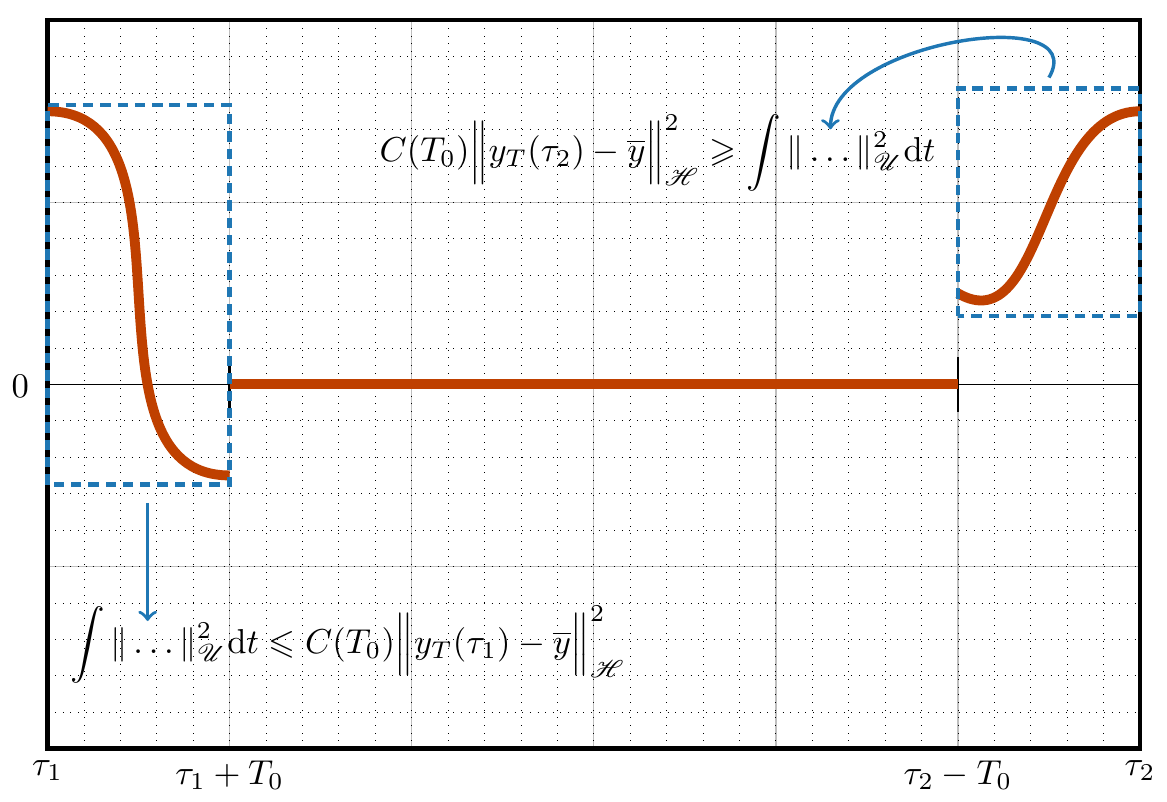}
\includegraphics[scale=0.575]{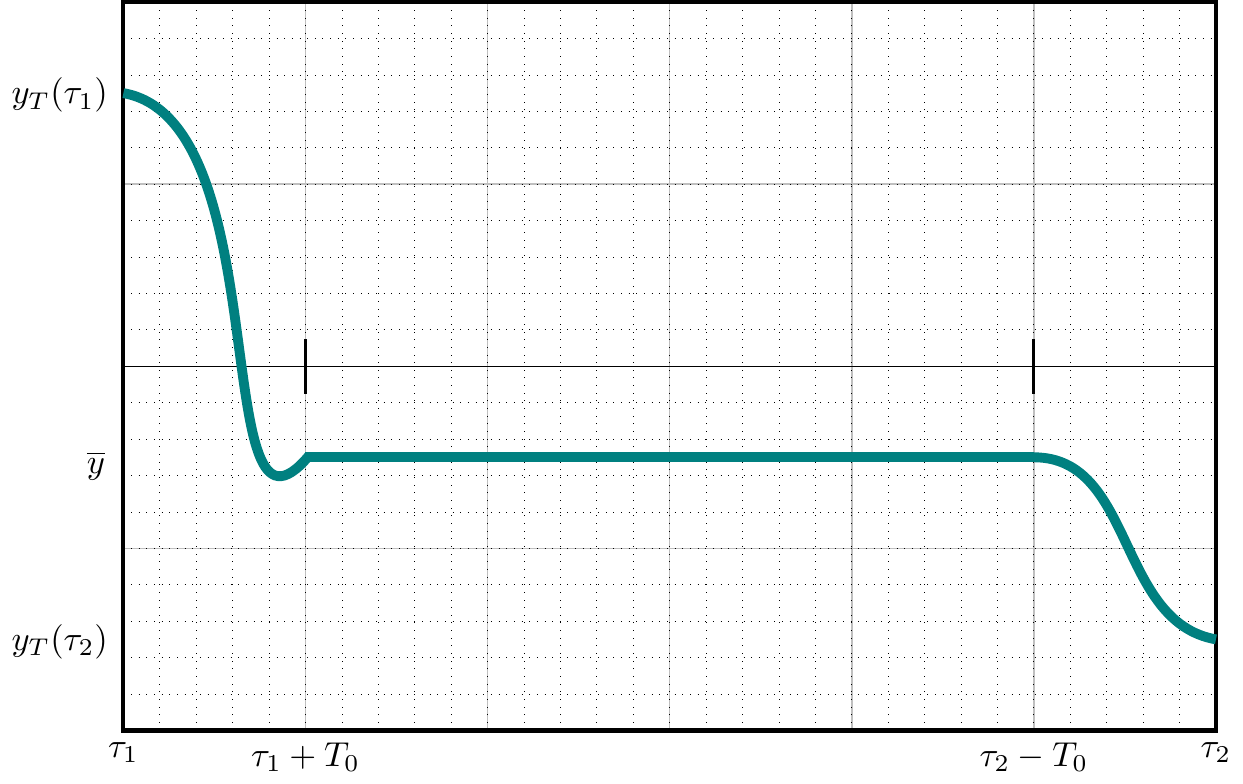}
\caption{(\emph{Left}) We construct a quasi-turnpike control $u^{\text{aux}}(t)=u^11_{[\tau_1,\tau_1+T_0]}(t)+u^21_{[\tau_2-T_0,\tau_2]}(t)$ for $t\in[0,T]$, where $u^1$ controls the state from $y_T(\tau_1)$ to $\overline{y}$ in time $\tau_1+T_0$, while $u^2$ controls from $\overline{y}$ (starting in time $t=\tau_2-T_0$) to $y_T(\tau_2)$ in time $t=\tau_2$ (\emph{right}). 
Using \eqref{eq: 10.10} and Assumption \ref{ass: linear.cost}, we may ensure that $\|u^j\|_{L^2}\leqslant C(T_0)\|y_T(\tau_j)-\overline{y}\|_{\mathscr{H}}$, which combined with the Gr\"onwall inequality for estimating the tracking terms, and the suboptimality of $u^{\text{aux}}$, yields \eqref{eq: 10.12}.}
\label{fig: quasi.2}
\end{figure}

\noindent
Note that at no point in the above steps did we make use of the optimality system, nor explicitly linearize the system. This in turn allowed us to avoid assuming $C^2$-nonlinearities, and smallness assumptions on the initial data $y^0$ or the target $\overline{y}$, which are needed if one proceeds by linearization of the optimality system as in \cite{trelat2015turnpike, porretta2016remarks}. 

\subsection{Discussion}  \label{sec: discussion.nonlinearity}

\begin{remark}[The constants $C$ and $\lambda$] \label{rem: constants.nonlinearity}

From the proof (presented below) and also \eqref{eq: bootstrap.ineq}, one can gather that the constants $C>0$ and $\lambda>0$ appearing in the exponential estimate of Theorem \ref{thm: borjan.thm} are explicit (albeit rather compound).  
We recall that, being given $r>0$ (defined in Assumption \ref{ass: linear.cost}) and $T_0>0$, one selects $\tau$ large enough (at least strictly larger than $16C_\bullet^4$, where $C_\bullet=C_\bullet(r,T_0)>0$ is the constant appearing in \eqref{eq: bootstrap.ineq}), and subsequently, takes $T\geqslant 2(T_0+\tau):=T^*$. 
Then,

\begin{itemize}
\item
The decay rate $\lambda>0$ is given by
	\begin{equation*}
	\lambda:=\frac{\log\left(\frac{\sqrt{\tau}}{4C_\bullet^2}\right)}{\tau+T_0}=\frac{\log\left(\frac{\tau}{16C_\bullet^4}\right)}{T^*}.
	\end{equation*}
	In particular, $\lambda$ also depends on the radius $r>0$ defined in Assumption \ref{ass: linear.cost} through $C_\bullet$ (in an increasing and exponential manner, due to underlying Gr\"onwall inequalities). As a matter of fact, should $\mathfrak{B}_r(\overline{y})=\mathscr{H}$ (namely, the estimates \eqref{eq: 10.3} -- \eqref{eq: 10.4} on the control cost hold for any initial datum), then we can select $r:=\left\|y^0-\overline{y}\right\|_{\mathscr{H}}$ in the proof, which already insinuates a dependence of the decay rate on the initial datum, quite unlike what was encountered in the linear case $\dot{y}=Ay+Bu$, where $\lambda$ solely depends on $A$ and $B$.  
	\smallskip
	
	\item
	On the other hand, from \eqref{eq: equationstar} and \eqref{eq: please.leave.me.alone} (along with the discussion regarding \eqref{eq: please.leave.me.alone}), we gather that the constant $C>0$ takes the form
	\begin{equation*}
	C:=\max\left\{C_3 e^{\lambda(\tau+T_0)},\frac{\sqrt{\tau}}{4C_\bullet^2}\right\},
	\end{equation*} 
	with $C_3>0$ stemming from \eqref{eq: unif.bound.explain}. Furthermore, we may also deduce that, roughly, 
	\begin{equation*}
	C_3^2=\phi(\overline{y})+C(T_0, f, \omega)\left\|y^0-\overline{y}\right\|_{\mathscr{H}}^2. 
	\end{equation*}
	Thus, the linear dependence with respect to $\Big(\left\|y^0-\overline{y}\right\|_{\mathscr{H}}, \|\overline{p}\|_{\mathscr{H}}\Big)$ of the LQ case is not quite maintained through the strategy presented in this section.
	\end{itemize}
\end{remark}

\begin{remark}[Time-irreversible equations] \label{rem: semi.heat}
\begin{itemize}
\item
When $\phi\equiv0$ in \eqref{eq: borjan.ocp}, or $\phi(\overline{y})=0$ (recall that $\phi\geqslant0$), one can repeat the proof above by iterating solely forward in time (namely, consider intervals of the form $[n\tau,T]$ in the induction argument) and show an estimate of the form 
\begin{equation} \label{eq: whatever.2}
\|y_T(t)-\overline{y}\|_{\mathscr{H}}\leqslant Ce^{-\lambda t}. 
\end{equation}
Here, we do not see the final arc near $t=T$ since the turnpike $\overline{y}$ is a zero of the final cost $\phi$, i.e. $\overline{y}\in\{\phi=0\}$.
In some sense, with \eqref{eq: whatever.2} we are recovering a nonlinear extension of well-known linear Riccati theory without making use of the optimality system. 
The result is however not trivial (in the sense that it is not a direct consequence of the controllability), since the underlying dynamics are nonlinear, and the stabilizing control is found by minimizing a (tractable) functional.
Furthermore, in this case, assuming \eqref{eq: 10.4} is not necessary, as solely \eqref{eq: 10.3} suffices. Similarly, solely controllability to the steady state $\overline{y}$ suffices.
This result is also provided and detailed in \cite{esteve2020turnpike}.
\smallskip

\item When \eqref{eq: 10.4} is not needed (suppose, for simplicity, that $\phi\equiv0$ in view of the above discussion), we see that at no point does one need to assume that the semigroup is conservative (i.e. \eqref{eq: conservation}). 
Thus, for problems of the form 
\begin{equation*}
\inf_{\substack{u\in L^2((0,T)\times\omega)\\ y \text{ solves} \eqref{eq: abstract.sys.large}}} \int_0^T \|y(t)-\overline{y}\|_{\mathscr{H}}^2\diff t + \int_0^T\|u(t)\|_{L^2(\omega)}^2\diff t,
\end{equation*}
where $\mathscr{H}=L^2(\Omega)$, with $A=\Delta$, $Bu=u1_\omega$ and $\mathfrak{f}=f$ in \eqref{eq: abstract.sys.large}, assuming only \eqref{eq: 10.3} (and not \eqref{eq: 10.4}, \eqref{eq: conservation}), one can ensure that $\|y_T(t)-\overline{y}\|_{\mathscr{H}}\leqslant Ce^{-\lambda t}$ by slightly adapting the proof presented above. This ensures the validity of the strategy also for the semilinear heat equation with a globally Lipschitz nonlinearity.
\smallskip

\item Having $\phi(\overline{y})\neq0$ and assuming \eqref{eq: 10.4} is precisely an obstacle for applying the strategy to time-irreversible systems such as the (semilinear) heat equation.
Reading the proof, one sees that the target $y^{1}$ will manifest itself roughly as a trajectory snapshot of the form $y_T(T-n\tau)$ (e.g., in \eqref{eq: aux.ocp}), so exact controllability to this reference point would also hold for the semilinear heat equation. The issue is rather ensuring the estimate \eqref{eq: 10.4}, which is used in the process of obtaining \eqref{eq: 10.12}. 
Indeed, $y_T(T-n\tau):=\hat{y}(T_\bullet)$ is an instance of a trajectory $\hat{y}$, which comes along with its own control $\hat{u}$, one would have $\|u-\hat{u}\|_{L^2((0,T_\bullet)\times\omega)}\leqslant C(T_\bullet)\|\overline{y}-\hat{y}(T_\bullet)\|_{\mathscr{H}}$ for a minimal $L^2$-norm control (see \cite[Lemma 8.3]{pighin2018controllability} and the references therein). Such an estimate will not suffice, since then one cannot provide a bound of the minimal $L^2$-norm control $u$ solely in terms of $\|\overline{y}-\hat{y}(T_\bullet)\|_{\mathscr{H}}$. 
\end{itemize}
\end{remark}

\begin{remark}[Exponential estimate for $u_T$] \label{rem: turnpike.u}
Due to the fact that the proof does not make use of the optimality system and linearization (to avoid smoothness assumptions on $f$, and smallness assumptions on $y^0$ and in particular on $\overline{y}$), $u_T$ is not characterized through the adjoint state $p_T$, and thus only an integral turnpike property/estimate rather than an exponential one for $u_T$ is guaranteed. There is however a case, presented in \cite{esteve2020turnpike}, in which exponential turnpike can be ensured. 
If $\phi\equiv0$, as discussed in the above remark, one can ensure that 
\begin{equation*}
\|y_T(t)-\overline{y}\|_{\mathscr{H}}\leqslant Ce^{-\lambda t}.
\end{equation*}
But if moreover $\mathscr{H}=\mathbb{R}^d$ and the underlying ODE is of driftless control-affine form: 
\begin{equation*}
\dot{y}(t)=\sum_{j=1}^m u_j(t) f_j(y(t)), 
\end{equation*}
with $f_j\in\mathrm{Lip}(\mathbb{R}^d;\mathbb{R}^d)$, then 
\begin{equation*}
\|u_T(t)\|\leqslant Ce^{-\lambda t}
\end{equation*}
for $t\in[0,T]$ also holds. The proof of this fact makes crucial use of the homogeneity properties that driftless systems enjoy, which allows one to construct suboptimal controls by simple scalings and show an estimate of the form
\begin{equation*}
\int_{t}^{t+h}\|u_T(t)\|^2\diff t \leqslant 2\int_{t}^{t+h} \|y_T(t)-\overline{y}\|^2\diff t
\end{equation*}
for $h\ll1$ and $t\in[0,T)$. The Lebesgue differentiation theorem would then yield the desired conclusion. 
\end{remark}

\begin{remark}[State penalty] \label{rem: full.state}
Note that in \eqref{eq: borjan.ocp} we are penalizing the energy norm of the full state (which, in the case of the wave equation, is $y(t):=(\zeta(t),\partial_t\zeta(t))$). 
We do this due to the fact that the energy norm of the full state appears on the right-hand side in the interpolation estimate \eqref{eq: estimate.no.number} (see also Lemma \ref{lem: poincare.sobolev}). Indeed, since the strategy consists in showing that the functional $\mathscr{J}_T$ evaluated at an optimal pair is bounded (through the quasi-turnpike principle), and then using this information to ensure a pointwise bound of the state $y(t)$ through \eqref{eq: estimate.no.number}, we need to ensure that the functional $\mathscr{J}_T$ contains all of the terms appearing in the right-hand side of the estimate in \eqref{eq: estimate.no.number}.

For the semilinear wave equation, it is plausible that this restriction can be relaxed, in the sense that one penalizes solely the kinetic or potential energy of the waves (as in the LQ case), by taking advantage of the equipartition of energy principle. This adaptation, however, does not appear trivial, we leave it open for future work. 
\end{remark}

\begin{remark}[On the nonlinearity $f$] \label{rem: f.lip}
The globally Lipschitz character of $f$ is used precisely in \eqref{eq: estimate.no.number} (namely Lemma \ref{lem: poincare.sobolev}). 
This may be solely a technical assumption, which is, however, not necessarily trivial to overcome at a first glance. 
One could stipulate that the strategy should also apply to equations with superlinear nonlinearities (which preserve the controllability mechanism of the linear dynamics), contrary to solely globally Lipschitz ones. 
In essence, the adaptation boils down to obtaining an estimate akin to \eqref{eq: estimate.no.number} for such systems. 
\begin{itemize}
\item
To illustrate the issues which may arise, let us first provide a simple proof of \eqref{eq: estimate.no.number} in the finite-dimensional case:
\begin{equation*} \label{eq: first.equation}
\dot{y}(t)=Ay(t)+\mathfrak{f}(y(t))+u(t)  \hspace{1cm} \text{ in } (0,T),
\end{equation*}
where $A\in\mathbb{R}^{d\times d}$, and $\mathfrak{f}\in\mathrm{Lip}(\mathbb{R}^d;\mathbb{R}^d)$. Suppose that $\overline{y}\in\mathbb{R}^d$ is some non-trivial steady state, with null control. 
We see that $\zeta(t):=y(t)-\overline{y}$ solves
\begin{equation} \label{eq: second.equation}
\dot{\zeta}(t) = A\zeta(t)+ g(\zeta(t)) + u(t) \hspace{1cm} \text{ in } (0,T),
\end{equation}
where $g$ is again globally Lipschitz. Clearly
\begin{equation} \label{eq: third.equation}
|\zeta(t)|^2 - |\zeta(0)|^2 = 2\int_0^t \zeta(s)\cdot\dot{\zeta}(s)\diff s
\end{equation}
for $t\in[0,T]$. But then, by the Cauchy-Schwarz and Young inequalities,
\begin{equation} \label{eq: fourth.equation}
\int_0^t \zeta(s)\cdot\dot{\zeta}(s)\diff s\leqslant \int_0^t |\zeta(s)|^2\diff s + \frac{1}{4}\int_0^t |\dot{\zeta}(s)|^2\diff s.
\end{equation}
Finally, directly using \eqref{eq: second.equation} and the Lipschitz character of $g$, one finds
\begin{equation} \label{eq: fifth.equation}
|\dot{\zeta}(s)|\leqslant C(A,g)|\zeta(s)|+|u(s)|.
\end{equation}
Putting \eqref{eq: third.equation}, \eqref{eq: fourth.equation} and \eqref{eq: fifth.equation} together, one derives \eqref{eq: estimate.no.number}.
\smallskip

\item
A canonical superlinear nonlinearity for which, oftentimes, controllability is preserved from the linear dynamics (in both finite and infinite dimensions) is the cubic nonlinearity. Let us thus consider
\begin{equation*}
\dot{y}=Ay-|y|^2 y+u \hspace{1cm} \text{ in } (0,T).
\end{equation*}
Suppose $\overline{y}\in\mathbb{R}^d$ is a non-trivial steady state, with zero control. 
We look to repeat the same arguments as in what precedes. Starting from \eqref{eq: third.equation}, we see that
\begin{align*}
\int_0^t (y(s)-\overline{y})\dot{y}(s)\diff s = \int_0^t (y(s)-\overline{y})\cdot\big(Ay(s)-|y(s)|^2 y(s)+u(s)\big)\diff s.
\end{align*}
If one applies the Cauchy-Schwarz and Young inequalities as in \eqref{eq: fourth.equation}, and uses the fact that $A\overline{y}-|\overline{y}|^2\overline{y}=0$, then inevitably the term
\begin{equation} \label{eq: cubic.problem}
\int_0^t \Big|\big|y(s)\big|^2y(s)-\big|\overline{y}\big|^2\overline{y}\Big|^2 \diff s
\end{equation}
appears. Recall that in \eqref{eq: estimate.no.number}, the norms appearing in the upper bound are precisely those minimized in the cost functional (in occurrence, $\|y-\overline{y}\|_{L^2(0,T;\mathbb{R}^d)}^2+\|u\|_{L^2(0,T;\mathbb{R}^d)}^2$). Hence, to derive \eqref{eq: estimate.no.number}, we would like to roughly absorb \eqref{eq: cubic.problem} by the quantity we minimize in the cost functional.
But the validity of such a claim is far from obvious to our understanding. If the derivative $\dot{y}(t)$ is also tracked in the functional, the elementary Sobolev embedding $H^1(0,T)\hookrightarrow C^0([0,T])$ could perhaps be of use. But this too remains to be analyzed with more rigor, as the constant which appears from applying the Sobolev inequality would depend on $T$ (albeit explicitly). Similar issues persist in the PDE case (for both heat and wave equations).
\end{itemize}
At any rate, should one be able to prove that an estimate such as \eqref{eq: estimate.no.number} holds for, say, locally Lipschitz-only nonlinearities for which blow-up is avoided, then the strategy should also be applicable to such settings. 
\end{remark}

\begin{remark}[Linear cost assumption]
\begin{itemize}
\item While we suppose that the underlying system \eqref{eq: abstract.sys.large} is controllable for arbitrarily large data, through \eqref{eq: 10.3} -- \eqref{eq: 10.4} we only assume that the cost of control is proportionate to the distance from the chosen steady state $\overline{y}$ in some, possibly arbitrarily small ball around this steady state. This assumption is oftentimes satisfied by semilinear systems which are shown to be controllable by looking at an associated linear problem combined with a fixed point theorem of some form (with possibly under various smallness assumptions on the Lipschitz constant for finite dimensional systems, see, e.g., \cite{carmichael1985fixed, seidman1987invariance}, and \cite{zhang2000exact} for further references). Such conclusions hold, for instance, for the semilinear wave equation (with $f(0)=0$).
\smallskip

\item
We cannot ensure the validity of estimates \eqref{eq: 10.3} -- \eqref{eq: 10.4} for driftless systems: $$\dot{y}(t)=\sum_{j=1}^m u_j(t)f_j(y(t)),$$ when $m<d$. This is due to the so-called ball-box theorem in sub-Riemannian geometry (\cite{agrachev2019comprehensive}), for smooth vector fields $f_1,\ldots,f_m$. This theorem states the following. Suppose that the vector fields $f_1,\ldots,f_m$ satisfy the H\"ormander condition, namely that the iterated Lie brackets of these vector fields at any point span $\mathbb{R}^d$. Let us henceforth denote 
$$\bigtriangleup^1(x):=\mathrm{span}\{f_1(x),\ldots, f_m(x)\}$$ 
for $x\in\mathbb{R}^d$, and then iterate as 
$$\bigtriangleup^{k+1}:=\bigtriangleup^1+[\bigtriangleup^{k},\bigtriangleup^1]$$ 
for $k\geqslant1$. Then, by virtue of the H\"ormander condition, there exists some $\kappa\geqslant1$ such that $\bigtriangleup^\kappa(x)=\mathbb{R}^d$ for all $x$. Furthermore, by the ball-box theorem, for $y^0$ close enough to $y^1$, an estimate of the form 
$$\|y^0-y^1\|\lesssim d_{\mathrm{SR}}(y^0,y^1)\lesssim\|y^0-y^1\|^{\sfrac{1}{\kappa}}$$ 
holds, where $d_{\mathrm{SR}}(y^0,y^1)$ is the sub-Riemannian distance of $y^0$ to $y^1$, equal (modulo a scalar multiple depending on $T_0$) to the infimum defined in \eqref{eq: 10.3} -- \eqref{eq: 10.4}. 
	Herein, one sees that if $m\geqslant d$, it may happen to find at least $d$ among $m$ vector fields which are linearly independent, thus ensuring that $\kappa=1$, as desired; this is quite simply impossible when $m<d$. 
	 This exact constraint is also encountered in \cite[Theorem 5.1]{esteve2020large}, where the estimates \eqref{eq: 10.3} -- \eqref{eq: 10.4} are shown to hold for $m\geqslant d$ in the driftless setting. 
	A clearer picture regarding this issue is also needed for general control-affine systems beyond those for which linearization techniques might not apply. We refer to \cite{jean2015complexity, prandi2014holder} for developments in this direction.
\end{itemize}
\end{remark}

\begin{remark}[Controlled steady states] 
One can also consider more general controlled steady states $\overline{y}$ as targets in $\mathscr{J}_T$; focusing on the semilinear wave equation, we could take $\overline{y}$ such that
\begin{equation*}
A\overline{y}+\mathfrak{f}(\overline{y})+B\overline{u}=0
\end{equation*}
for a given $\overline{u}\in L^2(\omega)$, provided the functional $\mathscr{J}_T$ is modified accordingly, namely rather consider
\begin{equation*}
\mathscr{J}_T(u):=\phi(y(T))+\int_0^T\|y(t)-\overline{y}\|^2_{\mathscr{H}}\diff t+\int_0^T \|u(t)-\overline{u}\|_{L^2(\omega)}^2\diff t.
\end{equation*}
The cornerstone of the above strategy lies in using the controllability assumption to construct sub-optimal controls which annul the running cost beyond time $T_0>0$. In the presence of a target for the control, given a controllability control $u^1$ steering $y^1$ to $\overline{y}$ in time $T_0$, one could simply construct a quasi-turnpike control by setting $u^{\text{aux}}(t)=u^11_{[0,T_0]}+\overline{u}1_{[T_0,T]}$ for $t\in[0,T]$, and the strategy would remain the same.
\end{remark}

\subsection{Proof of Theorem \ref{thm: borjan.thm}}

We proceed with (most details of) the proof. We focus on providing a transparent presentation, and for the full technical details, we refer the reader to \cite{esteve2020turnpike}. The proof requires a couple of preliminary lemmas. 
We begin with

\begin{lemma}[Poincaré-Sobolev-type inequality] \label{lem: poincare.sobolev}
Let $y^0\in\mathscr{H}$, and let $\overline{y}$ be as in \eqref{eq: kernel.eq}. 
There exists a constant $C>0$ (depending on $y^0,\overline{y}, f$) such that for any $T>0$ and $u\in L^2((0,T)\times\omega)$, the unique solution $y$ to \eqref{eq: abstract.sys.large} is such that
\begin{equation*}
\|y(t)-\overline{y}\|_{\mathscr{H}}\leqslant C\Big(\left\|y^0-\overline{y}\right\|_{\mathscr{H}}+\|y-\overline{y}\|_{L^2(0,T;\mathscr{H})}+\|u\|_{L^2((0,T)\times\omega)}\Big)
\end{equation*}
holds for all $t\in[0,T]$.
\end{lemma}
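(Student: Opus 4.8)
The plan is to pass to a shifted variable and then run an energy (multiplier) estimate whose decisive feature is that the resulting differential inequality must be integrated \emph{directly}, without Grönwall, so that the constant remains independent of $T$. First I would set $\zeta := y - \overline{y}$. Since $\overline{y}$ is an uncontrolled steady state, $A\overline{y} + \mathfrak{f}(\overline{y}) = 0$ by \eqref{eq: kernel.eq}, whence $\zeta$ solves
$$\partial_t\zeta = A\zeta + g(\zeta) + Bu \quad\text{in } (0,T), \qquad \zeta(0) = y^0 - \overline{y},$$
where $g(\zeta) := \mathfrak{f}(\zeta + \overline{y}) - \mathfrak{f}(\overline{y})$ satisfies $g(0)=0$ and is globally Lipschitz from $\mathscr{H}$ to $\mathscr{H}$. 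Indeed, in view of \eqref{eq: def.f.wave}, writing $\zeta=(\zeta_1,\zeta_2)$ and $\overline{y}=(\overline{\zeta},0)$, one has $\|g(\zeta)\|_{\mathscr{H}} = \|f(\zeta_1+\overline{\zeta})-f(\overline{\zeta})\|_{L^2(\Omega)} \leqslant \mathrm{Lip}(f)\,\|\zeta_1\|_{L^2(\Omega)} \leqslant \mathrm{Lip}(f)\,C_\Omega\,\|\zeta\|_{\mathscr{H}}$, with $C_\Omega$ the Poincaré constant of $\Omega$.

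Next I would derive the differential inequality for the energy $E(t) := \tfrac12\|\zeta(t)\|_{\mathscr{H}}^2$. The key structural fact is that the generator $A$ in \eqref{eq: def.A.wave} is skew-adjoint for the energy inner product on $\mathscr{H}=H^1_0(\Omega)\times L^2(\Omega)$, i.e. $\langle A\xi,\xi\rangle_{\mathscr{H}}=0$ for $\xi\in\mathfrak{D}(A)$ — this is equivalent to the conservation property \eqref{eq: conservation}. Testing the equation against $\zeta$ (equivalently, running the standard multiplier computation on the second-order form, multiplying by $\partial_t\zeta_1$ and integrating over $\Omega$), the term $\langle A\zeta,\zeta\rangle_{\mathscr{H}}$ drops out and I obtain
$$\frac{\diff}{\diff t}E(t) = \langle g(\zeta(t)),\zeta(t)\rangle_{\mathscr{H}} + \langle Bu(t),\zeta(t)\rangle_{\mathscr{H}}.$$
Bounding the first term by the Lipschitz estimate above as $\mathrm{Lip}(f)\,C_\Omega\,\|\zeta(t)\|_{\mathscr{H}}^2 = 2\,\mathrm{Lip}(f)\,C_\Omega\,E(t)$, and the second, since $\|Bu(t)\|_{\mathscr{H}}=\|u(t)\|_{L^2(\omega)}$, by Young's inequality as $\|u(t)\|_{L^2(\omega)}\|\zeta(t)\|_{\mathscr{H}} \leqslant E(t) + \tfrac12\|u(t)\|_{L^2(\omega)}^2$, yields
$$\frac{\diff}{\diff t}E(t) \leqslant c_0\,E(t) + \frac12\|u(t)\|_{L^2(\omega)}^2, \qquad c_0 := 2\,\mathrm{Lip}(f)\,C_\Omega + 1,$$
with $c_0$ independent of $T$.

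Then — and this is the point — instead of closing via Grönwall (which would produce a factor $e^{c_0 T}$ and ruin $T$-uniformity), I would integrate the inequality from $0$ to $t$ and bound $\int_0^t E(s)\,\diff s \leqslant \tfrac12\|\zeta\|_{L^2(0,T;\mathscr{H})}^2$ and $\int_0^t\|u(s)\|_{L^2(\omega)}^2\,\diff s \leqslant \|u\|_{L^2((0,T)\times\omega)}^2$, leaving these quantities on the right-hand side — which is legitimate precisely because they are the norms tracked in $\mathscr{J}_T$. This gives
$$\|\zeta(t)\|_{\mathscr{H}}^2 \leqslant \|y^0-\overline{y}\|_{\mathscr{H}}^2 + 2c_0\,\|\zeta\|_{L^2(0,T;\mathscr{H})}^2 + \|u\|_{L^2((0,T)\times\omega)}^2$$
for all $t\in[0,T]$, and the claimed inequality follows upon taking square roots and using $\sqrt{a+b+c}\leqslant\sqrt{a}+\sqrt{b}+\sqrt{c}$, with $C := \sqrt{\max\{1,2c_0\}}$; in particular $C$ depends only on $\mathrm{Lip}(f)$ and $C_\Omega$, hence on $f$ and $\Omega$, and is independent of $T$.

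The main obstacle is the rigorous justification of the energy identity for finite-energy (mild) solutions: since $f$ is merely Lipschitz, $\zeta(t)$ need not remain in $\mathfrak{D}(A)$, so the pointwise computation of $\tfrac{\diff}{\diff t}E$ is a priori only formal. I would resolve this by a density argument, establishing the identity first for regularized data $y^0\in\mathfrak{D}(A)$ (and, if necessary, smoothed nonlinearities), for which the solution is classical and the multiplier computation is valid, and then passing to the limit using the continuous (indeed Lipschitz, by the fixed-point well-posedness) dependence of the solution map $y^0\mapsto y$ in $C^0([0,T];\mathscr{H})$; both sides of the final inequality are continuous under this convergence. Throughout, one must verify that every constant feeding into $c_0$ and $C$ comes only from $\mathrm{Lip}(f)$, $C_\Omega$, and universal numerical factors, so that the crucial $T$-independence is never lost.
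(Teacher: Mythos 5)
Your proof is correct, and it takes a genuinely different route from the paper's. The paper works from the Duhamel formula: it uses only the contraction property $\|e^{tA}\|_{\mathscr{L}(\mathscr{H})}=1$, handles $t\leqslant 1$ directly, and for $t>1$ first locates a time $t^*\in(t-1,t]$ at which $\|y(t^*)-\overline{y}\|_{\mathscr{H}}\leqslant\|y-\overline{y}\|_{L^2(0,T;\mathscr{H})}$ (a mean-value/contradiction argument), then applies Gr\"onwall only on the unit-length interval $[t^*,t]$ so that the Gr\"onwall factor is $e^{c_0\cdot 1}$, independent of $T$. You instead run an energy estimate exploiting the skew-adjointness of $A$ in \eqref{eq: def.A.wave} for the energy inner product, and you sidestep Gr\"onwall entirely by leaving $\int_0^t E(s)\,\diff s$ on the right-hand side and absorbing it into the $L^2(0,T;\mathscr{H})$ norm that the lemma permits in its conclusion — this is exactly the infinite-dimensional analogue of the elementary finite-dimensional computation the paper itself records in Remark \ref{rem: f.lip}, which it declines to carry out in the PDE setting "due to the presence of an unbounded operator $A$". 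Your observation that $\langle A\xi,\xi\rangle_{\mathscr{H}}=0$ disposes of that unbounded term is the right fix, and you correctly flag the one genuine technical point: the energy identity must be justified for mild (finite-energy) solutions, since with $f$ merely Lipschitz and $u\in L^2$ the trajectory does not stay in $\mathfrak{D}(A)$. Your proposed density argument works, though note that you must regularize the control in time as well as the initial datum (or, more cleanly, pass through the Yosida approximants $A_\lambda$ of $A$, for which the equation becomes an ODE in $\mathscr{H}$ and the identity is immediate); this is standard for the wave equation with $L^1(0,T;L^2)$ forcing. The trade-offs: the paper's Duhamel argument generalizes verbatim to any uniformly bounded semigroup, whereas yours requires dissipativity of $A$ in the chosen inner product (automatic here by Lumer--Phillips for a contraction semigroup, but requiring an equivalent renorming in more general settings); in exchange, your argument is shorter, avoids the $t^*$-selection step, and yields a fully explicit constant $C$ depending only on $\mathrm{Lip}(f)$ and the Poincar\'e constant, independent even of $y^0$ and $\overline{y}$.
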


The proof of the Lemma in the infinite-dimensional setting is slightly more complicated than the elementary argument presented in Remark \ref{rem: f.lip}, due to the presence of an unbounded operator $A$, so we provide some more detail.

\begin{proof}
We readily see that $\psi(t):=y(t)-\overline{y}$ is the unique solution to 
\begin{equation*}
\begin{cases}
\partial_t \psi - A\psi + \mathfrak{f}(\psi+\overline{y})-\mathfrak{f}(\overline{y})=Bu &\text{ in } (0,T),\\
\psi_{|_{t=0}}=y^0-\overline{y},
\end{cases}
\end{equation*}
and has the Duhamel formula characterization
\begin{equation*}
y(t)-\overline{y}=e^{tA}\left(y^0-\overline{y}\right)+\int_0^t e^{(t-s)A}Bu(s)\diff s -\int_0^t e^{(t-s)A}\Big(\mathfrak{f}(y(s))-\mathfrak{f}(\overline{y})\Big)\diff s.
\end{equation*}
Let us first suppose that $t\leqslant 1$. As $\left\|e^{tA}\right\|_{\mathscr{L}(\mathscr{H})}=1$, using solely the Lipschitz character of $\mathfrak{f}$ (through that of $f$) and the fact that $B$ is bounded, we find
\begin{align*}
\|y(t)-\overline{y}\|_{\mathscr{H}}&\leqslant\left\|y^0-\overline{y}\right\|_{\mathscr{H}}\\
&\quad+ C(B,f)\left(\int_0^t \|u(s)\|_{L^2(\omega)}\diff s + \int_0^t\|y(s)-\overline{y}\|_{\mathscr{H}}\diff s\right).
\end{align*}
Using Gr\"onwall's lemma, and the Cauchy-Schwarz inequality (as $t\leqslant1$) yield the conclusion. 
Now suppose that $t\in(1,T]$. We claim that there exists $t^*\in(t-1,t]$ such that
\begin{equation} \label{eq: 10.6}
\|y(t^*)-\overline{y}\|_{\mathscr{H}}\leqslant\|y-\overline{y}\|_{L^2(0,T;\mathscr{H})}.
\end{equation}
This can indeed readily be shown by arguing by contradiction. By writing the Duhamel formula for $y(t)-\overline{y}$ in $[t^*,t]$, namely
\begin{equation*}
y(t)-\overline{y} = e^{tA}\left(y^0-\overline{y}\right)+\int_{t^*}^t e^{(t-s)A}Bu(s)\diff s - \int_{t^*}^t e^{(t-s)A}\Big(\mathfrak{f}(y(s))-\mathfrak{f}(\overline{y})\Big)\diff s,
\end{equation*}
we see that, just as before,
\begin{align*}
\|y(t)-\overline{y}\|_{\mathscr{H}}&\leqslant\left\|y(t^*)-\overline{y}\right\|_{\mathscr{H}}\\
&\quad+ C(B,f)\left(\int_{t^*}^t \|u(s)\|_{L^2(\omega)}\diff s + \int_{t^*}^t\|y(s)-\overline{y}\|_{\mathscr{H}}\diff s\right).
\end{align*}
Now, using Gr\"onwall's lemma, the Cauchy-Schwarz inequality (as $t-t^*\leqslant 1$), and \eqref{eq: 10.6}, we may conclude the proof.
\end{proof}

\begin{lemma}[Uniform bounds] \label{lem: uniform.bounds}
Suppose $y^0\in\mathscr{H}$. Let $T>T_0$, where $T_0$ denotes the (minimal) controllability time for \eqref{eq: abstract.sys.large}, and let $u_T$ be any global minimizer to $\mathscr{J}_T$, with $y_T$ denoting the associated state, unique solution to \eqref{eq: abstract.sys.large}. Then there exists a constant $C>0$, independent of $T$, such that
\begin{equation*}
\mathscr{J}_T(u_T)+\|y_T(t)-\overline{y}\|_{\mathscr{H}}^2\leqslant C.
\end{equation*} 
holds for all $t\in[0,T]$.
\end{lemma}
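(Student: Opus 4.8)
The plan is to follow the two-ingredient strategy sketched in Section \ref{sec: sketch.proof}: first bound the optimal value $\mathscr{J}_T(u_T)$ by a constant independent of $T$ through the quasi-turnpike principle, and then upgrade the resulting \emph{integral} control of $y_T-\overline{y}$ into a \emph{uniform pointwise} bound by invoking the Poincaré–Sobolev-type inequality of Lemma \ref{lem: poincare.sobolev}.

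First I would construct an admissible, suboptimal quasi-turnpike control $u^{\mathrm{aux}}$. By the global exact controllability in time $T_0$ (part 1 of Assumption \ref{ass: linear.cost}), one can pick $u^1\in L^2((0,T_0)\times\omega)$ steering $y^0$ to $\overline{y}$ over $[0,T_0]$; its norm $\|u^1\|_{L^2((0,T_0)\times\omega)}$ is finite and independent of $T$, as it depends only on $y^0,\overline{y},T_0$. Setting $u^{\mathrm{aux}}(t):=u^1(t)1_{[0,T_0]}(t)$, we have $u^{\mathrm{aux}}\equiv 0$ on $[T_0,T]$. The key structural point is that $\overline{y}$ is an \emph{uncontrolled} steady state, i.e.\ $A\overline{y}+\mathfrak{f}(\overline{y})=0$ by \eqref{eq: kernel.eq}; hence the associated trajectory satisfies $y^{\mathrm{aux}}(t)\equiv\overline{y}$ for $t\in[T_0,T]$, so that the running cost vanishes identically over this arbitrarily long interval. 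On the fixed interval $[0,T_0]$, well-posedness of \eqref{eq: abstract.sys.large} gives $y^{\mathrm{aux}}\in C^0([0,T_0];\mathscr{H})$ with energy controlled by $y^0$, $\|u^1\|_{L^2}$ and $T_0$ alone; together with $\phi(\overline{y})<+\infty$ (finite since $\phi$ is continuous and bounded below), this yields $\mathscr{J}_T(u^{\mathrm{aux}})\leqslant C_2$ for some $C_2=C_2(y^0,\overline{y},T_0,f)>0$ independent of $T$.

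Next, by optimality of $u_T$ and the nonnegativity of $\phi$, I obtain $\mathscr{J}_T(u_T)\leqslant\mathscr{J}_T(u^{\mathrm{aux}})\leqslant C_2$, whence both $\|y_T-\overline{y}\|_{L^2(0,T;\mathscr{H})}^2+\|u_T\|_{L^2((0,T)\times\omega)}^2\leqslant C_2$ and $\phi(y_T(T))\leqslant C_2$. Feeding these integral bounds into Lemma \ref{lem: poincare.sobolev} (whose constant $C_1$ is independent of $T$) applied to the optimal pair $(u_T,y_T)$ gives, for all $t\in[0,T]$,
\[
\|y_T(t)-\overline{y}\|_{\mathscr{H}}\leqslant C_1\left(\left\|y^0-\overline{y}\right\|_{\mathscr{H}}+\|y_T-\overline{y}\|_{L^2(0,T;\mathscr{H})}+\|u_T\|_{L^2((0,T)\times\omega)}\right)\leqslant C_1\left(\left\|y^0-\overline{y}\right\|_{\mathscr{H}}+2\sqrt{C_2}\right)=:C_3,
\]
with $C_3$ independent of $T$. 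Combining $\mathscr{J}_T(u_T)\leqslant C_2$ with $\sup_{t\in[0,T]}\|y_T(t)-\overline{y}\|_{\mathscr{H}}^2\leqslant C_3^2$ produces the claimed uniform bound with $C:=C_2+C_3^2$.

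The main subtlety—and the only genuinely delicate point—is the uniform-in-$T$ character of the suboptimal value $\mathscr{J}_T(u^{\mathrm{aux}})$. This hinges critically on the fact that the target is a true equilibrium, so that after reaching $\overline{y}$ in the \emph{fixed} time $T_0$ one incurs no further running cost over the arbitrarily long remaining interval $[T_0,T]$. Were $\overline{y}$ only approximately stationary, the running cost on $[T_0,T]$ would generically grow linearly in $T$ and the scheme would collapse; this is precisely why the argument relies on \eqref{eq: kernel.eq} rather than on an arbitrary target, and why at this stage only the controllability itself (and not the refined cost estimates \eqref{eq: 10.3}--\eqref{eq: 10.4}) is required.
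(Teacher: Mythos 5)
Your proposal is correct and follows essentially the same route as the paper: a quasi-turnpike suboptimal control that steers $y^0$ to the uncontrolled steady state $\overline{y}$ in time $T_0$ and then switches off (so the running cost vanishes on $[T_0,T]$), combined with optimality of $u_T$ and Lemma \ref{lem: poincare.sobolev} to upgrade the integral bound to a pointwise one. The added remarks on why the equilibrium property \eqref{eq: kernel.eq} is essential are accurate but do not change the argument.
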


\begin{proof}  By controllability, there exists some control $u^1$ (independent of $T$) such that the solution $y^1$ to \eqref{eq: abstract.sys.large} on $[0,T_0]$ satisfies $y^1(T_0)=\overline{y}$. 
We now set 
\begin{equation*}
u^{\text{aux}}(t):=u^1(t)1_{[0,T_0]}(t) \hspace{1cm} \text{ for } t\in[0,T],
\end{equation*} 
and let $y^{\text{aux}}$ be the associated solution to \eqref{eq: abstract.sys.large}. Clearly, 
\begin{equation*}
y^{\text{aux}}(t)\equiv\overline{y} \hspace{1cm} \text{ for } t\in[T_0,T].
\end{equation*}
Since $u_T$ is a minimizer of $\mathscr{J}_T$, we find 
\begin{equation*}
\mathscr{J}_T(u_T)\leqslant\mathscr{J}_T\left(u^{\text{aux}}\right)=\phi(\overline{y})+\int_0^{T_0}\left\|y^1(t)-\overline{y}\right\|_{\mathscr{H}}^2\diff t + \int_0^{T_0}\left\|u^1(t)\right\|^2_{L^2(\omega)}\diff t.
\end{equation*} 
As the right-hand-side is a constant independent of $T$, the result follows by applying Lemma \ref{lem: poincare.sobolev}.
\end{proof}

We may now provide the proof of Theorem \ref{thm: borjan.thm}.

\begin{proof} The uniform bound on optimal controls $u_T$ follows by Lemma \ref{lem: uniform.bounds}. We thus focus on proving the exponential turnpike estimate for the optimal state $y_T$. 
Let $T_0>0$ be the (minimal) controllability time of \eqref{eq: abstract.sys.large}.
Let $C_1>0$ denote the constant appearing in Lemma \ref{lem: uniform.bounds}. Let 
\begin{equation*}
\tau>0 
\end{equation*}
be a fixed degree of freedom and to be chosen later, and suppose 
\begin{equation*}
T\geqslant 2(\tau+T_0). 
\end{equation*}
We shall distinguish two cases.
\smallskip

\noindent
\textbf{Case 1. Should $t\in[0,\tau+T_0]\cup[T-(\tau+T_0),T]$.} In this case, the length of each time interval where $t$ lies is independent of $T$, and so the exponential turnpike estimate follows simply by Lemma \ref{lem: uniform.bounds}. Indeed, by Lemma \ref{lem: uniform.bounds}, for any $\lambda>0$, we have
\begin{align*}
\|y_T(t)-\overline{y}\|_{\mathscr{H}}&\leqslant C_1 e^{\lambda t} e^{-\lambda t}\leqslant C_1 e^{\lambda(\tau+T_0)}\Big(e^{-\lambda t}+e^{-\lambda(T-t)}\Big)
\end{align*}
for $t\in[0,\tau+T_0]$, and similarly
\begin{align*}
\|y_T(t)-\overline{y}\|_{\mathscr{H}}&\leqslant C_1 e^{\lambda(T-t)} e^{-\lambda(T-t)}\\&\leqslant C_1 e^{\lambda(\tau+T_0)}\Big(e^{-\lambda t}+e^{-\lambda(T-t)}\Big)
\end{align*}
for $t\in[T-(\tau+T_0),T]$. This yields the desired conclusion in the union of these time intervals; since $\lambda>0$ is arbitrary in both of the above estimates, the final rate $\lambda>0$ appearing in the statement of the theorem will be derived from the second case.
\smallskip

\noindent
\textbf{Case 2. Should $t\in(\tau+T_0,T-(\tau+T_0))$.} This is more delicate. 
The main clue will be to actually prove an estimate of the form
\begin{equation} \label{eq: rough.estimate}
\sup_{t\in[n\tau,T-n\tau]}\|y_T(t)-\overline{y}\|_{\mathscr{H}}\leqslant\left(\frac{4C_\bullet}{\sqrt{\tau}}\right)^n
\end{equation}
for some constant $C_\bullet>0$ independent of $T$ and $\tau$, and for any integer $n$ such that
\begin{equation*}
1\leqslant n\leqslant\frac{1}{\tau}\left(\frac{T}{2}-T_0\right). 
\end{equation*}
Indeed, suppose that estimate \eqref{eq: rough.estimate} holds. 
Then for any fixed but otherwise arbitrary $t\in(\tau+T_0,T-(\tau+T_0))$, one sets
\begin{equation} \label{eq: def.n}
n(t):=\min\left\{\left\lfloor\frac{t}{\tau+T_0}\right\rfloor,\left\lfloor\frac{T-t}{\tau+T_0}\right\rfloor\right\}.
\end{equation}
Clearly $n(t)\geqslant1$, as well as 
\begin{equation*}
n(t)\leqslant\frac{1}{\tau}\left(\frac{T}{2}-T_0\right)
\end{equation*}
(the latter is more tricky, and needs noting that $s\mapsto\frac{s-2T_0}{s}$ is nondecreasing), and finally, 
\begin{equation*}
n(t)\tau\leqslant t\leqslant T-n(t)\tau.
\end{equation*} 
Thus, for such fixed $t$, if we select $\tau>16C_\bullet^4$ in \eqref{eq: rough.estimate}, we find
\begin{align*}
\|y_T(t)-\overline{y}\|_{\mathscr{H}}&\leqslant\exp\left(-n(t)\log\left(\frac{\sqrt{\tau}}{4C_\bullet^2}\right)\right)\\
&\leqslant\frac{\sqrt{\tau}}{4C_\bullet^2}\left(\exp\left(-\frac{\log\left(\frac{\sqrt{\tau}}{4C_\bullet^2}\right)}{\tau+T_0}t\right)+\exp\left(-\frac{\log\left(\frac{\sqrt{\tau}}{4C_\bullet^2}\right)}{\tau+T_0}(T-t)\right)\right),
\end{align*}
where in the last estimate we have used the fact that either $n(t)\geqslant\frac{t}{\tau+T_0}-1$ or $n(t)\geqslant\frac{T-t}{\tau+T_0}-1$ must hold by definition of $n(t)$. 
This is the desired exponential turnpike estimate, with decay rate 
\begin{equation*}
\lambda:=\frac{\log\left(\frac{\sqrt{\tau}}{4C_\bullet}\right)}{\tau+T_0}>0,
\end{equation*}
and taking Case 1 into account, the constant $C>0$ appearing in the statement of the theorem takes the form
\begin{equation*}
C:=\max\left\{C_1e^{\lambda(\tau+T_0)},\frac{\sqrt{\tau}}{4C_\bullet^2}\right\}.
\end{equation*}
Thus, our task reduces to proving an estimate of the form \eqref{eq: rough.estimate}.
We shall proceed by induction.

\begin{enumerate}
\item[\textbf{1.}] Since $T\geqslant 2(\tau+T_0)$ and thus $\tau\leqslant\sfrac{T}{2}$, we can readily show\footnote{This estimate is true for any $\psi\in C^0([0,T];\mathscr{H})$, and can be shown by an indirect argument: if \begin{equation*}\|\psi(t)\|_{\mathscr{H}}>\frac{\|\psi\|_{L^2(0,T;\mathscr{H})}}{\sqrt{\tau}}\end{equation*} for all $t\in[0,\tau)$ or all $t\in(T-\tau,T]$, the integrating over $[0,T]$ one readily finds a condradiction.} that there exist a couple of time instances $\tau_1\in[0,\tau)$ and $\tau_2\in(T-\tau,T]$ such that 
\begin{equation*}
\|y_T(\tau_j)-\overline{y}\|_{\mathscr{H}}\leqslant\frac{\|y_T-\overline{y}\|_{L^2(0,T;\mathscr{H})}}{\sqrt{\tau}}
\end{equation*}
for $j\in\{1,2\}$. This estimate, combined with Lemma \ref{lem: uniform.bounds}, then yields
\begin{equation} \label{eq: conjunction}
\|y_T(\tau_i)-\overline{y}\|_{\mathscr{H}}\leqslant\frac{C_1}{\sqrt{\tau}}.
\end{equation}
Here, the constant $C_1>0$ stems from Lemma \ref{lem: uniform.bounds}, and is independent of both $T$ and $\tau$.
We shall now restrict our analysis onto $[\tau_1,\tau_2]$ and extrapolate onto the strict subset $[\tau,T-\tau]$. It can be seen\footnote{Can be shown by arguing by contradiction.} that $u_{T_{|_{[\tau_1,\tau_2]}}}$ is a solution to 
\begin{equation} \label{eq: ocp.mit}
\inf_{\substack{u\in L^2((\tau_1,\tau_2)\times\omega)\\\partial_t y=Ay+\mathfrak{f}(y)+Bu \text{ in } (\tau_1,\tau_2)\\ y(\tau_1)=y_T(\tau_1)\\ y(\tau_2)=y_T(\tau_2)}} \int_{\tau_1}^{\tau_2}\|y(t)-\overline{y}\|_{\mathscr{H}}^2 + \int_{\tau_1}^{\tau_2}\|u(t)\|_{L^2(\omega)}^2\diff t.
\end{equation}
For this optimal control problem, arguing as in Lemma \ref{lem: uniform.bounds}, whilst using the fact $\tau_2-\tau_1>2T_0$, as well as Assumption \ref{ass: linear.cost} in conjunction with \eqref{eq: conjunction} for 
\begin{equation} \label{eq: tau.r}
\tau\geqslant\frac{C_1^2}{r^2}
\end{equation}
(namely $\tau$ such that $\sfrac{C_1}{\sqrt{\tau}}\leqslant r$ in \eqref{eq: conjunction}), one can show that there exists a constant $C_2>0$, depending on $r$ and $C_1$, but otherwise independent of $T, \tau,\tau_1$ and $\tau_2$, such that
\begin{equation} \label{eq: 10.22}
\|y_T(t)-\overline{y}\|_{\mathscr{H}}\leqslant C_2\Big(\|y_T(\tau_1)-\overline{y}\|_{\mathscr{H}} + \|y_T(\tau_2)-\overline{y}\|_{\mathscr{H}}\Big)
\end{equation}
holds for all $t\in[\tau_1,\tau_2]$. 
The proof of \eqref{eq: 10.22} (precisely illustrated in Figure \ref{fig: quasi.2}, see also \cite[Lemma 5.2]{esteve2020turnpike}) relies on constructing a quasi-turnpike control (which is suboptimal for the functional in \eqref{eq: ocp.mit}) steering $y_T(t)$ to $\overline{y}$ in time $\tau_1+T_0$ by controllability, then staying at the steady state $\overline{y}$ until time $\tau_2-T_0$ by using no control whatsoever, and finally exiting $\overline{y}$ to reach $y_T(\tau_2)$ in time $\tau_2$, again by controllability. (See Figure \ref{fig: quasi.2}.) 
This quasi-turnpike control will be bounded precisely by the right-hand-side in \eqref{eq: 10.22} through the linear cost assumption estimates \eqref{eq: 10.3} -- \eqref{eq: 10.4}, and the same can then be said for the state tracking terms by using a Gr\"onwall inequality argument.
Setting 
\begin{equation*}
C_\bullet:=\max\{C_1,C_2\}>0, 
\end{equation*}
we find 
\begin{equation*}
\|y_T(t)-\overline{y}\|_{\mathscr{H}}\leqslant \frac{1}{2}\left(\frac{4C_\bullet^2}{\sqrt{\tau}}\right)
\end{equation*}
for all $t\in[\tau_1,\tau_2]$, and thus also for all $t\in[\tau,T-\tau]$. This proves the desired estimate \eqref{eq: rough.estimate} for $n=1$. 
\smallskip

\item[\textbf{2.}] We now bootstrap the above argument. We shall show that for any integer $n$ satisfying 
\begin{equation*}
1\leqslant n\leqslant\frac{1}{\tau}\left(\frac{T}{2}-T_0\right),
\end{equation*} 
one has
\begin{equation} \label{eq: 5.24}
\sup_{t\in[n\tau,T-n\tau]}\|y_T(t)-\overline{y}\|_{\mathscr{H}}\leqslant\frac{1}{2}\left(\frac{4C_\bullet^2}{\sqrt{\tau}}\right)^n.
\end{equation}
The parameter $n$ is chosen as such to ensure $T-2n\tau\geqslant 2T_0$, in view of repeating the argument of the case $n=1$ (which requires constructing a quasi-turnpike control by using controllability in two disjoint intervals of length $T_0$, hence the factor $2T_0$). By induction, we suppose that \eqref{eq: 5.24} holds for some $n$, and we aim to show heredity at stage $n+1$.
To this end, suppose that 
\begin{equation*}
n+1\leqslant\frac{1}{\tau}\left(\frac{T}{2}-T_0\right). 
\end{equation*}
Then we clearly have 
$$\tau\leqslant\frac{T-2n\tau}{2}.$$ 
And since $T-2n\tau\geqslant 2T_0$, it can be seen that $u_T|_{[n\tau,T-n\tau]}$ is a solution to
\begin{equation*}
\inf_{\substack{u\in L^2((n\tau,T-n\tau)\times\omega)\\\partial_t y=Ay+\mathfrak{f}(y)+Bu \text{ in } (n\tau,T-n\tau)\\ y(n\tau)=y_T(n\tau)\\ y(T-n\tau)=y_T(T-n\tau)}} \int_{n\tau}^{T-n\tau}\|y(t)-\overline{y}\|_{\mathscr{H}}^2 + \int_{n\tau}^{T-n\tau}\|u(t)\|_{L^2(\omega)}^2\diff t.
\end{equation*}
Arguing as before, we may again find time instances $t_1\in[n\tau,(n+1)\tau)$ and $t_2\in(T-(n+1)\tau,T-n\tau]$ such that
\begin{align*}
\|y_T(t_i)-\overline{y}\|_{\mathscr{H}}&\leqslant\frac{\|y_T-\overline{y}\|_{L^2(n\tau,T-n\tau; \mathscr{H})}}{\sqrt{\tau}}\nonumber\\
&\leqslant \frac{C_2}{\sqrt{\tau}}\Big(\|y_T(n\tau)-\overline{y}\|_{\mathscr{H}}+\|y_T(T-n\tau)-\overline{y}\|_{\mathscr{H}}\Big).
\end{align*}
Here, $C_2>0$ is precisely the same constant as in \eqref{eq: 10.22}.
We may use the induction hypothesis \eqref{eq: 5.24} to deduce that
\begin{equation} \label{eq: as.above}
\|y_T(t_i)-\overline{y}\|_{\mathscr{H}}\leqslant\frac{C_2}{\sqrt{\tau}}\left(\frac{4C_\bullet^2}{\sqrt{\tau}}\right)^n
\end{equation}
We need to recover a power of $C_2$ in the estimate to conclude, so we repeat the same argument on $[t_1,t_2]$. Since $t_2-t_1\geqslant2T_0$, and since $u_T|_{[t_1,t_2]}$ is a solution to 
\begin{equation*}
\inf_{\substack{u\in L^2((t_1,t_2)\times\omega)\\\partial_t y=Ay+\mathfrak{f}(y)+Bu \text{ in } (t_1,t_2)\\ y(t_1)=y_T(t_1)\\ y(t_2)=y_T(t_2)}} \int_{t_1}^{t_2}\|y(t)-\overline{y}\|_{\mathscr{H}}^2 + \int_{t_1}^{t_2}\|u(t)\|_{L^2(\omega)}^2\diff t.
\end{equation*}
using \eqref{eq: as.above} and $C_\bullet\geqslant C_2$, we may deduce that
\begin{align*}
\|y_T(t)-\overline{y}\|_{\mathscr{H}}&\leqslant C_2\Big(\|y_T(t_1)-\overline{y}\|_{\mathscr{H}}+\|y(t_2)-\overline{y}\|_{\mathscr{H}}\Big)\\
&\leqslant\frac{1}{2}\frac{4C_\bullet^2}{\sqrt{\tau}}\left(\frac{4C_\bullet^2}{\sqrt{\tau}}\right)^n
\end{align*}
for $t\in[t_1,t_2]$. This estimate also holds for $t\in[(n+1)\tau,T-(n+1)\tau]$, as desired, thus concluding the proof of \eqref{eq: 5.24} (namely \eqref{eq: rough.estimate}) should $\tau>\sfrac{C_1^2}{r^2}$. 
\end{enumerate}
In view of \eqref{eq: tau.r}, and to then ensure that $\sfrac{\sqrt{\tau}}{4C_\bullet^2}<1$ in \eqref{eq: rough.estimate}, we need to select
\begin{equation*} \label{eq: def.tau}
\tau>16C_\bullet^4+\frac{C_1^2}{r^2}.
\end{equation*}
This concludes the proof.
\end{proof}

\part{Applications}

\section{Initializing optimization algorithms} \label{sec: 11}

\subsection{Background}
One of the first practical applications of the concrete mathematical developments in turnpike theory was given in \cite{trelat2015turnpike}. In addition to proving a local turnpike property for nonlinear optimal control problems, the authors also provide an efficient way for initializing numerical methods for optimal control.

There exist, in essence, two kinds of approaches for the numerical resolution of continuous-time optimal control problems: \emph{direct} and \emph{indirect} methods (\cite{von1992direct, benzi2005numerical, betts2010practical, trelat2005controle, trelat2012optimal}). 
Direct methods consist of discretizing both the state and the control, so as to reduce the optimal control problem to a constrained optimization problem in finite dimension -- this is known as the \emph{discretize then control/optimize} paradigm. 
Indirect methods on the other hand consist of solving numerically the boundary value problem derived from the application of the Pontryagin maximum principle (which is a \emph{control/optimize then discretize} paradigm).
Both methods are known to suffer from issues regarding initialization. Yet, the  knowledge that turnpike is valid for the underlying optimal control problem can be used as a prior for constructing the initial point.
Following \cite{trelat2015turnpike}, we briefly present how this can be done for indirect methods in particular, where turnpike provides a clever insight.
For direct methods, the turnpike property also provides significant speedup and accuracy in the optimization scheme, in which one merely initializes the method precisely at the turnpike. 

\subsection{Setting}
In the context of optimal PDE control, one generally first semi-discretizes the PDE in the spatial (generally, any non-time) variable with care, and considers some quadrature formula for the cost functional. 
Let us thus focus on optimal ODE control, to have a clearer picture of the turnpike insights. We consider a generic optimal control problem of the form
\begin{equation} \label{eq: trelat.ocp.shooting}
\inf_{\substack{u\in L^2(0,T;\mathbb{R}^m)\\ y\text{ solves} \eqref{eq: trelat.ode}}} \int_0^T f^0(y(t),u(t))\diff t,
\end{equation}
where the underlying ODE constraint is
\begin{equation} \label{eq: trelat.ode}
\begin{cases}
\dot{y}(t) = f(y(t),u(t)) &\text{ in }(0,T),\\
y(0) = y^0\in\mathbb{R}^d.
\end{cases}
\end{equation}
We avoid final conditions or pay-offs for simplicity; the running cost $f^0$ in \eqref{eq: trelat.ocp.shooting} is assumed to satisfy necessary convexity, coercivity and continuity assumptions for ensuring (at least) the existence of solutions. 
One applies the Pontryagin Maximum Principle for an optimal pair $(u, y)$ to find the existence of an adjoint state $p$ such that 
\begin{equation} \label{eq: 11.3}
\begin{cases}
\dot{y}(t) = \partial_p\*H(y(t), p(t), u(t)) &\text{ in } (0,T),\\
\dot{p}(t) = -\partial_y\*H(y(t), p(t), u(t)) &\text{ in } (0,T),\\
y(0) = y^0, \\
p(T) = 0,
\end{cases}
\end{equation}
with $u(t)$ being found by solving
\begin{equation} \label{eq: 11.4}
\partial_u \*H(y(t),p(t), u(t)) = 0 \hspace{1cm} \text{ for } t\in(0,T).
\end{equation}
Recall that the Hamiltonian $\*H$ is given by $\*H(y,p,u) = p\cdot f(y,u) + f^0(y,u)$.
One notes that $\partial_p\*H(y,p,u) = f(y,u)$; moreover, in the LQ setting, one clearly recovers $u=B^*p$. 

\subsection{Shooting method}
If we assume that equation \eqref{eq: 11.4} gives an explicit representation for $u$ in terms of $(y,p)$ (as is the case, for instance, when the Hamiltonian is a power-like nonlinearity in $u$, as is typical in most cases), we see the optimality system \eqref{eq: 11.3} as a shooting problem: setting $\*z:=(y,p)$, due to \eqref{eq: 11.4}, one writes the first two equations in \eqref{eq: 11.3} as
\begin{equation} \label{eq: 11.5}
\dot{\*z}(t) = \*F(\*z(t)) \hspace{1cm} \text{ for } t\in(0,T),
\end{equation}
and the latter two as 
\begin{equation} \label{eq: 11.6}
\*G(\*z(0),\*z(T))=0.
\end{equation}
In the classic setting of the shooting method, one somehow initializes the datum $\*z_0\in\mathbb{R}^{2d}$, and finds the solution $\*z(t;\*z_0)$ to
\begin{equation} \label{eq: 11.6.5}
\begin{cases}
\dot{\*z}(t) = \*F(\*z(t)) &\text{ in }(0,T),\\
\*z(0) = \*z_0.
\end{cases}
\end{equation}
With this, \eqref{eq: 11.5} -- \eqref{eq: 11.6} is equivalent to finding a $\*z_0\in\mathbb{R}^{2d}$ such that
\begin{equation} \label{eq: 11.7}
\*R(\*z_0) := \*G(\*z(0; \*z_0),\*z(T;\*z_0)) = 0.
\end{equation}
Only the $d$ last components of $\*z_0$ are unknown, as $\*z_0=(y^0, p(0))$ and $y^0\in\mathbb{R}^d$ is fixed.  
We are thus only finding the roots of equation \eqref{eq: 11.7} over $\mathbb{R}^d$.
This is usually done by means of a Newton method, combined with one's favorite numerical integration method for solving \eqref{eq: 11.6.5}.

\subsubsection{The turnpike property as a blueprint} 

Due to the small domain of convergence of the Newton method, it is hard to initialize such a method. Many remedies exist for specific cases. (See \cite{trelat2012optimal}.) 
All things considered, in order to guarantee an inkling of convergence, one needs to provide an adequate initialization of $\*z_0$. 
Note that
\begin{itemize}
\item
The proximity entailed by turnpike cannot be used directly to ensure the convergence of the shooting method described above, if implemented in the usual way. Indeed, this is due to the fact that one knows the solution over $[\varepsilon, T-\varepsilon]$ for some $\varepsilon>0$, but not at the terminal points $t=0$ and $t=T$.
\smallskip 
\item The natural idea is then to modify the usual implementation of the shooting method and to initialize it at some arbitrary point of $[\varepsilon, T-\varepsilon]$, where we know that $\*z(t)$ will be exponentially near the turnpike. For instance, we select $t=\sfrac{T}{2}$. 
\end{itemize}

\noindent
This leads to the following variant suggested by \cite{trelat2015turnpike}, which has been shown to be quite effective by means of several numerical experiments. 
\medskip

\begin{algorithm}[H]
\SetAlgoLined
Unknown is $\*z_0\in \mathbb{R}^{2d}$, designating the value $\*z\left(\frac{T}{2}\right)$\;
Initialization: $\*z_0=(\overline{y}, \overline{p})$, where $(\overline{y},\overline{p})$ is the optimal steady pair\;
Then iterate
\begin{enumerate}
\item Integrate \eqref{eq: 11.5} backwards in time over $\left[0,\sfrac{T}{2}\right]$ to get a value of $\*z(0; \*z_0)$;
\item Integrate \eqref{eq: 11.5} forwards in time over $\left[\sfrac{T}{2},T\right]$ to get a value of $\*z(T; \*z_0)$;
\item The unknown $\*z_0$ is tuned (through a Newton method) so that 
\begin{equation*}
\*R(\*z_0)=\*G(\*z(0, \*z_0),\*z(T, \*z_0))=0.
\end{equation*} 
\end{enumerate}

 \caption{Turnpike-enhanced shooting method.}
\end{algorithm}

\section{Hamilton-Jacobi-Bellman asymptotics} \label{sec: 12}

\subsection{Setting}

The turnpike property for linear, finite-dimensional systems can also be used to derive the asymptotics of solutions to the associated Hamilton-Jacobi-Bellman equations. 
Following the recent work \cite{esteve2020turnpikea}, let us make precise the specific setup and the exact asymptotic behavior.
We consider the finite dimensional system
\begin{equation} \label{eq: AB.HJE}
\begin{cases}
\dot{y}(t) = Ay(t) + Bu(t) &\text{ in }(0,T),\\
y(0) = x
\end{cases}
\end{equation}
where $A\in \mathbb{R}^{d\times d}(\mathbb{R})$, $B\in\mathbb{R}^{d\times m}(\mathbb{R})$ with $d, m\geqslant 1$ (typically, of course, $d>m$). The initial datum is denoted\footnote{\ldots as it will play the role of the spatial variable for the value function $V(T,x)$.} by $x\in\mathbb{R}^d$. 
We consider the following linear quadratic (LQ) optimal control problem
\begin{equation} \label{eq: hj.func}
\inf_{\substack{u \in L^2(0,T; \mathbb{R}^m)\\ y \text{ solves} \eqref{eq: AB.HJE}}} \underbrace{\phi(y(T))+ \frac12 \int_0^T \|y(t)-y_d\|^2\diff t + \frac12 \int_0^T \|u(t)\|^2 \diff t}_{:=\mathscr{J}_{T,x}(u)},
\end{equation}
where $y_d\in \mathbb{R}^d$ is a given target, and $\phi\in\text{Lip}_{\text{loc}}(\mathbb{R}^d; \mathbb{R})$ is a final pay-off which is bounded from below. 
More general scenarios can be considered, such as, for instance, replacing the state tracking term by $\|Cy(t)-y_d\|^2$ for some matrix $C\in \mathbb{R}^{d\times d}(\mathbb{R})$. We focus on the simpler case of $\mathscr{J}_{T,x}$ to avoid technical details and additional assumptions, and we refer to \cite{esteve2020turnpikea} for more details.

\subsection{Hamilton-Jacobi-Bellman equation}

Now, the main goal is to establish a connection between the turnpike property of the solution $(u_T,y_T)$ of \eqref{eq: hj.func}, 
and the asymptotic behavior, as $T\to+\infty$, of the value function $V(T,x)$ associated to \eqref{eq: hj.func}. The latter is defined as
\begin{equation*}
V(T, x) := \inf_{\substack{u\in L^2(0,T;\mathbb{R}^m)\\ y \text{ solves } \eqref{eq: AB.HJE}}} \mathscr{J}_{T,x}(u).
\end{equation*}
We recall that $V(T,x)$ is the unique viscosity solution\footnote{As noted in \cite{esteve2020turnpikea}, considering a final pay-off $\phi$ in the LQ problem consisting of minimizing $\mathscr{J}_{T,x}$ subject to \eqref{eq: AB.HJE} allows to study the associated Hamilton-Jacobi-Bellman equation with a general initial condition (equal to $\phi$). That being said, when $\phi$ is non-convex (even if smooth), the gradient of the solution to the HJB equation would cease to exist in the classical sense for $T\gg1$. Therefore one has to work in the setting of viscosity solutions.} to the Hamilton-Jacobi-Bellman (HJB) equation
\begin{equation} \label{eq: HJB}
\begin{cases}
\partial_T V + \frac12 \|B^*\nabla_x V\|^2 - Ax \cdot \nabla_x V = \frac12 \|x-y_d\|^2 &\text{ in } (0,+\infty)\times\mathbb{R}^d,\\
V_{|_{t=0}} = \phi &\text{ in } \mathbb{R}^d.
\end{cases}
\end{equation}

\subsection{Asymptotics of the value function}

There is a large literature which already deals with the asymptotics of solutions to HJB equations (\cite{barles2000large, fujita2006asymptotic, ishii2006asymptotic, ishii2008asymptotic, barles2019large}). What we present herein, namely the study of \cite{esteve2020turnpike}, is a characterization of the HJB asymptotics through the turnpike property in the finite-dimensional, LQ setting, which is a rather natural idea. 

When studying the long-time behavior of $V(T,x)$, one may be inclined to simply set $T=+\infty$ in \eqref{eq: hj.func} and characterize the resulting problem. This approach fails in general since there is no reason to guarantee that the running cost of $\mathscr{J}_{T,x}$ is integrable in $(0,+\infty)$ for an arbitrary $u\in L^2_{\mathrm{loc}}(0,+\infty;\mathbb{R}^m)$. At this point, one should use the insight that is provided by the turnpike property: when $T\gg1$, the running cost  of $\mathscr{J}_{T,x}$, evaluated along an optimal pair $(u_T, y_T)$ solving \eqref{eq: hj.func}, satisfies
\begin{equation*}
 \frac12 \|y_T(t)-y_d\|^2 + \frac12 \|u_T(t)\|^2 \sim V_s
\end{equation*}
for any $t\in (0,T)$ away from $t=0$ and $t=T$. Here, $V_s$ denotes the steady cost corresponding to $\mathscr{J}_{T,x}$, namely
\begin{equation*}
V_s := \inf_{\substack{(u_s,y_s)\in \mathbb{R}^m \times \mathbb{R}^d \\ Ay_s+Bu_s=0}}  \underbrace{\frac12 \|y_s-y_d\|^2 + \frac12 \|u_s\|^2}_{:=\mathscr{J}_s(u_s)}.
\end{equation*} 
Hence, the lack of integrability when considering the infinite time horizon problem can be handled by subtracting the constant $V_s$ from said running cost. Consequently, in view of this discussion, we consider the corrected, infinite time horizon functional 
\begin{equation*}
\mathscr{J}_{\infty,x}(u) := \int_0^\infty \left\{\frac12 \|y(t)-y_d\|^2 + \frac12 \|u(t)\|^2  - V_s \right\}\diff t,
\end{equation*}
defined over $u\in \mathscr{A}_x$, where 
\begin{align*}
\mathscr{A}_x:= \Bigg\{ u \in L^2_{\text{loc}}(0,+\infty; \mathbb{R}^m) \,\Biggm|\,\, &y \text{ solves } \eqref{eq: AB.HJE}, \\
&\frac12 \|y(\cdot)-y_d\|^2 + \frac12\|u(\cdot)\|^2  - V_s \in L^1(0,+\infty) \Bigg\}.
\end{align*}
The space $\mathscr{A}_x$ can be shown to be non-void for $x\in\mathbb{R}^d$ by assuming that the pair $(A,B)$ is stabilizable (\cite{esteve2020turnpikea}). 
We now denote by $V_\infty(x)$ the value function for $\mathscr{J}_{\infty,x}$, namely
\begin{equation*}
V_\infty(x) := \inf_{u\in \mathscr{A}_x} \mathscr{J}_{\infty, x} (u).
\end{equation*}
Note that $V_\infty$ is independent of the pay-off $\phi$. In fact, it can be shown (\cite{esteve2020turnpikea}) that $V_\infty\in C^1(\mathbb{R}^d)$ is, up to a constant, the unique viscosity solution to the stationary Hamilton-Jacobi equation
\begin{equation*}
V_s + \frac12 \|B^* \nabla_x V_\infty \|^2 - Ax \cdot \nabla_x V_\infty = \frac12 \|x-y_d\|^2 \hspace{1cm} \text{ in } \mathbb{R}^d.
\end{equation*}
The following result can then be shown to hold. 

\begin{theorem}[\cite{esteve2020turnpikea}]  Suppose $\phi\in\mathrm{Lip}_{\mathrm{loc}}(\mathbb{R}^d; \mathbb{R})$ is bounded from below, and let $y_d\in \mathbb{R}^d$. Suppose that the exponential turnpike property holds for \eqref{eq: hj.func} -- \eqref{eq: AB.HJE}.
Then, for any bounded set $\Omega\subset\mathbb{R}^d$, we have
\begin{equation*}
V(T,x)-V_sT \xrightarrow[T\to+\infty]{} V_\infty(x) + \lambda_\infty,
\end{equation*}
uniformly for $x\in\Omega$. Here, $\lambda_\infty\in\mathbb{R}$ is given by
\begin{equation*}
\lambda_\infty=\lim_{T\to+\infty}\Big(V(T,\overline{y}) - V_sT\Big),
\end{equation*}
where $\overline{y}\in \mathbb{R}^d$ is the optimal steady state associated to the global minimizer $\overline{u}\in \mathbb{R}^d$ of $\mathscr{J}_s$. 
\end{theorem}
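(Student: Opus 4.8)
The plan is to reduce the statement to two asymptotic facts, thereby isolating the ``interior'' turnpike contribution from the two boundary layers. Writing the running excess Lagrangian as $\ell(y,u):=\frac12\|y-y_d\|^2+\frac12\|u\|^2-V_s$, the backbone of everything is the verification inequality furnished by the stationary Hamilton--Jacobi equation solved by $V_\infty\in C^1(\mathbb{R}^d)$. Rearranging that equation into $\frac12\|y-y_d\|^2-V_s=\frac12\|B^*\nabla_x V_\infty(y)\|^2-Ay\cdot\nabla_x V_\infty(y)$, adding $\frac12\|u\|^2$, and using Young's inequality $\frac12\|u\|^2+\frac12\|B^*\nabla_x V_\infty\|^2\geqslant-\langle Bu,\nabla_x V_\infty\rangle$, one obtains along any trajectory of \eqref{eq: AB.HJE} the pointwise bound
\begin{equation*}
\ell(y(t),u(t))\geqslant-\frac{\diff}{\diff t}V_\infty(y(t)),
\end{equation*}
hence, after integration,
\begin{equation*}
\frac12\int_0^T\big(\|y(t)-y_d\|^2+\|u(t)\|^2\big)\diff t\geqslant V_sT+V_\infty(x)-V_\infty(y(T)).
\end{equation*}
Together with $V_\infty(\overline{y})=0$ (the steady control being optimal at $\overline{y}$), this one-sided bound will drive all the lower estimates. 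I would then deduce the theorem by combining: (I) $V(T,\overline{y})-V_sT\to\lambda_\infty$; and (II) $V(T,x)-V(T,\overline{y})\to V_\infty(x)$ uniformly for $x$ in the bounded set $\Omega$.

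For the convergence \emph{defining} $\lambda_\infty$, set $g(T):=V(T,\overline{y})-V_sT$. Prepending a stretch of the steady control $\overline{u}$ (which holds the state at $\overline{y}$ at cost exactly $V_s$ per unit time) and invoking the dynamic programming principle gives $V(T_1+T_2,\overline{y})\leqslant V_sT_1+V(T_2,\overline{y})$, so $g$ is non-increasing. For a uniform lower bound, I apply the integrated verification inequality to the optimal pair $(u_T,y_T)$ issued from $\overline{y}$, together with $\phi\geqslant c_0$, obtaining $g(T)\geqslant c_0-V_\infty(y_T(T))$. The exponential turnpike hypothesis, evaluated near $t=T$, confines $y_T(T)$ to a ball independent of $T$, whence $V_\infty(y_T(T))$ is bounded by continuity; a non-increasing, bounded-below function converges, yielding $\lambda_\infty\in\mathbb{R}$.

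For (II) the guiding idea is that the whole discrepancy between starting at $x$ and at $\overline{y}$ is the cost of the initial transient, which saturates to $V_\infty(x)$. For the upper bound I fix a large $S$, run an $\varepsilon$-optimal control for $V_\infty(x)$ on $[0,S]$ (reaching some $y^*(S)$ close to $\overline{y}$), splice a short controllability patch to land exactly at $\overline{y}$ (its cost tends to $0$ as $S\to+\infty$ by the linear control-cost estimate), and then follow the optimal policy from $\overline{y}$ on the remaining horizon; this gives $V(T,x)\leqslant V_\infty(x)+V(T-S,\overline{y})+V_sS+o_S(1)$, and since $g(T-S)-g(T)\to0$ the quantity $V(T-S,\overline{y})+V_sS$ equals $V(T,\overline{y})+o_T(1)$. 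For the lower bound I split at time $S$ by dynamic programming, bound the $[0,S]$-cost below by the verification inequality (so $\geqslant V_sS+V_\infty(x)-V_\infty(y_T(S))$), and invoke turnpike to assert $y_T(S)\approx\overline{y}$ for large $S$, so that $V_\infty(y_T(S))\to0$ while the tail $V(T-S,y_T(S))$ is compared to $V(T-S,\overline{y})$; passing to $\liminf_T$ and then $S\to+\infty$ matches the upper bound.

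The hard part will be that last comparison: replacing $V(T-S,y_T(S))$ by $V(T-S,\overline{y})$ demands a \emph{local modulus of continuity in the spatial variable that is uniform in the horizon}. I would establish it from controllability, steering between nearby points in a fixed time at controllable cost to get $|V(T,x)-V(T,x')|\leqslant C(R)\|x-x'\|$ for $x,x'$ in a ball of radius $R$, with $C(R)$ independent of $T$. The remaining delicacy is the bookkeeping of the iterated limit — one must send $T\to+\infty$ first (so that the turnpike error $e^{-\lambda(T-S)}$ vanishes and $g(T-S)\to\lambda_\infty$), and only afterwards $S\to+\infty$ — while keeping every $o(1)$ uniform over $\Omega$, which holds because all constants enter through $\sup_{\Omega}\big(\|x\|,V_\infty(x)\big)$ and the $T$-uniform turnpike and control-cost bounds.
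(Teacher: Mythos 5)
The survey itself does not prove this theorem: it only states it, cites \cite{esteve2020turnpikea}, and interprets the three terms $V_\infty(x)$, $V_sT$, $\lambda_\infty$ as the costs of the initial arc, the transient arc, and the final arc. Your decomposition into (I) and (II) reproduces exactly that interpretation and matches the strategy of the cited reference: the calibration inequality $\ell(y,u)\geqslant-\frac{\diff}{\diff t}V_\infty(y(t))$ derived from the stationary Hamilton--Jacobi equation via Young's inequality is correct and is the right engine for all lower bounds; the monotonicity-plus-boundedness argument for $g(T)=V(T,\overline{y})-V_sT$ is correct (prepending the steady control gives subadditivity, the calibration inequality plus $\phi\geqslant c_0$ plus the turnpike confinement of $y_T(T)$ gives the lower bound); and the dynamic-programming split at an intermediate time $S$, with the iterated limit $T\to+\infty$ before $S\to+\infty$, is handled in the right order.

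Two steps deserve more care than your sketch gives them. First, $V_\infty(\overline{y})=0$: optimality of the steady control at $\overline{y}$ only yields $V_\infty(\overline{y})\leqslant0$; the reverse inequality requires applying your own calibration inequality to an arbitrary $u\in\mathscr{A}_{\overline{y}}$ and knowing that admissible trajectories (those with integrable excess cost) converge to $\overline{y}$, which follows from detectability but is not automatic. Second, and more seriously, the $T$-uniform local Lipschitz estimate $|V(T,x)-V(T,x')|\leqslant C(R)\|x-x'\|$ cannot be obtained by literally ``steering between nearby points in a fixed time'': the minimal cost of steering the state from $x$ to $x'$ in a fixed time $\tau$ is controlled by $\|x'-e^{\tau A}x\|$, which does not vanish as $x'\to x$, so that route only gives a bounded oscillation, not a modulus of continuity. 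The correct argument shadows the optimal trajectory from $x'$ by the feedback $u=u'+K(y-y')$ with $A+BK$ Hurwitz, so that $z=y-y'$ decays exponentially with $\|z(0)\|=\|x-x'\|$; the cross terms $\int\langle z,y'-y_d\rangle$ and $\int\langle Kz,u'\rangle$ are then bounded by $C(R)\|x-x'\|$ precisely because the pointwise turnpike bounds keep $y'(t)$ and $u'(t)$ uniformly bounded in $t$ and $T$ (a crude Cauchy--Schwarz against $\|y'-y_d\|_{L^2(0,T)}\sim\sqrt{T}$ would lose uniformity). With these two repairs, and a uniform-in-$x$ choice of near-optimal controls for $V_\infty$ on $\Omega$ (e.g.\ the Riccati feedback of the infinite-horizon problem) to make the $o_S(1)$ in your upper bound uniform, the proposal is a complete and correct proof.
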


Looking at the above theorem, one sees that, for any $x\in\Omega \subset\mathbb{R}^d$, the value function $V(T,x)$ has the following asymptotic decomposition 
\begin{equation*}
V(T,x) \sim V_\infty(x) + V_sT + \lambda_\infty
\end{equation*}
as $T\to+\infty$.
The authors in \cite{esteve2020turnpikea} provide the following interpretation of each of the three terms, in terms of what they signify regarding the turnpike property for \eqref{eq: hj.func}.
\begin{itemize}
\item The term $V_\infty(x)$ designates the cost of stabilizing the trajectory $y_T(t)$ from the initial state $x$ at time $t=0$ to the turnpike $\overline{y}$. 
The optimal strategy for the infinite time horizon problem (i.e. \eqref{eq: hj.func} with $T=+\infty$) would rather be to stabilize towards the turnpike $\overline{y}$ and remain in that configuration forever. 
Said stabilizing phase is not seen in the classical definition of the infinite time horizon problem (see e.g. \cite{fleming1995risk}), where the limit of the time averages only captures the transient arc during which the optima are close to the turnpike.
\smallskip
\item The term $V_sT$ corresponds to the running cost accumulated in the intermediate, transient arc, during which the time-evolution optima are close to the steady ones. The constant $V_s$ is commonly referred to as the \emph{ergodic constant}.
\smallskip
\item The constant $\lambda_\infty$ designates the cost of the final arc, namely the one during which the optimal trajectory $y_T$ leaves the turnpike $\overline{y}$ in order to minimize the final pay-off $\phi$. 
Although this final arc does not appear in the infinite time horizon problem, it does appear in the finite time horizon one, no matter how large $T$ is. Thus, it ought to be taken into account when analyzing the behavior of $V(T,x)$ as $T\to+\infty$. 
One can separate this final arc from the remainder of the trajectory by considering the finite time horizon problem with $\overline{y}$ as initial state. In this way, the cost of reaching the turnpike $\overline{y}$ is $0$. One may then subtract this cost during the transient arc $V_sT$. 
\end{itemize}

\section{Deep learning} \label{sec: 13}

As implied in the introduction, and using the theory developed in Section \ref{sec: 10}, the turnpike property may appear and be used as a guideline in the context of \emph{supervised learning} via \emph{residual neural networks}. 
For such neural networks, which can in essence be interpreted as time-discretized ODEs, with a specific scalar nonlinearity acting elementwise, and with controls entering the dynamics in some nonlinear way, found by minimizing some cost functional, any running target will designate the turnpike (namely, the unique optimal steady state solution). 
Such a property will be due to the fact that some of the controls are of a multiplicative nature -- consequently, any constant vector will be a steady state when the controls are null. 
In turn, this will lead to a turnpike property without a final arc near $t=T$, namely, roughly, $\|y_T(t)-\overline{y}\|+\|u_T(t)\|$ would be in $\mathcal{O}(e^{-\lambda t})$.
The latter is an exponential stability estimate, which ensures that the optimal controls $u_T(t)$ are exponentially small in every time $t$ (designating a layer), while the trajectories approximate the targets arbitrarily well in large time. 
Hence, in addition to providing a quantitative estimate for the number of layers needed to interpolate the data, the trained states would oscillate little over layers, which could be beneficial for generalization on unseen data. 

Let us give more details and structure regarding the above discussion.
We refer readers with a machine learning background directly to Section \ref{sec: dl.turnpike}.
	
	\subsection{ResNets and optimal control} 
	
	\textbf{Supervised learning} aims to approximate an unknown function 
	$$f:\mathcal{X}\to\mathcal{Y}$$ from  data 
	$$\left\{x^{(i)}, y^{(i)}=f\left(x^{(i)}\right)\right\}_{i\in[n]}\subset\mathcal{X}\times\mathcal{Y}.$$ Here and henceforth, $[n]:= \{1, \ldots, n\}$.  
	 Typically in practice, $\mathcal{X}\subset\mathbb{R}^d$, whereas either $\mathcal{Y}\subset\mathbb{R}^m$ (namely, we are solving a regression task) or $\mathcal{Y}\subset\mathbb{N}$ with $\#\mathcal{Y}=m$, $m\geqslant2$ (namely solving a classification task).
	Among the many possible classes of functions from which one can construct an approximation of $f$ (e.g., Fourier series, wavelets, and so on), neural networks have proven to be the most promising one for many computational tasks, in particular large scale ones such as image recognition \cite{krizhevsky2012imagenet}. 
	In particular, the large \emph{depth} (number of \emph{layers}) of the networks used in these experiments has been observed to play a key role in this computational supremacy. 
	\medskip
	
	\noindent
	\textbf{Residual neural networks.}
	A recent and very popular neural network architecture are the so-called \emph{residual neural networks} (ResNets, \cite{he2016deep}). 
	They may, in the simplest case (see Remark \ref{rem: settings} for extensions), be cast as discrete-time dynamical systems of the mould
	\begin{equation} \label{eq: 1.1}
	\begin{cases}
	\*x_i^{k+1} = \*x_i^k + \sigma\left(w^k \*x_i^k + b^k\right) &\text{ for }k \in \{0, \ldots, n_t-1\}\\
	\*x_i^0 = x^{(i)}
	\end{cases}
	\end{equation}
	for all $i \in [n]$. 
	The unknowns in system \eqref{eq: 1.1} are the states $\*x_i^k\in \mathbb{R}^d$ for any $i\in[n]$, while $\left\{w^k, b^k\right\}_{k=0}^{n_t-1}$ are the controls (referred to as \emph{parameters}), with $w^k \in \mathbb{R}^{d\times d}$, and $b^k \in \mathbb{R}^d$. 
	Furthermore, $\sigma\in\mathrm{Lip}(\mathbb{R})$ is a prescribed scalar, nonlinear function, defined component-wise in \eqref{eq: 1.1} (the commonly-used example being $x\mapsto \max\{x,0\}$, but also $x\mapsto\tanh(x)$), while $n_t\geqslant 1$ designates the number of layers of the network, referred to as the depth, as mentioned in the preceding paragraph. 
	
	The controls\footnote{Thus $u^k\in\mathbb{R}^{d^2+d}$ is a column vector containing the components of $w^k$ and $b^k$. For instance, and without loss of generality, this is done by vectorizing the matrix $w^k$ followed by appending the column vector $b^k$.} $u^k = \left(w^k, b^k\right)$ for all $k\geqslant0$ are found by solving the regularized empirical risk minimization problem\footnote{This is by no means a general formulation. One may consider other ways to regularize the controls and promote different patterns, such as  sparsity via $\ell^1$--regularization, and so on.}
	\begin{equation} \label{eq: ERM}
	\inf_{\left\{u^k\right\}_{k=0}^{n_t-1}=\left\{w^k, b^k\right\}_{k=0}^{n_t-1}} \underbrace{\frac{1}{n} \sum_{i=1}^n \text{loss}\left(P\*x_i^{n_t}, y^{(i)}\right)}_{\text{empirical risk } :=\*E(\*x^{n_t})} + \alpha \sum_{k=0}^{n_t-1}\left\|u^k\right\|^2,
	\end{equation}
	where $\alpha\geqslant0$ is fixed, while 
	\begin{equation*}
	\text{loss}(\cdot,\cdot)\in C^0(\mathbb{R}^m \times \mathcal{Y}; \mathbb{R})
	\end{equation*}
	is a given function which is bounded from below (say, for simplicity, by $0$) and which differs depending on the task in hand -- for instance, 
	\begin{equation*}
	\text{loss}(x,y) := \|x-y\|^p_{p}
	\end{equation*}
	for $p\in\{1,2\}$ is commonly used for regression tasks (here and henceforth the norm designates the entry-wise matrix norm), while the cross-entropy loss\footnote{More accurately, the cross-entropy loss with softmax activation, sometimes also referred to as cross-entropy with "logits" (with the "logits" designating the pre-softmax vectors, namely $x$ in this case).}
	\begin{equation*}
	\text{loss}(x,y) = -\log\left(\frac{e^{x_{y}}}{\sum_{j=1}^m e^{x_j}}\right) \hspace{1cm} (x,y)\in\mathbb{R}^m\times[m]
	\end{equation*}
	is commonly used for classification tasks. 
	Finally, $P:\mathbb{R}^d\to\mathbb{R}^m$ is an affine map (the \emph{output layer}), whose coefficients in practice are part of the optimizable parameters. More precisely,
	\begin{equation*}
	\mathbb{R}^d\ni x\mapsto Px := w^{n_t}x+b^{n_t}\in\mathbb{R}^m.
	\end{equation*}
	Herein, we shall assume that $P$ is given and fixed.
	
	\begin{remark}[Training]
	In practice, the minimization problem is solved by variants of stochastic gradient descent -- a procedure colloquially named as \emph{training}. We emphasize that here, we shall focus on deriving properties of the global minimizers of the regularized empirical risk rather than convergence of training algorithms. Our presentation is -- in principle -- algorithm independent.
	\end{remark}
	\medskip
	
	\noindent
	\textbf{Neural ODEs.}
	One may readily observe (as done by \cite{weinan2017proposal, haber2017stable}) that for any $i\in[n]$, and for $T>0$, \eqref{eq: 1.1} is a forward Euler scheme for
	\begin{equation} \label{eq: 1.2}
	\begin{cases}
	\dot{\*x}_i(t) = \sigma(w(t)\*x_i(t)+b(t)) & \text{ for }t \in (0, T) \\
	\*x_i(0) = x^{(i)}.
	\end{cases}
	\end{equation} 	
	Continuous-time ResNets such as \eqref{eq: 1.2} are commonly referred to as \emph{neural ordinary differential equations} (neural ODEs) in the computing literature\footnote{Albeit in the paper \cite{chen2018neural} where this denomination was originally introduced, the controls are time-independent.}. 
	The regularized empirical risk minimization problem reduces to the optimal control problem with final cost (defined in \eqref{eq: ERM})
	\begin{equation} \label{eq: non.regular.standard.sup.learn}
	\inf_{\substack{u:=[w,b]\in L^2(0,T; \mathbb{R}^{d_u})\\ \*x_i(\cdot) \text{ solves } \eqref{eq: 1.2}}} \*E(\*x(T))+ \alpha \int_0^T \left\|u(t)\right\|^2 \diff t.
	\end{equation} 
	Here, we used the notation $\*x:= \left[\*x_1^\top,\ldots,\*x_n^\top\right]^\top\in \mathbb{R}^{d_x}$, with $d_x:=d\cdot n$.
	We note that, written as such, training a ResNet is an optimal control problem for a nonlinear, discrete (or continuous)-time dynamical system. 
	There is, however, an important point to be made in addition. 
	In statistical learning, the concept of \emph{generalization}, namely ensuring reliable performance of the trained/controlled network on unseen data (namely, new points outside of $\{x^{(i)}, y^{(i)}\}_{i\in[n]}$), is of paramount importance. 
	And so, one sees that, contrary to the optimal control problems we considered in preceding sections, wherein we generally fix a single initial datum and find an optimal control (which may or may not be in feedback form), in \eqref{eq: 1.2} -- \eqref{eq: non.regular.standard.sup.learn} we find a single pair of time-dependent controls for $n$ different initial and target data\footnote{In control-theoretical terms, this is more in the spirit of notions such as \emph{simultaneous controllability} or \emph{ensemble controllability}, studied in various different contexts, see, for instance, \cite{lions1988exact, tucsnak2000simultaneous} for control systems with different control operators, or \cite{beauchard2010controllability, loheac2016averaged, agrachev2016ensemble} for control systems with parameter-dependent dynamics.}.
	
	\begin{remark}[Settings] \label{rem: settings}
	 \label{rem: remark.sobolev}
	\begin{itemize}
	\item One may consider variations of the nonlinear dynamics in \eqref{eq: 1.1} or \eqref{eq: 1.2}, as is typically done for canonical feed-forward neural networks. Among many possibilities, some simple examples could include
	\begin{equation} \label{eq: 5.7}
	\dot{\*x}_i(t) = w(t)\sigma(\*x_i(t)) + b(t) \hspace{1cm} \text{ for } t\in(0,T),
	\end{equation}
	and
	\begin{equation} \label{eq: 5.8}
	\dot{\*x}_i(t) = w_1(t) \sigma(w_2(t) \*x_i(t) + b_2(t)) + b_1(t) \hspace{1cm} \text{ for } t\in (0,T),
	\end{equation}
	for $i\in[n]$. 
	In \eqref{eq: 5.8}, one could also envisage having $w_2(t)\in \mathbb{R}^{d_{\text{hid}}\times d}$ and $w_1(t)\in \mathbb{R}^{d\times d_{\text{hid}}}$ (accordingly, $b_2\in \mathbb{R}^{d_{\text{hid}}}$ and $b_1\in \mathbb{R}^{d}$), where $d_{\text{hid}}\neq d$. 
	\smallskip
	
	\item We stress that considering solely an $L^2(0,T; \mathbb{R}^{d_u})$--regularization of the controls $(w,b)$ in \eqref{eq: non.regular.standard.sup.learn} may not be enough for guaranteeing the existence of minimizers when considering underlying networks with dynamics such as \eqref{eq: 1.2} or \eqref{eq: 5.8} (although it does suffice for \eqref{eq: 5.7}). 
	This is due to the way in which the nonlinearity $\sigma$ is applied upon the controls. 
	To our knowledge, it is not clear how one can ensure compactness of minimizing sequences in $L^1(0,T; \mathbb{R}^{d_u})$, needed for passing to the limit within the continuous-time neural network.
	In such cases, one should rather replace the $L^2(0,T; \mathbb{R}^{d_u})$ norm by either an $H^1(0,T; \mathbb{R}^{d_u})$ or $\text{BV}(0,T; \mathbb{R}^{d_u})$ norm regularization, both of which would ensure the desired compactness. 
	This issue is specific to the continuous-time setting, as in the discrete-time, finite dimensional setting, weak and strong convergences coincide.
	\smallskip
	\item First of all, we ought to suppose that $x^{(i)}\neq x^{(j)}$ for $i\neq j$. Now, due to the uniqueness of Lipschitz-nonlinear ODEs (in both directions of time), trajectories corresponding to different initial data cannot cross\footnote{In the discrete-time setting, this is not an issue on coarse grids (namely when the time-step $\bigtriangleup t$ is large), as is the case for the canonical ResNet, where it equals $1$.}. Hence, in the context of binary classification tasks for instance (namely, where $f$ is the characteristic function of some set), if the original dataset is not linearly separable, one cannot separate the dataset by a controlled neural ODE flow in a way that the underlying topology of the data (namely, the unknown function $f$) is captured and generalized. 
	As presented in \cite{dupont2019augmented}, a simple remedy\footnote{Another remedy consists in considering \emph{momentum} ResNets (\cite{sander2021momentum}), which consist in adding $\ddot{\*x}(t)$ to the ODE.} for this issue in this case is to embed $x^{(i)}\in \mathbb{R}^d$, for any $i\in[n]$, in a higher dimensional space, for instance in $\mathbb{R}^{d+1}$, by setting 
	\begin{equation} \label{eq: xi0}
	\*x_i^0=\begin{bmatrix}x^{(i)}\\0\end{bmatrix}
	\end{equation}
	for $i\in[n]$. 
	Unless stated otherwise, we shall henceforth consider initial data in such form, and use $d$ to denote the dimension of the augmented system.
	\end{itemize}
	\end{remark}
	
	\noindent
	The neural ODE formalism of deep learning has been used to great effect in several machine learning contexts. 
	To name a few, these include the use of adaptive ODE solvers (\cite{chen2018neural, dupont2019augmented, kidger2020neural}) and symplectic schemes (\cite{celledoni2020structure}) for efficient training, the use of indirect training algorithms based on the Pontryagin Maximum Principle (\cite{li2017maximum, benning2019deep}), image super-resolution (\cite{he2019ode}), as well as unsupervised learning and generative modeling (\cite{grathwohl2018ffjord, papamakarios2019normalizing}). 
		The origins of continuous-time supervised learning date back at least to \cite{lecun1988theoretical}, in which the back-propagation method is connected to the adjoint method. See also \cite{sontag1997complete, sontag1999further} and the references therein for earlier works in this direction.
	
	\subsection{The role of $T$} 
	In the ResNet \eqref{eq: 1.1}, the time-step $\bigtriangleup t=\sfrac{T}{n_t}$ is fixed (equal to $1$ in fact), and each time instance of a forward Euler discretization of \eqref{eq: 1.2} would represent a different layer of \eqref{eq: 1.1}. Hence, whenever the time-step $\bigtriangleup t$ is fixed, the time horizon $T$ in \eqref{eq: 1.2} serves as an indicator of the number of layers $n_t=\sfrac{T}{{\bigtriangleup}t}$ in the ResNet \eqref{eq: 1.1}. 
	
	In view of this, and the empirical success of residual neural networks with large depths, we are interested in studying the behavior of global minimizers $u(\cdot)$ (and corresponding states $\{\*x_i(\cdot)\}_{i=1}^n$) solutions to
	\begin{equation} \label{eq: 5.9}
	\inf_{\substack{u\in\mathscr{U}(0,T; \mathbb{R}^{d_u})\\ \*x_i(\cdot) \text{ solves } \eqref{eq: 5.10}}} \*E(\*x(T))+ \alpha \left\|u\right\|_{\mathscr{U}(0,T; \mathbb{R}^{d_u})}^2,
	\end{equation} 
	as $T\to+\infty$, where the constraint satisfied by the states $\{\*x_i(\cdot)\}_{i=1}^n$ is given by the nonlinear ODE
	\begin{equation} \label{eq: 5.10}
	\begin{cases}
	\dot{\*x}_i(t) = \mathfrak{f}(u(t), \*x_i(t)) &\text{ in }(0,T),\\
	\*x_i(0) = \*x_i^0,
	\end{cases}
	\end{equation}
	with $\mathfrak{f}$ as in \eqref{eq: 1.2} or \eqref{eq: 5.7} or \eqref{eq: 5.8}, with $\*x_i^0$ as in \eqref{eq: xi0}.
	(We henceforth drop the subscripts $T$ used in preceding discussions as to not overburden the notation.) 
	In \eqref{eq: 5.9}, the dimension $d_u$ of the tensor-valued function $u(t)$, and its definition, encompass the different dynamics for which there might be more than a pair of matrix-vector controls (i.e. \eqref{eq: 5.8}). 
	Here $\mathscr{U}$ is either $L^2$ or $H^1$.
	
	As discussed in the introduction, when one considers an optimal control problem with solely a final cost as in \eqref{eq: 5.9}, one cannot expect the appearance of the turnpike property. 
	But rather, whenever $\sigma$ is $1$--homogeneous, $\mathscr{U}$ is $L^2$ or $H^1$, and one omits the case of $\mathfrak{f}$ as in \eqref{eq: 5.8}, it can be shown that $\*E(\*x(T))$ decays to $0$ at most like $\mathcal{O}\left(\frac{1}{T}\right)$ (see \cite{esteve2020large}). 
	When $\*E(\cdot)$ attains its minimum, then the optimal controls $u$ also converge, when appropriately rescaled, to some solution $u^\star$ of 
	\begin{equation} \label{eq: 5.11}
	\inf_{\substack{u\in\mathscr{U}(0,1; \mathbb{R}^{d_u}) \\ \*x_i(\cdot) \text{ solves } \eqref{eq: 5.10} \text{in} (0,1)\\ \*E(\*x(1)) = 0}} \|u\|^2_{\mathscr{U}(0,1; \mathbb{R}^{d_u})}, 
	\end{equation} 
	along some subsequence as $T\to+\infty$ (\cite{esteve2020large}).
	Problem \eqref{eq: 5.11}, on the other hand, is seen as the minimal norm control ensuring controllability, similarly to what was discussed in the introductory sections for linear PDEs. Namely, it provides controls of least oscillations among those who interpolate the data, in the sense that  $\*E(\*x(1)) = 0$, which, when $\text{loss}$ is an $\ell^p$ distance, rewrites as $P\*x_i(1)=y^{(i)}$ for all $i\in[n]$. 
	
	This convergence result mainly makes use of the fact that, under the aforementioned homogeneity assumptions, one has $\mathfrak{f}(\zeta u, \*x)=\zeta \mathfrak{f}(u, \*x)$ for $\zeta>0$. 
	Hence, given $(u_T, \*x_T)$ solving $\dot{\*x}_T(t) = \mathfrak{f}(u_T(t),\*x_T(t))$ for $t\in(0,T)$, it follows that the rescaled map $\*x_1(t):=\*x_T(\frac{t}{T})$ solves $\dot{\*x}_1(t) = \mathfrak{f}(u_1(t), \*x_1(t))$ for $t\in(0,1)$, with $u_1(t) := \frac{1}{T} u_T(\frac{t}{T})$. 
	This rescaling relation yields the scaling of the norm of the controls
	\begin{equation} \label{eq: scaling.norm}
	\alpha \int_0^T \|u_T(t)\|^2 \diff t = \frac{\alpha}{T} \int_0^1 \|u_1(t)\|^2 \diff t.
	\end{equation}	
	In addition to the polynomial convergence of $\*E(\*x(T))$, one sees that \eqref{eq: scaling.norm} also indicates an equivalence of the limits $T\to+\infty$ (with $\alpha>0$ fixed) and $\alpha\searrow0$ (with $T$ fixed)\footnote{As a matter of fact, when $T$ is fixed, one can show that the desired result holds for general dynamics $\mathfrak{f}$, without any homogeneity assumptions -- the proof follows the same arguments as those presented in \cite{esteve2020large}. Homogeneity is needed solely to ensure the scaling in time, which is where one sees the appearance of a factor of $\sfrac{1}{T}$, and thus the pattern when $T\to+\infty$.}.  The latter is known as the \emph{regularization path} limit, and is well-studied in the statistical learning literature, mainly for linear problems (see \cite{rosset2004boosting}, for instance). 

	While such a result could or might be desirable in practice -- due to the possible generalization properties of the limiting controls--, it does not provide any specific stability estimates for optimal controls or the trajectories over each time instant $t$. This in turn would be desirable for choosing a sharper number of layers needed to interpolate the dataset, whilst still retaining controls of moderate amplitude.
	
	This lack of stability may also be seen numerically. 
	In Figures \ref{fig: no.turnpike.1} -- \ref{fig: no.turnpike.2}, we solve a toy binary classification task, by solving \eqref{eq: 5.9} with cross-entropy loss, $\mathfrak{f}$ as in \eqref{eq: 5.7} and $\sigma\equiv\text{tanh}$, making use of an explicit midpoint scheme with $T=4$, $n_t=16$ and thus $\bigtriangleup t=0.25$ (hence, a relative error of $6.25\%$, due to the quadratic convergence of the midpoint scheme).

	\begin{figure}[h!]
	\center
	\includegraphics[scale=0.5]{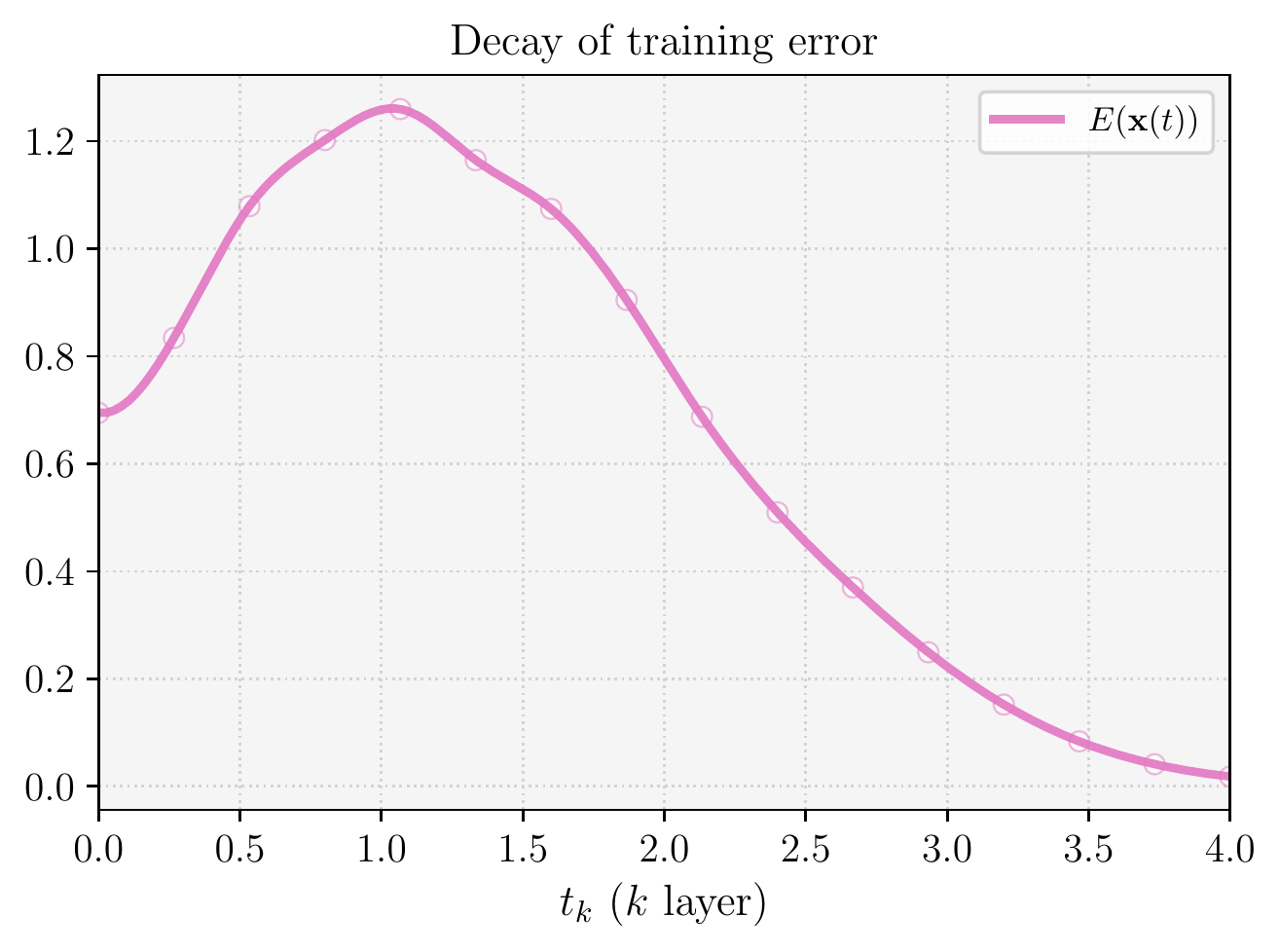}
	\includegraphics[scale=0.5]{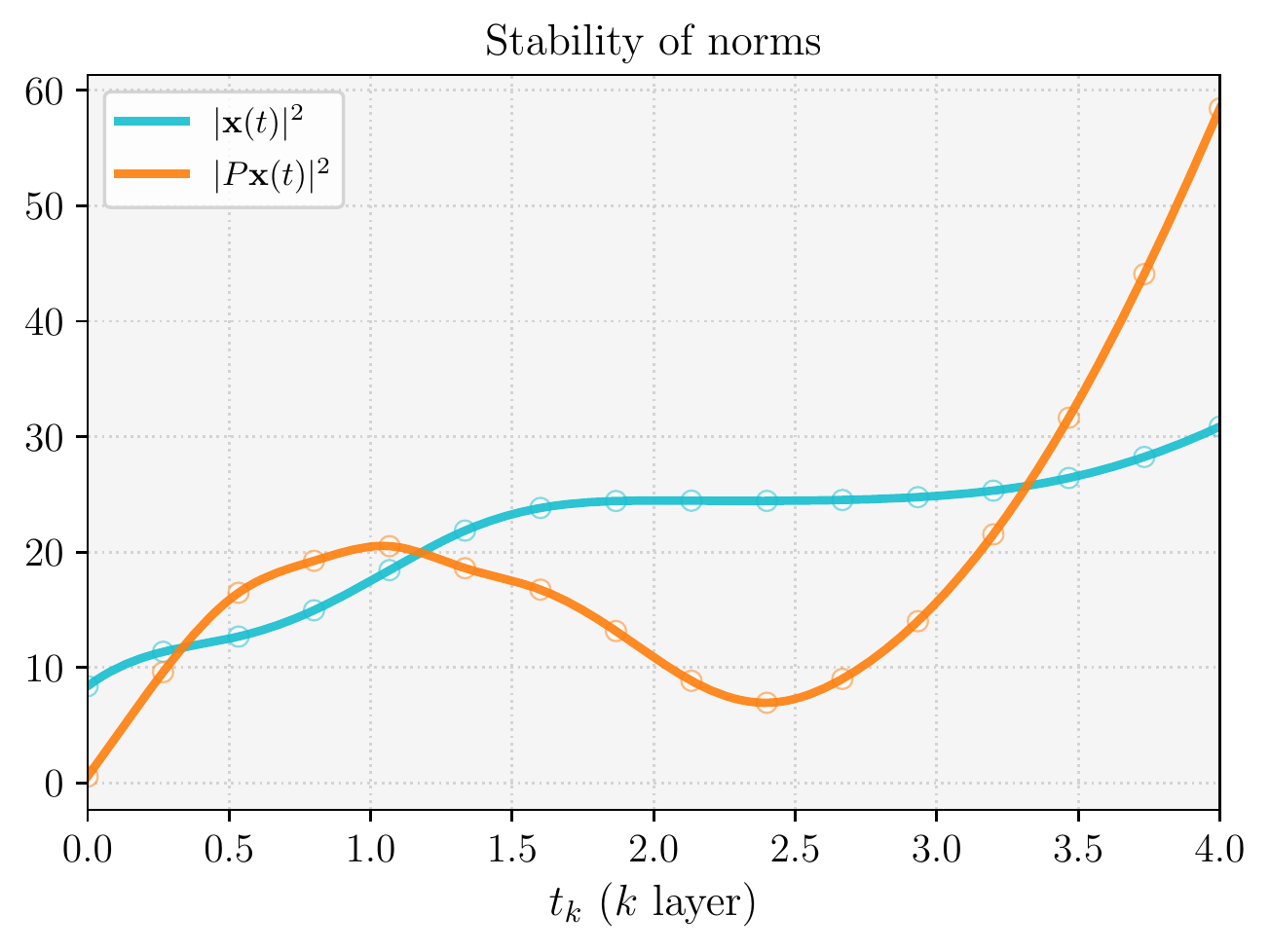}
	\caption{The training error $\*E(\*x(t))$ is only minimal at time $t=T=4$ (\emph{left}), and we do not see a turnpike-like stability for the trajectories (\emph{right}).}
	 \label{fig: no.turnpike.1}
	\end{figure}
	
	\begin{figure}[h!]
	\center
	\includegraphics[scale=0.6]{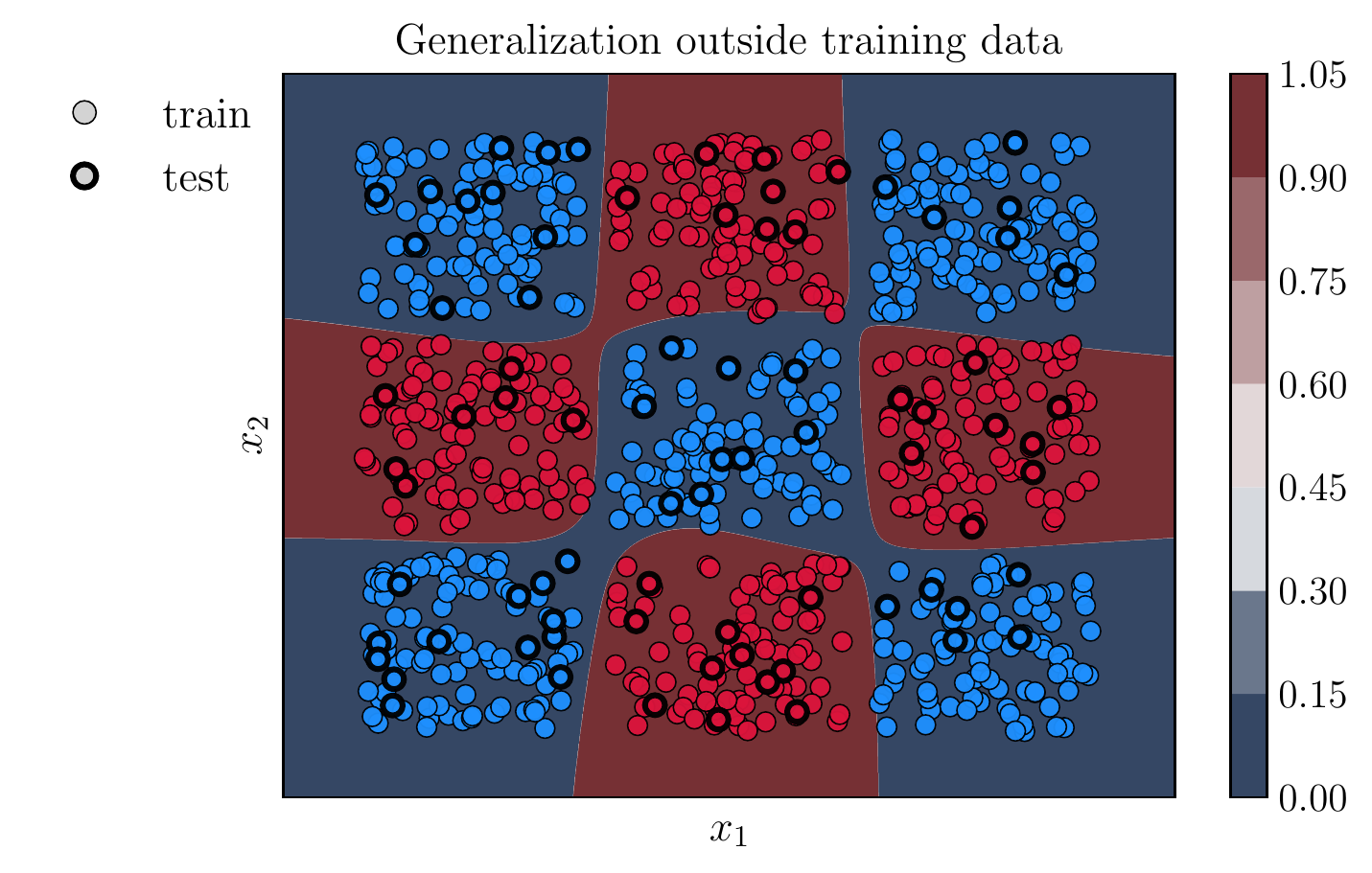}
	\caption{We plot the trained classifier $P\*x_x(T)$ for any initial datum $x\in[-1.1,1.1]^2$, along with the training data, as well as test data.
	The shape of the data is captured accurately, and thus the unknown function $f$ is approximated well, ensuring generalization, but only at $t=T$, as per Figure \ref{fig: no.turnpike.1}.}
	\label{fig3.ex1}
	\end{figure}
	
	\begin{figure}[h!] 
	\center
	\includegraphics[scale=0.6]{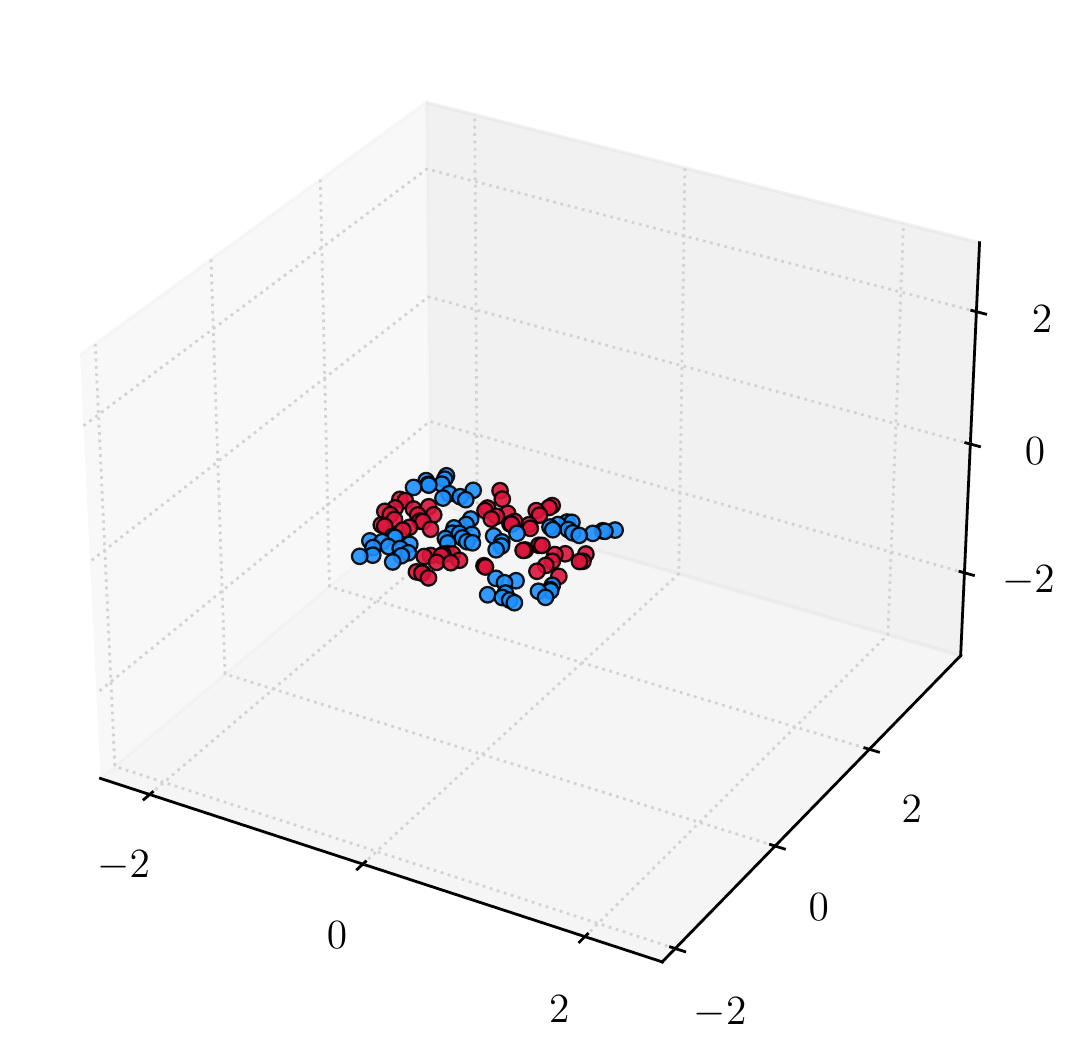}
	\includegraphics[scale=0.6]{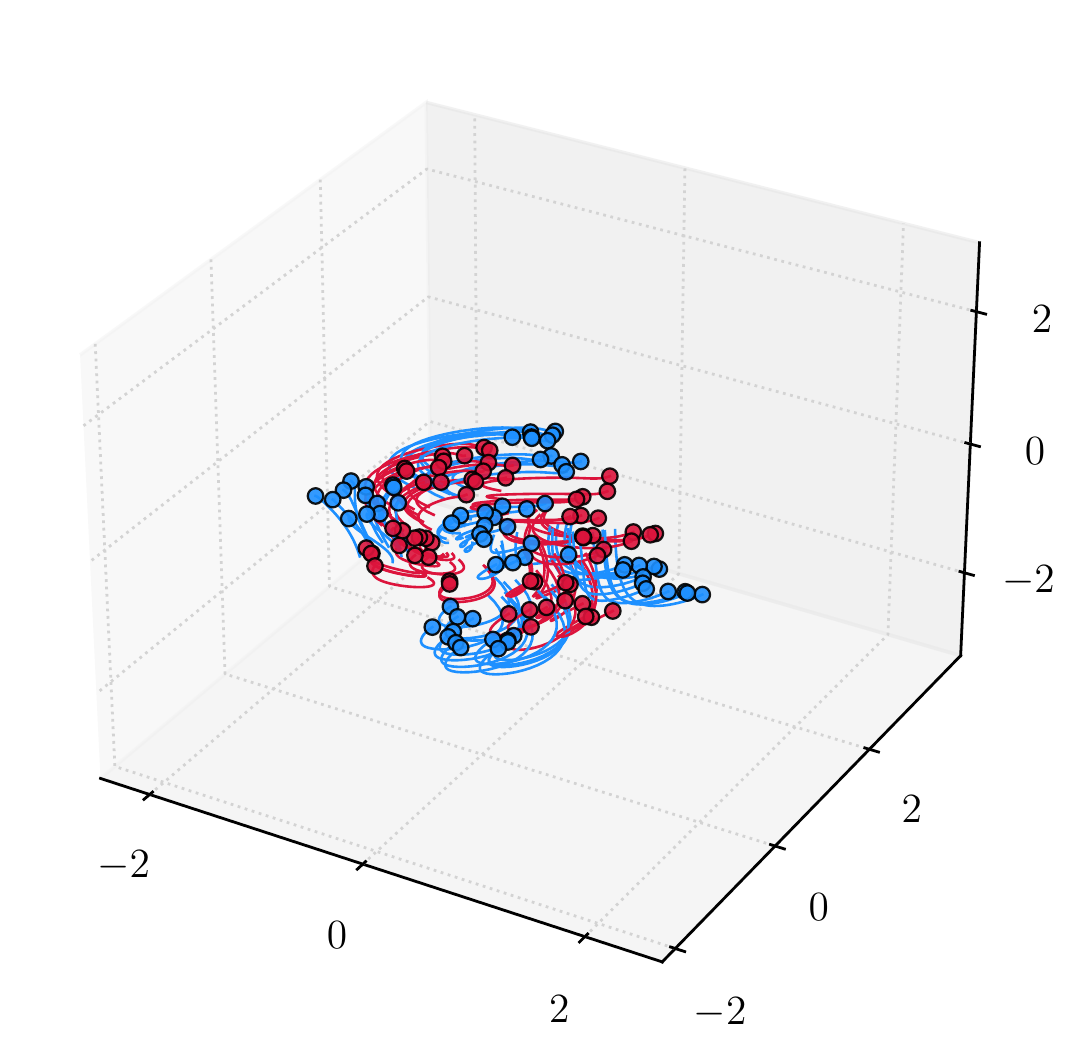}
	\includegraphics[scale=0.6]{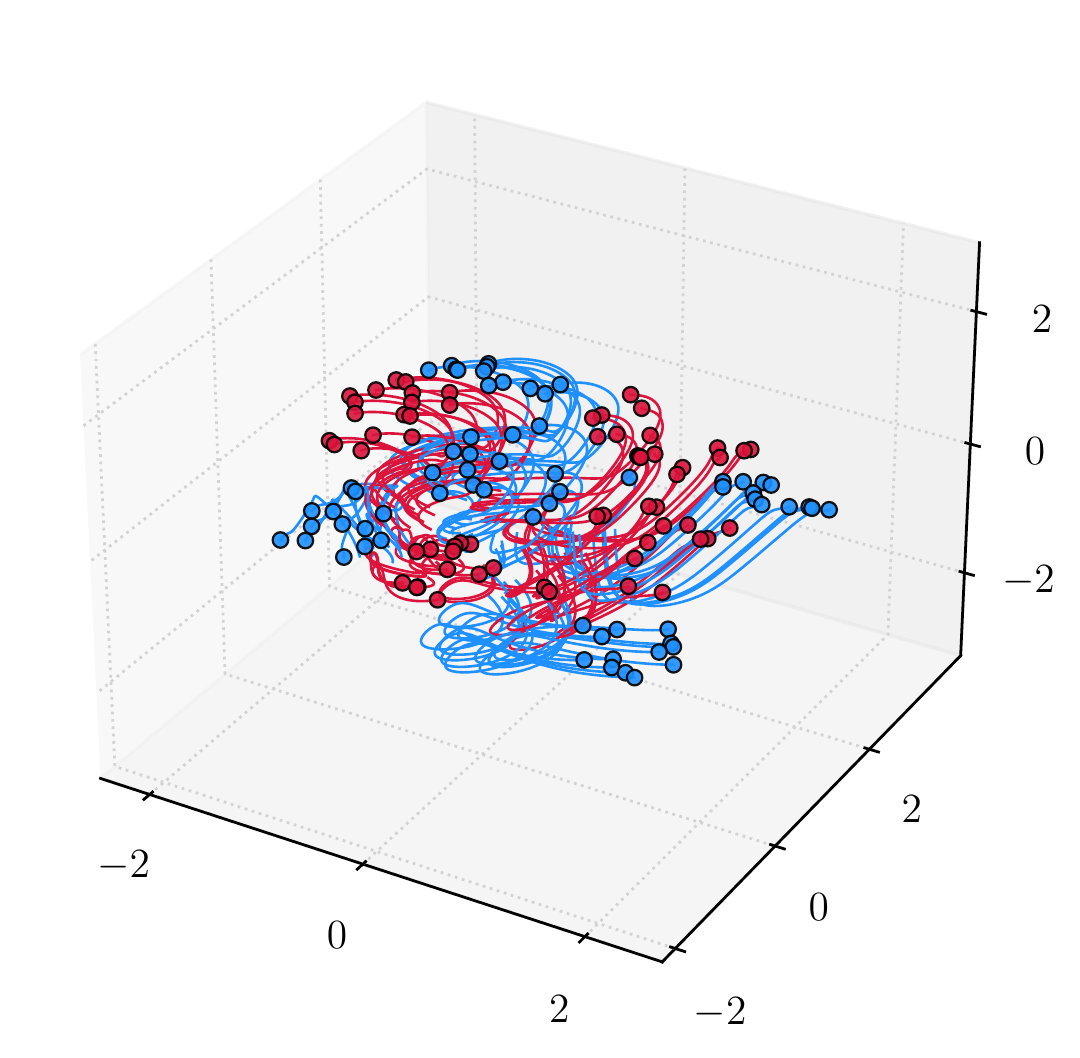}
	\includegraphics[scale=0.6]{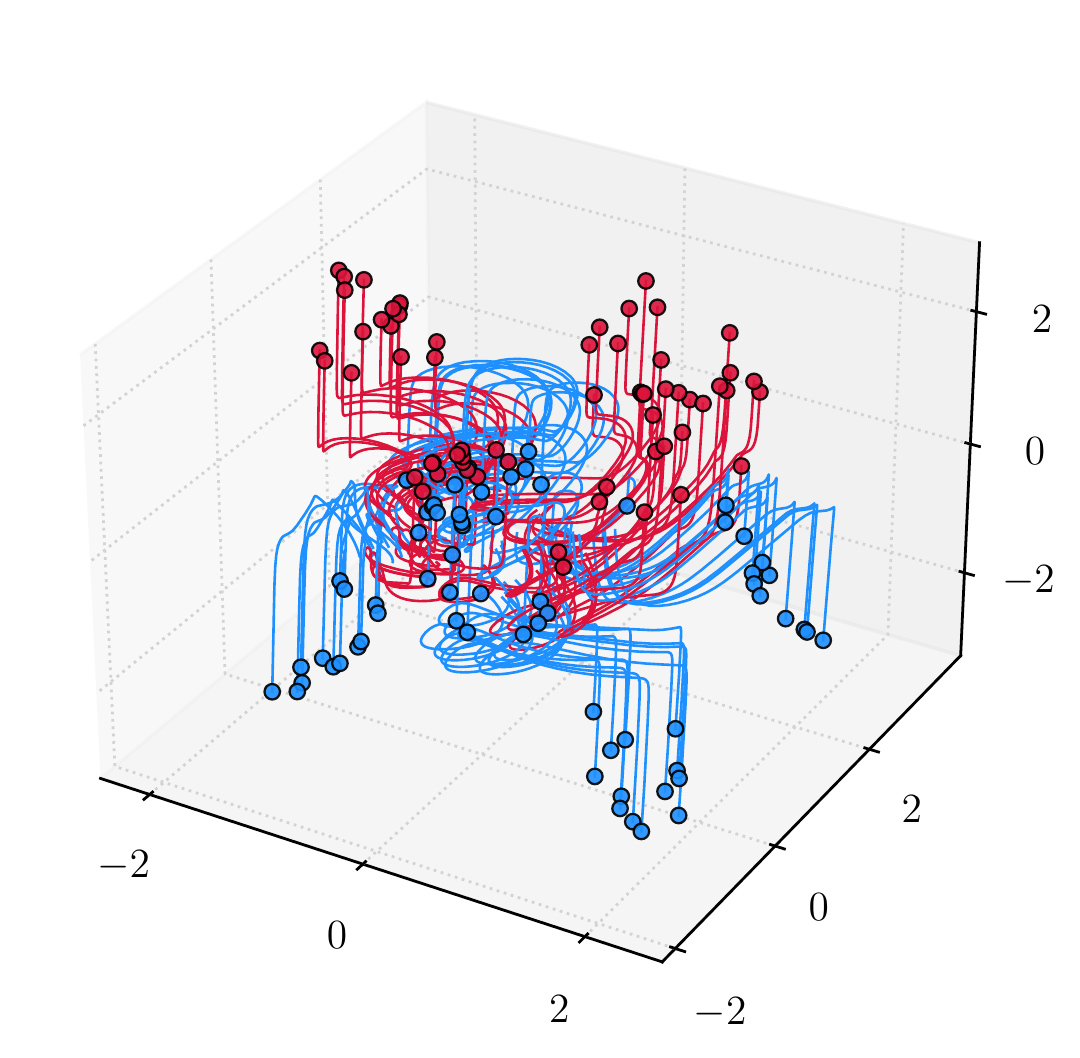}
	\caption{The evolution of the trajectories $\{\*x_i(t)\}_{i\in[n]}$, solutions to \eqref{eq: 5.7} with data $\{\*x_i^0\}_{i\in[n]}$ (\emph{top left}), shown for $t\leqslant1$ (\emph{top right}), $t\leqslant 2$ (\emph{bottom left}), $t\leqslant T=4$ (\emph{bottom right}). Separation of the dataset is only done towards $t=T$, as per Figure \ref{fig: no.turnpike.1}.}
	\label{fig: no.turnpike.2}
	\end{figure}
	
	\begin{remark}[Solutions to \eqref{eq: 5.11}] It should be noted that the study of existence of solutions to \eqref{eq: 5.11} (say, when $\text{loss}(\cdot,\cdot)$ is an $\ell^p$ distance), namely, the \emph{controllability} of $n\gg1$ trajectories $\{\*x_i(\cdot)\}_{i\in[n]}$ of \eqref{eq: 5.10} corresponding to different initial data $\*x_i^0$, by means of the same controls $u(\cdot)$ (living in a space which is of a higher dimension than that of each individual $\*x_i$), is an independent topic which has attracted considerable interest in recent years. 
	This particular controllability property has been referred to as \emph{ensemble controllability}, \emph{universal interpolation}, or \emph{simultaneous controllability}, among other denominations. 
	We refer to several recent works for various different techniques -- for instance, \cite{agrachev2020control} and \cite{cuchiero2019deep}, where the authors make use of geometric, Lie bracket techniques for dynamics such as \eqref{eq: 5.7} and \eqref{eq: 1.2}, for which such tools are quite natural (see \cite{coron2007control} for a detailed presentation on Lie algebra techniques for nonlinear control). For more compound neural ODE dynamics, such as \eqref{eq: 5.8}, we refer to \cite{ruiz2020universal} and \cite{li2019deep}, where the controls are built explicitly in a constructive way. In particular, in these works (see also \cite{ruiz2021interpolation}), the link with the closely related topic of universal approximation (see the seminal works \cite{cybenko1989approximation, pinkus1999approximation}, and also the recent survey \cite{devore2021neural}) is clearly established.
	The analog property for discrete-time neural networks has been thoroughly investigated in the statistical learning literature, under the name of \emph{finite sample expressivity} (albeit mainly for simplified models such as linear regression or shallow neural networks, see \cite{zhang2016understanding} for further details).
	The study of which optimal controls/parameters are the "best" (in the sense of generalization) in this interpolating regime remains an open problem. 
	\end{remark}

	\subsection{Turnpike and turnpike-like properties} \label{sec: dl.turnpike}
	For the turnpike property to hold, one would require more coercivity of the cost functional to be minimized with respect to the state over all time.  
	\medskip
	
	\noindent
	\textbf{Augmented functional.} 
	Looking at \eqref{eq: 5.9} (and \eqref{eq: ERM}), let us thus consider its natural extension, which is an augmented empirical risk minimization problem of the form
	\begin{equation} \label{eq: 5.12}
	\inf_{\substack{u\in \mathscr{U}(0,T; \mathbb{R}^{d_u})\\ \*x_i(\cdot) \text{ solves } \eqref{eq: 5.10}}} \int_0^T \*E(\*x(t))\diff t+ \alpha \int_0^T \|u(t)\|^2\diff t.
	\end{equation}
	When considered for the discrete-time ResNet case, by using Riemann sums, \eqref{eq: 5.12} would reformulate as 
	\begin{equation} \label{eq: 5.13}
	\inf_{\substack{\left\{u^k\right\}_{k=0}^{n_t-1} \subset \mathbb{R}^{d_u}\\ \*x_i^{k+1}=\*x_i^k+\mathfrak{f}(u^k, \*x_i^k)}}  \bigtriangleup t\left(\sum_{i=1}^n \sum_{k=1}^{n_t-1} \*E(\*x^k) + \alpha \sum_{k=0}^{n_t-1} \left\|u^k\right\|^2\right).
	\end{equation}
	One sees in \eqref{eq: 5.13} that the artificial tracking term introduces an additional regularization of the states over every layer $k\in\{1,\ldots,n_t-1\}$. 
	\medskip
	
	\noindent
	\textbf{The steady problem.}
	A turnpike property for \eqref{eq: 5.12} would entail proximity of solutions to \eqref{eq: 5.12} to the corresponding stationary optimal control problem. The latter thus needs to be properly characterized. Let us assume for now that $\*E(\cdot)$ attains its minimum (say, $0$, for simplicity). Then the stationary problem corresponding to \eqref{eq: 5.12} would read as
	\begin{equation} \label{eq: 5.15}
	\inf_{\substack{(u,\*x)\in \mathbb{R}^{d_u}\times\mathbb{R}^{dn}\\ \mathfrak{f}(u, \*x_i) =0,  \quad i\in[n]}}  \*E(\*x)+ \alpha\|u\|^2.
	\end{equation}
	But due to the specific form of $\mathfrak{f}$ and $u$, which are as in \eqref{eq: 1.2}, \eqref{eq: 5.7} or \eqref{eq: 5.8}, one sees that the unique optimal solution to \eqref{eq: 5.15} is $\overline{u}\equiv0$, with $\overline{\*x}\in \text{argmin }\*E$. 
	Hence, the turnpike is a couple $(\overline{u}, \overline{\*x})$ at which the running cost $(u, \*x)\mapsto \*E(\*x)+\|u\|^2$ is minimized, with $\*x=\{\*x_i\}_{i\in[n]}$ also being a steady state of the underlying ODE. 
	\medskip
	
	\noindent
	\textbf{Exponential turnpike/decay/stability.}
	As the state turnpike is a steady state of the neural ODE, and there is no final cost, one should expect the final arc near $t=T$ of the exponential turnpike estimate to vanish -- this is indeed seen in numerical experiments presented in Figures \ref{fig1.ex2} -- \ref{fig2.ex2}. 
	In other words, the turnpike property for the supervised learning problem \eqref{eq: 5.12} would be characterized by a decay/stability estimate of the form
	\begin{equation} \label{eq: turnpike.dl}
	 \*E(\*x(t)) + \|u(t)\|^2 \leqslant C e^{-\lambda t},
	\end{equation}
	for all $t\in[0,T]$ and for some constants $C>0$ and $\lambda>0$ independent of $T$. Furthermore, when $\text{argmin }\*E\neq\varnothing$, we would also have
	\begin{equation*}
	\inf_{\{\*z_i\}_{i\in[n]} \in \text{argmin } \*E } \sum_{i=1}^n \|\*x_i(t)-\*z_i\|^2\leqslant Ce^{-\lambda t}.
	\end{equation*}
	Such results are indeed shown, in specific settings ($\ell^2$ losses), in \cite{esteve2020large} -- this is the theory presented in Section \ref{sec: 10}. We also refer the reader to \cite{yague2021sparse} for results with general losses and $L^1(0,T;\mathbb{R}^{d_u})$ control penalties, albeit with polynomial decay rates. A proof of the turnpike property \eqref{eq: turnpike.dl} in the case of general losses with $L^2$ control penalties is an open problem.
	\medskip
	
	\noindent
	\textbf{What does the decay \eqref{eq: turnpike.dl} entail?}
	Taking into account the fact that the time-step $\bigtriangleup t = \sfrac{T}{n_t}$ is fixed, \eqref{eq: turnpike.dl} provides a quantitative estimate of the number of layers needed to fit the data, whilst keeping the controls small (thus possibly helping in generalization). In fact, these estimates ensure and indicate that the time horizon (or number of layers) ought not be large at all so that the error reaches $0$ with controls of small amplitude (in our toy experiments for instance, we use $T=4$, and stability occurs beyond a stopping time (layer) $T^*\sim 1$ or so).
	In other words, the exponential stability indicates that any layers beyond a certain stopping time $T^*$ can be dropped (in theory) from training.
	
	\begin{remark}[Stability trade-off] Let us briefly comment on the choice of $T$.
	\begin{itemize}
	\item The time $T$ ought to be large enough (namely, $T\geqslant T^*$ for some $T^*$ large enough) for \eqref{eq: turnpike.dl} to hold in the general case, in particular if one refers to the strategy presented in Section \ref{sec: 10}. 
	Therein, the minimal time $T^*>0$ for which the stability property holds, is seen to depend on the data $\left\{x^{(i)}, y^{(i)}\right\}_{i\in[n]}$ through \eqref{eq: 5.11} (the minimal norm control which interpolates the dataset), and typically this dependence can be exponential. A precise characterization of \eqref{eq: 5.11} in terms of the "complexity" of the data (or even the number of samples\footnote{Note that this may be an important caveat in the direct application of the techniques presented in Section \ref{sec: 10} to the neural ODE setting. Due to the multiplicative nature of the controls, when one applies the Gr\"onwall inequality in various instances, a factor of $n$ will appear, and ultimately, one could end up with constants in \eqref{eq: turnpike.dl} which depend on $n$ in an exponential way. We believe that this dependence could be sharpened by an appropriate scaling of the augmented functional. On another hand, estimate \eqref{eq: turnpike.dl} could also insinuate an interplay between $n$ and $T$, once the underlying constants have been sharpened. We believe that this presentation is solely a first step in obtaining a clearer picture.} $n$) is not known to our knowledge in this nonlinear setting. Partial results are provided in \cite{ruiz2020universal}, where a characterization in terms of the fractal dimension of the dataset is provided, but solely when referring to explicitly constructed controls/parameters which interpolate the data, and not those of minimal norm.
	\smallskip	
	\item
	In turn, the presence of a minimal time $T^*$ would mean that one still needs several layers -- namely a large enough $T$--, before entering in the stability regime, from which point on the training error can be ensured to be exponentially small. This insinuates a trade-off in how large $T$ should actually be. In our numerical simulations, we see that $T$ is generally rather small, but one should keep in mind that these are toy examples, and do not convey possible difficulties encountered for various real-life datasets, which may be significantly more complex and high-dimensional.
	\end{itemize}
	
	\noindent
	All in all, we believe that further clarification on the role and size of $T$ in the neural ODE context is an open problem.
	\end{remark}
	
	\begin{remark}[Non-coercive losses]
	Interestingly enough, the exponential stability stated in \eqref{eq: turnpike.dl} may also be observed for running costs which do not attain a minimum, as is the case for instance for cross-entropy losses. (See for example Figures \ref{fig1.ex2} -- \ref{fig2.ex2}.) 
	In this case, the cost functional is actually not coercive with respect to the state. As a matter of fact, $\*E(\*x(t))$ approaches $0$ only if every trajectory $\*x_i(t)$ for $i\in[n]$ grows to $+\infty$ in an appropriate direction in $\mathbb{R}^d$. Thus, in this non-coercive case, we do not interpret the numerical results below as a turnpike property for the state, since the turnpike would depend (and increase with) $T$. Rather, the trajectories $\*x(t)$ become almost stationary (due to the exponentially small error and controls) beyond time $t\geqslant T^*$ to some point $\overline{\*x}\in\mathbb{R}^{d_x}$, which is exponentially "sliding" to $+\infty$ as $T\to+\infty$.
	\end{remark}
			
\begin{figure}[h]
	\centering
	\includegraphics[scale=0.5]{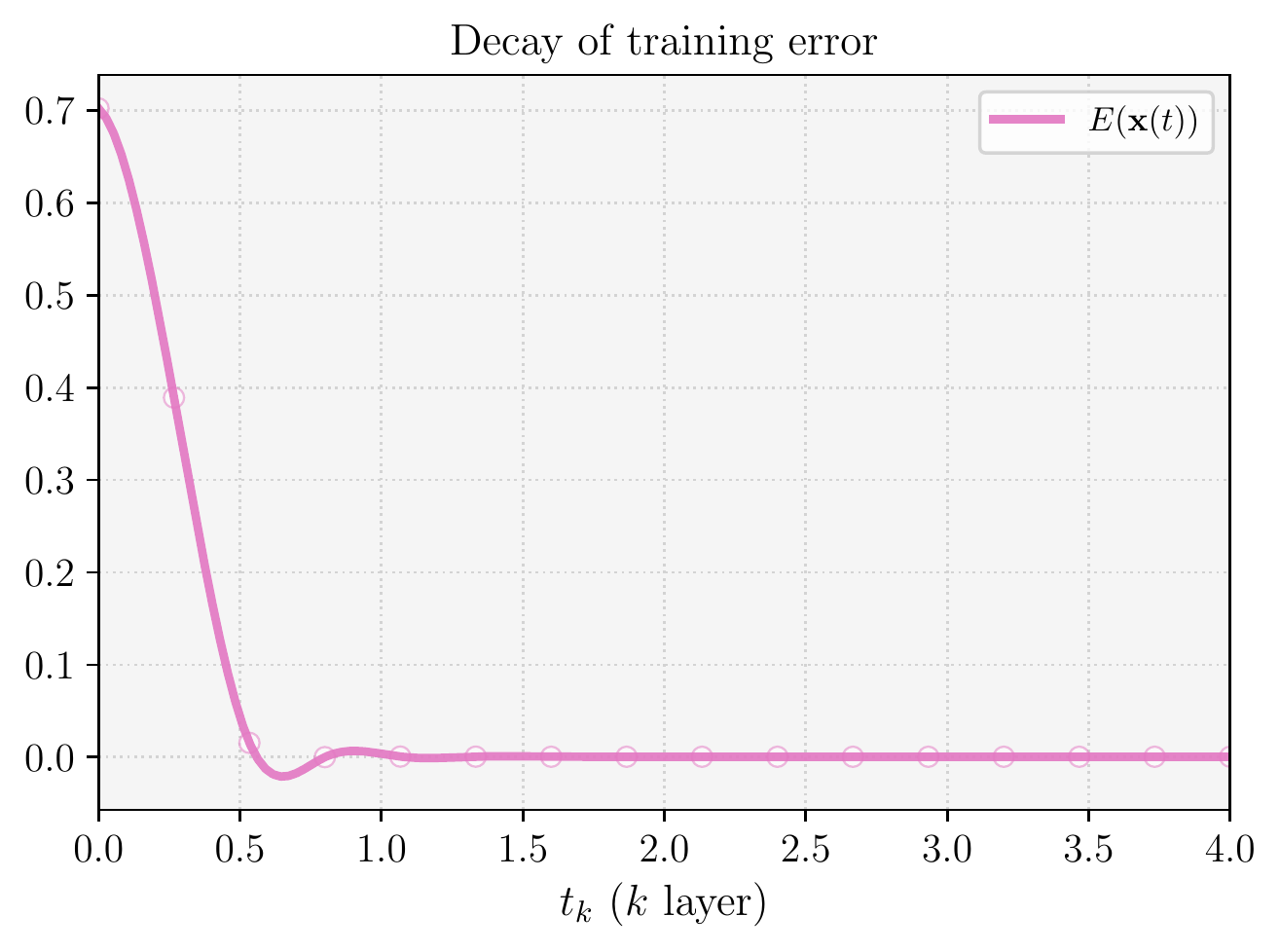}
\includegraphics[scale=0.5]{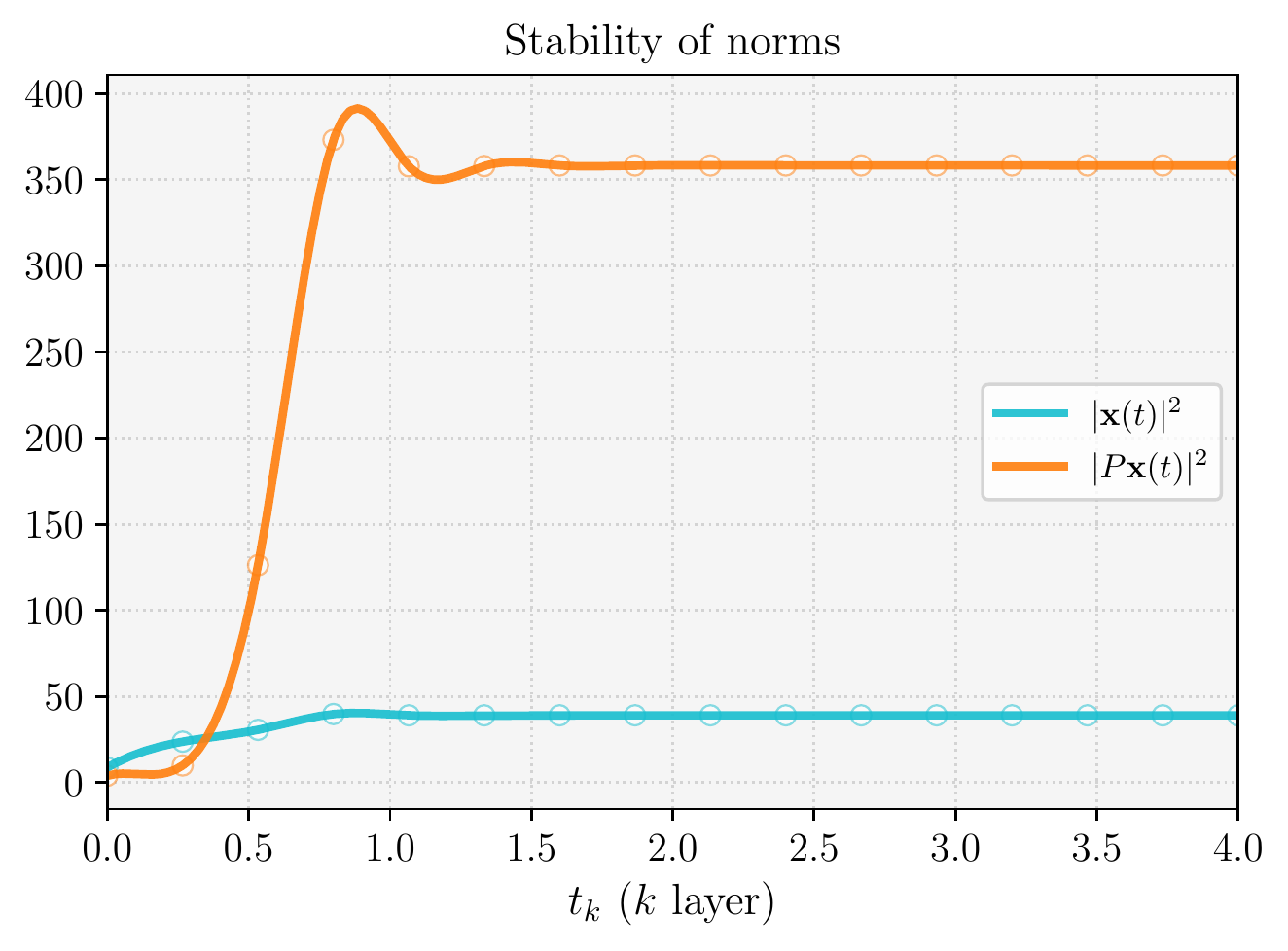}
\caption{Decay of the training error (\textit{left}) and "stabilization" of the trained trajectories $\{\*x_i(t)\}_{i\in[n]}$ (solutions to \eqref{eq: 5.8}) and $\{P\*x_i(t)\}_{i\in[n]}$ (\textit{right}) for $t\in[0, 4]$. We see that the error reaches $0$ and trajectories become almost stationary in time $\sim 1$, and since $\bigtriangleup t=0.25$, we solely need $4$ layers to train the network appropriately.}
\label{fig1.ex2}
\end{figure}

\begin{figure}[h]
\centering
\includegraphics[scale=0.6]{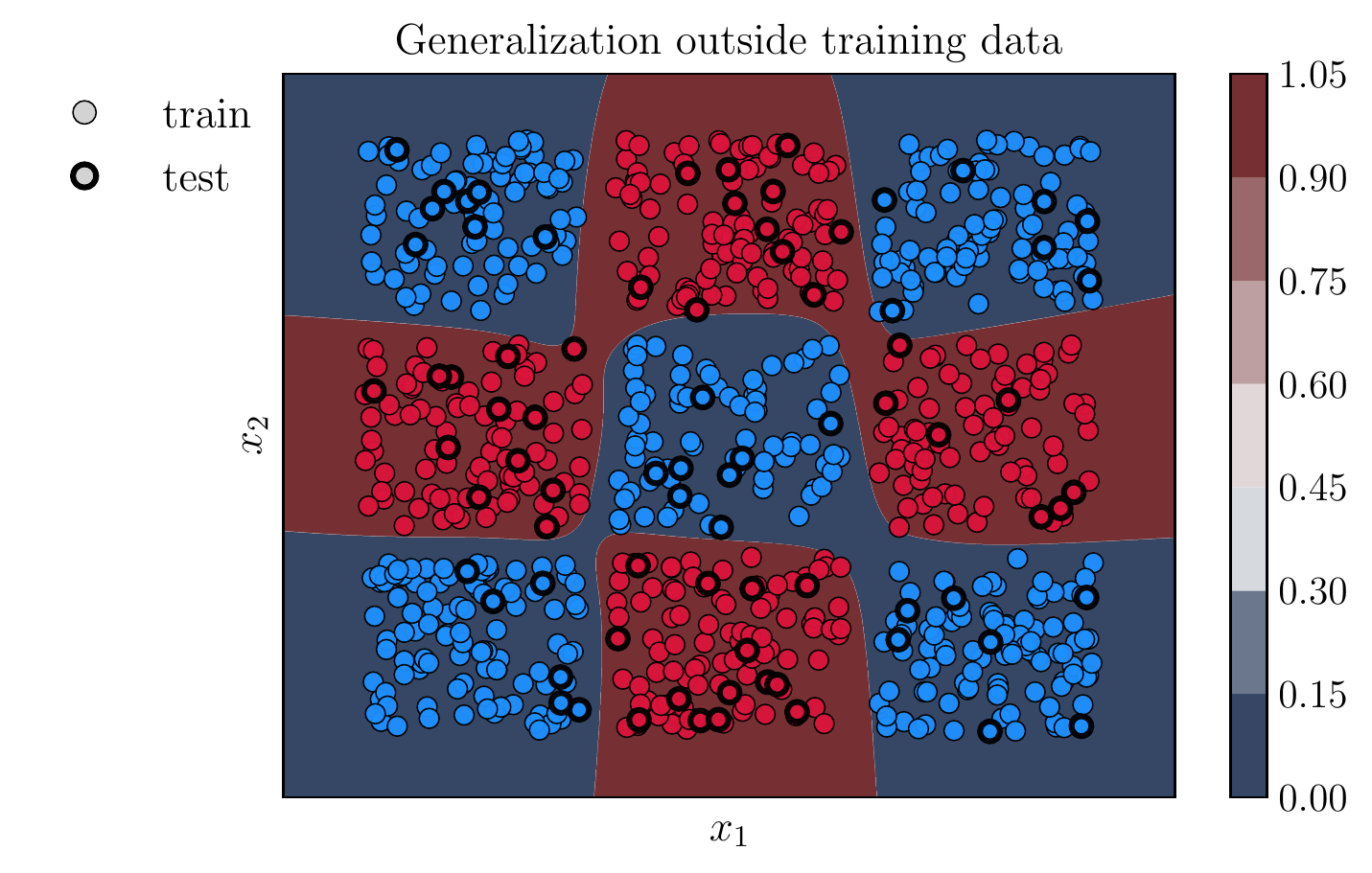}
\label{fig3.ex2}
\caption{We plot the trained classifier $P\*x_x(T)$ for any initial datum $x\in[-1.1,1.1]^2$, along with the training data, as well as test data. The shape of the dataset is captured accurately and thus the unknown function $f$ is approximated well, ensuring generalization, just as in Figure \ref{fig3.ex1}, but this regime is actually reached in time $t\sim 1$, unlike for Figure \ref{fig3.ex1} ($t=T=4$).}
\end{figure}

\begin{figure}[h]
\centering
\includegraphics[scale=0.6]{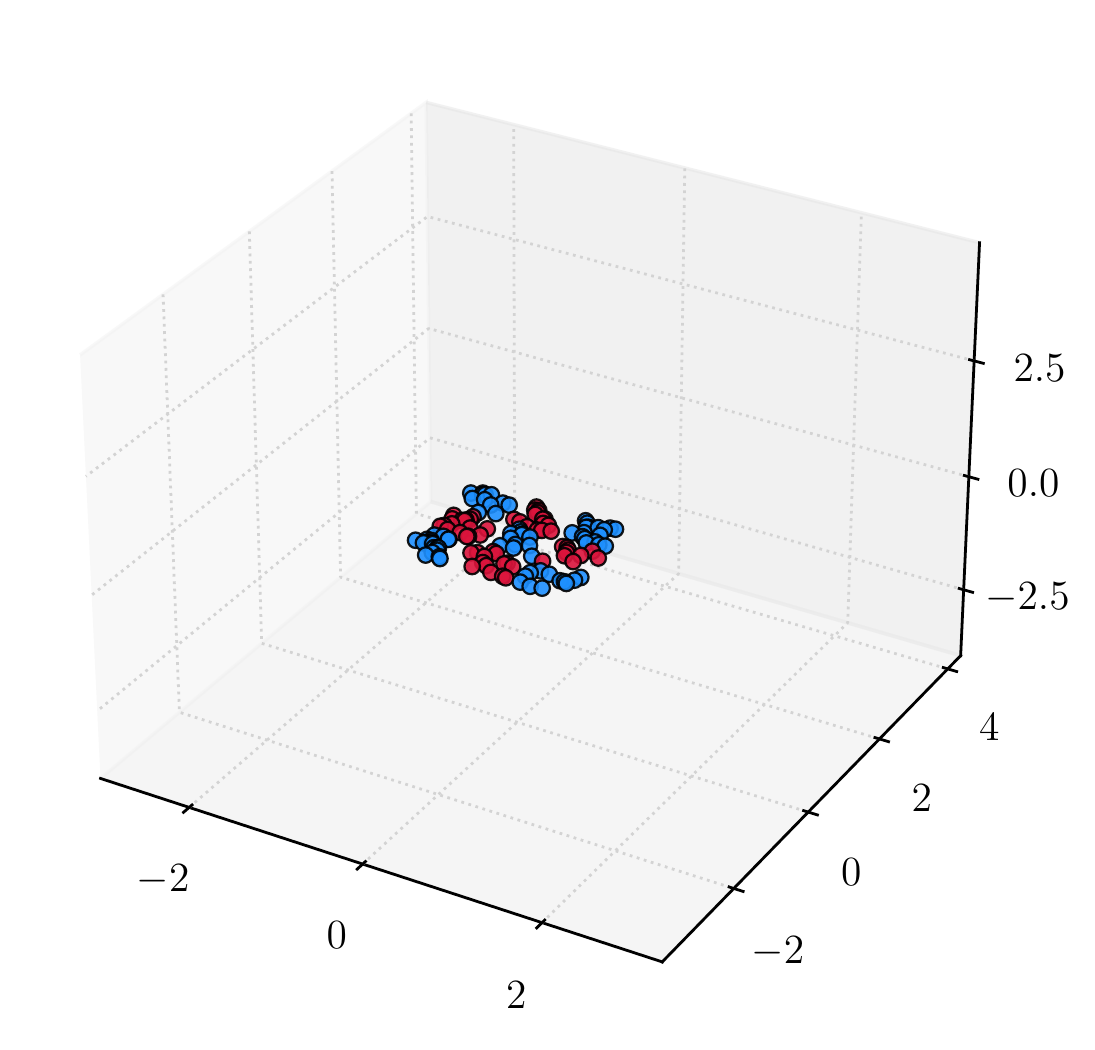}
\includegraphics[scale=0.6]{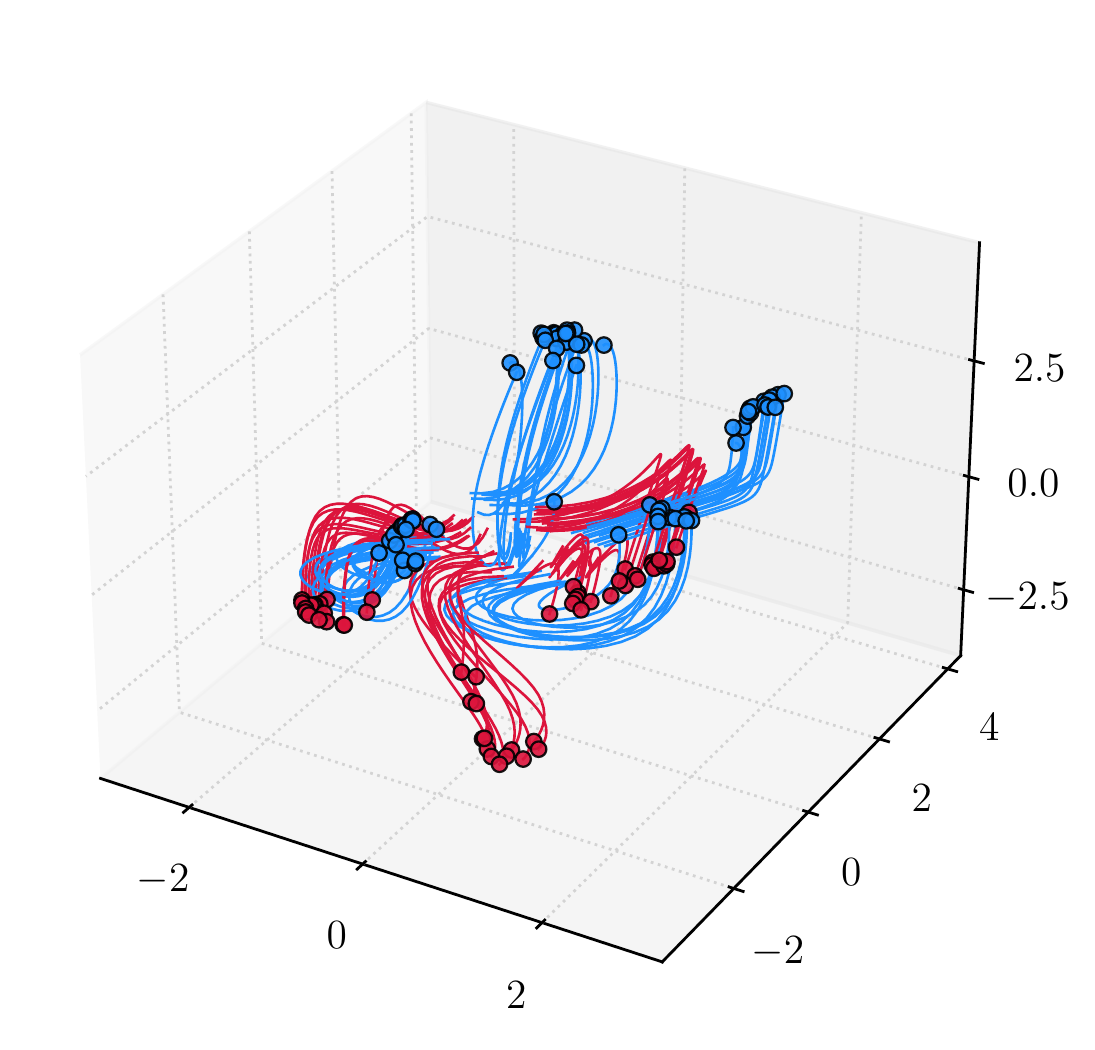}
\includegraphics[scale=0.6]{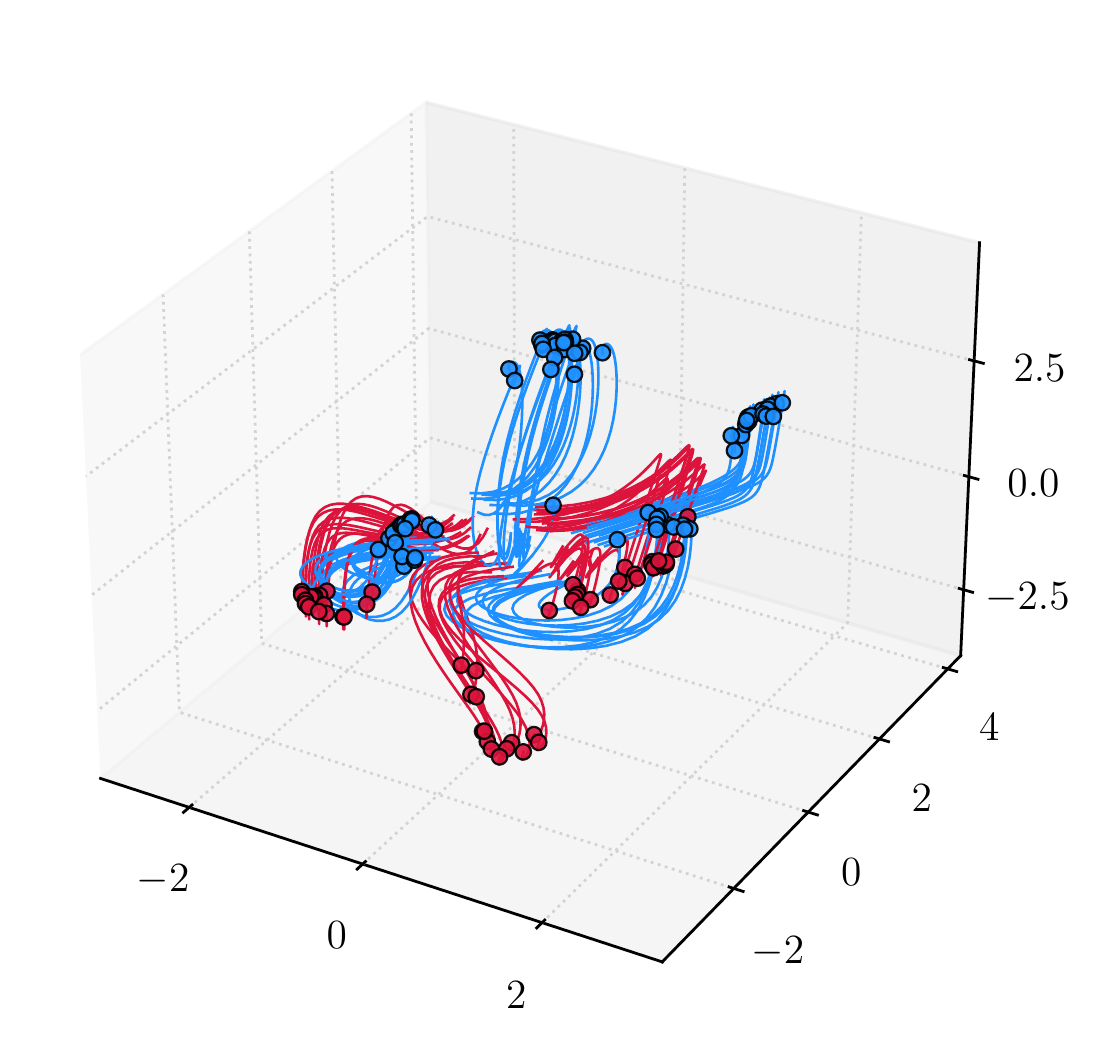}
\includegraphics[scale=0.6]{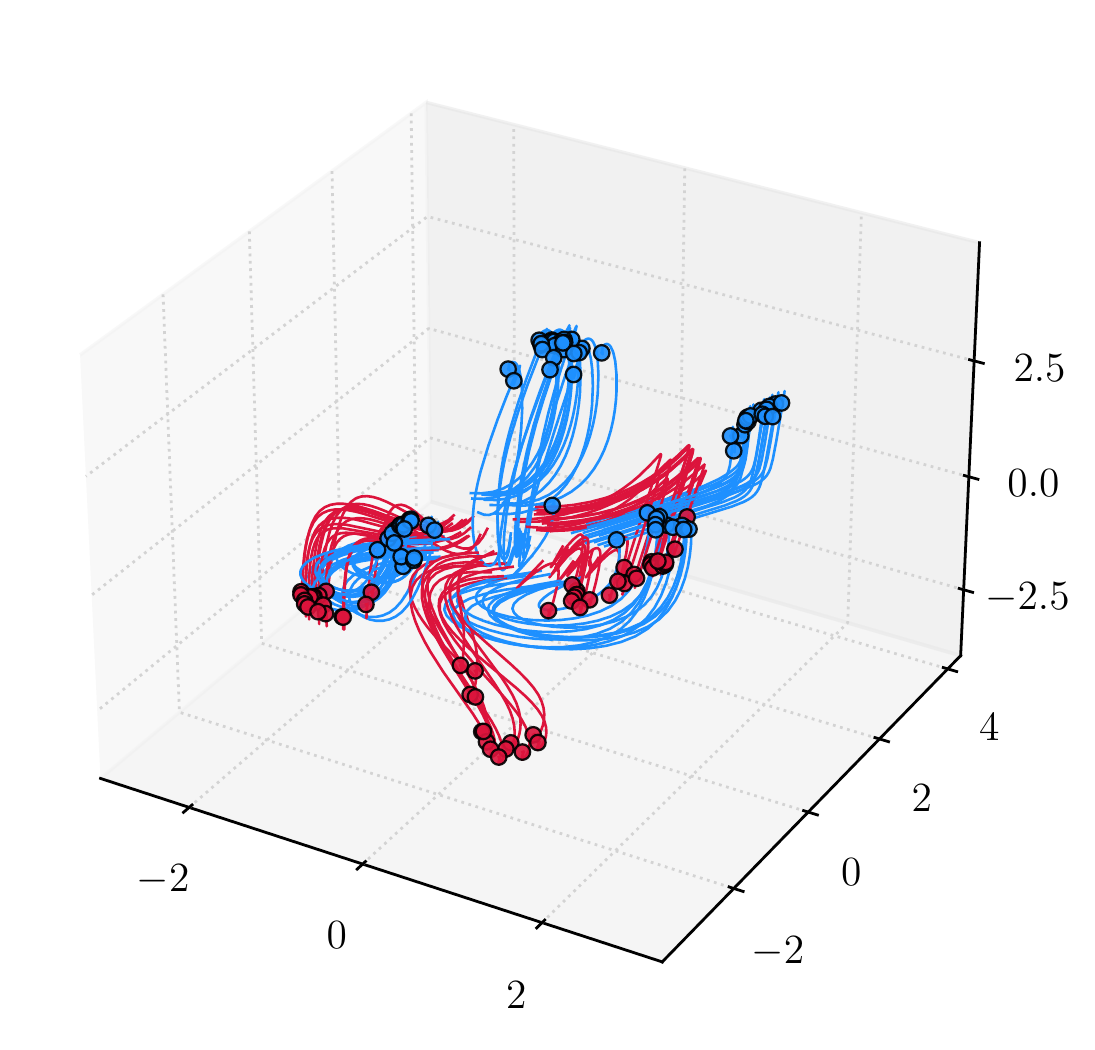}
\caption{The evolution of the trajectories $\{\*x_i(t)\}_{i\in[n]}$, solutions to \eqref{eq: 5.7} with data $\{\*x_i^0\}_{i\in[n]}$ (\emph{top left}), shown for $t\leqslant1$ (\emph{top right}), $t\leqslant 2$ (\emph{bottom left}), $t\leqslant T=4$ (\emph{bottom right}). The trajectories are stationary (in a separation regime) beyond time $t\geqslant1$.}
\label{fig2.ex2}
\end{figure}
	
	The decay may also be observed for ResNets (where the time-grid is coarse, with $\bigtriangleup t=1$), and also on datasets such as MNIST \cite{lecun2010mnist}. 
	In the latter, each input sample $x^{(i)}$ is a grayscale, $28\times 28$ image of a handwritten digit, and thus an element of $\mathbb{R}^{784}$; the dataset has $10$ labels: $y^{(i)} \in [10]$. 
	The appearance of the turnpike property for the corresponding ResNet, i.e. the discretized ODE on a coarse mesh (which has been shown to hold independently in \cite{faulwasser2021turnpikeML}, by making use of dissipativity arguments) may also be interpreted as a stability guarantee for the forward Euler scheme. 
	
	To justify these claims, we make use of \eqref{eq: 5.8} and a forward Euler scheme to obtain a corresponding ResNet with fixed time-step $\bigtriangleup t=1$, $T=20$, $d_{\text{hid}} = 32$ and $\sigma\equiv\tanh$. 
	We make use of fully connected layers only.
	The output layer is parametrized by $Px = p_1x+p_2$, where $p_1\in\mathbb{R}^{10\times784}$, $p_2\in\mathbb{R}^{10}$ are part of the optimization variables.
	We show the results of the experiments in Figures \ref{fig: figure.mnist.turnpike.1} -- \ref{fig2.ex4}. 
	Analog experiments for Fashion-MNIST are shown in Figures \ref{fig2.ex6} -- \ref{fig3.ex6}.

	\begin{figure}[h!] 
	\centering
	\includegraphics[scale=0.5]{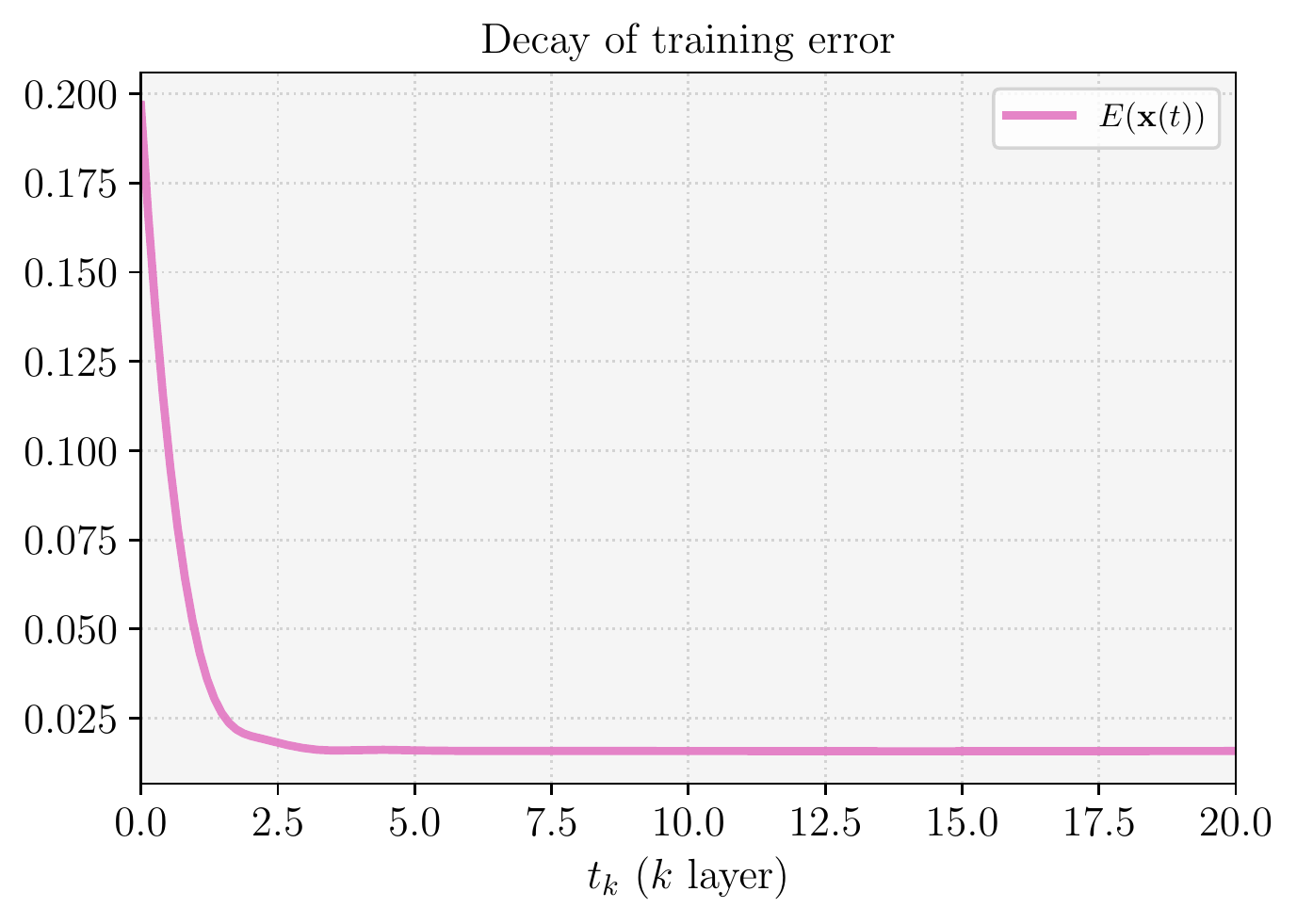}
	\includegraphics[scale=0.5]{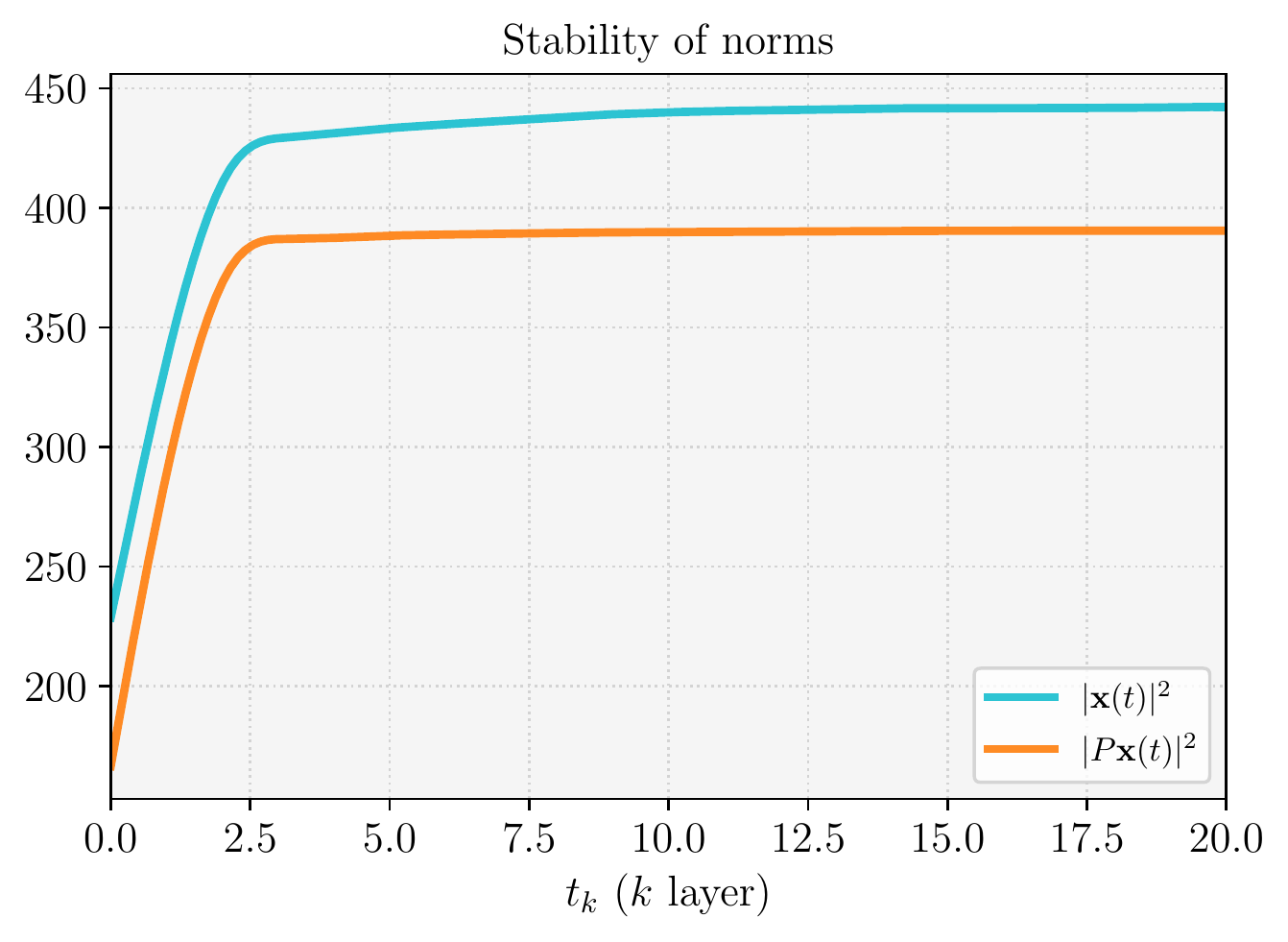}
	\caption{Decay of the training error (\textit{left}) and "stabilization" of the trained trajectories $\*x(t)$ and $\{P\*x_i(t)\}_{i\in[n]}$ (\textit{right}).}
	\label{fig: figure.mnist.turnpike.1}
	\end{figure}
	
	\begin{figure}
	\center
	\includegraphics[scale=1]{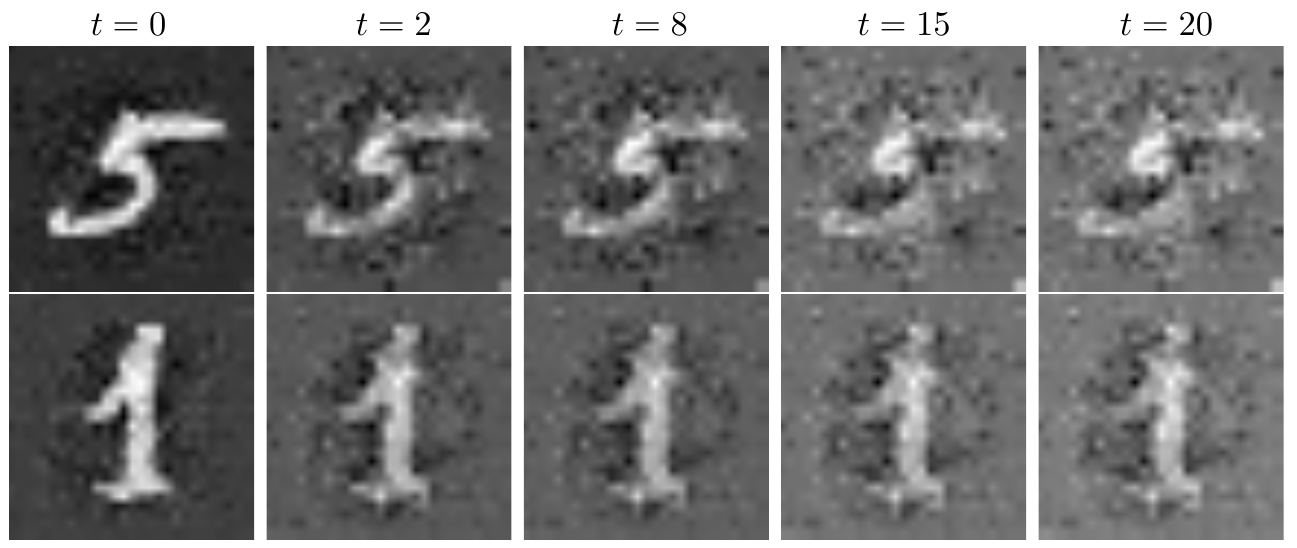}
	\caption{Evolution of two individual samples $\*x_i(t)\in\mathbb{R}^{784}$ mapped onto a $28\times28$ grid.  
	Each trajectory reaches some stationary configuration. 
	The trained model tends to "diffuse" the input signal ahead of classifying via the softmax applied to $P\*x_i(t) \in \mathbb{R}^{10}$.}
	\end{figure}
	
	\begin{figure}
	\centering
	\includegraphics[scale=0.5]{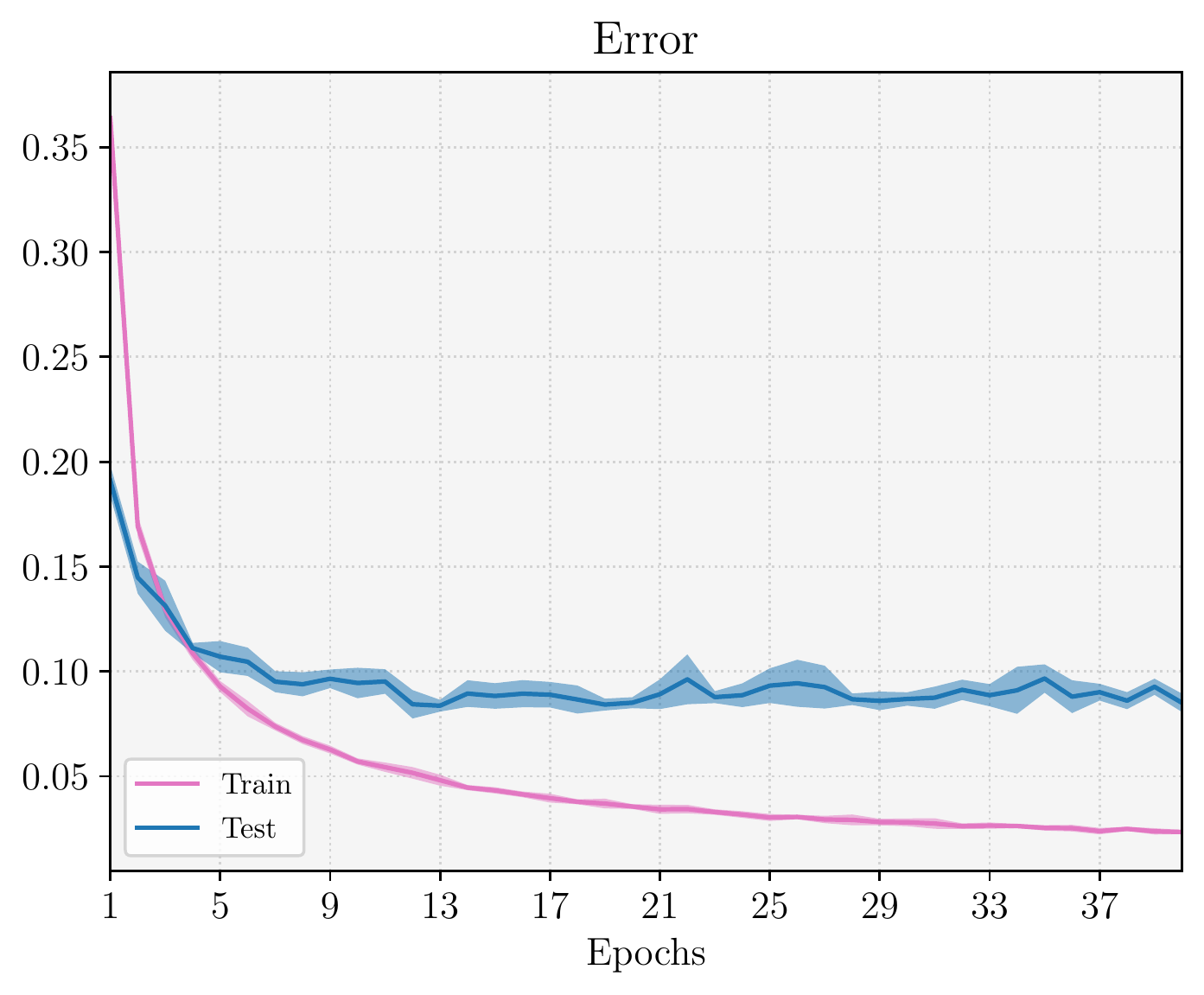}
	\includegraphics[scale=0.5]{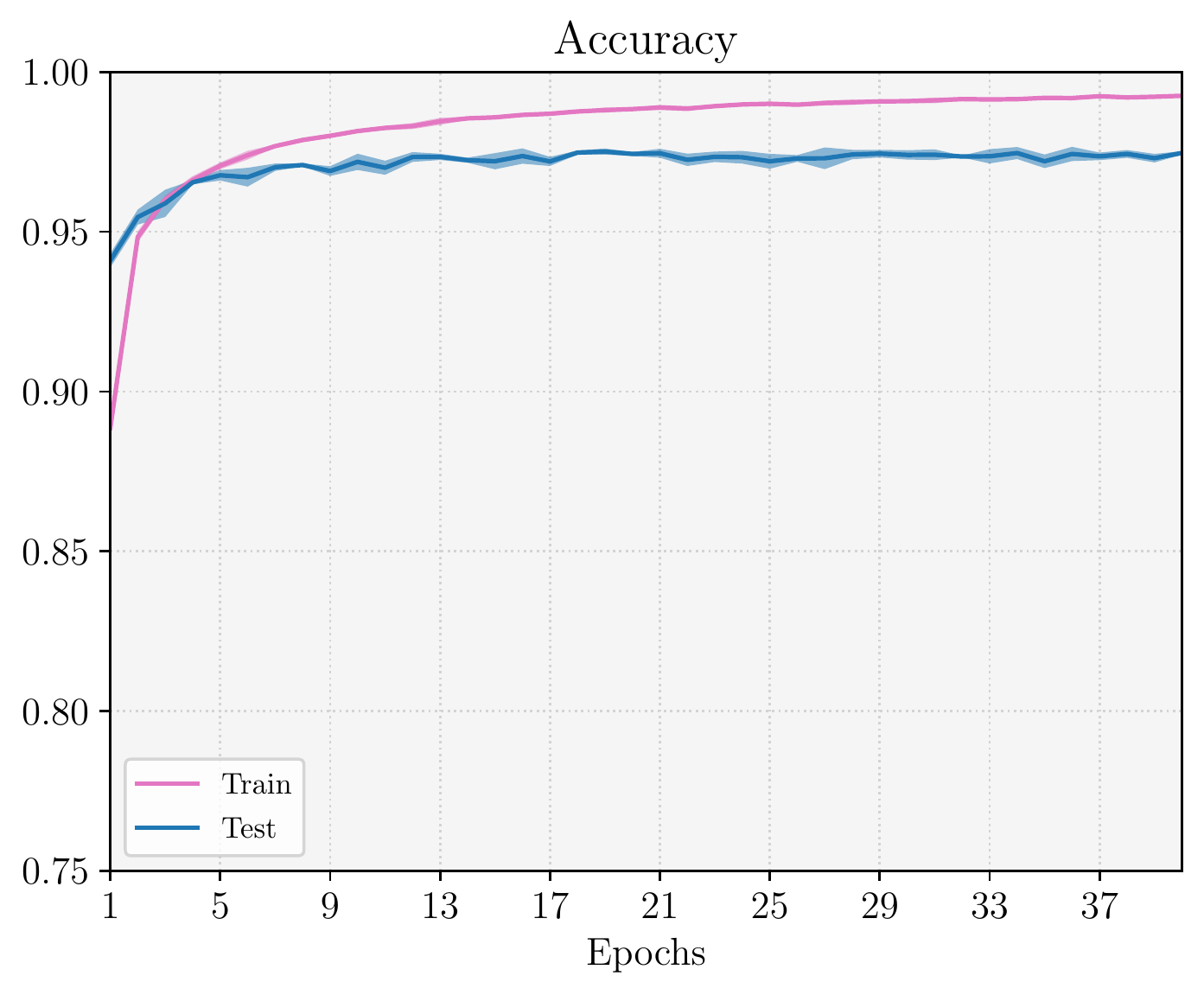}
	\caption{Validation error and accuracy over epochs (experiments repeated $10$ times). Generalization is not compromised due to the introduction of an integrated empirical risk.}
	\label{fig2.ex4}
	\end{figure}

	\begin{figure}
	\center
	\includegraphics[scale=1]{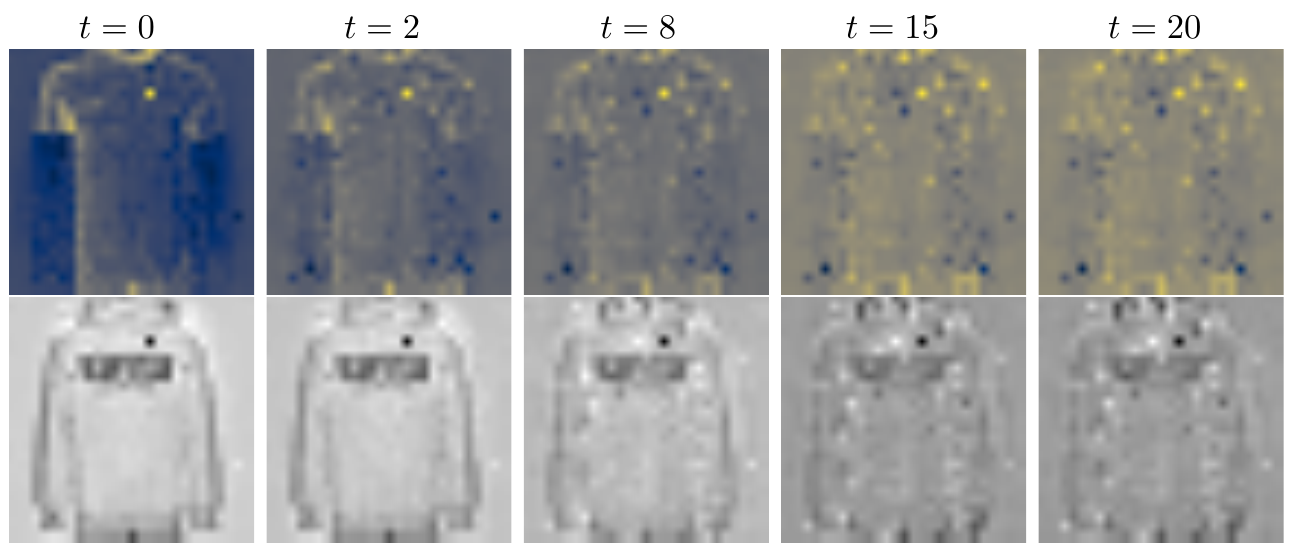}
	\caption{The evolution of two individual samples $\*x_i(t)\in\mathbb{R}^{784}$ mapped onto a $28\times28$ grid. Images are grayscale, but a different colormap is used to enhance visibility.}
	\label{fig2.ex6}
	\end{figure}
	
	\begin{figure}
	\centering
	\includegraphics[scale=0.5]{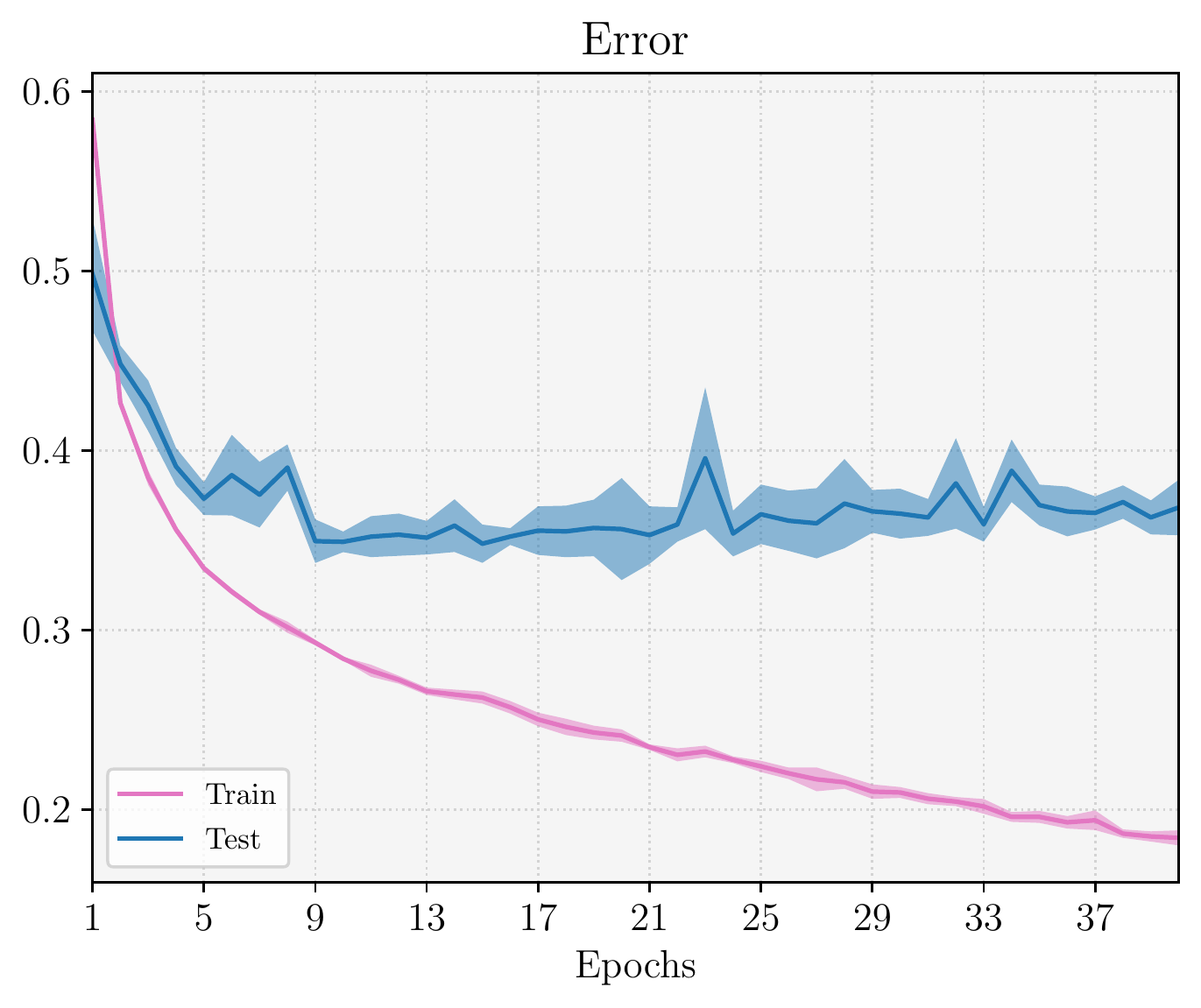}
	\includegraphics[scale=0.5]{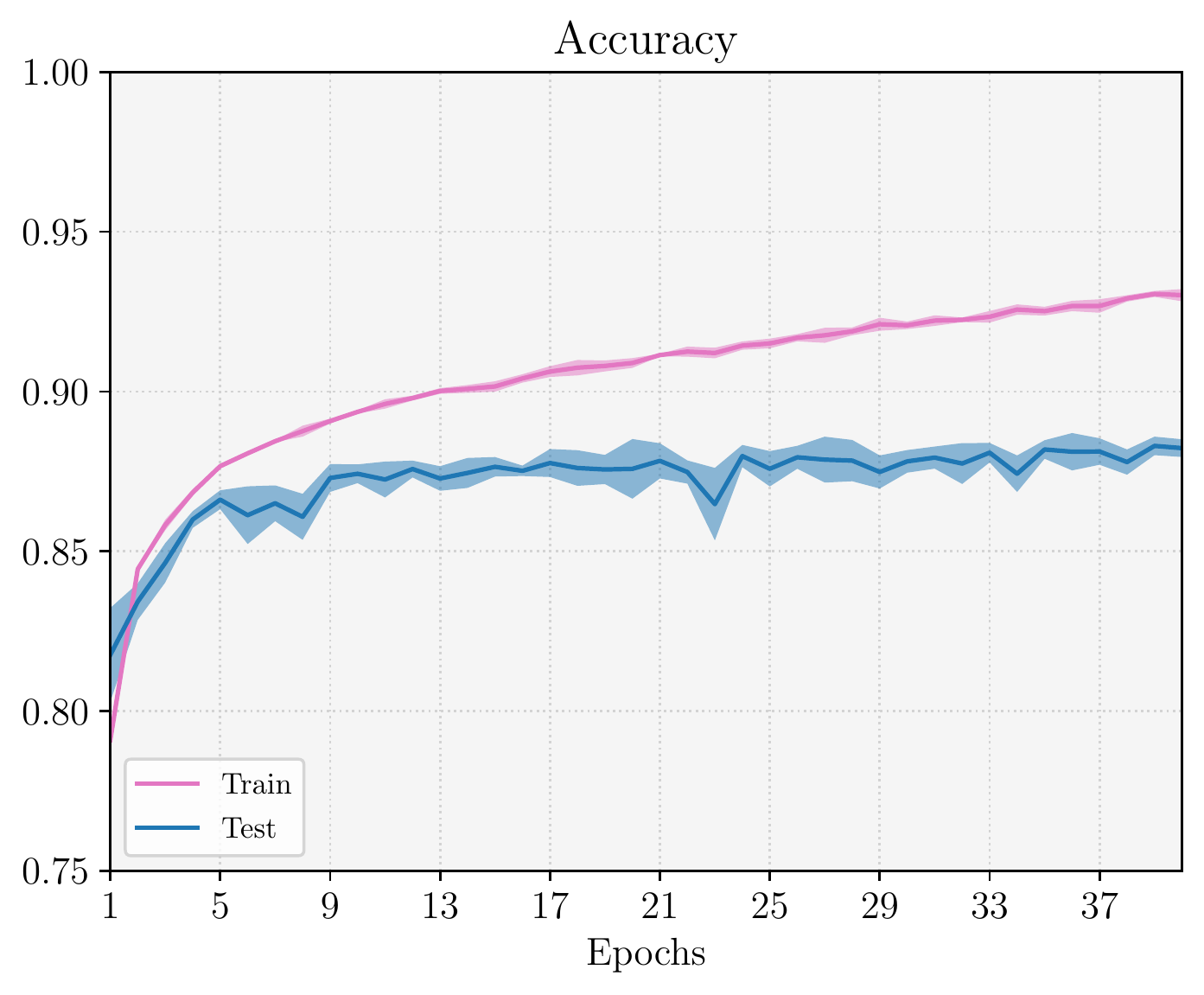}
	\caption{Validation error and accuracy over training epochs (experiments repeated $10$ times). The lower accuracy with respect to state of the art configurations is due to the simplified architecture.}
	\label{fig3.ex6}
	\end{figure}

\section{Further topics} \label{sec: 14}

\subsection{Model predictive control}

The turnpike property has also been used in the design of adequate temporal grids for numerical discretizations which are moulded to  \emph{model predictive control} (MPC) feedback loops. We henceforth follow \cite{grune2019sensitivity}. 
Model predictive control (\cite{garcia1989model, grune2017nonlinear}) is one of the most successful paradigms in contemporary control theory, with reliable performance in several practical applications, ranging from chemical to aerospace engineering. For a given, arbitrary, time-dependent optimal control problem set in a time horizon $[0,T]$, which we designate by OCP, in which one looks for an optimal control $u(t)$ and associated state $y(t)$ solving an ODE or PDE, the standard MPC algorithm can roughly be summarized in Algorithm 2. 

\begin{algorithm}[h!] \label{algo: mpc}
\SetAlgoLined
\noindent
Initialize $T>\tau>0$, $k=0$, $K>0$, and $y(0)$\;
\While{$k<K$}{
  Solve OCP on $[0,T]$ with initial data $y(k\tau)$, giving control $u^k$\;
  Solve ODE or PDE on $[0,T]$ with initial data $y(k\tau)$ and control $u^k$, giving state $y^k$\;
  Set $y(t):= y^k(t-k\tau)$ and $u(t):=u^k(t-k\tau)$ for $t\in[k\tau,(k+1)\tau]$\;
  $k\leftarrow k+1$
 }
\caption{Model predictive control (MPC).}
\end{algorithm}

Note that in Algorithm 2, 
 one would normally use a uniform mesh for discretizing the time interval $[0,T]$, and use one's favorite quadrature formula to integrate the underlying ODE (or spatially-discretized PDE). And since the MPC algorithm only implements the first part of the trajectory until a time $\tau$, one is particularly interested in a high accuracy of the computed control on $[0,\tau]$.
Yet, the turnpike property indicates the precise distribution of the times for which a resolution of the evolutionary problem is needed: except for the boundary layers near $t=0$ and $t=T$, the optimal pairs for the time-dependent problem are essentially constant. As observed in \cite{grune2019sensitivity}, instead of considering a conventional uniform grid with $n_t\geqslant1$ nodes, one can also construct a turnpike-adapted grid $\{t_j\}_{j=0}^{n_t-1}$ by solving 
\begin{equation*}
\int_{t_j}^{t_{j+1}} \left(e^{-\lambda t} + e^{-\lambda(T-t)}\right) \diff t = C, \hspace{1cm} \text{ for all } j\in\{0,\ldots,n_t-2\}.
\end{equation*}
Here the constants $\lambda>0$ and $C>0$ are externally tuned. The numerical experiments performed in \cite{grune2019sensitivity}, in the case where the target is a steady state, and thus only the term $e^{-\lambda t}$ is needed in the above construction, insinuate a significant reduction in the number $n_t$ of nodes needed to render the value of the cost functional in the OCP near $0$, when compared to a uniform grid. We also refer the reader \cite{grune2020efficient} for a recent improvement which combines spatial mesh refinement in the context of goal-oriented MPC. 
All in all, these ideas are quite broad and go beyond solely MPC design, being in the spirit of what was presented in the above discussion on deep learning.

\subsection{Greedy algorithms}

In many practical applications, robustness of the optimal controls with respect to various parameters for the underlying PDE (for instance, diffusivity or conductivity coefficients, Reynolds number, and so on) needs to be ensured to have a viable policy. 
This should in turn result in the consideration of a parameter dependent optimal control problem. 
For instance, consider 
\begin{equation} \label{eq: ocp.parameter}
\inf_{\substack{u\in L^2((0,T)\times\omega) \\ y \text{ solves } \eqref{eq: heat.parameter}}} \int_0^T \|y(t)-y_d\|^2_{L^2(\Omega)} + \int_0^T \|u(t)\|_{L^2(\omega)}^2 \diff t,
\end{equation}
where 
\begin{equation} \label{eq: heat.parameter}
\begin{cases}
\partial_t y - \nabla \cdot \left(a(x, \nu)\nabla y\right) + c(x, \nu) y = u1_\omega &\text{ in } (0,T)\times\Omega,\\
y= 0 &\text{ in } (0,T)\times\partial\Omega,\\
y_{|_{t=0}} = y^0 &\text{ in } \Omega.
\end{cases}
\end{equation}
Here the coefficients $a(\cdot, \nu) \in L^\infty(\Omega)$ and $c(\cdot, \nu) \in L^\infty(\Omega)$ are such that 
$$Ay=-\nabla \cdot \left(a(\cdot, \nu) \nabla y\right) + c(\cdot, \nu) y$$ 
is elliptic, and $\nu\in\mathbb{R}^d$ is a parameter. Of course, now, $y=y(t,x;\nu)$ will depend on the parameter $\nu$. But from the Pontryagin Maximum Principle, one sees that the optimal control $u_T$ will be given by $u_T\equiv p_T1_\omega$, where $p_T$ is the adjoint state, which will thus depend on the parameter $\nu$. Thus, one has to solve \eqref{eq: ocp.parameter} for each new choice of $\nu$. This, combined with the time-dependence, renders the above optimal control problem computationally unfeasible due to the exorbitant dimensionality it manifests. 

The turnpike property can however serve as a remedy to this dimensionality issue. One can aim to rather consider the corresponding elliptic problem,
namely
\begin{equation} \label{eq: static.ocp.parameter}
\inf_{\substack{u\in L^2(\omega)\\ y \text{ solves } \eqref{eq: poisson.parameter}}} \|y-y_d\|^2_{L^2(\Omega)} + \|u\|_{L^2(\omega)}^2,
\end{equation}
where 
\begin{equation} \label{eq: poisson.parameter}
\begin{cases}
-\nabla \cdot \left(a(x,\nu) y\right) + c(x, \nu) = u1_\omega &\text{ in }\Omega,\\
y=0 &\text{ on }\partial\Omega,
\end{cases}
\end{equation}
 and thus reduce the dimensionality by removing the time-dependence (this is, in some sense, a model reduction step). On another hand, the parameter dependence can be addressed in an optimal manner by making use of greedy algorithms, to determine the most relevant values of a parameter-space and provide the best possible approximation of the set of parameter dependent optimal controls. 

This is the approach taken in \cite{hernandez2019greedy}, inspired by \cite{lazar2016greedy}. 
Let us provide brief details.
Assume that the parameter $\nu$ ranges within a compact set $\mathcal{K}\subset\mathbb{R}^d$, and that the functions $\nu\mapsto a(\cdot, \nu)$ and $\nu\mapsto c(\cdot, \nu)$ are holomorphic\footnote{The holomorphy assumption is needed to ensure a polynomial decay of the so-called \emph{Kolmogorov $n$-widths}, used to quantify the best approximation via greedy algorithms. We refer to \cite{devore2013greedy} for details.}. 
Now consider the set of controls solving \eqref{eq: static.ocp.parameter} for each $\nu\in \mathcal{K}$, namely
\begin{equation*}
\mathscr{U}_s = \left\{ \overline{u}(\cdot; \nu)\,\Bigm|\, \nu \in \mathcal{K}\right\}.
\end{equation*}
By making use of the characterization of optimal controls via the adjoint state, and making use of some classic functional analysis arguments, it can be seen that $\mathscr{U}_s$ is a compact subset of $L^2(\omega)$. 
Now, given $\varepsilon>0$, one seeks to determine a family of parameters $\{\nu_1, \ldots, \nu_n\} \subset \mathcal{K}$, with $n=n(\varepsilon)\geqslant1$, so that the corresponding controls $u_{\nu_1}, \ldots, u_{\nu_n}$ are such that for every $\nu\in \mathcal{K}$, there exists $u_\nu^\star\in\text{span}\{u_{\nu_1},\ldots,u_{\nu_n}\}$ such that
\begin{equation*}
\|u_\nu^\star-\overline{u}_\nu\|_{L^2(\omega)}\leqslant\varepsilon.
\end{equation*}
This problem can subsequently be solved by making use of greedy algorithms (also known as matching pursuit) suggested in \cite{hernandez2019greedy} and established theory from \cite{barron2008approximation, devore2013greedy, cohen2015approximation}, for instance.
The greedy algorithm theory is mostly done for elliptic parameter dependent PDEs. In the optimal control setting, the turnpike property ensures that the elliptic theory can then be transferred to the parabolic case. 
In \cite{hernandez2019greedy}, the optimal control associated to parameters $\nu$ found by weak greedy algorithms is seen to corroborate this fact through numerical simulations.

\subsection{Mean field games}

To conclude, we briefly discuss some links and appearances of the turnpike properties in \emph{mean field games}. These are coupled systems modeling the joint interactions of multiple and many agents, consisting of a Hamilton-Jacobi-Bellman equation, evolving backward in time, governing the computation of an optimal path for each agent, and a Fokker-Planck equation, evolving forward in time, governing the evolution of the density of the agents. Namely, these are systems of the form
\begin{equation} \label{eq: mfg}
\begin{cases}
-\partial_t u - \nu\Delta v + \*H(x,Du) = f^1(x,m) &\text{ in }(0,T)\times\mathbb{T}^d,\\
\partial_t m - \nu\Delta m + \nabla \cdot (m\, \partial_p \*H(x,Du)) = 0 &\text{ in } (0,T)\times\mathbb{T}^d,\\
m_{|_{t=0}} = m_0 &\text{ on }\mathbb{T}^d,\\
u_{|_{t=T}} = \phi(x,m(T)) &\text{ on }\mathbb{T}^d.
\end{cases}
\end{equation}
Here $\mathbb{T}^d=\mathbb{R}^d/\mathbb{Z}^d$ denotes the $d$-dimensional flat torus --  this consideration of spatial domain is done for simplicity regarding boundary conditions. Mean field games models were introduced in \cite{lasry2007mean}.

Let us begin by motivating the meaning of the equations in \eqref{eq: mfg}. 
Suppose that an agent (player) is at location $X_0=x\in\mathbb{R}^d$ at time $\tau=0$, and wishes to "improve" its position $X_T$ at time $\tau=T$. An elementary approach in proceeding with the resolution of this problem would be to endow the agent with controls $\alpha_\tau$ at all time $\tau$, and solve
\begin{equation*}
\begin{cases}
\dot{X}_\tau= \alpha_\tau &\text{ for } \tau\in(0,T),\\
X_0=x.
\end{cases}
\end{equation*}
(We use the notation $X_\tau$ to stay in line with common notation in stochastic calculus.)
One finds these controls $\alpha_\tau$ by minimizing a cost, which in this theory also accounts for the density of all the agents $m(t,x)$ at time $t$ and position $x$. In the absence of noise in the dynamics, the value function will solve a hyperbolic Hamilton-Jacobi-Bellman equation, which is often not very desirable from an analytical point of view, and does not account for inherent uncertainties. Thus, one rather models the position of the agent at time $\tau$ by a stochastic differential equation (SDE) of the form
\begin{equation} \label{eq: sde}
\begin{cases}
\diff X_\tau= \alpha_\tau \diff \tau + \nu \diff B_\tau &\text{ for } \tau\in(0,T),\\
X_0=x.
\end{cases}
\end{equation}
Here, $\nu>0$, and $\{B_\tau\}_{\tau\geqslant0}$ is the standard $d$-dimensional Brownian motion. The equation \eqref{eq: sde} is interpreted in the sense of the Duhamel formula. The minimization of the cost functional then reads
\begin{equation*}
\inf_{\alpha} \mathbb{E}\left(\phi(X_T,m(T))+\int_t^T\left(f^0(X_\tau,\alpha_\tau) + f^1(X_\tau, m(\tau))\right)\diff \tau \right),
\end{equation*}
where $f^0, f^1$ and $\phi$ are given. Note that the evolution of the measure $m(t)$ enters as a parameter. One solves this minimization problem by taking a Hamilton-Jacobi approach, and defines the value function 
\begin{equation*}
u(t,x) = \inf_\alpha \mathbb{E}\left(\phi(X_T,m(T))+\int_t^T\left(f^0(X_\tau,\alpha_\tau) + f^1(X_\tau, m(\tau))\right)\diff \tau \right),
\end{equation*}
where $\alpha$ is an admissible control such that $X$ solves \eqref{eq: sde}. 
It can then be shown that $u(t,x)$ solves the first equation in \eqref{eq: mfg}, where
\begin{equation*}
\*H(x,p)=\sup_{q} \left(-q \cdot p - f^0(x,q)\right).
\end{equation*} 
Moreover, given the value function $u$, it is known that the agent plays in the optimal way by using the feedback control $\alpha^\star(t,x)=-\partial_p \*H(x,Du(t,x))$.
Furthermore, if all agents have independent associated noises and follow the same strategy as above, the law of large numbers (applied to the number of agents) leads one to deduce that the density $m$ of the agents satisfies the second equation in \eqref{eq: mfg}. This leads one to the prefix "mean-field".
The game-theoretical interpretation comes from seeing \eqref{eq: mfg} as a description of a Nash equilibrium (see \cite{lasry2007mean, cardaliaguet2010notes} for details). 

Given the above derivation of \eqref{eq: mfg}, one sees the backward HJB equation as representing the agents’ decisions based on where they want to be in the future, while the forward Fokker-Planck equation as representing where they actually end up, based on their initial distribution.
Solving this coupled system of equations, one evolving backwards in time, and one evolving forwards in time, is highly non-trivial, and in some cases existence or uniqueness, or both, break down.

In fact, one notes a striking similarity to optimality systems we encountered in previous discussions on optimal control for nonlinear PDEs.
It is this similarity that leads to a connection with the turnpike property, when studying the asymptotics of solutions to \eqref{eq: mfg}. The MFG theory can actually be seen as a catalyst in the turnpike one. Indeed, works on a double-arc exponential estimate for \eqref{eq: mfg} (e.g., \cite{cardaliaguet2012long, cardaliaguet2013long}, and also more recently \cite{cardaliaguet2019long, cardaliaguet2020introduction}) precede and have motivated those on the turnpike property (first appearing in \cite{porretta2013long}). 
Regarding this exponential estimate, in the recent paper \cite{cirant2021long} for instance, the authors roughly show under the assumption that 
the Hamiltonian $\*H(x,p)$ is $C^2$ and locally Lipschitz with respect to $p$, and locally convex with respect to $p$, with $f^0(x,\alpha), f^1(\alpha, m)$ also satisfying Lipschitz assumptions, and being locally bounded, then any classical solution $(u_T,m_T)$ to \eqref{eq: mfg} satisfies 
\begin{equation*}
\|m_T(t)-\overline{m}\|_{L^\infty(\mathbb{T}^d)} + \|Du_T(t)-D\overline{u}\|_{L^\infty(\mathbb{T}^d)} \leqslant C\left(e^{-\omega t} + e^{-\omega(T-t)}\right)
\end{equation*}
for all $t\in(1,T-1)$ and for some $C,\omega>0$ independent of $T$, where $(\overline{u}, \overline{m}, \overline{\lambda})$ is the unique solution to 
\begin{equation} \label{eq: mfg.static}
\begin{cases}
\overline{\lambda} - \nu\Delta\overline{u} +\*H(x,D\overline{u})=f^1(x,\overline{m}) &\text{ on } \mathbb{T}^d,\\
-\nu\Delta \overline{m} - \nabla \cdot(\overline{m}\,\partial_p\*H(x,D\overline{u}))=0 &\text{ on } \mathbb{T}^d,\\
\int_{\mathbb{T}^d} \overline{m}= 1, \, \int_{\mathbb{T}^d} \overline{u} = 0. 
\end{cases}
\end{equation}
This is an exponential turnpike-like property. Indeed, one sees system \eqref{eq: mfg.static} as similar to the steady optimality system in turnpike theory. Moreover, one can also readily connect the convergence of the value function $u$ to the corresponding steady problem to the behavior observed in the HJB asymptotics via turnpike.

\part{Epilogue}

Summarizing, the turnpike property occurs naturally and generically among a variety of optimization problems encountered in applications of different nature. These range from shape design in aerodynamics, to stability estimates for residual neural networks in machine learning. 
This being said, the full mathematical theory of turnpike is far from mature -- non LQ problems, or bilinear control systems, could give rise to all kinds of different turnpike-like patterns, among other open problems. As a matter of fact, even a precise mathematical definition of what \emph{the turnpike} may be, a priori, for a general optimal control problem, is still not completely clear.
Our goal, through this article, was to illustrate the cases where the picture is (relatively) clear, and those where further analysis is needed.

We saw that,  for LQ problems -- the staple of contemporary optimal control theory -- turnpike holds whenever the cost functional is sufficiently coercive with respect to the state and control, and under natural stabilizability assumptions on the underlying ODE or PDE dynamics. The turnpike property may then be characterized by a spectrum of different definitions, ranging from integral or cardinal turnpike, to measure turnpike, all the way to the exponential, double arc characterization, which was the major theme of this work. 

As is natural in many problems in analysis, a local theory can then be developed for nonlinear problems -- nonlinear here implies that the underlying set wherein one optimizes, is not a linear space. The latter could be due, for instance, typically, to nonlinear underlying dynamics, but also to presence of specific nonlinear constraints on the control and/or state.
In the context of nonlinear dynamics, the linearization strategy comes along with smallness assumptions, in particular on the running target the trajectory seeks to match over time. Said smallness assumptions, which appear to be of a technical nature, when removed, raise the critical issue of characterizing the actual turnpike. 
Similar issues are raised when one forays away from the setting of quadratic functionals, in which, while the turnpike property can be seen to hold numerically (as seen, for instance, in the context of deep learning), a full picture of the underlying arguments is lacking. These considerations lead us to the following open problems.

\section{Open problems} \label{sec: open.problems}

\setcounter{tocdepth}{1}

\subsection{Alternative proofs in the LQ setting}

We have discussed, in depth, two strategies (which rely on very similar ideas, namely using a "corrected Riccati" feedback to decouple the optimality system) for proving the exponential turnpike property for LQ problems in the infinite-dimensional setting. We believe however, that there ought to be different ways to prove this result. 

\begin{itemize}
\item A first direction could be to cleanse the picture regarding \emph{scaling} and \emph{singular perturbation} ideas, somewhat inspired from boundary layer theory in fluid mechanics. To be more specific, consider for instance the optimal control problem 
\begin{equation} \label{eq: 15.1.0}
\inf_{\substack{u\\ y \text{ solves } \eqref{eq: 15.1.1}}}\frac{1}{T}\int_0^T \|y(t)-y_d\|^2_{\mathscr{H}} \diff t + \frac{1}{T}\int_0^T \|u(t)\|^2_{\mathscr{U}}\diff t
\end{equation}
where
\begin{equation} \label{eq: 15.1.1}
\begin{cases}
\partial_t y = Ay + Bu &\text{ in }(0,T),\\
y_{|_{t=0}} = y^0.
\end{cases}
\end{equation}
Setting $s=\frac{t}{T}$ and $\varepsilon=\frac{1}{T}$, problem \eqref{eq: 15.1.0} readily rewrites as 
\begin{equation} \label{eq: 15.1.3}
\inf_{\substack{u\\ y \text{ solves } \eqref{eq: 15.1.2}}}\int_0^1 \|y(s)-y_d\|^2_{\mathscr{H}} \diff s + \int_0^1 \|u(s)\|^2_{\mathscr{U}}\diff s
\end{equation}
where
\begin{equation} \label{eq: 15.1.2}
\begin{cases}
\varepsilon\partial_s y =Ay + Bu &\text{ in }(0,1),\\
y_{|s=0} = y^0.
\end{cases}
\end{equation}
One sees that when $T\to+\infty$ then $\varepsilon\to0$, and can, heuristically, stipulate some convergence of \eqref{eq: 15.1.3} to the steady problem
\begin{equation*}
\inf_{\substack{(u,y)\\ Ay+Bu=0}}\|y-y_d\|^2_{\mathscr{H}} +\|u\|^2_{\mathscr{U}}.
\end{equation*}
We are not aware if this direction has been fully developed in the existing literature, and we believe that doing so would be of paramount importance.
\smallskip

\item
On another hand, we had also discussed the so-called dissipativity strategy in the sense of Willems, which in the PDE context is only known to guarantee the weak, measure-turnpike property. We believe that a transparent study of whether dissipativity theory (which, as said in what precedes, is an open loop extension of the Lyapunov method) can be used to recover the results obtained by the Riccati-inspired approaches, is warranted. 
\end{itemize}

\subsection{Non-uniqueness and the turnpike set}

A major theme in this paper was the possibility of non-uniqueness of turnpikes for quadratic optimal control problems with underlying nonlinear PDE dynamics, whenever the running targets are large. The canonical example of this artifact is the cubic heat equation.
We also saw that the definition of the turnpike property depends on whether the target is time-dependent or not (and not just on its smallness), as periodic turnpike may occur whenever the target is time-dependent and periodic. It is thus rather necessary to provide a more general yet tractable characterization of the turnpike property, which would account for such scenarios. 
Such a characterization might naturally and probably come by making use of the value function for the infinite time horizon problem. This idea was already raised in \cite{trelat2018integral}.

\begin{itemize}
\item
Focusing on the setting where non-uniqueness arises, further clarity and theoretical underpinning is needed to characterize which one among the global minimizers is the actual turnpike. We have a \emph{turnpike set} $$\mathfrak{T}\subset\mathscr{H}\times\mathscr{U};$$
as before, $\mathscr{H}$ is the state space, and $\mathscr{U}$ is the control space. 
Given some solution $(y_T,p_T)$ of the transient optimality system under consideration (since, naturally, we cannot guarantee or expect uniqueness for the transient system, if it breaks down in the steady one), one would then characterize the turnpike property by an estimate such as
\begin{equation*}
\text{dist}\Big(\Big(y_T(t),p_T(t)\Big), \mathfrak{T}\Big) \leqslant C\Big(e^{-\lambda t} + e^{-\lambda (T-t)}\Big)
\end{equation*}
for all $t\in[0,T]$.  This would then mean that there exists \emph{some} point $(\overline{y},\overline{p})\in\mathfrak{T}$ to which $(y_T(t),p_T(t))$ is "near" in the sense of the above estimate. But characterizing said point remains an open problem.
\smallskip

\item
In \cite{trelat2020linear}, numerical examples show the competition of two global turnpikes. The author mentions that the turnpike is determined by measuring its proximity to the terminal conditions. 
If one looks at Figure \ref{fig: dario}, where two global minimizers for the cubic Poisson optimal control problem are shown, then one would look to see which of the two wells is the basin of attraction, and could do so by (numerically) computing the spectrum of the Hamiltonian matrix for the optimality system linearized around the minimizer within said well. 
A full picture of this artifact remains an open problem.
\end{itemize}

\subsection{Large targets for the semilinear heat equation}

Much in line with the above subject, new ideas and techniques are also required for proving a turnpike property for semilinear heat equations whenever the target $y_d$ is arbitrarily large. This is due to a lack of complete understanding of the linearized optimality system. 

To fix ideas, we consider the case of the cubic nonlinearity: $f(y)=y^3$. This is clearly a  dissipative system. 
The optimality system, when linearized and considered in perturbation variables, reads
\begin{equation*}
\begin{cases}
\partial_t \zeta - \Delta\zeta + 3\overline{y}^2\zeta = \varphi 1_\omega &\text{ in }(0,T)\times\Omega,\\
\partial_t \varphi +\Delta \varphi - 3\overline{y}^2\varphi = (1-6\overline{y}\overline{p}) \zeta &\text{ in }(0,T)\times\Omega.
\end{cases}
\end{equation*}
As pointed out in Section \ref{sec: 8}, a key point is to check the validity of the turnpike property for the linearized optimality system just above. This is complicated because of the term $1-6\overline{y}(x)\overline{p}(x)$, whose sign is difficult to determine for general large targets. Furthermore, due to non-uniqueness of steady minima, it is not evident which one among the multiple global minimizers would designate the turnpike. 

\subsection{Non-quadratic functionals} 
Another recurrent theme in this work was the setting of quadratic cost functionals. While this setting is quite flexible and covers many problems arising in applications, they come along with a certain smoothness, seen notably on the level of the optimality system. But in fact, the turnpike property (in some form or another) has been shown to hold for other cost functionals as well. For instance, \cite{gugat2019turnpike} penalize the $\text{TV}$--norm (in time) of the control and obtain an integral turnpike property for linear, first-order hyperbolic systems. In \cite{yague2021sparse}, a polynomial turnpike property is obtained for the optimal controls for finite-dimensional driftless nonlinear systems, when the $L^1$--norm (in time) of the control is penalized, and a rather general cost is used for the state. And more specifically, when the $L^1$--norm (in time) of the discrepancy of the state to the running target is penalized, the authors in \cite{gugat2021finite} show, for finite-dimensional systems, that a finite-time turnpike property occurs for the state, namely, the $L^1$ norm is saturated and the state reaches the turnpike exactly in finite time. 
Even more surprisingly, an integral turnpike property is obtained in \cite{mazari2020quantitative} for a functional without a tracking term in the state, but, albeit, with mass and pointwise positivity and boundedness constraints on the control (implying those for the state, by the parabolic maximum principle).
The authors work with a specific bilinear control problem for the heat equation, and show that the turnpike limit (for optimal controls) is a design which minimizes the first eigenvalue of the Laplacian. 
These works illustrate a "universality" of the turnpike property, but a general theory encompassing these cases, much akin to the quadratic case, is not present in the literature to our knowledge. 

\subsection{Turnpike with constraints}

We had considered optimal control problems without any constraints on the admissible pair as to be in line with the setting of the exponential turnpike property, for which, to our knowledge, results are only known in the unconstrained case. 
Already in the setting of linear systems $\dot{y} = Ay+Bu$, the presence of constraints renders the optimality system derived from the Pontryagin maximum principle significantly more compound, and detecting hyperbolicity patterns of the optimality system may be very challenging. 
For instance, a constraint of the form $u(t)\geqslant 0$ on the control may promote \emph{chattering} phenomena --  an infinite number of control switchings over a compact time interval. This is somewhat transparent when looking at the form of the optimal control $u_T$:
\begin{equation*}
u_T(t)\equiv\max\{0, B^*p_T(t)\}.
\end{equation*} 
It is not obvious whether the Riccati-inspired strategies can be readily extended to this context; now, the control is not given linearly in terms of the adjoint state $p_T$. 

Although the controllability theory under constraints on the state and/or control for linear and semilinear PDEs is now rather well established (\cite{loheac2017minimal, pighin2018controllability, hegoburu2018controllability, pighin2019controllability, pouchol2019phase, maity2019controllability, le2020local, ruiz2020control, lissy2020state, mazari2020constrained}), the validity and proof of the exponential turnpike property in such cases is a challenging open problem.

\subsection{Using HJB asymptotics}

We saw, following \cite{esteve2020turnpike}, that the turnpike property for LQ problems provides a rather clear picture of the asymptotics of the value function solving the associated HJB equation. What we are rather asking here, is to see as to how different properties of the HJB equation, under more general assumptions on the Hamiltonian, would translate to turnpike properties for different optimal control problems. To our knowledge, the literature on this issue is rather scarce.
The HJB interpretation of optimal control is clear in the setting of finite-dimensional problems. But it is not clear, to our knowledge, how one provides a transparent formulation of the Hamilton-Jacobi-Bellman equations for optimal control problems governed by PDEs. The derivation of the \emph{master equation} in mean field games could lead to some pointers regarding this issue.
Related to this, a stronger link between turnpike and the notion of ergodicity in optimal control and differential game theory, as studied in \cite{quincampoix2011existence, buckdahn2015representation, renault2017long}, also warrants further development. We refer to \cite{backhoff2020mean} for a related study in this direction.

\subsection{Decay rates for nonlinear problems} 
When one proceeds in proving turnpike by linearizing the optimality system, the explicit decay rate $\lambda$, given by the spectral abscissa of the operator $-A+BB^*\mathscr{E}_\infty$, may be lost. It would be of interest to have a clear understanding of the interplay between the linear turnpike decay rate, the size of the initial data (if any), and the running target, in the nonlinear turnpike context. It could be said that the current results are not completely transparent regarding this issue, which is much clearer in the context of (feedback) stabilization of nonlinear systems. Such considerations could first be addressed for systems which have a convenient variational structure (e.g., power-type nonlinearities), where the decay rates are more transparent in the stabilization context. 

\subsection{Turnpike and optimal shape design}
The proof of an exponential turnpike property for optimal shape design problems such as those discussed in Section \ref{sec: 7} remains completely open (even in the case of the heat equation, let alone that of the Navier-Stokes equations). 
Looking beyond, there are many other problems in the interface of shape optimization and turnpike that warrant further study.
For instance, it would be of interest to have a complete picture of the asymptotic behavior of optimal shapes for actuators, minimizing the controllability cost for partial differential equations. The problem of characterizing such optimal shapes for linear PDEs has been partially resolved, namely by making use of a randomization procedure (see  \cite{privat2015optimal, privat2016optimal} and the references therein). In the finite dimensional context, the optimal actuator shape may happen to be time-independent (\cite{geshkovski2021optimal}). 
But a full picture in the PDE setting is lacking.

\subsection{Beyond supervised learning}

The turnpike property, and insights stemming from the theory surrounding it, could have further applications beyond those discussed in preceding sections. The field of reinforcement learning in particular is known to have strong connections and  to classical optimal control and HJB equations (\cite{recht2019tour, bertsekas2019reinforcement, bertsekas2021lessons}). In reinforcement learning, one sometimes (but not always) works in a \emph{model-free} scenario, namely, the model on its own is not known, and is typically replaced by a Markov decision process. In a simpler setting, one could assume an LQ structure for a canonical system form $\dot{y}=Ay+Bu$, and assume that $A$ and $B$ are unknown, but can be estimated from data with high probability using some contemporary method (see e.g., \cite{dean2020sample}). In such cases, the turnpike property would apply to the estimated system. But what happens in the general case is not clear. 
Perhaps the turnpike property can again be used as a blueprint in view of avoiding discovering using full time-series data for learning the unknown dynamics. At any rate, an in-depth study in this direction is warranted.

\begin{acks}[Acknowledgments]
A major part of this work was completed while B.G. was affiliated with the Chair of Computational Mathematics, Fundación Deusto.

We warmly thank all the people who have contributed to the improvement of this manuscript through careful reading, comments, discussions and so forth. We in particular thank Emmanuel Trélat, Martin Lazar, Daniel Veldman, Sergi Andreu, Manuel Schaller, Carlos Esteve-Yag\"ue, and Charlotte Rodriguez for a careful reading of the manuscript and judicious remarks and suggestions. We also thank Lars Gr\"une and Dario Pighin for insightful comments.
\end{acks}

\begin{funding}
We gratefully acknowledge funding received from the European Union’s Horizon 2020 research and innovation programme under the Marie Sklodowska-Curie grant agreement No.765579-ConFlex, the Alexander von Humboldt-Professorship program, 
the European Research Council (ERC) under the European Union’s Horizon 2020 research and innovation programme (grant agreement NO. 694126-DyCon), 
the Transregio 154 Project “Mathematical Modeling, Simulation and Optimization Using the Example of Gas Networks” of the German DFG, grant MTM2017-92996-C2-1-R COSNET of MINECO (Spain), by the Elkartek grant KK-2020/00091 CONVADP of the Basque government and by the Air Force Office of Scientific Research (AFOSR) under Award NO: FA9550-18-1-0242.
\end{funding}

\bibliographystyle{imsart-number} 
\bibliography{refs}       

\begin{thebibliography}{196}

\bibitem{aftalion2021pace}
\begin{barticle}[author]
\bauthor{\bsnm{Aftalion},~\bfnm{Amandine}\binits{A.}} \AND
  \bauthor{\bsnm{Tr{\'e}lat},~\bfnm{Emmanuel}\binits{E.}}
(\byear{2021}).
\btitle{Pace and motor control optimization for a runner}.
\bjournal{Journal of Mathematical Biology}
\bvolume{83}
\bpages{1--21}.
\end{barticle}
\endbibitem

\bibitem{agrachev2019comprehensive}
\begin{bbook}[author]
\bauthor{\bsnm{Agrachev},~\bfnm{Andrei}\binits{A.}},
  \bauthor{\bsnm{Barilari},~\bfnm{Davide}\binits{D.}} \AND
  \bauthor{\bsnm{Boscain},~\bfnm{Ugo}\binits{U.}}
(\byear{2019}).
\btitle{A comprehensive introduction to sub-Riemannian geometry}
\bvolume{181}.
\bpublisher{Cambridge University Press}.
\end{bbook}
\endbibitem

\bibitem{agrachev2016ensemble}
\begin{barticle}[author]
\bauthor{\bsnm{Agrachev},~\bfnm{Andrei}\binits{A.}},
  \bauthor{\bsnm{Baryshnikov},~\bfnm{Yuliy}\binits{Y.}} \AND
  \bauthor{\bsnm{Sarychev},~\bfnm{Andrey}\binits{A.}}
(\byear{2016}).
\btitle{Ensemble controllability by {Lie} algebraic methods}.
\bjournal{ESAIM: Control, Optimisation and Calculus of Variations}
\bvolume{22}
\bpages{921--938}.
\end{barticle}
\endbibitem

\bibitem{agrachev2020control}
\begin{barticle}[author]
\bauthor{\bsnm{Agrachev},~\bfnm{Andrei}\binits{A.}} \AND
  \bauthor{\bsnm{Sarychev},~\bfnm{Andrey}\binits{A.}}
(\byear{2021}).
\btitle{Control on the Manifolds of Mappings with a View to the Deep Learning}.
\bjournal{Journal of Dynamical and Control Systems}
\bpages{1--20}.
\end{barticle}
\endbibitem

\bibitem{allahverdi2016numerical}
\begin{barticle}[author]
\bauthor{\bsnm{Allahverdi},~\bfnm{Navid}\binits{N.}},
  \bauthor{\bsnm{Pozo},~\bfnm{Alejandro}\binits{A.}} \AND
  \bauthor{\bsnm{Zuazua},~\bfnm{Enrique}\binits{E.}}
(\byear{2016}).
\btitle{Numerical aspects of large-time optimal control of {B}urgers equation}.
\bjournal{ESAIM: Mathematical Modelling and Numerical Analysis}
\bvolume{50}
\bpages{1371--1401}.
\end{barticle}
\endbibitem

\bibitem{allaire2010long}
\begin{barticle}[author]
\bauthor{\bsnm{Allaire},~\bfnm{Gr{\'e}goire}\binits{G.}},
  \bauthor{\bsnm{M{\"u}nch},~\bfnm{Arnaud}\binits{A.}} \AND
  \bauthor{\bsnm{Periago},~\bfnm{Francisco}\binits{F.}}
(\byear{2010}).
\btitle{Long time behavior of a two-phase optimal design for the heat
  equation}.
\bjournal{SIAM Journal on Control and Optimization}
\bvolume{48}
\bpages{5333--5356}.
\end{barticle}
\endbibitem

\bibitem{alonso2012multidisciplinary}
\begin{barticle}[author]
\bauthor{\bsnm{Alonso},~\bfnm{Juan~J}\binits{J.~J.}} \AND
  \bauthor{\bsnm{Colonno},~\bfnm{Michael~R}\binits{M.~R.}}
(\byear{2012}).
\btitle{Multidisciplinary optimization with applications to sonic-boom
  minimization}.
\bjournal{Annual Review of Fluid Mechanics}
\bvolume{44}
\bpages{505--526}.
\end{barticle}
\endbibitem

\bibitem{ammari2011transient}
\begin{barticle}[author]
\bauthor{\bsnm{Ammari},~\bfnm{Habib}\binits{H.}},
  \bauthor{\bsnm{Asch},~\bfnm{Mark}\binits{M.}},
  \bauthor{\bsnm{Bustos},~\bfnm{Lili~Guadarrama}\binits{L.~G.}},
  \bauthor{\bsnm{Jugnon},~\bfnm{Vincent}\binits{V.}} \AND
  \bauthor{\bsnm{Kang},~\bfnm{Hyeonbae}\binits{H.}}
(\byear{2011}).
\btitle{Transient wave imaging with limited-view data}.
\bjournal{SIAM Journal on Imaging Sciences}
\bvolume{4}
\bpages{1097--1121}.
\end{barticle}
\endbibitem

\bibitem{backhoff2020mean}
\begin{barticle}[author]
\bauthor{\bsnm{Backhoff},~\bfnm{Julio}\binits{J.}},
  \bauthor{\bsnm{Conforti},~\bfnm{Giovanni}\binits{G.}},
  \bauthor{\bsnm{Gentil},~\bfnm{Ivan}\binits{I.}} \AND
  \bauthor{\bsnm{L{\'e}onard},~\bfnm{Christian}\binits{C.}}
(\byear{2020}).
\btitle{The mean field {S}chr{\"o}dinger problem: ergodic behavior, entropy
  estimates and functional inequalities}.
\bjournal{Probability Theory and Related Fields}
\bvolume{178}
\bpages{475--530}.
\end{barticle}
\endbibitem

\bibitem{bardos1992sharp}
\begin{barticle}[author]
\bauthor{\bsnm{Bardos},~\bfnm{Claude}\binits{C.}},
  \bauthor{\bsnm{Lebeau},~\bfnm{Gilles}\binits{G.}} \AND
  \bauthor{\bsnm{Rauch},~\bfnm{Jeffrey}\binits{J.}}
(\byear{1992}).
\btitle{Sharp sufficient conditions for the observation, control, and
  stabilization of waves from the boundary}.
\bjournal{SIAM Journal on Control and Optimization}
\bvolume{30}
\bpages{1024--1065}.
\end{barticle}
\endbibitem

\bibitem{barles2019large}
\begin{barticle}[author]
\bauthor{\bsnm{Barles},~\bfnm{Guy}\binits{G.}},
  \bauthor{\bsnm{Ley},~\bfnm{Olivier}\binits{O.}},
  \bauthor{\bsnm{Nguyen},~\bfnm{Thi-Tuyen}\binits{T.-T.}} \AND
  \bauthor{\bsnm{Phan},~\bfnm{Thanh~Viet}\binits{T.~V.}}
(\byear{2019}).
\btitle{Large time behavior of unbounded solutions of first-order
  {Hamilton--Jacobi} equations in $\mathbb{R}^n$}.
\bjournal{Asymptotic Analysis}
\bvolume{112}
\bpages{1--22}.
\end{barticle}
\endbibitem

\bibitem{barles2000large}
\begin{barticle}[author]
\bauthor{\bsnm{Barles},~\bfnm{Guy}\binits{G.}} \AND
  \bauthor{\bsnm{Souganidis},~\bfnm{Panagiotis~E}\binits{P.~E.}}
(\byear{2000}).
\btitle{On the large time behavior of solutions of {Hamilton--Jacobi}
  equations}.
\bjournal{SIAM Journal on Mathematical Analysis}
\bvolume{31}
\bpages{925--939}.
\end{barticle}
\endbibitem

\bibitem{barron2008approximation}
\begin{barticle}[author]
\bauthor{\bsnm{Barron},~\bfnm{Andrew~R}\binits{A.~R.}},
  \bauthor{\bsnm{Cohen},~\bfnm{Albert}\binits{A.}},
  \bauthor{\bsnm{Dahmen},~\bfnm{Wolfgang}\binits{W.}} \AND
  \bauthor{\bsnm{DeVore},~\bfnm{Ronald~A}\binits{R.~A.}}
(\byear{2008}).
\btitle{Approximation and learning by greedy algorithms}.
\bjournal{The Annals of Statistics}
\bvolume{36}
\bpages{64--94}.
\end{barticle}
\endbibitem

\bibitem{beauchard2010controllability}
\begin{barticle}[author]
\bauthor{\bsnm{Beauchard},~\bfnm{Karine}\binits{K.}},
  \bauthor{\bsnm{Coron},~\bfnm{Jean-Michel}\binits{J.-M.}} \AND
  \bauthor{\bsnm{Rouchon},~\bfnm{Pierre}\binits{P.}}
(\byear{2010}).
\btitle{Controllability issues for continuous-spectrum systems and ensemble
  controllability of {Bloch} equations}.
\bjournal{Communications in Mathematical Physics}
\bvolume{296}
\bpages{525--557}.
\end{barticle}
\endbibitem

\bibitem{benning2019deep}
\begin{barticle}[author]
\bauthor{\bsnm{Benning},~\bfnm{Martin}\binits{M.}},
  \bauthor{\bsnm{Celledoni},~\bfnm{Elena}\binits{E.}},
  \bauthor{\bsnm{Ehrhardt},~\bfnm{Matthias~J}\binits{M.~J.}},
  \bauthor{\bsnm{Owren},~\bfnm{Brynjulf}\binits{B.}} \AND
  \bauthor{\bsnm{Sch{\"o}nlieb},~\bfnm{Carola-Bibiane}\binits{C.-B.}}
(\byear{2019}).
\btitle{Deep learning as optimal control problems: Models and numerical
  methods}.
\bjournal{J. Comput. Dyn.}
\bvolume{6}
\bpages{171--198}.
\end{barticle}
\endbibitem

\bibitem{benzi2005numerical}
\begin{barticle}[author]
\bauthor{\bsnm{Benzi},~\bfnm{Michele}\binits{M.}},
  \bauthor{\bsnm{Golub},~\bfnm{Gene~H}\binits{G.~H.}} \AND
  \bauthor{\bsnm{Liesen},~\bfnm{J{\"o}rg}\binits{J.}}
(\byear{2005}).
\btitle{Numerical solution of saddle point problems}.
\bjournal{Acta Numerica}
\bvolume{14}
\bpages{1--137}.
\end{barticle}
\endbibitem

\bibitem{bertsekas2019reinforcement}
\begin{bbook}[author]
\bauthor{\bsnm{Bertsekas},~\bfnm{Dimitri}\binits{D.}}
(\byear{2019}).
\btitle{Reinforcement learning and optimal control}.
\bpublisher{Athena Scientific}.
\end{bbook}
\endbibitem

\bibitem{bertsekas2021lessons}
\begin{barticle}[author]
\bauthor{\bsnm{Bertsekas},~\bfnm{Dimitri}\binits{D.}}
(\byear{2021}).
\btitle{Lessons from AlphaZero for Optimal, Model Predictive, and Adaptive
  Control}.
\bjournal{arXiv preprint arXiv:2108.10315}.
\end{barticle}
\endbibitem

\bibitem{betts2010practical}
\begin{bbook}[author]
\bauthor{\bsnm{Betts},~\bfnm{John~T}\binits{J.~T.}}
(\byear{2010}).
\btitle{Practical methods for optimal control and estimation using nonlinear
  programming}.
\bpublisher{SIAM}.
\end{bbook}
\endbibitem

\bibitem{brogliato2007dissipative}
\begin{bbook}[author]
\bauthor{\bsnm{Brogliato},~\bfnm{Bernard}\binits{B.}},
  \bauthor{\bsnm{Lozano},~\bfnm{Rogelio}\binits{R.}},
  \bauthor{\bsnm{Maschke},~\bfnm{Bernhard}\binits{B.}} \AND
  \bauthor{\bsnm{Egeland},~\bfnm{Olav}\binits{O.}}
(\byear{2007}).
\btitle{Dissipative systems analysis and control: {T}heory and applications},
\bedition{{S}econd} ed.
\bseries{Communications and Control Engineering Series}.
\bpublisher{Springer-Verlag London, Ltd., London}.
\bdoi{10.1007/978-1-84628-517-2}
\bmrnumber{2286431}
\end{bbook}
\endbibitem

\bibitem{buckdahn2015representation}
\begin{barticle}[author]
\bauthor{\bsnm{Buckdahn},~\bfnm{Rainer}\binits{R.}},
  \bauthor{\bsnm{Quincampoix},~\bfnm{Marc}\binits{M.}} \AND
  \bauthor{\bsnm{Renault},~\bfnm{J{\'e}r{\^o}me}\binits{J.}}
(\byear{2015}).
\btitle{On representation formulas for long run averaging optimal control
  problem}.
\bjournal{Journal of Differential Equations}
\bvolume{259}
\bpages{5554--5581}.
\end{barticle}
\endbibitem

\bibitem{burq1997condition}
\begin{barticle}[author]
\bauthor{\bsnm{Burq},~\bfnm{Nicolas}\binits{N.}} \AND
  \bauthor{\bsnm{G{\'e}rard},~\bfnm{Patrick}\binits{P.}}
(\byear{1997}).
\btitle{Condition n{\'e}cessaire et suffisante pour la contr{\^o}labilit{\'e}
  exacte des ondes}.
\bjournal{Comptes Rendus de l'Acad{\'e}mie des Sciences-Series I-Mathematics}
\bvolume{325}
\bpages{749--752}.
\end{barticle}
\endbibitem

\bibitem{cannarsa2013null}
\begin{barticle}[author]
\bauthor{\bsnm{Cannarsa},~\bfnm{Piermarco}\binits{P.}},
  \bauthor{\bsnm{Beauchard},~\bfnm{Karine}\binits{K.}} \AND
  \bauthor{\bsnm{Guglielmi},~\bfnm{Roberto}\binits{R.}}
(\byear{2013}).
\btitle{Null controllability of {Grushin}-type operators in dimension two}.
\bjournal{Journal of the European Mathematical Society}
\bvolume{16}
\bpages{67--101}.
\end{barticle}
\endbibitem

\bibitem{cardaliaguet2010notes}
\begin{btechreport}[author]
\bauthor{\bsnm{Cardaliaguet},~\bfnm{Pierre}\binits{P.}}
(\byear{2010}).
\btitle{Notes on mean field games}
\btype{Technical Report},
\bpublisher{Notes from P. L. Lions’ lectures at the Coll{\`e}ge de France}.
\end{btechreport}
\endbibitem

\bibitem{cardaliaguet2012long}
\begin{barticle}[author]
\bauthor{\bsnm{Cardaliaguet},~\bfnm{Pierre}\binits{P.}},
  \bauthor{\bsnm{Lasry},~\bfnm{Jean-Michel}\binits{J.-M.}},
  \bauthor{\bsnm{Lions},~\bfnm{Pierre-Louis}\binits{P.-L.}} \AND
  \bauthor{\bsnm{Porretta},~\bfnm{Alessio}\binits{A.}}
(\byear{2012}).
\btitle{Long time average of mean field games}.
\bjournal{Networks \& Heterogeneous Media}
\bvolume{7}
\bpages{279}.
\end{barticle}
\endbibitem

\bibitem{cardaliaguet2013long}
\begin{barticle}[author]
\bauthor{\bsnm{Cardaliaguet},~\bfnm{Pierre}\binits{P.}},
  \bauthor{\bsnm{Lasry},~\bfnm{J-M}\binits{J.-M.}},
  \bauthor{\bsnm{Lions},~\bfnm{P-L}\binits{P.-L.}} \AND
  \bauthor{\bsnm{Porretta},~\bfnm{Alessio}\binits{A.}}
(\byear{2013}).
\btitle{Long time average of mean field games with a nonlocal coupling}.
\bjournal{SIAM Journal on Control and Optimization}
\bvolume{51}
\bpages{3558--3591}.
\end{barticle}
\endbibitem

\bibitem{cardaliaguet2019long}
\begin{barticle}[author]
\bauthor{\bsnm{Cardaliaguet},~\bfnm{Pierre}\binits{P.}} \AND
  \bauthor{\bsnm{Porretta},~\bfnm{Alessio}\binits{A.}}
(\byear{2019}).
\btitle{Long time behavior of the master equation in mean field game theory}.
\bjournal{Analysis \& PDE}
\bvolume{12}
\bpages{1397--1453}.
\end{barticle}
\endbibitem

\bibitem{cardaliaguet2020introduction}
\begin{bincollection}[author]
\bauthor{\bsnm{Cardaliaguet},~\bfnm{Pierre}\binits{P.}} \AND
  \bauthor{\bsnm{Porretta},~\bfnm{Alessio}\binits{A.}}
(\byear{2020}).
\btitle{An Introduction to {Mean Field Game Theory}}.
In \bbooktitle{Mean Field Games}
\bpages{1--158}.
\bpublisher{Springer}.
\end{bincollection}
\endbibitem

\bibitem{carmichael1985fixed}
\begin{bbook}[author]
\bauthor{\bsnm{Carmichael},~\bfnm{N}\binits{N.}} \AND
  \bauthor{\bsnm{Quinn},~\bfnm{MD}\binits{M.}}
(\byear{1985}).
\btitle{Fixed point methods in nonlinear control}.
\bpublisher{Springer}.
\end{bbook}
\endbibitem

\bibitem{casas2002second}
\begin{barticle}[author]
\bauthor{\bsnm{Casas},~\bfnm{Eduardo}\binits{E.}} \AND
  \bauthor{\bsnm{Mateos},~\bfnm{Mariano}\binits{M.}}
(\byear{2002}).
\btitle{Second order optimality conditions for semilinear elliptic control
  problems with finitely many state constraints}.
\bjournal{SIAM Journal on Control and Optimization}
\bvolume{40}
\bpages{1431--1454}.
\end{barticle}
\endbibitem

\bibitem{casas2002seconda}
\begin{barticle}[author]
\bauthor{\bsnm{Casas},~\bfnm{Eduardo}\binits{E.}} \AND
  \bauthor{\bsnm{Tr{\"o}ltzsch},~\bfnm{Fredi}\binits{F.}}
(\byear{2002}).
\btitle{Second-order necessary and sufficient optimality conditions for
  optimization problems and applications to control theory}.
\bjournal{SIAM Journal on Optimization}
\bvolume{13}
\bpages{406--431}.
\end{barticle}
\endbibitem

\bibitem{castro2007systematic}
\begin{barticle}[author]
\bauthor{\bsnm{Castro},~\bfnm{Carlos}\binits{C.}},
  \bauthor{\bsnm{Lozano},~\bfnm{Carlos}\binits{C.}},
  \bauthor{\bsnm{Palacios},~\bfnm{Francisco}\binits{F.}} \AND
  \bauthor{\bsnm{Zuazua},~\bfnm{Enrique}\binits{E.}}
(\byear{2007}).
\btitle{Systematic continuous adjoint approach to viscous aerodynamic design on
  unstructured grids}.
\bjournal{AIAA journal}
\bvolume{45}
\bpages{2125--2139}.
\end{barticle}
\endbibitem

\bibitem{cazenave1998introduction}
\begin{bbook}[author]
\bauthor{\bsnm{Cazenave},~\bfnm{Thierry}\binits{T.}} \AND
  \bauthor{\bsnm{Haraux},~\bfnm{Alain}\binits{A.}}
(\byear{1998}).
\btitle{An introduction to semilinear evolution equations}.
\bseries{Oxford Lecture Series in Mathematics and its Applications}
\bvolume{13}.
\bpublisher{The Clarendon Press, Oxford University Press, New York}.
\end{bbook}
\endbibitem

\bibitem{celledoni2020structure}
\begin{barticle}[author]
\bauthor{\bsnm{Celledoni},~\bfnm{Elena}\binits{E.}},
  \bauthor{\bsnm{Ehrhardt},~\bfnm{Matthias~J}\binits{M.~J.}},
  \bauthor{\bsnm{Etmann},~\bfnm{Christian}\binits{C.}},
  \bauthor{\bsnm{McLachlan},~\bfnm{Robert~I}\binits{R.~I.}},
  \bauthor{\bsnm{Owren},~\bfnm{Brynjulf}\binits{B.}},
  \bauthor{\bsnm{Sch{\"o}nlieb},~\bfnm{Carola-Bibiane}\binits{C.-B.}} \AND
  \bauthor{\bsnm{Sherry},~\bfnm{Ferdia}\binits{F.}}
(\byear{2021}).
\btitle{Structure-preserving deep learning}.
\bjournal{European Journal of Applied Mathematics}
\bvolume{32}
\bpages{888--936}.
\end{barticle}
\endbibitem

\bibitem{chen2018neural}
\begin{binproceedings}[author]
\bauthor{\bsnm{Chen},~\bfnm{Tian~Qi}\binits{T.~Q.}},
  \bauthor{\bsnm{Rubanova},~\bfnm{Yulia}\binits{Y.}},
  \bauthor{\bsnm{Bettencourt},~\bfnm{Jesse}\binits{J.}} \AND
  \bauthor{\bsnm{Duvenaud},~\bfnm{David~K}\binits{D.~K.}}
(\byear{2018}).
\btitle{Neural ordinary differential equations}.
In \bbooktitle{Advances in Neural Information Processing Systems}
\bpages{6571--6583}.
\end{binproceedings}
\endbibitem

\bibitem{christof2020nonuniqueness}
\begin{barticle}[author]
\bauthor{\bsnm{Christof},~\bfnm{Constantin}\binits{C.}} \AND
  \bauthor{\bsnm{Hafemeyer},~\bfnm{Dominik}\binits{D.}}
(\byear{2020}).
\btitle{On the nonuniqueness and instability of solutions of tracking-type
  optimal control problems}.
\bjournal{arXiv preprint arXiv:2007.08250}.
\end{barticle}
\endbibitem

\bibitem{cirant2021long}
\begin{barticle}[author]
\bauthor{\bsnm{Cirant},~\bfnm{Marco}\binits{M.}} \AND
  \bauthor{\bsnm{Porretta},~\bfnm{Alessio}\binits{A.}}
(\byear{2021}).
\btitle{Long time behavior and turnpike solutions in mildly non-monotone mean
  field games}.
\bjournal{ESAIM: Control, Optimisation and Calculus of Variations}
\bvolume{27}
\bpages{86}.
\bdoi{10.1051/cocv/2021077}
\end{barticle}
\endbibitem

\bibitem{cohen2015approximation}
\begin{barticle}[author]
\bauthor{\bsnm{Cohen},~\bfnm{Albert}\binits{A.}} \AND
  \bauthor{\bsnm{DeVore},~\bfnm{Ronald}\binits{R.}}
(\byear{2015}).
\btitle{Approximation of high-dimensional parametric {PDEs}}.
\bjournal{Acta Numerica}
\bvolume{24}
\bpages{1--159}.
\end{barticle}
\endbibitem

\bibitem{coron2007control}
\begin{bbook}[author]
\bauthor{\bsnm{Coron},~\bfnm{Jean-Michel}\binits{J.-M.}}
(\byear{2007}).
\btitle{Control and nonlinearity}
\bvolume{136}.
\bpublisher{American Mathematical Soc.}
\end{bbook}
\endbibitem

\bibitem{cuchiero2019deep}
\begin{barticle}[author]
\bauthor{\bsnm{Cuchiero},~\bfnm{Christa}\binits{C.}},
  \bauthor{\bsnm{Larsson},~\bfnm{Martin}\binits{M.}} \AND
  \bauthor{\bsnm{Teichmann},~\bfnm{Josef}\binits{J.}}
(\byear{2020}).
\btitle{Deep neural networks, generic universal interpolation, and controlled
  {ODE}s}.
\bjournal{SIAM J. Math. Data Sci.}
\bvolume{2}
\bpages{901--919}.
\end{barticle}
\endbibitem

\bibitem{cybenko1989approximation}
\begin{barticle}[author]
\bauthor{\bsnm{Cybenko},~\bfnm{George}\binits{G.}}
(\byear{1989}).
\btitle{Approximation by superpositions of a sigmoidal function}.
\bjournal{Mathematics of control, signals and systems}
\bvolume{2}
\bpages{303--314}.
\end{barticle}
\endbibitem

\bibitem{dager2006wave}
\begin{bbook}[author]
\bauthor{\bsnm{D{\'a}ger},~\bfnm{Ren{\'e}}\binits{R.}} \AND
  \bauthor{\bsnm{Zuazua},~\bfnm{Enrique}\binits{E.}}
(\byear{2006}).
\btitle{Wave propagation, observation and control in 1-d flexible
  multi-structures}
\bvolume{50}.
\bpublisher{Springer Science \& Business Media}.
\end{bbook}
\endbibitem

\bibitem{damm2014exponential}
\begin{barticle}[author]
\bauthor{\bsnm{Damm},~\bfnm{Tobias}\binits{T.}},
  \bauthor{\bsnm{Gr{\"u}ne},~\bfnm{Lars}\binits{L.}},
  \bauthor{\bsnm{Stieler},~\bfnm{Marleen}\binits{M.}} \AND
  \bauthor{\bsnm{Worthmann},~\bfnm{Karl}\binits{K.}}
(\byear{2014}).
\btitle{An exponential turnpike theorem for dissipative discrete time optimal
  control problems}.
\bjournal{SIAM Journal on Control and Optimization}
\bvolume{52}
\bpages{1935--1957}.
\end{barticle}
\endbibitem

\bibitem{datko1972uniform}
\begin{barticle}[author]
\bauthor{\bsnm{Datko},~\bfnm{Richard}\binits{R.}}
(\byear{1972}).
\btitle{Uniform asymptotic stability of evolutionary processes in a {B}anach
  space}.
\bjournal{SIAM Journal on Mathematical Analysis}
\bvolume{3}
\bpages{428--445}.
\end{barticle}
\endbibitem

\bibitem{dean2020sample}
\begin{barticle}[author]
\bauthor{\bsnm{Dean},~\bfnm{Sarah}\binits{S.}},
  \bauthor{\bsnm{Mania},~\bfnm{Horia}\binits{H.}},
  \bauthor{\bsnm{Matni},~\bfnm{Nikolai}\binits{N.}},
  \bauthor{\bsnm{Recht},~\bfnm{Benjamin}\binits{B.}} \AND
  \bauthor{\bsnm{Tu},~\bfnm{Stephen}\binits{S.}}
(\byear{2020}).
\btitle{On the sample complexity of the linear quadratic regulator}.
\bjournal{Foundations of Computational Mathematics}
\bvolume{20}
\bpages{633--679}.
\end{barticle}
\endbibitem

\bibitem{devore2021neural}
\begin{barticle}[author]
\bauthor{\bsnm{DeVore},~\bfnm{Ronald}\binits{R.}},
  \bauthor{\bsnm{Hanin},~\bfnm{Boris}\binits{B.}} \AND
  \bauthor{\bsnm{Petrova},~\bfnm{Guergana}\binits{G.}}
(\byear{2021}).
\btitle{Neural network approximation}.
\bjournal{Acta Numerica}
\bvolume{30}
\bpages{327--444}.
\end{barticle}
\endbibitem

\bibitem{devore2013greedy}
\begin{barticle}[author]
\bauthor{\bsnm{DeVore},~\bfnm{Ronald}\binits{R.}},
  \bauthor{\bsnm{Petrova},~\bfnm{Guergana}\binits{G.}} \AND
  \bauthor{\bsnm{Wojtaszczyk},~\bfnm{Przemyslaw}\binits{P.}}
(\byear{2013}).
\btitle{Greedy algorithms for reduced bases in {B}anach spaces}.
\bjournal{Constructive Approximation}
\bvolume{37}
\bpages{455--466}.
\end{barticle}
\endbibitem

\bibitem{dorfman1987linear}
\begin{bbook}[author]
\bauthor{\bsnm{Dorfman},~\bfnm{Robert}\binits{R.}},
  \bauthor{\bsnm{Samuelson},~\bfnm{Paul~Anthony}\binits{P.~A.}} \AND
  \bauthor{\bsnm{Solow},~\bfnm{Robert~M}\binits{R.~M.}}
(\byear{1958}).
\btitle{Linear programming and economic analysis}.
\bpublisher{Courier Corporation}.
\end{bbook}
\endbibitem

\bibitem{dupont2019augmented}
\begin{binproceedings}[author]
\bauthor{\bsnm{Dupont},~\bfnm{Emilien}\binits{E.}},
  \bauthor{\bsnm{Doucet},~\bfnm{Arnaud}\binits{A.}} \AND
  \bauthor{\bsnm{Teh},~\bfnm{Yee~Whye}\binits{Y.~W.}}
(\byear{2019}).
\btitle{Augmented {N}eural {ODE}s}.
In \bbooktitle{Advances in Neural Information Processing Systems}
\bpages{3134--3144}.
\end{binproceedings}
\endbibitem

\bibitem{weinan2017proposal}
\begin{barticle}[author]
\bauthor{\bsnm{E},~\bfnm{Weinan}\binits{W.}}
(\byear{2017}).
\btitle{A proposal on machine learning via dynamical systems}.
\bjournal{Communications in Mathematics and Statistics}
\bvolume{5}
\bpages{1--11}.
\end{barticle}
\endbibitem

\bibitem{esteve2020turnpikea}
\begin{barticle}[author]
\bauthor{\bsnm{Esteve},~\bfnm{Carlos}\binits{C.}},
  \bauthor{\bsnm{Kouhkouh},~\bfnm{Hicham}\binits{H.}},
  \bauthor{\bsnm{Pighin},~\bfnm{Dario}\binits{D.}} \AND
  \bauthor{\bsnm{Zuazua},~\bfnm{Enrique}\binits{E.}}
(\byear{2020}).
\btitle{The turnpike property and the long-time behavior of the
  {Hamilton-Jacobi equation}}.
\bjournal{arXiv preprint arXiv:2006.10430}.
\end{barticle}
\endbibitem

\bibitem{esteve2020inverse}
\begin{barticle}[author]
\bauthor{\bsnm{Esteve},~\bfnm{Carlos}\binits{C.}} \AND
  \bauthor{\bsnm{Zuazua},~\bfnm{Enrique}\binits{E.}}
(\byear{2020}).
\btitle{The Inverse Problem for {Hamilton--Jacobi} Equations and Semiconcave
  Envelopes}.
\bjournal{SIAM Journal on Mathematical Analysis}
\bvolume{52}
\bpages{5627--5657}.
\end{barticle}
\endbibitem

\bibitem{yague2021sparse}
\begin{barticle}[author]
\bauthor{\bsnm{Esteve-Yag{\"u}e},~\bfnm{Carlos}\binits{C.}} \AND
  \bauthor{\bsnm{Geshkovski},~\bfnm{Borjan}\binits{B.}}
(\byear{2021}).
\btitle{Sparse approximation in learning via neural {ODE}s}.
\bjournal{arXiv preprint arXiv:2102.13566}.
\end{barticle}
\endbibitem

\bibitem{esteve2020turnpike}
\begin{barticle}[author]
\bauthor{\bsnm{Esteve-Yag\"ue},~\bfnm{Carlos}\binits{C.}},
  \bauthor{\bsnm{Geshkovski},~\bfnm{Borjan}\binits{B.}},
  \bauthor{\bsnm{Pighin},~\bfnm{Dario}\binits{D.}} \AND
  \bauthor{\bsnm{Zuazua},~\bfnm{Enrique}\binits{E.}}
(\byear{2020}).
\btitle{Turnpike in {Lipschitz}-nonlinear optimal control}.
\bjournal{arXiv preprint arXiv:2011.11091}.
\end{barticle}
\endbibitem

\bibitem{esteve2020large}
\begin{barticle}[author]
\bauthor{\bsnm{Esteve-Yag\"ue},~\bfnm{Carlos}\binits{C.}},
  \bauthor{\bsnm{Geshkovski},~\bfnm{Borjan}\binits{B.}},
  \bauthor{\bsnm{Pighin},~\bfnm{Dario}\binits{D.}} \AND
  \bauthor{\bsnm{Zuazua},~\bfnm{Enrique}\binits{E.}}
(\byear{2020}).
\btitle{Large-time asymptotics in deep learning}.
\bjournal{arXiv preprint arXiv:2008.02491}.
\end{barticle}
\endbibitem

\bibitem{evans1998partial}
\begin{bbook}[author]
\bauthor{\bsnm{Evans},~\bfnm{Lawrence~C.}\binits{L.~C.}}
(\byear{1998}).
\btitle{Partial differential equations}.
\bseries{Graduate Studies in Mathematics}
\bvolume{19}.
\bpublisher{American Mathematical Society, Providence, RI}.
\end{bbook}
\endbibitem

\bibitem{faulwasser2019towards}
\begin{barticle}[author]
\bauthor{\bsnm{Faulwasser},~\bfnm{Timm}\binits{T.}},
  \bauthor{\bsnm{Fla{\ss}kamp},~\bfnm{Kathrin}\binits{K.}},
  \bauthor{\bsnm{Ober-Bl{\"o}baum},~\bfnm{Sina}\binits{S.}} \AND
  \bauthor{\bsnm{Worthmann},~\bfnm{Karl}\binits{K.}}
(\byear{2019}).
\btitle{Towards velocity turnpikes in optimal control of mechanical systems}.
\bjournal{IFAC-PapersOnLine}
\bvolume{52}
\bpages{490--495}.
\end{barticle}
\endbibitem

\bibitem{faulwasser2021dissipativity}
\begin{barticle}[author]
\bauthor{\bsnm{Faulwasser},~\bfnm{Timm}\binits{T.}},
  \bauthor{\bsnm{Fla{\ss}kamp},~\bfnm{Kathrin}\binits{K.}},
  \bauthor{\bsnm{Ober-Bl{\"o}baum},~\bfnm{Sina}\binits{S.}} \AND
  \bauthor{\bsnm{Worthmann},~\bfnm{Karl}\binits{K.}}
(\byear{2021}).
\btitle{A dissipativity characterization of velocity turnpikes in optimal
  control problems for mechanical systems}.
\bjournal{IFAC-PapersOnLine}
\bvolume{54}
\bpages{624--629}.
\end{barticle}
\endbibitem

\bibitem{faulwasser2020turnpike}
\begin{barticle}[author]
\bauthor{\bsnm{Faulwasser},~\bfnm{Timm}\binits{T.}} \AND
  \bauthor{\bsnm{Gr{\"u}ne},~\bfnm{Lars}\binits{L.}}
(\byear{2020}).
\btitle{Turnpike properties in optimal control: An overview of discrete-time
  and continuous-time results}.
\bjournal{arXiv preprint arXiv:2011.13670}.
\end{barticle}
\endbibitem

\bibitem{faulwasser2021turnpikeML}
\begin{barticle}[author]
\bauthor{\bsnm{Faulwasser},~\bfnm{Timm}\binits{T.}},
  \bauthor{\bsnm{Hempel},~\bfnm{Arne-Jens}\binits{A.-J.}} \AND
  \bauthor{\bsnm{Streif},~\bfnm{Stefan}\binits{S.}}
(\byear{2021}).
\btitle{On the turnpike to design of deep neural nets: Explicit depth bounds}.
\bjournal{arXiv preprint arXiv:2101.03000}.
\end{barticle}
\endbibitem

\bibitem{faulwasser2017turnpike}
\begin{barticle}[author]
\bauthor{\bsnm{Faulwasser},~\bfnm{Timm}\binits{T.}},
  \bauthor{\bsnm{Korda},~\bfnm{Milan}\binits{M.}},
  \bauthor{\bsnm{Jones},~\bfnm{Colin~N}\binits{C.~N.}} \AND
  \bauthor{\bsnm{Bonvin},~\bfnm{Dominique}\binits{D.}}
(\byear{2017}).
\btitle{On turnpike and dissipativity properties of continuous-time optimal
  control problems}.
\bjournal{Automatica}
\bvolume{81}
\bpages{297--304}.
\end{barticle}
\endbibitem

\bibitem{fernandez2004local}
\begin{barticle}[author]
\bauthor{\bsnm{Fern{\'a}ndez-Cara},~\bfnm{Enrique}\binits{E.}},
  \bauthor{\bsnm{Guerrero},~\bfnm{Sergio}\binits{S.}},
  \bauthor{\bsnm{Imanuvilov},~\bfnm{O~Yu}\binits{O.~Y.}} \AND
  \bauthor{\bsnm{Puel},~\bfnm{J-P}\binits{J.-P.}}
(\byear{2004}).
\btitle{Local exact controllability of the {Navier--Stokes} system}.
\bjournal{Journal de Math{\'e}matiques Pures et Appliqu{\'e}es}
\bvolume{83}
\bpages{1501--1542}.
\end{barticle}
\endbibitem

\bibitem{fleming1995risk}
\begin{barticle}[author]
\bauthor{\bsnm{Fleming},~\bfnm{Wendell~H}\binits{W.~H.}} \AND
  \bauthor{\bsnm{McEneaney},~\bfnm{William~M}\binits{W.~M.}}
(\byear{1995}).
\btitle{Risk-sensitive control on an infinite time horizon}.
\bjournal{SIAM Journal on Control and Optimization}
\bvolume{33}
\bpages{1881--1915}.
\end{barticle}
\endbibitem

\bibitem{fujita2006asymptotic}
\begin{barticle}[author]
\bauthor{\bsnm{Fujita},~\bfnm{Yasuhiro}\binits{Y.}},
  \bauthor{\bsnm{Ishii},~\bfnm{Hitoshi}\binits{H.}} \AND
  \bauthor{\bsnm{Loreti},~\bfnm{Paola}\binits{P.}}
(\byear{2006}).
\btitle{Asymptotic solutions of {Hamilton-Jacobi} equations in {Euclidean}
  {$n$} space}.
\bjournal{Indiana University Mathematics Journal}
\bpages{1671--1700}.
\end{barticle}
\endbibitem

\bibitem{fursikov1996controllability}
\begin{bbook}[author]
\bauthor{\bsnm{Fursikov},~\bfnm{Andrej~Vladimirovi{\v{c}}}\binits{A.~V.}} \AND
  \bauthor{\bsnm{Imanuvilov},~\bfnm{O.}\binits{O.}}
(\byear{1996}).
\btitle{Controllability of evolution equations}
\bvolume{34}.
\bpublisher{Seoul National University}.
\end{bbook}
\endbibitem

\bibitem{gad2007flow}
\begin{bbook}[author]
\bauthor{\bparticle{Gad-el} \bsnm{Hak},~\bfnm{Mohamed}\binits{M.}}
(\byear{2007}).
\btitle{Flow control: passive, active, and reactive flow management}.
\bpublisher{Cambridge University Press}.
\end{bbook}
\endbibitem

\bibitem{garcia1989model}
\begin{barticle}[author]
\bauthor{\bsnm{Garcia},~\bfnm{Carlos~E}\binits{C.~E.}},
  \bauthor{\bsnm{Prett},~\bfnm{David~M}\binits{D.~M.}} \AND
  \bauthor{\bsnm{Morari},~\bfnm{Manfred}\binits{M.}}
(\byear{1989}).
\btitle{{Model predictive control: Theory and practice—A survey}}.
\bjournal{Automatica}
\bvolume{25}
\bpages{335--348}.
\end{barticle}
\endbibitem

\bibitem{geshkovski2020null}
\begin{barticle}[author]
\bauthor{\bsnm{Geshkovski},~\bfnm{Borjan}\binits{B.}}
(\byear{2020}).
\btitle{Null-controllability of perturbed porous medium gas flow}.
\bjournal{ESAIM: Control, Optimisation and Calculus of Variations}
\bvolume{26}
\bpages{85}.
\end{barticle}
\endbibitem

\bibitem{geshkovski2021optimal}
\begin{barticle}[author]
\bauthor{\bsnm{Geshkovski},~\bfnm{Borjan}\binits{B.}} \AND
  \bauthor{\bsnm{Zuazua},~\bfnm{Enrique}\binits{E.}}
(\byear{2021}).
\btitle{Optimal actuator design via {Brunovsky's} normal form}.
\bjournal{arXiv preprint arXiv:2108.05629}.
\end{barticle}
\endbibitem

\bibitem{ghil1991data}
\begin{barticle}[author]
\bauthor{\bsnm{Ghil},~\bfnm{Michael}\binits{M.}} \AND
  \bauthor{\bsnm{Malanotte-Rizzoli},~\bfnm{Paola}\binits{P.}}
(\byear{1991}).
\btitle{Data assimilation in meteorology and oceanography}.
\bjournal{Advances in Geophysics}
\bvolume{33}
\bpages{141--266}.
\end{barticle}
\endbibitem

\bibitem{glowinski1995exact}
\begin{barticle}[author]
\bauthor{\bsnm{Glowinski},~\bfnm{Roland}\binits{R.}} \AND
  \bauthor{\bsnm{Lions},~\bfnm{Jacques-Louis}\binits{J.-L.}}
(\byear{1995}).
\btitle{Exact and approximate controllability for distributed parameter
  systems}.
\bjournal{Acta Numerica}
\bvolume{4}
\bpages{159--328}.
\end{barticle}
\endbibitem

\bibitem{grathwohl2018ffjord}
\begin{barticle}[author]
\bauthor{\bsnm{Grathwohl},~\bfnm{Will}\binits{W.}},
  \bauthor{\bsnm{Chen},~\bfnm{Ricky~TQ}\binits{R.~T.}},
  \bauthor{\bsnm{Bettencourt},~\bfnm{Jesse}\binits{J.}},
  \bauthor{\bsnm{Sutskever},~\bfnm{Ilya}\binits{I.}} \AND
  \bauthor{\bsnm{Duvenaud},~\bfnm{David}\binits{D.}}
(\byear{2018}).
\btitle{Ffjord: Free-form continuous dynamics for scalable reversible
  generative models}.
\bjournal{arXiv preprint arXiv:1810.01367}.
\end{barticle}
\endbibitem

\bibitem{grune2018turnpike}
\begin{barticle}[author]
\bauthor{\bsnm{Gr{\"u}ne},~\bfnm{Lars}\binits{L.}} \AND
  \bauthor{\bsnm{Guglielmi},~\bfnm{Roberto}\binits{R.}}
(\byear{2018}).
\btitle{Turnpike properties and strict dissipativity for discrete time linear
  quadratic optimal control problems}.
\bjournal{SIAM Journal on Control and Optimization}
\bvolume{56}
\bpages{1282--1302}.
\end{barticle}
\endbibitem

\bibitem{grune2021relation}
\begin{barticle}[author]
\bauthor{\bsnm{Gr\"{u}ne},~\bfnm{Lars}\binits{L.}} \AND
  \bauthor{\bsnm{Guglielmi},~\bfnm{Roberto}\binits{R.}}
(\byear{2021}).
\btitle{On the relation between turnpike properties and dissipativity for
  continuous time linear quadratic optimal control problems}.
\bjournal{Math. Control Relat. Fields}
\bvolume{11}
\bpages{169--188}.
\end{barticle}
\endbibitem

\bibitem{grune2017relation}
\begin{barticle}[author]
\bauthor{\bsnm{Gr{\"u}ne},~\bfnm{Lars}\binits{L.}},
  \bauthor{\bsnm{Kellett},~\bfnm{Christopher~M}\binits{C.~M.}} \AND
  \bauthor{\bsnm{Weller},~\bfnm{Steven~R}\binits{S.~R.}}
(\byear{2017}).
\btitle{On the relation between turnpike properties for finite and infinite
  horizon optimal control problems}.
\bjournal{Journal of Optimization Theory and Applications}
\bvolume{173}
\bpages{727--745}.
\end{barticle}
\endbibitem

\bibitem{grune2016relation}
\begin{barticle}[author]
\bauthor{\bsnm{Gr{\"u}ne},~\bfnm{Lars}\binits{L.}} \AND
  \bauthor{\bsnm{M{\"u}ller},~\bfnm{Matthias~A}\binits{M.~A.}}
(\byear{2016}).
\btitle{On the relation between strict dissipativity and turnpike properties}.
\bjournal{Systems \& Control Letters}
\bvolume{90}
\bpages{45--53}.
\end{barticle}
\endbibitem

\bibitem{grune2017nonlinear}
\begin{bincollection}[author]
\bauthor{\bsnm{Gr{\"u}ne},~\bfnm{Lars}\binits{L.}} \AND
  \bauthor{\bsnm{Pannek},~\bfnm{J{\"u}rgen}\binits{J.}}
(\byear{2017}).
\btitle{Nonlinear model predictive control}.
In \bbooktitle{Nonlinear model predictive control}
\bpages{45--69}.
\bpublisher{Springer}.
\end{bincollection}
\endbibitem

\bibitem{grune2018strict}
\begin{bincollection}[author]
\bauthor{\bsnm{Gr{\"u}ne},~\bfnm{Lars}\binits{L.}},
  \bauthor{\bsnm{Pirkelmann},~\bfnm{Simon}\binits{S.}} \AND
  \bauthor{\bsnm{Stieler},~\bfnm{Marleen}\binits{M.}}
(\byear{2018}).
\btitle{Strict dissipativity implies turnpike behavior for time-varying
  discrete time optimal control problems}.
In \bbooktitle{Control Systems and Mathematical Methods in Economics}
\bpages{195--218}.
\bpublisher{Springer}.
\end{bincollection}
\endbibitem

\bibitem{grune2019sensitivity}
\begin{barticle}[author]
\bauthor{\bsnm{Gr{\"u}ne},~\bfnm{Lars}\binits{L.}},
  \bauthor{\bsnm{Schaller},~\bfnm{Manuel}\binits{M.}} \AND
  \bauthor{\bsnm{Schiela},~\bfnm{Anton}\binits{A.}}
(\byear{2019}).
\btitle{Sensitivity analysis of optimal control for a class of parabolic {PDE}s
  motivated by model predictive control}.
\bjournal{SIAM Journal on Control and Optimization}
\bvolume{57}
\bpages{2753--2774}.
\end{barticle}
\endbibitem

\bibitem{grune2020exponential}
\begin{barticle}[author]
\bauthor{\bsnm{Gr{\"u}ne},~\bfnm{Lars}\binits{L.}},
  \bauthor{\bsnm{Schaller},~\bfnm{Manuel}\binits{M.}} \AND
  \bauthor{\bsnm{Schiela},~\bfnm{Anton}\binits{A.}}
(\byear{2020}).
\btitle{Exponential sensitivity and turnpike analysis for linear quadratic
  optimal control of general evolution equations}.
\bjournal{Journal of Differential Equations}
\bvolume{268}
\bpages{7311--7341}.
\end{barticle}
\endbibitem

\bibitem{grune2020efficient}
\begin{barticle}[author]
\bauthor{\bsnm{Gr{\"u}ne},~\bfnm{Lars}\binits{L.}},
  \bauthor{\bsnm{Schaller},~\bfnm{Manuel}\binits{M.}} \AND
  \bauthor{\bsnm{Schiela},~\bfnm{Anton}\binits{A.}}
(\byear{2020}).
\btitle{Efficient MPC for parabolic PDEs with goal oriented error estimation}.
\bjournal{arXiv preprint arXiv:2007.14446}.
\end{barticle}
\endbibitem

\bibitem{grune2021abstract}
\begin{barticle}[author]
\bauthor{\bsnm{Gr{\"u}ne},~\bfnm{Lars}\binits{L.}},
  \bauthor{\bsnm{Schaller},~\bfnm{Manuel}\binits{M.}} \AND
  \bauthor{\bsnm{Schiela},~\bfnm{Anton}\binits{A.}}
(\byear{2021}).
\btitle{Abstract nonlinear sensitivity and turnpike analysis and an application
  to semilinear parabolic {PDEs}}.
\bjournal{ESAIM: Control, Optimisation and Calculus of Variations}
\bvolume{27}
\bpages{56}.
\end{barticle}
\endbibitem

\bibitem{gueye2014exact}
\begin{barticle}[author]
\bauthor{\bsnm{Gueye},~\bfnm{Mamadou}\binits{M.}}
(\byear{2014}).
\btitle{Exact boundary controllability of 1-D parabolic and hyperbolic
  degenerate equations}.
\bjournal{SIAM Journal on Control and Optimization}
\bvolume{52}
\bpages{2037--2054}.
\end{barticle}
\endbibitem

\bibitem{gugat2021turnpike}
\begin{barticle}[author]
\bauthor{\bsnm{Gugat},~\bfnm{Martin}\binits{M.}}
(\byear{2021}).
\btitle{On the turnpike property with interior decay for optimal control
  problems}.
\bjournal{Mathematics of Control, Signals, and Systems}
\bvolume{33}
\bpages{237--258}.
\end{barticle}
\endbibitem

\bibitem{gugat2019turnpike}
\begin{barticle}[author]
\bauthor{\bsnm{Gugat},~\bfnm{Martin}\binits{M.}} \AND
  \bauthor{\bsnm{Hante},~\bfnm{Falk~M}\binits{F.~M.}}
(\byear{2019}).
\btitle{On the turnpike phenomenon for optimal boundary control problems with
  hyperbolic systems}.
\bjournal{SIAM Journal on Control and Optimization}
\bvolume{57}
\bpages{264--289}.
\end{barticle}
\endbibitem

\bibitem{gugat2021finite}
\begin{bincollection}[author]
\bauthor{\bsnm{Gugat},~\bfnm{Martin}\binits{M.}},
  \bauthor{\bsnm{Schuster},~\bfnm{Michael}\binits{M.}} \AND
  \bauthor{\bsnm{Zuazua},~\bfnm{Enrique}\binits{E.}}
(\byear{2021}).
\btitle{The finite-time turnpike phenomenon for optimal control problems:
  Stabilization by non-smooth tracking terms}.
In \bbooktitle{Stabilization of Distributed Parameter Systems: Design Methods
  and Applications}
\bpages{17--41}.
\bpublisher{Springer}.
\end{bincollection}
\endbibitem

\bibitem{haber2017stable}
\begin{barticle}[author]
\bauthor{\bsnm{Haber},~\bfnm{Eldad}\binits{E.}} \AND
  \bauthor{\bsnm{Ruthotto},~\bfnm{Lars}\binits{L.}}
(\byear{2017}).
\btitle{Stable architectures for deep neural networks}.
\bjournal{Inverse problems}
\bvolume{34}
\bpages{014004}.
\end{barticle}
\endbibitem

\bibitem{han2021slow}
\begin{barticle}[author]
\bauthor{\bsnm{Han},~\bfnm{Zhong-Jie}\binits{Z.-J.}} \AND
  \bauthor{\bsnm{Zuazua},~\bfnm{Enrique}\binits{E.}}
(\byear{2021}).
\btitle{Slow decay and turnpike for infinite-horizon hyperbolic {LQ} problems}.
\bjournal{arXiv preprint arXiv:2108.10240}.
\end{barticle}
\endbibitem

\bibitem{haraux1989remarque}
\begin{barticle}[author]
\bauthor{\bsnm{Haraux},~\bfnm{Alain}\binits{A.}}
(\byear{1989}).
\btitle{Une remarque sur la stabilisation de certains systemes du deuxieme
  ordre en temps}.
\bjournal{Portugaliae mathematica}
\bvolume{46}
\bpages{245--258}.
\end{barticle}
\endbibitem

\bibitem{haurie1976optimal}
\begin{barticle}[author]
\bauthor{\bsnm{Haurie},~\bfnm{A}\binits{A.}}
(\byear{1976}).
\btitle{Optimal control on an infinite time horizon: the turnpike approach}.
\bjournal{Journal of Mathematical Economics}
\bvolume{3}
\bpages{81--102}.
\end{barticle}
\endbibitem

\bibitem{he2016deep}
\begin{binproceedings}[author]
\bauthor{\bsnm{He},~\bfnm{Kaiming}\binits{K.}},
  \bauthor{\bsnm{Zhang},~\bfnm{Xiangyu}\binits{X.}},
  \bauthor{\bsnm{Ren},~\bfnm{Shaoqing}\binits{S.}} \AND
  \bauthor{\bsnm{Sun},~\bfnm{Jian}\binits{J.}}
(\byear{2016}).
\btitle{Deep residual learning for image recognition}.
In \bbooktitle{Proceedings of the IEEE conference on Computer Vision and
  Pattern Recognition}
\bpages{770--778}.
\end{binproceedings}
\endbibitem

\bibitem{he2019ode}
\begin{binproceedings}[author]
\bauthor{\bsnm{He},~\bfnm{Xiangyu}\binits{X.}},
  \bauthor{\bsnm{Mo},~\bfnm{Zitao}\binits{Z.}},
  \bauthor{\bsnm{Wang},~\bfnm{Peisong}\binits{P.}},
  \bauthor{\bsnm{Liu},~\bfnm{Yang}\binits{Y.}},
  \bauthor{\bsnm{Yang},~\bfnm{Mingyuan}\binits{M.}} \AND
  \bauthor{\bsnm{Cheng},~\bfnm{Jian}\binits{J.}}
(\byear{2019}).
\btitle{Ode-inspired network design for single image super-resolution}.
In \bbooktitle{Proceedings of the IEEE/CVF Conference on Computer Vision and
  Pattern Recognition}
\bpages{1732--1741}.
\end{binproceedings}
\endbibitem

\bibitem{hebrard2005spillover}
\begin{barticle}[author]
\bauthor{\bsnm{H{\'e}brard},~\bfnm{Pascal}\binits{P.}} \AND
  \bauthor{\bsnm{Henrot},~\bfnm{Antoine}\binits{A.}}
(\byear{2005}).
\btitle{A spillover phenomenon in the optimal location of actuators}.
\bjournal{SIAM journal on control and optimization}
\bvolume{44}
\bpages{349--366}.
\end{barticle}
\endbibitem

\bibitem{hegoburu2018controllability}
\begin{barticle}[author]
\bauthor{\bsnm{Hegoburu},~\bfnm{Nicolas}\binits{N.}},
  \bauthor{\bsnm{Magal},~\bfnm{Pierre}\binits{P.}} \AND
  \bauthor{\bsnm{Tucsnak},~\bfnm{Marius}\binits{M.}}
(\byear{2018}).
\btitle{Controllability with positivity constraints of the {Lotka--McKendrick}
  system}.
\bjournal{SIAM Journal on Control and Optimization}
\bvolume{56}
\bpages{723--750}.
\end{barticle}
\endbibitem

\bibitem{heiland2020classical}
\begin{barticle}[author]
\bauthor{\bsnm{Heiland},~\bfnm{Jan}\binits{J.}} \AND
  \bauthor{\bsnm{Zuazua},~\bfnm{Enrique}\binits{E.}}
(\byear{2020}).
\btitle{Classical system theory revisited for turnpike in standard state space
  systems and impulse controllable descriptor systems}.
\bjournal{arXiv preprint arXiv:2007.13621}.
\end{barticle}
\endbibitem

\bibitem{hernandez2019greedy}
\begin{barticle}[author]
\bauthor{\bsnm{Hern{\'a}ndez-Santamar{\'\i}a},~\bfnm{V{\'\i}ctor}\binits{V.}},
  \bauthor{\bsnm{Lazar},~\bfnm{Martin}\binits{M.}} \AND
  \bauthor{\bsnm{Zuazua},~\bfnm{Enrique}\binits{E.}}
(\byear{2019}).
\btitle{Greedy optimal control for elliptic problems and its application to
  turnpike problems}.
\bjournal{Numerische Mathematik}
\bvolume{141}
\bpages{455--493}.
\end{barticle}
\endbibitem

\bibitem{hinze2008optimization}
\begin{bbook}[author]
\bauthor{\bsnm{Hinze},~\bfnm{Michael}\binits{M.}},
  \bauthor{\bsnm{Pinnau},~\bfnm{Ren{\'e}}\binits{R.}},
  \bauthor{\bsnm{Ulbrich},~\bfnm{Michael}\binits{M.}} \AND
  \bauthor{\bsnm{Ulbrich},~\bfnm{Stefan}\binits{S.}}
(\byear{2008}).
\btitle{Optimization with PDE constraints}
\bvolume{23}.
\bpublisher{Springer Science \& Business Media}.
\end{bbook}
\endbibitem

\bibitem{holmes2012turbulence}
\begin{bbook}[author]
\bauthor{\bsnm{Holmes},~\bfnm{Philip}\binits{P.}},
  \bauthor{\bsnm{Lumley},~\bfnm{John~L}\binits{J.~L.}},
  \bauthor{\bsnm{Berkooz},~\bfnm{Gahl}\binits{G.}} \AND
  \bauthor{\bsnm{Rowley},~\bfnm{Clarence~W}\binits{C.~W.}}
(\byear{2012}).
\btitle{Turbulence, coherent structures, dynamical systems and symmetry}.
\bpublisher{Cambridge University Press}.
\end{bbook}
\endbibitem

\bibitem{ishii2006asymptotic}
\begin{binproceedings}[author]
\bauthor{\bsnm{Ishii},~\bfnm{Hitoshi}\binits{H.}}
(\byear{2006}).
\btitle{Asymptotic solutions for large time of {Hamilton-Jacobi} equations}.
In \bbooktitle{International Congress of Mathematicians}
\bvolume{3}
\bpages{213--227}.
\end{binproceedings}
\endbibitem

\bibitem{ishii2008asymptotic}
\begin{binproceedings}[author]
\bauthor{\bsnm{Ishii},~\bfnm{Hitoshi}\binits{H.}}
(\byear{2008}).
\btitle{Asymptotic solutions for large time of {Hamilton-Jacobi} equations in
  Euclidean $n$ space}.
In \bbooktitle{Annales de l'IHP Analyse non lin{\'e}aire}
\bvolume{25}
\bpages{231--266}.
\end{binproceedings}
\endbibitem

\bibitem{ito2008lagrange}
\begin{bbook}[author]
\bauthor{\bsnm{Ito},~\bfnm{Kazufumi}\binits{K.}} \AND
  \bauthor{\bsnm{Kunisch},~\bfnm{Karl}\binits{K.}}
(\byear{2008}).
\btitle{Lagrange multiplier approach to variational problems and applications}.
\bpublisher{SIAM}.
\end{bbook}
\endbibitem

\bibitem{jameson1988aerodynamic}
\begin{barticle}[author]
\bauthor{\bsnm{Jameson},~\bfnm{Antony}\binits{A.}}
(\byear{1988}).
\btitle{Aerodynamic design via control theory}.
\bjournal{Journal of Scientific Computing}
\bvolume{3}
\bpages{233--260}.
\end{barticle}
\endbibitem

\bibitem{jameson2010optimization}
\begin{barticle}[author]
\bauthor{\bsnm{Jameson},~\bfnm{Antony}\binits{A.}} \AND
  \bauthor{\bsnm{Ou},~\bfnm{Kui}\binits{K.}}
(\byear{2010}).
\btitle{Optimization methods in computational fluid dynamics}.
\bjournal{Encyclopedia of Aerospace Engineering}.
\end{barticle}
\endbibitem

\bibitem{jean2015complexity}
\begin{barticle}[author]
\bauthor{\bsnm{Jean},~\bfnm{Fr{\'e}d{\'e}ric}\binits{F.}} \AND
  \bauthor{\bsnm{Prandi},~\bfnm{Dario}\binits{D.}}
(\byear{2015}).
\btitle{Complexity of control-affine motion planning}.
\bjournal{SIAM Journal on Control and Optimization}
\bvolume{53}
\bpages{816--844}.
\end{barticle}
\endbibitem

\bibitem{joly2014note}
\begin{barticle}[author]
\bauthor{\bsnm{Joly},~\bfnm{Romain}\binits{R.}} \AND
  \bauthor{\bsnm{Laurent},~\bfnm{Camille}\binits{C.}}
(\byear{2014}).
\btitle{A note on the semiglobal controllability of the semilinear wave
  equation}.
\bjournal{SIAM Journal on Control and Optimization}
\bvolume{52}
\bpages{439--450}.
\end{barticle}
\endbibitem

\bibitem{kellett2019feedback}
\begin{barticle}[author]
\bauthor{\bsnm{Kellett},~\bfnm{Christopher~M}\binits{C.~M.}},
  \bauthor{\bsnm{Weller},~\bfnm{Steven~R}\binits{S.~R.}},
  \bauthor{\bsnm{Faulwasser},~\bfnm{Timm}\binits{T.}},
  \bauthor{\bsnm{Gr{\"u}ne},~\bfnm{Lars}\binits{L.}} \AND
  \bauthor{\bsnm{Semmler},~\bfnm{Willi}\binits{W.}}
(\byear{2019}).
\btitle{Feedback, dynamics, and optimal control in climate economics}.
\bjournal{Annual Reviews in Control}
\bvolume{47}
\bpages{7--20}.
\end{barticle}
\endbibitem

\bibitem{kidger2020neural}
\begin{barticle}[author]
\bauthor{\bsnm{Kidger},~\bfnm{Patrick}\binits{P.}},
  \bauthor{\bsnm{Morrill},~\bfnm{James}\binits{J.}},
  \bauthor{\bsnm{Foster},~\bfnm{James}\binits{J.}} \AND
  \bauthor{\bsnm{Lyons},~\bfnm{Terry}\binits{T.}}
(\byear{2020}).
\btitle{Neural controlled differential equations for irregular time series}.
\bjournal{arXiv preprint arXiv:2005.08926}.
\end{barticle}
\endbibitem

\bibitem{komornik1997rapid}
\begin{barticle}[author]
\bauthor{\bsnm{Komornik},~\bfnm{Vilmos}\binits{V.}}
(\byear{1997}).
\btitle{Rapid boundary stabilization of linear distributed systems}.
\bjournal{SIAM journal on control and optimization}
\bvolume{35}
\bpages{1591--1613}.
\end{barticle}
\endbibitem

\bibitem{krizhevsky2012imagenet}
\begin{barticle}[author]
\bauthor{\bsnm{Krizhevsky},~\bfnm{Alex}\binits{A.}},
  \bauthor{\bsnm{Sutskever},~\bfnm{Ilya}\binits{I.}} \AND
  \bauthor{\bsnm{Hinton},~\bfnm{Geoffrey~E}\binits{G.~E.}}
(\byear{2012}).
\btitle{Imagenet classification with deep convolutional neural networks}.
\bjournal{Advances in neural information processing systems}
\bvolume{25}
\bpages{1097--1105}.
\end{barticle}
\endbibitem

\bibitem{lance2020shape}
\begin{barticle}[author]
\bauthor{\bsnm{Lance},~\bfnm{Gontran}\binits{G.}},
  \bauthor{\bsnm{Tr{\'e}lat},~\bfnm{Emmanuel}\binits{E.}} \AND
  \bauthor{\bsnm{Zuazua},~\bfnm{Enrique}\binits{E.}}
(\byear{2020}).
\btitle{Shape turnpike for linear parabolic {PDE} models}.
\bjournal{Systems \& Control Letters}
\bvolume{142}
\bpages{104733}.
\end{barticle}
\endbibitem

\bibitem{lasry2007mean}
\begin{barticle}[author]
\bauthor{\bsnm{Lasry},~\bfnm{Jean-Michel}\binits{J.-M.}} \AND
  \bauthor{\bsnm{Lions},~\bfnm{Pierre-Louis}\binits{P.-L.}}
(\byear{2007}).
\btitle{Mean field games}.
\bjournal{Japanese Journal of Mathematics}
\bvolume{2}
\bpages{229--260}.
\end{barticle}
\endbibitem

\bibitem{lazar2016greedy}
\begin{barticle}[author]
\bauthor{\bsnm{Lazar},~\bfnm{Martin}\binits{M.}} \AND
  \bauthor{\bsnm{Zuazua},~\bfnm{Enrique}\binits{E.}}
(\byear{2016}).
\btitle{Greedy controllability of finite dimensional linear systems}.
\bjournal{Automatica}
\bvolume{74}
\bpages{327--340}.
\end{barticle}
\endbibitem

\bibitem{le2020local}
\begin{barticle}[author]
\bauthor{\bsnm{Le~Balc’h},~\bfnm{K{\'e}vin}\binits{K.}}
(\byear{2020}).
\btitle{Local controllability of reaction-diffusion systems around nonnegative
  stationary states}.
\bjournal{ESAIM: Control, Optimisation and Calculus of Variations}
\bvolume{26}
\bpages{55}.
\end{barticle}
\endbibitem

\bibitem{lebeau1996equation}
\begin{bincollection}[author]
\bauthor{\bsnm{Lebeau},~\bfnm{G.}\binits{G.}}
(\byear{1996}).
\btitle{\'{E}quation des ondes amorties}.
In \bbooktitle{Algebraic and geometric methods in mathematical physics
  ({K}aciveli, 1993)}.
\bseries{Math. Phys. Stud.}
\bvolume{19}
\bpages{73--109}.
\bpublisher{Kluwer Acad. Publ., Dordrecht}.
\end{bincollection}
\endbibitem

\bibitem{lebeau1995controle}
\begin{barticle}[author]
\bauthor{\bsnm{Lebeau},~\bfnm{Gilles}\binits{G.}} \AND
  \bauthor{\bsnm{Robbiano},~\bfnm{Luc}\binits{L.}}
(\byear{1995}).
\btitle{Contr{\^o}le exact de l'{\'e}quation de la chaleur}.
\bjournal{Communications in Partial Differential Equations}
\bvolume{20}
\bpages{335--356}.
\end{barticle}
\endbibitem

\bibitem{lecun2010mnist}
\begin{barticle}[author]
\bauthor{\bsnm{LeCun},~\bfnm{Yann}\binits{Y.}},
  \bauthor{\bsnm{Cortes},~\bfnm{Corinna}\binits{C.}} \AND
  \bauthor{\bsnm{Burges},~\bfnm{CJ}\binits{C.}}
(\byear{2010}).
\btitle{{MNIST} handwritten digit database}.
\bjournal{ATT Labs [Online]. Available: http://yann.lecun.com/exdb/mnist}
\bvolume{2}.
\end{barticle}
\endbibitem

\bibitem{lecun1988theoretical}
\begin{binproceedings}[author]
\bauthor{\bsnm{LeCun},~\bfnm{Yann}\binits{Y.}},
  \bauthor{\bsnm{Touresky},~\bfnm{D}\binits{D.}},
  \bauthor{\bsnm{Hinton},~\bfnm{G}\binits{G.}} \AND
  \bauthor{\bsnm{Sejnowski},~\bfnm{T}\binits{T.}}
(\byear{1988}).
\btitle{A theoretical framework for back-propagation}.
In \bbooktitle{Proceedings of the 1988 connectionist models summer school}
\bvolume{1}
\bpages{21--28}.
\bpublisher{CMU, Pittsburgh, Pa: Morgan Kaufmann}.
\end{binproceedings}
\endbibitem

\bibitem{lee1967foundations}
\begin{bbook}[author]
\bauthor{\bsnm{Lee},~\bfnm{E.~B.}\binits{E.~B.}} \AND
  \bauthor{\bsnm{Markus},~\bfnm{L.}\binits{L.}}
(\byear{1967}).
\btitle{Foundations of optimal control theory}.
\bpublisher{John Wiley \& Sons, Inc., New York-London-Sydney}.
\end{bbook}
\endbibitem

\bibitem{li2017maximum}
\begin{barticle}[author]
\bauthor{\bsnm{Li},~\bfnm{Qianxiao}\binits{Q.}},
  \bauthor{\bsnm{Chen},~\bfnm{Long}\binits{L.}},
  \bauthor{\bsnm{Tai},~\bfnm{Cheng}\binits{C.}} \AND
  \bauthor{\bsnm{E},~\bfnm{Weinan}\binits{W.}}
(\byear{2017}).
\btitle{Maximum principle based algorithms for deep learning}.
\bjournal{J. Mach. Learn. Res.}
\bvolume{18}
\bpages{5998--6026}.
\end{barticle}
\endbibitem

\bibitem{li2019deep}
\begin{barticle}[author]
\bauthor{\bsnm{Li},~\bfnm{Qianxiao}\binits{Q.}},
  \bauthor{\bsnm{Lin},~\bfnm{Ting}\binits{T.}} \AND
  \bauthor{\bsnm{Shen},~\bfnm{Zuowei}\binits{Z.}}
(\byear{2019}).
\btitle{Deep learning via dynamical systems: An approximation perspective}.
\bjournal{arXiv preprint arXiv:1912.10382}.
\end{barticle}
\endbibitem

\bibitem{liard2021initial}
\begin{barticle}[author]
\bauthor{\bsnm{Liard},~\bfnm{Thibault}\binits{T.}} \AND
  \bauthor{\bsnm{Zuazua},~\bfnm{Enrique}\binits{E.}}
(\byear{2021}).
\btitle{Initial data identification for the one-dimensional {B}urgers
  equation}.
\bjournal{IEEE Transactions on Automatic Control}.
\end{barticle}
\endbibitem

\bibitem{lions1971optimal}
\begin{bbook}[author]
\bauthor{\bsnm{Lions},~\bfnm{Jacques-Louis}\binits{J.-L.}}
(\byear{1971}).
\btitle{Optimal control of systems governed by partial differential equations}
\bvolume{170}.
\bpublisher{Springer Verlag}.
\end{bbook}
\endbibitem

\bibitem{lions1988exact}
\begin{barticle}[author]
\bauthor{\bsnm{Lions},~\bfnm{Jacques-Louis}\binits{J.-L.}}
(\byear{1988}).
\btitle{Exact controllability, stabilization and perturbations for distributed
  systems}.
\bjournal{SIAM Review}
\bvolume{30}
\bpages{1--68}.
\end{barticle}
\endbibitem

\bibitem{lions1988controlabilite}
\begin{bbook}[author]
\bauthor{\bsnm{Lions},~\bfnm{J-L}\binits{J.-L.}}
(\byear{1988}).
\btitle{Contr{\^o}labilit{\'e} exacte, perturbations et stabilisation de
  syst{\`e}mes distribu{\'e}s. Tome 1}
\bvolume{8}.
\bpublisher{Masson}.
\end{bbook}
\endbibitem

\bibitem{lissy2020state}
\begin{barticle}[author]
\bauthor{\bsnm{Lissy},~\bfnm{Pierre}\binits{P.}} \AND
  \bauthor{\bsnm{Moreau},~\bfnm{Cl{\'e}ment}\binits{C.}}
(\byear{2020}).
\btitle{State-constrained controllability of linear reaction-diffusion
  systems}.
\bjournal{arXiv preprint arXiv:2011.04165}.
\end{barticle}
\endbibitem

\bibitem{loheac2017minimal}
\begin{barticle}[author]
\bauthor{\bsnm{Loh{\'e}ac},~\bfnm{J{\'e}r{\^o}me}\binits{J.}},
  \bauthor{\bsnm{Tr{\'e}lat},~\bfnm{Emmanuel}\binits{E.}} \AND
  \bauthor{\bsnm{Zuazua},~\bfnm{Enrique}\binits{E.}}
(\byear{2017}).
\btitle{Minimal controllability time for the heat equation under unilateral
  state or control constraints}.
\bjournal{Mathematical Models and Methods in Applied Sciences}
\bvolume{27}
\bpages{1587--1644}.
\end{barticle}
\endbibitem

\bibitem{loheac2016averaged}
\begin{binproceedings}[author]
\bauthor{\bsnm{Loh{\'e}ac},~\bfnm{J{\'e}r{\^o}me}\binits{J.}} \AND
  \bauthor{\bsnm{Zuazua},~\bfnm{Enrique}\binits{E.}}
(\byear{2016}).
\btitle{From averaged to simultaneous controllability}.
In \bbooktitle{Annales de la Facult{\'e} des sciences de Toulouse:
  Math{\'e}matiques}
\bvolume{25}
\bpages{785--828}.
\end{binproceedings}
\endbibitem

\bibitem{lou2019turnpike}
\begin{barticle}[author]
\bauthor{\bsnm{Lou},~\bfnm{Hongwei}\binits{H.}} \AND
  \bauthor{\bsnm{Wang},~\bfnm{Weihan}\binits{W.}}
(\byear{2019}).
\btitle{Turnpike properties of optimal relaxed control problems}.
\bjournal{ESAIM: Control, Optimisation and Calculus of Variations}
\bvolume{25}
\bpages{74}.
\end{barticle}
\endbibitem

\bibitem{macia2021observability}
\begin{barticle}[author]
\bauthor{\bsnm{Maci{\`a}},~\bfnm{Fabricio}\binits{F.}}
(\byear{2021}).
\btitle{Observability Results Related to Fractional {Schr{\"o}dinger}
  Operators}.
\bjournal{Vietnam Journal of Mathematics}
\bvolume{49}
\bpages{919--936}.
\end{barticle}
\endbibitem

\bibitem{maity2019controllability}
\begin{barticle}[author]
\bauthor{\bsnm{Maity},~\bfnm{Debayan}\binits{D.}},
  \bauthor{\bsnm{Tucsnak},~\bfnm{Marius}\binits{M.}} \AND
  \bauthor{\bsnm{Zuazua},~\bfnm{Enrique}\binits{E.}}
(\byear{2019}).
\btitle{Controllability and positivity constraints in population dynamics with
  age structuring and diffusion}.
\bjournal{Journal de Math{\'e}matiques Pures et Appliqu{\'e}es}
\bvolume{129}
\bpages{153--179}.
\end{barticle}
\endbibitem

\bibitem{mazari2020quantitative}
\begin{barticle}[author]
\bauthor{\bsnm{Mazari},~\bfnm{Idriss}\binits{I.}} \AND
  \bauthor{\bsnm{Ruiz-Balet},~\bfnm{Domenec}\binits{D.}}
(\byear{2020}).
\btitle{Quantitative stability for eigenvalues of {S}chr{\"o}dinger operator,
  {Q}uantitative bathtub principle \& {A}pplication to the turnpike property
  for a bilinear optimal control problem}.
\bjournal{arXiv preprint arXiv:2010.10798}.
\end{barticle}
\endbibitem

\bibitem{mazari2020constrained}
\begin{barticle}[author]
\bauthor{\bsnm{Mazari},~\bfnm{Idriss}\binits{I.}},
  \bauthor{\bsnm{Ruiz-Balet},~\bfnm{Dom{\`e}nec}\binits{D.}} \AND
  \bauthor{\bsnm{Zuazua},~\bfnm{Enrique}\binits{E.}}
(\byear{2020}).
\btitle{Constrained control of gene-flow models}.
\bjournal{arXiv preprint arXiv:2005.09236}.
\end{barticle}
\endbibitem

\bibitem{mckenzie1976turnpike}
\begin{barticle}[author]
\bauthor{\bsnm{McKenzie},~\bfnm{Lionel~W}\binits{L.~W.}}
(\byear{1976}).
\btitle{Turnpike theory}.
\bjournal{Econometrica: Journal of the Econometric Society}
\bvolume{44}
\bpages{841--865}.
\end{barticle}
\endbibitem

\bibitem{mohammadi2010applied}
\begin{bbook}[author]
\bauthor{\bsnm{Mohammadi},~\bfnm{Bijan}\binits{B.}} \AND
  \bauthor{\bsnm{Pironneau},~\bfnm{Olivier}\binits{O.}}
(\byear{2010}).
\btitle{Applied shape optimization for fluids}.
\bpublisher{Oxford university press}.
\end{bbook}
\endbibitem

\bibitem{munch2010numerical}
\begin{barticle}[author]
\bauthor{\bsnm{M{\"u}nch},~\bfnm{Arnaud}\binits{A.}} \AND
  \bauthor{\bsnm{Zuazua},~\bfnm{Enrique}\binits{E.}}
(\byear{2010}).
\btitle{Numerical approximation of null controls for the heat equation:
  ill-posedness and remedies}.
\bjournal{Inverse Problems}
\bvolume{26}
\bpages{085018, 39}.
\end{barticle}
\endbibitem

\bibitem{nordhaus1992optimal}
\begin{barticle}[author]
\bauthor{\bsnm{Nordhaus},~\bfnm{William~D}\binits{W.~D.}}
(\byear{1992}).
\btitle{An optimal transition path for controlling greenhouse gases}.
\bjournal{Science}
\bvolume{258}
\bpages{1315--1319}.
\end{barticle}
\endbibitem

\bibitem{papamakarios2019normalizing}
\begin{barticle}[author]
\bauthor{\bsnm{Papamakarios},~\bfnm{George}\binits{G.}},
  \bauthor{\bsnm{Nalisnick},~\bfnm{Eric}\binits{E.}},
  \bauthor{\bsnm{Rezende},~\bfnm{Danilo~Jimenez}\binits{D.~J.}},
  \bauthor{\bsnm{Mohamed},~\bfnm{Shakir}\binits{S.}} \AND
  \bauthor{\bsnm{Lakshminarayanan},~\bfnm{Balaji}\binits{B.}}
(\byear{2019}).
\btitle{Normalizing flows for probabilistic modeling and inference}.
\bjournal{arXiv preprint arXiv:1912.02762}.
\end{barticle}
\endbibitem

\bibitem{pighin2020nonuniqueness}
\begin{barticle}[author]
\bauthor{\bsnm{Pighin},~\bfnm{Dario}\binits{D.}}
(\byear{2020}).
\btitle{Nonuniqueness of minimizers for semilinear optimal control problems}.
\bjournal{arXiv preprint arXiv:2002.04485}.
\end{barticle}
\endbibitem

\bibitem{pighin2020turnpike}
\begin{barticle}[author]
\bauthor{\bsnm{Pighin},~\bfnm{Dario}\binits{D.}}
(\byear{2021}).
\btitle{The turnpike property in semilinear control}.
\bjournal{ESAIM: Control, Optimisation and Calculus of Variations}
\bvolume{27}
\bpages{48}.
\end{barticle}
\endbibitem

\bibitem{pighin2020theturnpike}
\begin{barticle}[author]
\bauthor{\bsnm{Pighin},~\bfnm{Dario}\binits{D.}} \AND
  \bauthor{\bsnm{Sakamoto},~\bfnm{Noboru}\binits{N.}}
(\byear{2020}).
\btitle{The turnpike with lack of observability}.
\bjournal{arXiv preprint arXiv:2007.14081}.
\end{barticle}
\endbibitem

\bibitem{pighin2018controllability}
\begin{barticle}[author]
\bauthor{\bsnm{Pighin},~\bfnm{Dario}\binits{D.}} \AND
  \bauthor{\bsnm{Zuazua},~\bfnm{Enrique}\binits{E.}}
(\byear{2018}).
\btitle{Controllability under positivity constraints of semilinear heat
  equations}.
\bjournal{Mathematical Control and Related Fields}
\bvolume{8}
\bpages{935--964}.
\end{barticle}
\endbibitem

\bibitem{pighin2019controllability}
\begin{barticle}[author]
\bauthor{\bsnm{Pighin},~\bfnm{Dario}\binits{D.}} \AND
  \bauthor{\bsnm{Zuazua},~\bfnm{Enrique}\binits{E.}}
(\byear{2019}).
\btitle{Controllability under positivity constraints of multi-d wave
  equations}.
\bjournal{Trends in Control Theory and Partial Differential Equations}
\bpages{195--232}.
\end{barticle}
\endbibitem

\bibitem{pinkus1999approximation}
\begin{barticle}[author]
\bauthor{\bsnm{Pinkus},~\bfnm{Allan}\binits{A.}}
(\byear{1999}).
\btitle{Approximation theory of the MLP model in neural networks}.
\bjournal{Acta Numerica}
\bvolume{8}
\bpages{143--195}.
\end{barticle}
\endbibitem

\bibitem{porretta2013long}
\begin{barticle}[author]
\bauthor{\bsnm{Porretta},~\bfnm{Alessio}\binits{A.}} \AND
  \bauthor{\bsnm{Zuazua},~\bfnm{Enrique}\binits{E.}}
(\byear{2013}).
\btitle{Long time versus steady state optimal control}.
\bjournal{SIAM Journal on Control and Optimization}
\bvolume{51}
\bpages{4242--4273}.
\end{barticle}
\endbibitem

\bibitem{porretta2016remarks}
\begin{bincollection}[author]
\bauthor{\bsnm{Porretta},~\bfnm{Alessio}\binits{A.}} \AND
  \bauthor{\bsnm{Zuazua},~\bfnm{Enrique}\binits{E.}}
(\byear{2016}).
\btitle{Remarks on long time versus steady state optimal control}.
In \bbooktitle{Mathematical paradigms of climate science}
\bpages{67--89}.
\bpublisher{Springer}.
\end{bincollection}
\endbibitem

\bibitem{pouchol2019phase}
\begin{barticle}[author]
\bauthor{\bsnm{Pouchol},~\bfnm{Camille}\binits{C.}},
  \bauthor{\bsnm{Tr{\'e}lat},~\bfnm{Emmanuel}\binits{E.}} \AND
  \bauthor{\bsnm{Zuazua},~\bfnm{Enrique}\binits{E.}}
(\byear{2019}).
\btitle{Phase portrait control for {1D} monostable and bistable
  reaction--diffusion equations}.
\bjournal{Nonlinearity}
\bvolume{32}
\bpages{884--909}.
\end{barticle}
\endbibitem

\bibitem{prandi2014holder}
\begin{barticle}[author]
\bauthor{\bsnm{Prandi},~\bfnm{Dario}\binits{D.}}
(\byear{2014}).
\btitle{H{\"o}lder equivalence of the value function for control-affine
  systems}.
\bjournal{ESAIM: Control, Optimisation and Calculus of Variations}
\bvolume{20}
\bpages{1224--1248}.
\end{barticle}
\endbibitem

\bibitem{privat2015optimal}
\begin{barticle}[author]
\bauthor{\bsnm{Privat},~\bfnm{Yannick}\binits{Y.}},
  \bauthor{\bsnm{Tr{\'e}lat},~\bfnm{Emmanuel}\binits{E.}} \AND
  \bauthor{\bsnm{Zuazua},~\bfnm{Enrique}\binits{E.}}
(\byear{2015}).
\btitle{Optimal shape and location of sensors for parabolic equations with
  random initial data}.
\bjournal{Archive for Rational Mechanics and Analysis}
\bvolume{216}
\bpages{921--981}.
\end{barticle}
\endbibitem

\bibitem{privat2016optimal}
\begin{barticle}[author]
\bauthor{\bsnm{Privat},~\bfnm{Yannick}\binits{Y.}},
  \bauthor{\bsnm{Tr{\'e}lat},~\bfnm{Emmanuel}\binits{E.}} \AND
  \bauthor{\bsnm{Zuazua},~\bfnm{Enrique}\binits{E.}}
(\byear{2016}).
\btitle{Optimal observability of the multi-dimensional wave and
  {Schr{\"o}dinger equations} in quantum ergodic domains}.
\bjournal{Journal of the European Mathematical Society}
\bvolume{18}
\bpages{1043--1111}.
\end{barticle}
\endbibitem

\bibitem{quincampoix2011existence}
\begin{barticle}[author]
\bauthor{\bsnm{Quincampoix},~\bfnm{Marc}\binits{M.}} \AND
  \bauthor{\bsnm{Renault},~\bfnm{J{\'e}r{\^o}me}\binits{J.}}
(\byear{2011}).
\btitle{On the existence of a limit value in some nonexpansive optimal control
  problems}.
\bjournal{SIAM Journal on Control and Optimization}
\bvolume{49}
\bpages{2118--2132}.
\end{barticle}
\endbibitem

\bibitem{ramsey1928mathematical}
\begin{barticle}[author]
\bauthor{\bsnm{Ramsey},~\bfnm{Frank~Plumpton}\binits{F.~P.}}
(\byear{1928}).
\btitle{A mathematical theory of saving}.
\bjournal{The economic journal}
\bvolume{38}
\bpages{543--559}.
\end{barticle}
\endbibitem

\bibitem{rapaport2004turnpike}
\begin{barticle}[author]
\bauthor{\bsnm{Rapaport},~\bfnm{Alain}\binits{A.}} \AND
  \bauthor{\bsnm{Cartigny},~\bfnm{Pierre}\binits{P.}}
(\byear{2004}).
\btitle{Turnpike theorems by a value function approach}.
\bjournal{ESAIM: Control, Optimisation and Calculus of Variations}
\bvolume{10}
\bpages{123--141}.
\end{barticle}
\endbibitem

\bibitem{recht2019tour}
\begin{barticle}[author]
\bauthor{\bsnm{Recht},~\bfnm{Benjamin}\binits{B.}}
(\byear{2019}).
\btitle{A tour of reinforcement learning: The view from continuous control}.
\bjournal{Annual Review of Control, Robotics, and Autonomous Systems}
\bvolume{2}
\bpages{253--279}.
\end{barticle}
\endbibitem

\bibitem{renault2017long}
\begin{barticle}[author]
\bauthor{\bsnm{Renault},~\bfnm{J{\'e}r{\^o}me}\binits{J.}} \AND
  \bauthor{\bsnm{Venel},~\bfnm{Xavier}\binits{X.}}
(\byear{2017}).
\btitle{Long-term values in Markov decision processes and repeated games, and a
  new distance for probability spaces}.
\bjournal{Mathematics of Operations Research}
\bvolume{42}
\bpages{349--376}.
\end{barticle}
\endbibitem

\bibitem{rosset2004boosting}
\begin{barticle}[author]
\bauthor{\bsnm{Rosset},~\bfnm{Saharon}\binits{S.}},
  \bauthor{\bsnm{Zhu},~\bfnm{Ji}\binits{J.}} \AND
  \bauthor{\bsnm{Hastie},~\bfnm{Trevor}\binits{T.}}
(\byear{2004}).
\btitle{Boosting as a regularized path to a maximum margin classifier}.
\bjournal{The Journal of Machine Learning Research}
\bvolume{5}
\bpages{941--973}.
\end{barticle}
\endbibitem

\bibitem{ruiz2021interpolation}
\begin{barticle}[author]
\bauthor{\bsnm{Ruiz-Balet},~\bfnm{Dom{\`e}nec}\binits{D.}},
  \bauthor{\bsnm{Affili},~\bfnm{Elisa}\binits{E.}} \AND
  \bauthor{\bsnm{Zuazua},~\bfnm{Enrique}\binits{E.}}
(\byear{2021}).
\btitle{Interpolation and approximation via {Momentum ResNets and Neural
  ODEs}}.
\bjournal{arXiv preprint arXiv:2110.08761}.
\end{barticle}
\endbibitem

\bibitem{ruiz2020control}
\begin{barticle}[author]
\bauthor{\bsnm{Ruiz-Balet},~\bfnm{Dom{\`e}nec}\binits{D.}} \AND
  \bauthor{\bsnm{Zuazua},~\bfnm{Enrique}\binits{E.}}
(\byear{2020}).
\btitle{Control under constraints for multi-dimensional reaction-diffusion
  monostable and bistable equations}.
\bjournal{Journal de Math{\'e}matiques Pures et Appliqu{\'e}es}
\bvolume{143}
\bpages{345--375}.
\end{barticle}
\endbibitem

\bibitem{ruiz2020universal}
\begin{barticle}[author]
\bauthor{\bsnm{Ruiz-Balet},~\bfnm{Dom{\`e}nec}\binits{D.}} \AND
  \bauthor{\bsnm{Zuazua},~\bfnm{Enrique}\binits{E.}}
(\byear{2021}).
\btitle{{Neural ODE} control for classification, approximation and transport}.
\bjournal{arXiv preprint arXiv:2104.05278}.
\end{barticle}
\endbibitem

\bibitem{sakamoto2019turnpike}
\begin{binproceedings}[author]
\bauthor{\bsnm{Sakamoto},~\bfnm{Noboru}\binits{N.}},
  \bauthor{\bsnm{Pighin},~\bfnm{Dario}\binits{D.}} \AND
  \bauthor{\bsnm{Zuazua},~\bfnm{Enrique}\binits{E.}}
(\byear{2019}).
\btitle{The turnpike property in nonlinear optimal control—A geometric
  approach}.
In \bbooktitle{2019 IEEE 58th Conference on Decision and Control (CDC)}
\bpages{2422--2427}.
\bpublisher{IEEE}.
\end{binproceedings}
\endbibitem

\bibitem{samuelson1965catenary}
\begin{barticle}[author]
\bauthor{\bsnm{Samuelson},~\bfnm{Paul~A}\binits{P.~A.}}
(\byear{1965}).
\btitle{A catenary turnpike theorem involving consumption and the golden rule}.
\bjournal{The American Economic Review}
\bvolume{55}
\bpages{486--496}.
\end{barticle}
\endbibitem

\bibitem{samuelson1976periodic}
\begin{barticle}[author]
\bauthor{\bsnm{Samuelson},~\bfnm{Paul~A}\binits{P.~A.}}
(\byear{1976}).
\btitle{The periodic turnpike theorem}.
\bjournal{Nonlinear Analysis: Theory, Methods \& Applications}
\bvolume{1}
\bpages{3--13}.
\end{barticle}
\endbibitem

\bibitem{sander2021momentum}
\begin{binproceedings}[author]
\bauthor{\bsnm{Sander},~\bfnm{Michael~E}\binits{M.~E.}},
  \bauthor{\bsnm{Ablin},~\bfnm{Pierre}\binits{P.}},
  \bauthor{\bsnm{Blondel},~\bfnm{Mathieu}\binits{M.}} \AND
  \bauthor{\bsnm{Peyr{\'e}},~\bfnm{Gabriel}\binits{G.}}
(\byear{2021}).
\btitle{Momentum residual neural networks}.
In \bbooktitle{International Conference on Machine Learning}
\bpages{9276--9287}.
\bpublisher{PMLR}.
\end{binproceedings}
\endbibitem

\bibitem{seidman1987invariance}
\begin{barticle}[author]
\bauthor{\bsnm{Seidman},~\bfnm{Thomas~I}\binits{T.~I.}}
(\byear{1987}).
\btitle{Invariance of the reachable set under nonlinear perturbations}.
\bjournal{SIAM Journal on Control and Optimization}
\bvolume{25}
\bpages{1173--1191}.
\end{barticle}
\endbibitem

\bibitem{sontag1997complete}
\begin{barticle}[author]
\bauthor{\bsnm{Sontag},~\bfnm{Eduardo}\binits{E.}} \AND
  \bauthor{\bsnm{Sussmann},~\bfnm{H{\'e}ctor}\binits{H.}}
(\byear{1997}).
\btitle{Complete controllability of continuous-time recurrent neural networks}.
\bjournal{Systems Control Lett.}
\bvolume{30}
\bpages{177--183}.
\end{barticle}
\endbibitem

\bibitem{sontag1999further}
\begin{barticle}[author]
\bauthor{\bsnm{Sontag},~\bfnm{Eduardo~D}\binits{E.~D.}} \AND
  \bauthor{\bsnm{Qiao},~\bfnm{Y}\binits{Y.}}
(\byear{1999}).
\btitle{Further results on controllability of recurrent neural networks}.
\bjournal{Systems Control Lett.}
\bvolume{36}
\bpages{121--129}.
\end{barticle}
\endbibitem

\bibitem{stuart2010inverse}
\begin{barticle}[author]
\bauthor{\bsnm{Stuart},~\bfnm{Andrew~M}\binits{A.~M.}}
(\byear{2010}).
\btitle{Inverse problems: a {B}ayesian perspective}.
\bjournal{Acta Numerica}
\bvolume{19}
\bpages{451--559}.
\end{barticle}
\endbibitem

\bibitem{trelat2005controle}
\begin{bbook}[author]
\bauthor{\bsnm{Tr\'{e}lat},~\bfnm{Emmanuel}\binits{E.}}
(\byear{2005}).
\btitle{Contr\^{o}le optimal}.
\bseries{Math\'{e}matiques Concr\`etes. [Concrete Mathematics]}.
\bpublisher{Vuibert, Paris}.
\end{bbook}
\endbibitem

\bibitem{trelat2012optimal}
\begin{barticle}[author]
\bauthor{\bsnm{Tr{\'e}lat},~\bfnm{Emmanuel}\binits{E.}}
(\byear{2012}).
\btitle{Optimal control and applications to aerospace: some results and
  challenges}.
\bjournal{Journal of Optimization Theory and Applications}
\bvolume{154}
\bpages{713--758}.
\end{barticle}
\endbibitem

\bibitem{trelat2020linear}
\begin{barticle}[author]
\bauthor{\bsnm{Tr{\'e}lat},~\bfnm{Emmanuel}\binits{E.}}
(\byear{2020}).
\btitle{Linear turnpike theorem}.
\bjournal{arXiv preprint arXiv:2010.13605}.
\end{barticle}
\endbibitem

\bibitem{trelat2019characterization}
\begin{barticle}[author]
\bauthor{\bsnm{Tr{\'e}lat},~\bfnm{Emmanuel}\binits{E.}},
  \bauthor{\bsnm{Wang},~\bfnm{Gengsheng}\binits{G.}} \AND
  \bauthor{\bsnm{Xu},~\bfnm{Yashan}\binits{Y.}}
(\byear{2019}).
\btitle{Characterization by observability inequalities of controllability and
  stabilization properties}.
\bjournal{Pure and Applied Analysis}
\bvolume{2}
\bpages{93--122}.
\end{barticle}
\endbibitem

\bibitem{trelat2018integral}
\begin{barticle}[author]
\bauthor{\bsnm{Tr{\'e}lat},~\bfnm{Emmanuel}\binits{E.}} \AND
  \bauthor{\bsnm{Zhang},~\bfnm{Can}\binits{C.}}
(\byear{2018}).
\btitle{Integral and measure-turnpike properties for infinite-dimensional
  optimal control systems}.
\bjournal{Mathematics of Control, Signals, and Systems}
\bvolume{30}
\bpages{1--34}.
\end{barticle}
\endbibitem

\bibitem{trelat2018steady}
\begin{barticle}[author]
\bauthor{\bsnm{Tr{\'e}lat},~\bfnm{Emmanuel}\binits{E.}},
  \bauthor{\bsnm{Zhang},~\bfnm{Can}\binits{C.}} \AND
  \bauthor{\bsnm{Zuazua},~\bfnm{Enrique}\binits{E.}}
(\byear{2018}).
\btitle{Steady-state and periodic exponential turnpike property for optimal
  control problems in {Hilbert} spaces}.
\bjournal{SIAM Journal on Control and Optimization}
\bvolume{56}
\bpages{1222--1252}.
\end{barticle}
\endbibitem

\bibitem{trelat2018optimal}
\begin{barticle}[author]
\bauthor{\bsnm{Tr{\'e}lat},~\bfnm{Emmanuel}\binits{E.}},
  \bauthor{\bsnm{Zhang},~\bfnm{Can}\binits{C.}} \AND
  \bauthor{\bsnm{Zuazua},~\bfnm{Enrique}\binits{E.}}
(\byear{2018}).
\btitle{Optimal shape design for 2D heat equations in large time}.
\bjournal{Pure and Applied Functional Analysis}
\bvolume{3}
\bpages{255--269}.
\end{barticle}
\endbibitem

\bibitem{trelat2015turnpike}
\begin{barticle}[author]
\bauthor{\bsnm{Tr{\'e}lat},~\bfnm{Emmanuel}\binits{E.}} \AND
  \bauthor{\bsnm{Zuazua},~\bfnm{Enrique}\binits{E.}}
(\byear{2015}).
\btitle{The turnpike property in finite-dimensional nonlinear optimal control}.
\bjournal{Journal of Differential Equations}
\bvolume{258}
\bpages{81--114}.
\end{barticle}
\endbibitem

\bibitem{troltzsch2010optimal}
\begin{bbook}[author]
\bauthor{\bsnm{Tr{\"o}ltzsch},~\bfnm{Fredi}\binits{F.}}
(\byear{2010}).
\btitle{Optimal control of partial differential equations: theory, methods, and
  applications}
\bvolume{112}.
\bpublisher{American Mathematical Soc.}
\end{bbook}
\endbibitem

\bibitem{tucsnak2000simultaneous}
\begin{barticle}[author]
\bauthor{\bsnm{Tucsnak},~\bfnm{Marius}\binits{M.}} \AND
  \bauthor{\bsnm{Weiss},~\bfnm{George}\binits{G.}}
(\byear{2000}).
\btitle{Simultaneous exact controllability and some applications}.
\bjournal{SIAM Journal on Control and Optimization}
\bvolume{38}
\bpages{1408--1427}.
\end{barticle}
\endbibitem

\bibitem{tucsnak2009observation}
\begin{bbook}[author]
\bauthor{\bsnm{Tucsnak},~\bfnm{Marius}\binits{M.}} \AND
  \bauthor{\bsnm{Weiss},~\bfnm{George}\binits{G.}}
(\byear{2009}).
\btitle{Observation and control for operator semigroups}.
\bpublisher{Springer Science \& Business Media}.
\end{bbook}
\endbibitem

\bibitem{valein2009stabilization}
\begin{barticle}[author]
\bauthor{\bsnm{Valein},~\bfnm{Julie}\binits{J.}} \AND
  \bauthor{\bsnm{Zuazua},~\bfnm{Enrique}\binits{E.}}
(\byear{2009}).
\btitle{Stabilization of the wave equation on 1-D networks}.
\bjournal{SIAM Journal on Control and Optimization}
\bvolume{48}
\bpages{2771--2797}.
\end{barticle}
\endbibitem

\bibitem{vinter2010optimal}
\begin{bbook}[author]
\bauthor{\bsnm{Vinter},~\bfnm{Richard~B}\binits{R.~B.}}
(\byear{2010}).
\btitle{Optimal control}.
\bpublisher{Springer}.
\end{bbook}
\endbibitem

\bibitem{von1937uber}
\begin{binproceedings}[author]
\bauthor{\bsnm{Von~Neumann},~\bfnm{John}\binits{J.}}
(\byear{1937}).
\btitle{Uber ein okonomsiches gleichungssystem und eine verallgemeinering des
  browerschen fixpunktsatzes}.
In \bbooktitle{Erge. Math. Kolloq.}
\bvolume{8}
\bpages{73--83}.
\end{binproceedings}
\endbibitem

\bibitem{von1992direct}
\begin{barticle}[author]
\bauthor{\bsnm{Von~Stryk},~\bfnm{Oskar}\binits{O.}} \AND
  \bauthor{\bsnm{Bulirsch},~\bfnm{Roland}\binits{R.}}
(\byear{1992}).
\btitle{Direct and indirect methods for trajectory optimization}.
\bjournal{Annals of Operations Research}
\bvolume{37}
\bpages{357--373}.
\end{barticle}
\endbibitem

\bibitem{warma2020exponential}
\begin{barticle}[author]
\bauthor{\bsnm{Warma},~\bfnm{Mahamadi}\binits{M.}} \AND
  \bauthor{\bsnm{Zamorano},~\bfnm{Sebasti{\'a}n}\binits{S.}}
(\byear{2021}).
\btitle{Exponential Turnpike property for fractional parabolic equations with
  non-zero exterior data}.
\bjournal{ESAIM: Control, Optimisation and Calculus of Variations}
\bvolume{27}
\bpages{1}.
\end{barticle}
\endbibitem

\bibitem{wiener2019cybernetics}
\begin{bbook}[author]
\bauthor{\bsnm{Wiener},~\bfnm{Norbert}\binits{N.}}
(\byear{1949}).
\btitle{Cybernetics or Control and Communication in the Animal and the
  Machine}.
\bpublisher{MIT press}.
\end{bbook}
\endbibitem

\bibitem{willems1972dissipative}
\begin{barticle}[author]
\bauthor{\bsnm{Willems},~\bfnm{Jan~C}\binits{J.~C.}}
(\byear{1972}).
\btitle{Dissipative dynamical systems part {I: General theory}}.
\bjournal{Archive for Rational Mechanics and Analysis}
\bvolume{45}
\bpages{321--351}.
\end{barticle}
\endbibitem

\bibitem{zabczyk2020mathematical}
\begin{bbook}[author]
\bauthor{\bsnm{Zabczyk},~\bfnm{Jerzy}\binits{J.}}
(\byear{2020}).
\btitle{Mathematical control theory}.
\bpublisher{Springer}.
\end{bbook}
\endbibitem

\bibitem{zamorano2018turnpike}
\begin{barticle}[author]
\bauthor{\bsnm{Zamorano},~\bfnm{Sebasti{\'a}n}\binits{S.}}
(\byear{2018}).
\btitle{Turnpike property for two-dimensional {Navier--Stokes} equations}.
\bjournal{Journal of Mathematical Fluid Mechanics}
\bvolume{20}
\bpages{869--888}.
\end{barticle}
\endbibitem

\bibitem{zanon2016periodic}
\begin{barticle}[author]
\bauthor{\bsnm{Zanon},~\bfnm{Mario}\binits{M.}},
  \bauthor{\bsnm{Gr{\"u}ne},~\bfnm{Lars}\binits{L.}} \AND
  \bauthor{\bsnm{Diehl},~\bfnm{Moritz}\binits{M.}}
(\byear{2016}).
\btitle{Periodic optimal control, dissipativity and {MPC}}.
\bjournal{IEEE Transactions on Automatic Control}
\bvolume{62}
\bpages{2943--2949}.
\end{barticle}
\endbibitem

\bibitem{zaslavski2005turnpike}
\begin{bbook}[author]
\bauthor{\bsnm{Zaslavski},~\bfnm{Alexander}\binits{A.}}
(\byear{2005}).
\btitle{Turnpike properties in the calculus of variations and optimal control}
\bvolume{80}.
\bpublisher{Springer Science \& Business Media}.
\end{bbook}
\endbibitem

\bibitem{zaslavski2007turnpike}
\begin{barticle}[author]
\bauthor{\bsnm{Zaslavski},~\bfnm{Alexander~J}\binits{A.~J.}}
(\byear{2007}).
\btitle{Turnpike results for discrete-time optimal control systems arising in
  economic dynamics}.
\bjournal{Nonlinear Analysis: Theory, Methods \& Applications}
\bvolume{67}
\bpages{2024--2049}.
\end{barticle}
\endbibitem

\bibitem{zaslavski2015turnpike}
\begin{bbook}[author]
\bauthor{\bsnm{Zaslavski},~\bfnm{Alexander~J}\binits{A.~J.}}
(\byear{2015}).
\btitle{Turnpike theory of continuous-time linear optimal control problems}
\bvolume{104}.
\bpublisher{Springer}.
\end{bbook}
\endbibitem

\bibitem{zhang2016understanding}
\begin{barticle}[author]
\bauthor{\bsnm{Zhang},~\bfnm{Chiyuan}\binits{C.}},
  \bauthor{\bsnm{Bengio},~\bfnm{Samy}\binits{S.}},
  \bauthor{\bsnm{Hardt},~\bfnm{Moritz}\binits{M.}},
  \bauthor{\bsnm{Recht},~\bfnm{Benjamin}\binits{B.}} \AND
  \bauthor{\bsnm{Vinyals},~\bfnm{Oriol}\binits{O.}}
(\byear{2021}).
\btitle{Understanding deep learning (still) requires rethinking
  generalization}.
\bjournal{Communications of the ACM}
\bvolume{64}
\bpages{107--115}.
\end{barticle}
\endbibitem

\bibitem{zhang2000exact}
\begin{barticle}[author]
\bauthor{\bsnm{Zhang},~\bfnm{X}\binits{X.}}
(\byear{2000}).
\btitle{Exact controllability of semilinear evolution systems and its
  application}.
\bjournal{Journal of Optimization Theory and Applications}
\bvolume{107}
\bpages{415--432}.
\end{barticle}
\endbibitem

\bibitem{zhang2004exact}
\begin{barticle}[author]
\bauthor{\bsnm{Zhang},~\bfnm{Xu}\binits{X.}} \AND
  \bauthor{\bsnm{Zuazua},~\bfnm{Enrique}\binits{E.}}
(\byear{2004}).
\btitle{Exact controllability of the semi-linear wave equation}.
\bjournal{Unsolved Problems in Mathematical Systems and Control Theory}
\bvolume{173}.
\end{barticle}
\endbibitem

\bibitem{zuazua1993exact}
\begin{binproceedings}[author]
\bauthor{\bsnm{Zuazua},~\bfnm{Enrique}\binits{E.}}
(\byear{1993}).
\btitle{Exact controllability for semilinear wave equations in one space
  dimension}.
In \bbooktitle{Annales de l'Institut Henri Poincar{\'e} C, Analyse non
  lin{\'e}aire}
\bvolume{10}
\bpages{109--129}.
\bpublisher{Elsevier}.
\end{binproceedings}
\endbibitem

\bibitem{zuazua2005propagation}
\begin{barticle}[author]
\bauthor{\bsnm{Zuazua},~\bfnm{Enrique}\binits{E.}}
(\byear{2005}).
\btitle{Propagation, observation, and control of waves approximated by finite
  difference methods}.
\bjournal{SIAM Review}
\bvolume{47}
\bpages{197--243}.
\end{barticle}
\endbibitem

\bibitem{zuazua2017large}
\begin{barticle}[author]
\bauthor{\bsnm{Zuazua},~\bfnm{Enrique}\binits{E.}}
(\byear{2017}).
\btitle{Large time control and turnpike properties for wave equations}.
\bjournal{Annual Reviews in Control}
\bvolume{44}
\bpages{199--210}.
\end{barticle}
\endbibitem

\end{thebibliography}

\end{document}